\newtheorem{Theorem}{Theorem}[section]
\newtheorem{Lemma}[Theorem]{Lemma}
\newtheorem{Prop}[Theorem]{Proposition}
\newtheorem{Cor}[Theorem]{Corollary}
\newtheorem{Ex}[Theorem]{Example}
\newtheorem{Obs}[Theorem]{Remark}
\newtheorem*{conjecture}{{\bf Conjecture}}
\newtheorem*{thm-intro}{Theorem}
\newtheorem{appxthm}{Theorem}[section]
\newtheorem{appxlem}[appxthm]{Lemma}
\newtheorem{appxcor}[appxthm]{Corollary}
\def\C{{\mathbb{C}}}
\def\P{{\mathbb{P}}}
\def\Q{{\mathbb{Q}}}
\def\Z{{\mathbb{Z}}}
\def\sF{{\mathscr{F}}}
\def\sG{{\mathscr{G}}}
\def\G{{\mathscr{G}}}
\def\D{{\mathcal{D}}}
\def\O{{\mathcal{O}_{\mathbb{P}^3}}}
\def\OZ{{\mathcal{O}_{Z}}}
\def\IZ{I_{Z/{\mathbb{P}^3}}}
\def\OC{{\mathcal{O}_{C}}}
\def\IC{I_{C/{\mathbb{P}^3}}}
\def\OZX{{\mathcal{O}_{Z}}}
\def\OCX{{\mathcal{O}_{C}}}
\def\OX{{\mathcal{O}_{X}}}
\def\a{{\alpha}}
\def\b{{\beta}}
\def\o{\omega}
\def\Om{{\Omega_{\mathbb{P}^3}^1}}
\newcommand{\op}[1]{{\mathcal O}_{\mathbb{P}^{#1}}}
\newcommand{\p}[1]{{\mathbb{P}^{#1}}}
\newcommand{\quotf}{{\mathcal Q}{\it uot}}
\newcommand{\dist}{{\mathcal D}{\it ist}}
\newcommand{\sch}{\mathfrak{Sch}_{/\C}}
\newcommand{\sets}{\mathfrak{Sets}}
\newcommand{\inext}{{\mathcal E}{\it xt}}
\DeclareMathOperator{\sing}{Sing}
\DeclareMathOperator{\supp}{Supp}
\DeclareMathOperator{\Hom}{Hom}
\DeclareMathOperator{\Ext}{Ext}
\DeclareMathOperator{\rk}{{rk}}
\DeclareMathOperator{\coker}{{coker}}
\DeclareMathOperator{\img}{{im}}
\DeclareMathOperator{\codim}{{codim}}
\DeclareMathOperator{\Pic}{{Pic}}
\def\BB{\textrm{BB}}
\title[Codimension one holomorphic distributions on the projective three-space]
{Codimension one holomorphic distributions on the projective three-space}
\author[ O. Calvo-Andrade ]{ O. Calvo-Andrade }
\thanks{ }
\dedicatory{}
\address{CIMAT \\ Ap. Postal 402, Guanajuato, 36000, Gto. Mexico}
\email{omegar.mat@gmail.com}
\author[M. Corr\^ea]{M. Corr\^ea}
\thanks{ }
\dedicatory{}
\address{ICEX-UFMG \\ Departamento de Matem\'atica \\
Av. Ant\^onio Carlos, 6627 \\ 31270-901 Belo Horizonte-MG, Brazil}
\email{mauriciomatufmg@gmail.com}
\author[  M. Jardim]{M. Jardim}
\thanks{ }
\dedicatory{}
\address{IMECC - UNICAMP \\ Departamento de Matem\'atica \\
Rua S\'ergio  Buarque de Holanda, 651\\ 13083-970 Campinas-SP, Brazil}
\email{jardim@ime.unicamp.br}
\keywords{}
\subjclass{}
\date{}
\begin{document}

\begin{abstract}
We study codimension one holomorphic distributions on the projective three-space, analyzing the properties of their singular schemes and tangent sheaves. In particular, we provide a classification of codimension one distributions of degree at most 2 with locally free tangent sheaves, and show that codimension one distributions of arbitrary degree with only isolated singularities have stable tangent sheaves. Furthermore, we describe the moduli space of distributions in terms of Grothendieck's Quot-scheme for the tangent bundle. In certain cases, we show that the moduli space of codimension one distributions on the projective space is an irreducible, nonsingular quasi-projective variety. Finally, we prove that every rational foliation, and certain logarithmic foliations have stable tangent sheaves.
\end{abstract}

\maketitle

\section{Introduction}
Techniques from algebraic geometry have been extremely useful in the study of singular holomorphic foliations, see for instance \cite{AD,BMc,CJV,EK,GM,J,LPT,Mi}. 
In particular, Jouanolou classified codimension one foliations on $\P^3$ of degrees 0 and 1 in his monograph \cite{J}; Cerveau and Lins Neto showed in \cite{CL} that there exist six irreducible components of foliations of degree 2 on projective spaces; and Polishchuk, motivated by the study of holomorphic Poisson structures, also found in \cite{P} a classification of foliations of degree 2 on $\P^3$ under certain hypotheses on the singular set of the foliations.

In this work, we study holomorphic distributions on $\P^3$. We focus on studying properties of their singular schemes, on analyzing the semistability of the tangent sheaf, and on describing certain moduli spaces of distributions of low degree.

More precisely, we provide a complete classification of codimension one distributions of degrees 0 (for the sake of completeness) and 1 in Sections \ref{class0} and \ref{class1}, respectively. We describe all possible singular schemes, and prove that the tangent sheaf of a codimension one distribution of degree 1 is stable whenever it does not split as a sum of line bundles; in particular, we show that if the tangent sheaf of a codimension one distribution of degree 1 is locally free, then it splits as a sum of line bundles. Finally, we describe one moduli space of codimension one distributions of degree 0 in Section \ref{sec:nc-dist}, and one moduli space of codimension one distributions of degree 1 in Section \ref{sec:dist deg 1}; both are irreducible, nonsingular quasi-projective varieties, of dimensions 5 and 14, respectively.

Regarding codimension one distributions of degree 2, we list all possible values for the second and third Chern classes of the tangent sheaves, see Table \ref{deg 2 table}. In particular, we conclude that the tangent sheaf of such distributions is always $\mu$-semistable.

In addition, we classify codimension one distributions of degree 2 whose tangent sheaf is locally free. In the statement below, we say that a curve of degree $m$ and genus $p$ is \emph{reduced up to deformation} if it lies in a component of the Hilbert scheme ${\rm Hilb}_{m,p}$ whose general point corresponds to a reduced curve. 

\begin{thm-intro} %\label{class2intro}
Let $\sF$ be a codimension one distribution of degree $2$ on $\p3$ with locally free tangent sheaf $T_\sF$, and such that $\sing(\sF)$  is reduced, up to deformation. Then:
\begin{enumerate}
\item  $T_\sF$ splits as a sum of line bundles and 
\begin{enumerate}

\item either $T_\sF=\O(1)\oplus \O(-1)$, and $\sing(\sF)$ is a connected curve of degree $7$ and arithmetic genus $5$.

\item or $T_\sF=\O\oplus \O$, and $\sing(\sF)$ is a connected curve of degree $6$ and arithmetic genus $3$.

\end{enumerate}

\item  $T_\sF$ is stable, and:

\begin{enumerate}
\item either $T_\sF$ is a null-correlation bundle, and $\sing(\sF)$ is a connected curve of degree $5$ and arithmetic genus $1$; in addition, $\sF$ possesses sub-foliations of degree $2$ which are singular along two skew lines;
 
\item or $T_\sF$ is an instanton bundle of charge $2$, and $\sing(\sF)$ is the disjoint union of a line and a twisted cubic (or a degeneration of such curve); in addition, $\sF$ possesses  sub-foliations of degree $2$ which are singular along three skew lines.  
\end{enumerate}
\end{enumerate}

In addition, this result is effective, in the sense that there exist injective morphisms $\phi : T_\sF \to T\P^3$ with torsion free cokernel for each of the possibilities listed above.  
\end{thm-intro}

In particular, note that if the tangent sheaf of a codimension one distribution of degree 2 is locally free, then either it splits as a sum of line bundles, or it is stable. Moduli spaces of codimension one distribution of degree 2 are described in Sections \ref{sec:nc-dist},  \ref{sec:2-inst},  \ref{sec:r(2,2)}, \ref{sec:r(1,3)}. In all cases considered, the moduli spaces are irreducible, nonsingular quasi-projective varieties. 

Another important open problem in the theory of holomorphic foliations is a conjecture due to Cerveau \cite{Ce}: if $\sF$ is a codimension one foliation on $\P^3$, then the pure 1-dimensional component of $\sing(\sF)$ is connected. This statement is false for non-integrable distributions, since, according to the previous Theorem, there exists a codimension one distribution of degree 2 with locally free tangent sheaf whose singular scheme is not connected. We prove the following cohomological criterion for the connectedness of the singular scheme of codimension one distributions.

\begin{thm-intro} 
Let $\sF$ be a codimension one distribution on $\p3$ of degree $d>0$ with singular scheme $Z$. If $h^2(T_\sF(-2-d))=0$, then $Z$ is connected and of pure dimension 1, so that $T_\sF$ is locally free. Conversely, for $d\ne2$, if $Z$ is connected, then $T_\sF$ is locally free and $h^2(T_\sF(-2-d))=0$.
\end{thm-intro}

In addition, we also explore the relation between connectedness and integrability.

\begin{thm-intro}
Let $\sF$ be a codimension one distribution on $\p3$ of degree $d>0$ with locally free tangent sheaf. If $\sing(\sF)$ is smooth and connected, then
 $\sF$ is not integrable. 
\end{thm-intro}

A key fact in our description of moduli spaces of codimension one distributions of low degree is precisely the stability of the tangent sheaf. The $\mu$-semistability of the tangent sheaf of foliations has also been explored in \cite{AD,LPT} to show the existence of algebraic, rationally connected leaves via the Miyaoka--Bogomolov--McQuillan Theorem \cite{BMc,Mi}. We prove that the tangent sheaf of a codimension one distributions with only isolated singularities is always stable.

\begin{thm-intro}
Let $\sF$ be a codimension one distribution on $\p3$. If $\sing(\sF)$ is a nonempty, 0-dimensional scheme, then $T_\sF$ is stable.
\end{thm-intro}

In addition, we prove that every rational foliation has stable tangent sheaf, see Theorem \ref{racionais} below, and that the tangent sheaf of a logarithmic foliation of degree at most 2 either splits as a sum of line bundles, or it is $\mu$-semistable, see Theorem \ref{logStable}. Such observation motivates the following conjecture.

\begin{conjecture}
If the tangent sheaf of a codimension one foliation on $\p3$ is not split, then it is $\mu$-semistable.
\end{conjecture}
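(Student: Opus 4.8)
The plan is to argue by contrapositive: assuming that $T_\sF$ is \emph{not} $\mu$-semistable, I would try to force it to split as a sum of line bundles. Since $T_\sF$ is a rank-two reflexive sheaf with $c_1(T_\sF)=2-d$, hence slope $\mu(T_\sF)=(2-d)/2$, failure of $\mu$-semistability produces a saturated sub-line bundle $\O(a)\hookrightarrow T_\sF$ of maximal slope with $a>(2-d)/2$. Taking it to be the maximal destabilizing subsheaf, the quotient is torsion free of rank one, say $\mathcal I_W(b)$ with $W$ of codimension $\ge 2$, $a+b=2-d$ and $a>b$. This yields the short exact sequence
\[
0\longrightarrow \O(a)\longrightarrow T_\sF\longrightarrow \mathcal I_W(b)\longrightarrow 0,
\]
and the goal becomes to show that, for a genuine \emph{foliation}, this sequence splits and $W=\emptyset$.

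First I would bound $a$. Composing with $T_\sF\hookrightarrow T\P^3$ gives a nonzero section of $T\P^3(-a)$, and the Euler sequence shows $H^0(T\P^3(-a))=0$ for $a\ge 2$, so necessarily $a\le 1$. Thus the destabilizing slope lives in the narrow window $a\in\big((2-d)/2,\,1\big]$, and the inclusion $\O(a)\hookrightarrow T_\sF\subset T\P^3$ is precisely a one-dimensional sub-foliation $\G\subset\sF$ generated by a twisted vector field $v$ tangent to $\sF$, i.e. $\iota_v\omega=0$ where $\omega\in H^0(\Om(d+2))$ defines $\sF$. Because $a\le 1$, the field $v$ is of very low degree, which severely constrains its possible orbit structure.

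The heart of the argument is to exploit integrability, $\omega\wedge d\omega=0$, together with $\iota_v\omega=0$. The flow of $v$ preserves $\sF$, and I would use this to produce a \emph{second} invariant direction inside $T_\sF$: decomposing $v$ according to its (generically semisimple) linear type should yield a complementary tangent field spanning a sub-line bundle $\O(b)\hookrightarrow T_\sF$ lifting the quotient $\mathcal I_W(b)$. A complementary splitting of this kind forces $W=\emptyset$ and an isomorphism $T_\sF\cong\O(a)\oplus\O(b)$, contradicting the non-split hypothesis; equivalently, one wants integrability to kill the extension class in $\Ext^1(\mathcal I_W(b),\O(a))$. As an independent cross-check I would restrict to a general line $\ell$: by Grauert--M\"ulich the generic splitting type of a $\mu$-semistable sheaf is balanced, so showing directly that $T_\sF|_\ell$ is balanced for general $\ell$ would also suffice, and the induced foliation on a general plane $H\cong\p2$ of the same degree $d$ could feed known stability statements on $\p2$ into this.

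The main obstacle is precisely this core step: extracting a complementary invariant sub-line bundle, or the vanishing of the extension class, from integrability alone and \emph{uniformly in} $d$. When $v$ is diagonalizable the eigenspace decomposition plausibly delivers the second field, but the nilpotent/Jordan cases, and especially the configurations with $W\neq\emptyset$, are genuinely delicate: a nonreduced or positive-dimensional $W$ can support nontrivial extensions, and ruling these out seems to require a finer analysis of how $\sing(\sF)$ meets the sub-foliation $\G$. I expect the already-established cases---stability for isolated singularities, for rational foliations (Theorem \ref{racionais}), and $\mu$-semistability for logarithmic foliations of low degree (Theorem \ref{logStable})---to be the model computations exhibiting the mechanism, the crux being to upgrade them to a degree-independent statement; this is why the assertion is posed here as a conjecture rather than a theorem.
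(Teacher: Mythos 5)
This statement is posed in the paper as an open \emph{conjecture}: the paper contains no proof of it, only partial evidence (stability for isolated singularities, Theorem \ref{StGDist}; stability for rational foliations, Theorem \ref{racionais}; $\mu$-semistability for logarithmic foliations in a range of degrees, Theorem \ref{logStable}; and the degree bounds of Proposition \ref{non_stable_prop}). So there is no complete argument to compare yours against, and your own assessment at the end is accurate. Your preliminary reductions are sound and in fact reproduce the paper's machinery: the destabilizing sequence $0\to\O(a)\to T_\sF\to \mathcal{I}_W(b)\to 0$, the bound on $a$ via $H^0(T\P^3(-a))=0$ for $a\ge 2$, and the interpretation of the destabilizing section as a low-degree sub-foliation are exactly the setup of Remark \ref{min twist} and Proposition \ref{non_stable_prop}. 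One sharpening you missed: under the non-split hypothesis, Lemma \ref{coho_non_split} gives $h^0(T_\sF(-1))=0$, so in fact $a\le 0$, not merely $a\le 1$; since a destabilizing subsheaf needs $a>(2-d)/2$, this already settles the conjecture for $d\le 2$ with no further work, and it is how the paper organizes its partial results. Note also that the paper obtains its semistability statements not from integrability per se but from Sancho's bound $\deg(\sigma=0)\le r^2+3r+3$ on the singular curve of the degree-$(r+1)$ sub-foliation, which yields only degree-dependent conclusions --- precisely the limitation you identify.

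The genuine gap is the one you name, but it is worth saying concretely why your proposed mechanism would fail as stated. You want to decompose the destabilizing field $v$ "according to its (generically semisimple) linear type" to extract a second invariant line subbundle. This only makes sense when $v\in H^0(T\P^3)$, i.e.\ when $a=0$: for $a<0$ the section $v$ is a twisted vector field defining a one-dimensional foliation of degree $1-a\ge 2$, which has no Jordan--Chevalley decomposition, and for non-split $T_\sF$ the window $((2-d)/2,\,0]$ contains arbitrarily negative values of $a$ as $d$ grows; so the case your heuristic covers is exactly the case that is already easy, and the hard range is untouched. Moreover, killing the extension class in $\Ext^1(\mathcal{I}_W(b),\O(a))$ cannot follow from integrability alone by any soft argument: by the Serre construction, nontrivial extensions of this shape with $W\ne\emptyset$ produce reflexive sheaves in abundance, and the paper's Example \ref{exeNonsemi} shows that for non-integrable distributions such non-semistable $T_\sF$ genuinely occur (degree $14$, indecomposable, locally free), so any successful argument must use $\omega\wedge d\omega=0$ in an essential, quantitative way --- for instance through invariance constraints on $\sing(\sF)$ along the sub-foliation (cf.\ the use of \cite{Mol} and the Jouanolou--Esteves--Kleiman residue argument in Lemmas \ref{(3,8)} and \ref{(2,4)}). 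Your Grauert--M\"ulich cross-check is logically correct as a sufficient criterion (balancedness of $T_\sF|_\ell$ for general $\ell$ does imply $\mu$-semistability), but it merely restates the problem, since producing the balanced splitting type is equivalent to excluding the destabilizing subsheaf.
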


On the other hand, the conjecture does not hold if one allows for non-integrable distributions. We provide in the Appendix a generalization of Ottaviani's Bertini type Theorem \cite[Teorema 2.8]{O} to produce examples of codimension one distributions on $\p3$; as an application, we exhibit a non-integrable codimension one distribution on $\p3$ of degree 14 whose tangent sheaf is indecomposable, locally free, and not $\mu$-semistable, see Example \ref{exeNonsemi}.

\subsection*{Acknowledgments}
We are grateful to Alex Massarenti for interesting conversations. The authors also thank the anonymous referees for giving many suggestions that helped improving the presentation of the paper.  CA was partially supported by the FAPESP grant number 2014/23594-6.  
MC was partially supported by CNPq, CAPES and the FAPESP grant number 2015/20841-5; he is grateful to University of Oxford for its hospitality. Both CA and MC visited the University of Campinas during the completion of this work in 2015 and 2016, and are also grateful for its hospitality.
MJ was partially supported by the CNPq grant number 303332/2014-0 and the FAPESP grants number 2014/14743-8 and 2016/03759-6; he thanks the University of Edinburgh for a pleasant and productive visit in 2017.

%%%%%%%%%%%%%%%%%%%%%%%%%%%%%%%%%%%%%%%%%%%%%%%%%%%%%%%%%%%%%%%%%%%%%%%%%%%%
%%%%%%%%%%%%%%%%%%%%%%%%%%%%%%%%%%%%%%%%%%%%%%%%%%%%%%%%%%%%%%%%%%%%%%%%%%%%

\section{Distributions and their moduli spaces}

We begin by setting up the notation and nomenclature to be used in the rest of the paper.

\subsection{Distributions and foliations}\label{dist+fol}

A \emph{codimension $r$ distribution} $\sF$ on a smooth complex manifold $X$ is given by an exact sequence
\begin{equation}\label{eq:Dist}
\mathscr{F}:\  0  \longrightarrow T_\sF \stackrel{\phi}{ \longrightarrow} TX \stackrel{\pi}{ \longrightarrow} N_{\sF}  \longrightarrow 0,
\end{equation}
where $T_\sF$ is a coherent sheaf of rank $s:=\dim(X)-r$, and $N_{\sF}$ is a torsion free sheaf. The sheaves $T_\sF$ and $N_{\sF}$ are called the \emph{tangent} and the \emph{normal} sheaves of $\mathscr{F}$, respectively. Note that $T_\sF$ must be reflexive \cite[Proposition 1.1]{H2}.

Two distributions $\mathscr{F}$ and ${\mathscr{F}}'$ are said to be isomorphic if there exists an isomorphism $\beta:T_{\mathscr{F}}\to T_{\mathscr{F}'}$ such that $\phi'\circ\beta=\phi$. By taking exterior power $\wedge^{n-r}\phi :\det(T_{\mathscr{F}})\to\wedge^{n-r}TX$, every codimension $r$ distribution induces a section of $H^0(X,\wedge^{n-k}TX \otimes \det(T_\sF)^{\vee})$. If $\mathscr{F}$ and ${\mathscr{F}}'$ are isomorphic distributions, $\wedge^{n-r}\phi'\circ\det(\beta)=(\wedge^{n-r}\phi)$; but $\det(\beta)=\lambda$ for some constant $\lambda\in\C^*$ since $\det(T_\sF)\simeq \det(T_{\sF'})$, i.e, $\wedge^{n-r}\phi'=\lambda \cdot \wedge^{n-r}\phi$. Therefore, every isomorphism class of codimension $r$ distribution on $X$ induces an element in the projective space
$$
\mathbb{P}H^0(\wedge^{n-r}TX \otimes \det(T_\sF)^{\vee}) = 
\mathbb{P}H^0(\Omega^r_X \otimes\det(TX)\otimes \det(T_\sF)^{\vee}). 
$$

% \begin{Obs}\label{equiv-distributions}
% If $(F,\phi)\sim(F',\phi')$, we have by definition that   $\beta_s:F_s\to F_s'$  are  isomorphism for all $s\in S$. By taking exterior power $ \wedge^{n-k}\phi_s:\det(F_s)\to \wedge^{n-k}TX$ and $\wedge^{n-k}\phi_s':\det(F_s)\to \wedge^{n-k} TX$, where $k$ is the codimension of $(F,\phi)$ and $(F',\phi')$.   
% In particular, $\wedge^{n-k}\phi_s'=(\wedge^{n-k}\phi_s)\circ \det(\beta_s)$, but $\det(\beta_s)=\lambda$ is constant since $\det(F_s)\simeq \det(F_s')$, i.e, $\wedge^{n-k}\phi_s'=\lambda \cdot \wedge^{n-k}\phi_s$.  Therefore, we have that isomorphic distributions induce  the same element in the projective space
% $$
% \mathbb{P}H^0(X,\wedge^{n-k}TX \otimes \det(F_s)^{\vee} ). 
% $$
% \end{Obs}

The \emph{singular scheme} of $\mathscr{F}$ is defined as follows. Taking the maximal exterior power of the dual morphism $\phi^\vee:\Omega^1_X\to T_\sF^\vee$ we obtain a morphism $\Omega^{s}_X\to \det(T_\sF)^\vee$; the image of such morphism is the ideal sheaf $I_{Z/X}$ of a subscheme $Z\subset X$, the singular scheme of $\mathscr{F}$, twisted by $\det(T_\sF)^\vee$.

Note that if the tangent sheaf $T_\sF$ is locally free, $Z$ coincides, as a set, with the singular set of the normal sheaf. Indeed, by definition
$$ \sing(N_{\sF}) := \bigcup_{p=1}^{\dim(X)-1} \supp\left( \inext^p(N_{\sF},\OX) \right), $$
where by $\supp(-)$ we mean the set-theoretical support of a sheaf. When $T_\sF$ is locally free, then 
$\inext^p(N_{\sF},\OX)=0$ for $p\ge2$, and 
$$ \sing(N_{\sF}) = \supp\left( \inext^1(N_{\sF},\OX) \right) = \left\{ ~ x \in X ~|~ \phi(x) ~~\textrm{is not injective} ~ \right\} = \supp(\OZ). $$     

A \emph{sub-distribution} of $\mathscr{F}$ is a distribution
$$ \mathscr{G}:\  0  \longrightarrow T_\sG \stackrel{\phi}{ \longrightarrow} TX\to N_{\sG}  \longrightarrow 0, $$
whose tangent sheaf $T_\sG$ is a subsheaf of $T_\sF$. The quotient $N_{\sG/\sF}:=T_\sF/T_\sG$ is called the \emph{relative normal sheaf}. In addition, the following exact sequence is satisfied:
\begin{equation} \label{sqc relat normal}
0 \to N_{\sG/\sF} \to N_{\sG} \to N_{\sF} \to 0. 
\end{equation}
It follows that $N_{\sG/\sF}$ is also torsion free.

Finally, we introduce the notion of integrability. A \emph{foliation} is an integrable distribution, that is a distribution
$$ \mathscr{F}:\  0  \longrightarrow T_\sF \stackrel{\phi}{ \longrightarrow} TX \stackrel{\pi}{ \longrightarrow} N_{\sF}  \longrightarrow 0 $$
whose tangent sheaf is closed under the Lie bracket of vector fields, i. e. $[\phi(T_\sF),\phi(T_\sF)]\subset \phi(T_\sF)$.  Clearly, every distribution of codimension $\dim(X)-1$ is integrable. A \emph{sub-foliation} of $T_\sF$ is an integrable sub-distribution.

When $r=1$, the normal sheaf, being a torsion free sheaf of rank $1$, must be the twisted ideal sheaf $I_{Z/X}\otimes \det(TX)\otimes\det(T_\sF)^\vee$ of a closed subscheme $Z\subset X$ of codimension at least 2, which is precisely the singular scheme of $\mathscr{F}$. Let $\mathscr{U}$ be the maximal subsheaf of $\OZX$ of codimension $>2$ (cf. \cite[Definition 1.1.4]{HL}), so that one has an exact sequence of the form
\begin{equation}\label{OZ->OC}
0 \to \mathscr{U} \to \OZX \to \OCX \to 0
\end{equation}
where $C\subset X$ is a (possibly empty) subscheme of pure codimension 2; in particular, it follows that $\inext^{p}(\OC,\OX)=0$ for $p>2$, see \cite[Proposition 1.1.10]{HL}.
Note also that $\mathscr{U}\simeq I_{C/Z}$ . 

In addition, for each $p\ge 1$ and setting $\mathscr{L}:=\det(TX)\otimes\det(T_\sF)^\vee$, we obtain:
\begin{equation} \label{display exts}
\inext^p(T_\sF,\OX)\simeq \inext^{p+1}(I_{Z/X},\OX)\otimes\mathscr{L}^\vee \simeq
\inext^{p+2}(\OZX,\OX)\otimes\mathscr{L}^\vee \simeq \inext^{p+2}(\mathscr{U},\OX)\otimes\mathscr{L}^\vee .
\end{equation}
As an immediate consequence we obtain the following generalization of \cite[Theorem 3.2]{GP}.

\begin{Lemma}\label{loc free tg}
The tangent sheaf of a codimension one distribution is locally free if and only if its singular locus has pure codimension 2.
\end{Lemma}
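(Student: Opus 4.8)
The plan is to reduce the statement to the vanishing of the local Ext-sheaves of $\mathscr{U}$, and then to read off these vanishings from the codimension of $\supp(\mathscr{U})$. Recall first the standard criterion on a smooth variety: a coherent sheaf $\mathcal{G}$ is locally free if and only if $\inext^p(\mathcal{G},\OX)=0$ for all $p\ge 1$ (this follows from the regularity of the local rings of $X$ together with the Auslander--Buchsbaum formula). Applying this criterion to $\mathcal{G}=T_\sF$ and feeding in the isomorphisms \eqref{display exts}, namely $\inext^p(T_\sF,\OX)\simeq \inext^{p+2}(\mathscr{U},\OX)\otimes\mathscr{L}^\vee$ for $p\ge 1$, we see at once that $T_\sF$ is locally free if and only if $\inext^q(\mathscr{U},\OX)=0$ for every $q\ge 3$.

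It therefore suffices to prove that, for the particular sheaf $\mathscr{U}$ at hand, the condition $\inext^q(\mathscr{U},\OX)=0$ for all $q\ge 3$ is equivalent to $Z$ having pure codimension $2$. The key observation is that, by construction, $\mathscr{U}$ is the maximal subsheaf of $\OZX$ supported in codimension $>2$; hence $\mathscr{U}=0$ if and only if $\OZX=\OCX$ in \eqref{OZ->OC}, i.e. if and only if $Z=C$ is purely of codimension $2$. One direction of the equivalence is then immediate: if $Z$ has pure codimension $2$, then $\mathscr{U}=0$, all its Ext-sheaves vanish, and $T_\sF$ is locally free by the previous paragraph.

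For the converse I would argue by contradiction. Assume $T_\sF$ is locally free, so that $\inext^q(\mathscr{U},\OX)=0$ for all $q\ge 3$, and suppose $\mathscr{U}\ne 0$. Since $\supp(\mathscr{U})$ has codimension $\ge 3$, set $c:=\codim(\mathscr{U})\ge 3$. The crux of the argument---and the step I expect to be the only delicate point---is the general fact (see \cite[\S 1.1]{HL}) that for any nonzero coherent sheaf $\mathcal{G}$ on the smooth variety $X$ one has $\inext^p(\mathcal{G},\OX)=0$ for $p<\codim(\mathcal{G})$, while $\inext^{\,\codim(\mathcal{G})}(\mathcal{G},\OX)\ne 0$; in other words, the lowest Ext-degree in which $\mathcal{G}$ survives records exactly the codimension of its support. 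Applying this to $\mathcal{G}=\mathscr{U}$ yields $\inext^c(\mathscr{U},\OX)\ne 0$ with $c\ge 3$, contradicting the vanishing obtained above. Hence $\mathscr{U}=0$ and $Z$ has pure codimension $2$, which completes the equivalence. (Since $X$ is smooth and $\omega_X$ is a line bundle, there is no difference between dualizing into $\OX$ or into $\omega_X$, so the cited (non)vanishing statements apply verbatim, and the reflexivity of $T_\sF$ is not needed beyond what is already encoded in \eqref{display exts}.)
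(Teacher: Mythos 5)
Your proof is correct and takes essentially the same route as the paper's: both reduce local freeness of $T_\sF$ through the isomorphisms \eqref{display exts} to the vanishing of $\inext^q(\mathscr{U},\OX)$ for $q\ge3$, and both then invoke the fact from \cite[Proposition 1.1.10]{HL} that a nonzero sheaf of codimension $c$ has nonvanishing $\inext^c$, so that this vanishing forces $\mathscr{U}=0$, i.e.\ $Z$ of pure codimension $2$. Your phrasing of the converse as a contradiction is only a cosmetic repackaging of the paper's biconditional.
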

\begin{proof}
On the one hand, $T_\sF$ is locally free if and only if $\inext^p(T_\sF,\OX)=0$ for each $p\ge1$. On the other hand, since $\mathscr{U}$ is a sheaf of codimension at least 3, $\inext^q(\mathscr{U},\OX)=0$ for $q\ge3$ if and only if $\mathscr{U}=0$, which is the same as saying that $Z$ has pure codimension 2, see again \cite[Proposition 1.1.10]{HL}.
\end{proof}

\begin{Obs}\rm
The singular scheme of an integrable distribution of codimension one on $X=\P^n$ is always nonempty, and has components of codimension 2
\cite{J}. Giraldo and Pan-Collantes proved Lemma \ref{loc free tg} for the case $X=\p3$ using a completely different argument, cf. \cite[Theorem 3.2]{GP}. We also note that Lemma \ref{loc free tg} implies that the hypotheses of \cite[Theorem 1 and Theorem 2]{CJV} are redundant. 
\end{Obs}

%--------------------------------------------------------

\subsection{The Quot-scheme of TX}\label{secQuot}

Let us now recall the definition of the Grothendieck's Quot-scheme for the tangent bundle $TX$ and some of its basic properties. We use \cite[Section 2.2]{HL} as main reference. $X$ now denotes a polarized nonsingular projective variety of dimension $n$. 

Let $\sch$ denote the category of schemes of finite type over $\C$, and $\sets$ be the category of sets. Fix a polynomial $P\in\Q[t]$, and consider the functor
$$ \quotf^P : \sch^{\rm op} \to \sets ~~,~~ \quotf^P(S) := \{(N,\eta)\}/\sim $$
where
\begin{itemize}
\item[(i)] $N$ is a coherent sheaf of $\mathcal{O}_{X\times S}$-modules, flat over $S$, such that the Hilbert polynomial of $N_s:=N|_{X\times\{s\}}$ is equal to $P$ for every $s\in S$;
\item[(ii)] $\eta:\pi^*_X TX \to N$ is an epimorphism, where $\pi_X:X\times S\to X$ is the standard projection onto the first factor.
\end{itemize}
In addition, we say that $(N,\eta)\sim(N',\eta')$ if there exists an isomorphism $\gamma:N\to N'$ such that $\gamma\circ\eta=\eta'$.

Finally, if $f:R\to S$ is a morphism in $\sch$, we define $\quotf^P(f):\quotf^P(S)\to\quotf^P(R)$ by
$(N,\eta) \mapsto (f^*N,f^*\eta)$. Elements of the set $\quotf^P(S)$ will be denoted by $[N,\eta]$.

\begin{Theorem}\label{quot-thm}
The functor $\quotf^P$ is represented by a projective scheme $\mathcal{Q}^P$ of finite type over $\C$, that is, there exists an isomorphism of functors $\quotf^P \stackrel{\sim}{\longrightarrow} \Hom(\cdot,\mathcal{Q}^P)$. 
%In addition, if \linebreak $\Ext^1(\ker\eta,N)=0$, then $\mathcal{Q}^P$ is smooth at a point $[N,\eta]$, and $\dim T_{[N,\eta]}\mathcal{Q}^P = \dim\Hom(\ker\eta,N)$.
\end{Theorem}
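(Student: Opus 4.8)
The plan is to reproduce Grothendieck's classical construction of the Quot-scheme, following the treatment in \cite[Section 2.2]{HL}, specialized to the case where the sheaf being quotiented is the tangent bundle $TX$. The guiding idea is that, after twisting by a sufficiently large power of the polarization, a quotient $TX \twoheadrightarrow N$ with Hilbert polynomial $P$ is completely determined by the induced surjection on global sections; this surjection is a quotient of a \emph{fixed} finite-dimensional vector space onto a space of the \emph{fixed} dimension $P(m)$. Such data is exactly a point of a Grassmannian, so the strategy is to embed $\quotf^P$ into the functor of points of a Grassmannian $\mathbf{G}$ and then identify $\mathcal{Q}^P$ with a closed subscheme of $\mathbf{G}$ cut out by flatness conditions.

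First I would establish \emph{boundedness}: the family of all coherent quotients $N$ of $TX$ having Hilbert polynomial $P$ is bounded. By Grothendieck's boundedness theorem, obtained through Mumford's estimates on Castelnuovo--Mumford regularity together with Kleiman's bounds, there is an integer $m_0$, depending only on $P$, on $X$, and on the polarization $\mathcal{O}_X(1)$, such that for every $m\ge m_0$ both $N(m)$ and the kernel $K(m):=\ker(TX\to N)(m)$ are $m$-regular. Consequently $H^i(N(m))=H^i(K(m))=0$ for all $i>0$, the sheaf $N(m)$ is globally generated, and $\dim H^0(N(m))=P(m)$ uniformly across the family. Next, fixing any such $m$ and setting $H:=H^0(X,TX(m))$, I would send a class $[N,\eta]$ to the surjection $H \twoheadrightarrow H^0(N(m))$ obtained by twisting and taking global sections; the $m$-regularity of the kernel guarantees that this surjection recovers $\eta$ uniquely, so the assignment is injective. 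Relativizing over an arbitrary base $S$ via cohomology and base change, which is legitimate precisely because the higher cohomology vanishes uniformly, yields a natural transformation from $\quotf^P$ to the Grassmannian functor $\mathrm{Grass}(H,P(m))$ of rank-$P(m)$ quotients of $H$, represented by the projective variety $\mathbf{G}$.

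Finally I would carve out $\mathcal{Q}^P$ inside $\mathbf{G}$. On $\mathbf{G}\times X$ there is the universal quotient of $H\otimes\mathcal{O}$; composing the tautological evaluation $H\otimes\mathcal{O}_X(-m)\to TX$ with this universal quotient produces a universal family of quotients of $\pi_X^* TX$. The locus of $\mathbf{G}$ over which this family is flat with Hilbert polynomial equal to $P$ is closed, by the flattening stratification together with semicontinuity applied to a finite collection of twists, and I would define $\mathcal{Q}^P$ to be this closed subscheme. Restricting the universal family to $\mathcal{Q}^P$ gives the inverse natural transformation, whence the isomorphism of functors $\quotf^P\cong \Hom(-,\mathcal{Q}^P)$; projectivity and finiteness of type are inherited from $\mathbf{G}$.

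The main obstacle, and the genuine technical heart of the argument, is the uniform boundedness and regularity of Step one: one must produce a \emph{single} $m_0$ valid for the whole family and ensure that all higher cohomology vanishes simultaneously, so that cohomology-and-base-change makes the passage to global sections functorial in families. Once this is in place, the identification of surjections of sheaves with surjections on global sections, and the bookkeeping of flatness, follow the standard pattern; I would simply invoke \cite[Section 2.2]{HL} for these routine verifications rather than reproduce them.
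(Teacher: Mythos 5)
This theorem is stated in the paper without proof: it is Grothendieck's classical representability theorem, and the paper simply refers to \cite[Section 2.2]{HL}. Your outline reproduces exactly the construction given in that reference --- uniform boundedness and $m$-regularity of the family of quotients and their kernels, the resulting injection of $\quotf^P$ into the Grassmannian functor of $P(m)$-dimensional quotients of $H:=H^0(TX(m))$, and the identification of $\mathcal{Q}^P$ inside $\mathbf{G}$ via the universal family --- so in approach you coincide with the source the paper relies on.

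There is, however, one step whose justification would fail as written: you assert that the locus of $\mathbf{G}$ over which the universal family is flat with Hilbert polynomial $P$ is \emph{closed}, ``by the flattening stratification together with semicontinuity.'' The flattening stratification yields only a \emph{locally closed} stratum: the universal family on $\mathbf{G}\times X$ is not flat, and under specialization the fibrewise Hilbert polynomial can jump (torsion appears in the limit), so the closure of the $P$-stratum may meet strata with other polynomials; semicontinuity at finitely many twists does not remove this. Since projectivity of $\mathcal{Q}^P$ is part of the statement, the missing idea is properness of the functor $\quotf^P$, checked by the valuative criterion: given a flat quotient of $TX$ over the fraction field $K$ of a DVR $R$, extend it over $R$ by replacing the kernel with its saturation in $TX\otimes_{\C} R$ (equivalently, kill the torsion of the extended quotient supported on the closed fibre); the result is $R$-flat with the same Hilbert polynomial on both fibres. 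Properness of the functor, combined with separatedness of $\mathbf{G}$, then upgrades the locally closed immersion $\mathcal{Q}^P\hookrightarrow\mathbf{G}$ to a closed one, which is precisely how \cite[Section 2.2]{HL} obtains projectivity. This step is not among the routine verifications you deferred, so it should be stated explicitly; with that replacement your proof is the standard one.
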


In what follows, it will be important to consider the following subset of the Quot-scheme:
\begin{equation} \label{d^p}
\mathcal{D}^P := \{ [N,\eta]\in \mathcal{Q}^{P_{TX}-P} ~|~ N ~\textrm{is torsion free}\},
\end{equation}
where $P_{TX}$ is the Hilbert polynomial of the tangent bundle. Note that, by \cite[Proposition 2.3.1]{HL}, $\mathcal{D}^P$ is an open subset of $\mathcal{Q}^{P_{TX}-P}$.

%--------------------------------------------------------

\subsection{Moduli spaces of distributions}

Fix a polynomial $P\in\Q[t]$, and consider the functor
$$ \dist^P : \sch^{\rm op} \to \sets ~~,~~ \dist^P(S) := \{(F,\phi)\}/\sim $$
where
\begin{itemize}
\item[(i)] $F$ is a coherent sheaf of $\mathcal{O}_{X\times S}$-modules, flat over $S$, such that the Hilbert polynomial of $F_s:=F|_{X\times\{s\}}$ is equal to $P$ for every $s\in S$;
\item[(ii)] $\phi:F \to \pi^*_X TX$ is a morphism of sheaves such that $\phi_s:F_s\to TX$ is injective, and $\coker\phi_s$ is torsion free for every $s\in S$. 
\end{itemize}

In addition, we say that $(F,\phi)\sim(F',\phi')$ if there exists and isomorphism $\beta:F\to F'$ such that $\phi'\circ\beta=\phi$.

Finally, if $f:R\to S$ is a morphism in $\sch$, we define $\dist^P(f):\dist^P(S)\to\dist^P(R)$ by
$(F,\phi) \mapsto (f^*F,f^*\phi)$. Equivalence classes in the set $\dist^P(S)$ will be denoted by $[F,\phi]$.

Each pair $(F,\phi)$ should be regarded as a \emph{family of distributions on $X$ parametrized by the scheme $S$} (or an $S$-family, for short): for each $s\in S$, we have the distribution given by the short exact sequence
$$ \sF_s ~:~ 0 \to F_s \stackrel{\phi_s}{\longrightarrow} TX \stackrel{\eta_{\phi_s}}{\longrightarrow} \coker\phi_s \to 0 , $$
where $\eta_\phi$ denotes the canonical projection $\pi_X^*TX\to\coker\phi$. Note that if $(F,\phi)$ and $(F',\phi')$ are equivalent $S$-families of distributions on $X$, then for each $s$, the distributions $\sF_s$ and $\sF_s'$ are isomorphic.

\begin{Prop}\label{dist-prop}
The functor $\dist^P$ is represented by the quasi-projective scheme $\mathcal{D}^P$, described in display \eqref{d^p}, that is, there exists an isomorphism of functors
$\dist^P \stackrel{\sim}{\longrightarrow} \Hom(\cdot,\mathcal{D}^P)$.
\end{Prop}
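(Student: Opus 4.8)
The plan is to identify $\dist^P$ with the open subfunctor of $\quotf^{P_{TX}-P}$ consisting of torsion-free quotients, which by Theorem \ref{quot-thm} together with the definition of $\mathcal{D}^P$ in \eqref{d^p} is exactly what $\mathcal{D}^P$ represents. The correspondence is dictated by the short exact sequence \eqref{eq:Dist}: to a family $(F,\phi)$ of distributions I would associate the quotient family $(\coker\phi,\eta_\phi)$, and conversely to a torsion-free quotient $\eta:\pi_X^*TX\to N$ I would associate the subsheaf family $(\ker\eta,\iota)$, with $\iota$ the inclusion into $\pi_X^*TX$. Since $F_s$ has Hilbert polynomial $P$ and $TX$ has Hilbert polynomial $P_{TX}$, additivity of Hilbert polynomials in the fiberwise sequence $0\to F_s\to TX\to\coker\phi_s\to 0$ shows that $\coker\phi_s$ has Hilbert polynomial $P_{TX}-P$, so the two constructions match the numerical invariants.

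Concretely, I would define a natural transformation $\alpha:\dist^P\to\Hom(\cdot,\mathcal{D}^P)$ by $\alpha_S([F,\phi])=[\coker\phi,\eta_\phi]$ and a candidate inverse $\beta_S([N,\eta])=[\ker\eta,\iota]$. The verification then splits into three routine-but-necessary checks for each transformation: that the output sheaf is flat over $S$ with the prescribed Hilbert polynomial and the prescribed fiberwise behaviour (so that it is a bona fide element of the target functor); that equivalent pairs are sent to equivalent pairs (an isomorphism $\beta:F\to F'$ over $\phi$ induces an isomorphism of cokernels over $\eta_\phi$, and symmetrically); and that the assignment commutes with pullback along any $f:R\to S$, which yields naturality.

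The crux of the argument, and the step I expect to be the main obstacle, is the flatness and base-change behaviour of the kernel and cokernel. For $\alpha$ I would invoke the standard flatness lemma: if $F$ and $\pi_X^*TX$ are flat over $S$ and $\phi_s$ is injective for every $s\in S$, then $\coker\phi$ is flat over $S$, the map $\phi$ is itself injective, and the formation of both $\img\phi$ and $\coker\phi$ commutes with arbitrary base change; this is a consequence of the local criterion for flatness (cf. \cite[Section 2.1]{HL}). Condition (ii) in the definition of $\dist^P$ is precisely this fiberwise injectivity hypothesis, so $\coker\phi$ is an $S$-flat quotient of $\pi_X^*TX$ whose fibers $\coker\phi_s$ are torsion free; hence $[\coker\phi,\eta_\phi]$ lands in $\mathcal{D}^P$ and not merely in $\mathcal{Q}^{P_{TX}-P}$. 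For $\beta$ I would argue dually: given a torsion-free quotient $\eta:\pi_X^*TX\to N$ with $N$ flat over $S$, the kernel $\ker\eta$ is flat over $S$ by the two-out-of-three property for flatness applied to $0\to\ker\eta\to\pi_X^*TX\to N\to 0$, and because $N$ is flat this sequence remains exact after any pullback, so $(\ker\eta)_s=\ker(\eta_s)$, the restriction $\iota_s$ is injective, and $\coker\iota_s=N_s$ is torsion free. Thus $(\ker\eta,\iota)\in\dist^P(S)$.

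Finally I would check that $\alpha$ and $\beta$ are mutually inverse. The composite $\alpha_S\circ\beta_S$ is the identity because $\coker(\ker\eta\hookrightarrow\pi_X^*TX)$ is canonically $N$ with projection $\eta$, the map $\eta$ being an epimorphism. The composite $\beta_S\circ\alpha_S$ is the identity because $\ker\eta_\phi=\img\phi$, and here one uses once more that $\phi$ is injective — which the flatness lemma guarantees from the fiberwise injectivity — to identify $\img\phi$ with $F$ compatibly with the inclusions into $\pi_X^*TX$. Once the flatness lemma and its base-change compatibility are secured, all remaining verifications are formal, and the resulting natural isomorphism $\dist^P\cong\Hom(\cdot,\mathcal{D}^P)$ exhibits $\mathcal{D}^P$ as the representing scheme.
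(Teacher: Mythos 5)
Your proposal is correct and follows essentially the same route as the paper: both identify $\dist^P$ with the torsion-free locus of the Quot functor via the cokernel/kernel correspondence $[F,\phi]\mapsto[\coker\phi,\eta_\phi]$, $[N,\eta]\mapsto[\ker\eta,\iota_\eta]$, and deduce representability from Theorem \ref{quot-thm} together with openness of $\mathcal{D}^P$ in $\mathcal{Q}^{P_{TX}-P}$, checking naturality through the same pullback diagrams. The only difference is that you spell out the flatness and base-change verifications (flatness of $\coker\phi$ from fiberwise injectivity via the local criterion, and flatness of $\ker\eta$ by the two-out-of-three property) which the paper leaves implicit --- a worthwhile addition, but not a different argument.
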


\begin{proof}
Let $S$ be a scheme of finite type over $\C$, and take $[F,\phi]\in\dist^P(S)$. It follows that $[\coker\phi,\eta_\phi]$ is an element of $\quotf^{P_{TX}-P}(S)$. Since $\quotf^{P_{TX}-P}(S)\simeq\Hom(S,\mathcal{Q}^{P_{TX}-P})$, let $g_{[F,\phi]}:S\to \mathcal{Q}^{P_{TX}-P}$ be the morphism corresponding to the $S$-family $[\coker\phi,\eta_\phi]$. Note that $\img(g_{[F,\phi]})\subset \mathcal{D}^P$, since $\coker\phi_s$ is torsion free for every $s\in S$, thus actually $g_{[F,\phi]}\in\Hom(S,\mathcal{D}^{P})$. 

We have therefore obtained a morphism of sets $\dist^P(S)\to\Hom(S,\mathcal{D}^{P})$ for each scheme $S$; we check that it is bijective. Indeed, we can compose a morphism $g:S\to \mathcal{D}^P$ with the inclusion of $\mathcal{D}^P$ into $\mathcal{Q}^{P_{TX}-P}$ to obtain an element of $\Hom(S,\mathcal{Q}^{P_{TX}-P})$, which corresponds to a $S$-family
$[N,\eta]\in\quotf^{P_{TX}-P}(S)$; since $g(s)\in \mathcal{D}^P$ for each $s\in S$, it follows that $[\ker\eta,\iota_\eta]$ belongs to $\dist^P(S)$. Moreover, we also obtained a commutative diagram
\begin{equation}\label{dist-quot-diagram}
\xymatrix{
\dist^P(S) \ar@{^{(}->}[d] \ar[r]^{\sim} & \Hom(S,\mathcal{D}^P) \ar@{^{(}->}[d] \\
\quotf^{P_{TX}-P}(S) \ar[r]^{\sim} & \Hom(S,\mathcal{Q}^{P_{TX}-P})
}\end{equation}
in which the vertical arrows are injective, and the horizontal arrows are bijective.

To conclude the proof, we must check that the bijection between $\dist^P(S)$ and $\Hom(S,\mathcal{D}^{P})$ constructed above underlies a natural transformation between the functors $\dist^P$ and $\Hom(\cdot,\mathcal{D}^P)$. Indeed, the representability of the Quot functor $\quotf^{P_{TX}-P}$ yields the following commutative diagram, for every morphism $f\in\Hom(R,S)$:
\begin{equation}\label{quot-diagram}
\xymatrix{
\quotf^{P_{TX}-P}(S) \ar[d]^{f^*} \ar[r]^{\sim} & \Hom(S,\mathcal{Q}^{P_{TX}-P}) \ar[d]^{\circ f} \\
\quotf^{P_{TX}-P}(R) \ar[r]^{\sim} & \Hom(R,\mathcal{Q}^{P_{TX}-P})
}\end{equation}
This last diagram together with diagrams of the form (\ref{dist-quot-diagram}) for the schemes $R$ and $S$ yields the desired diagram:
$$ \xymatrix{
\dist^{P}(S) \ar[d]^{f^*} \ar[r]^{\sim} & \Hom(S,\mathcal{D}^{P}) \ar[d]^{\circ f} \\
\quotf^{P}(R) \ar[r]^{\sim} & \Hom(R,\mathcal{D}^{P})
} $$
\end{proof}

Next, we recall the construction of the Gieseker--Maruyama moduli space of semistable sheaves on $X$; we use \cite[Section 4]{HL} as our main reference. Recall that a torsion free sheaf $\mathscr{E}$ on $X$ is said to be (semi)stable if every proper nontrivial subsheaf $\mathscr{F}\subset\mathscr{E}$ satisfies
$$ \frac{P_\mathscr{F}(t)}{\rk\mathscr{F}} (\le)< \frac{P_\mathscr{E}(t)}{\rk\mathscr{E}} $$
for $t$ sufficiently large. Consider the functor 
$$ \mathcal{M}^P : \sch^{\rm op} \to \sets ~~,~~ \mathcal{M}^P(S) := \{F\}/\sim $$
where $F$ is a sheaf on $X\times S$ flat over $S$ such that for each $s\in S$, the sheaf $F_s:=F|_{X\times\{s\}}$ is semistable and has Hilbert polynomial equal to $P$; the equivalence relation $\sim$ is just isomorphism of sheaves on $X\times S$.

As it is well known, $\mathcal{M}^P$ admits a \emph{coarse moduli space}, here denoted by $M^P$, which is a projective scheme; this means that the closed points of $M^P$ are bijective with the S-equivalence classes of semistable sheaves on $X$ with fixed Hilbert polynomial $P$, and that there exists a natural transformation $\mathcal{M}^P \to \Hom(\cdot,M^P)$. Below, $M^{P,st}$ will denote the open subset of $M^P$ consisting of stable sheaves.

Let $\dist^{P,ss}$ be the subfunctor of $\dist^P$ given by 
$$ \dist^{P,ss} : \sch^{\rm op} \to \sets ~~,~~ \dist^{P,ss}(S) := \{(F,\phi)\}/\sim $$
where $F_s$ is now assumed to be semistable for each $s\in S$. Clearly, $\dist^{P,ss}\simeq \Hom(\cdot,\mathcal{D}^{P,ss})$, where 
$$ \mathcal{D}^{P,ss} := \{ [F,\phi]\in \mathcal{D}^P ~|~ F \textrm{ is semistable } \}; $$ 
note that $\mathcal{D}^{P,ss}$ is an open subset of $\mathcal{D}^P$. Similarly, we will also consider the following open subset of $\mathcal{D}^P$:
$$ \mathcal{D}^{P,st} := \{ [F,\phi]\in \mathcal{D}^P ~|~ F \textrm{ is stable } \}. $$ 

\begin{Lemma} \label{forget phi}
There exists a forgetful morphism
$$ \varpi :  \mathcal{D}^{P,ss} \to M^P ~~,~~ \varpi([\sF]) := [T_\sF] . $$
In addition, if $T_{\sF}$ is stable and satisfies $\Ext^1(T_{\sF},TX)=\Ext^2(T_{\sF},T_{\sF})=0$, then $[\sF]$ is a nonsingular point of ${\mathcal D}^{P,st}$, $\varpi$ is a submersion at $[T_\sF]\in M^{P,st}$ and 
$$ \dim_{[\sF]} {\mathcal D}^{P,st} = \dim \Ext^1(T_{\sF},T_{\sF}) + \dim\Hom(T_\sF,TX) - 1. $$
\end{Lemma}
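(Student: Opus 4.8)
The plan is to treat the two assertions separately: first construct the forgetful morphism $\varpi$ using the representability results already in place, and then establish the local statements (nonsingularity, the dimension formula, and the submersion property) by the standard deformation theory of the Quot-scheme together with the long exact sequence attached to the defining sequence of $\sF$.

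For the morphism $\varpi$, I would argue functorially. Given an $S$-family $[F,\phi]\in\dist^{P,ss}(S)$, the sheaf $F$ by itself is flat over $S$ with semistable fibers of Hilbert polynomial $P$, hence defines an element of $\mathcal{M}^P(S)$; forgetting $\phi$ is visibly compatible with pullbacks, so it furnishes a natural transformation $\dist^{P,ss}\to\mathcal{M}^P$. Composing with the natural transformation $\mathcal{M}^P\to\Hom(\cdot,M^P)$ afforded by the coarse moduli space, and invoking the representability $\dist^{P,ss}\simeq\Hom(\cdot,\mathcal{D}^{P,ss})$, Yoneda's lemma produces the sought morphism $\varpi:\mathcal{D}^{P,ss}\to M^P$. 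On closed points it sends $[\sF]=[F,\phi]$ to the $S$-equivalence class $[F]=[T_\sF]$, as required.

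For the second part, I would use that $\mathcal{D}^{P,st}$ is open in the Quot-scheme $\mathcal{Q}^{P_{TX}-P}$, whose deformation theory at the point $[\sF]$ — corresponding to the quotient $TX\twoheadrightarrow N_{\sF}$ with kernel $T_\sF$ — has Zariski tangent space $\Hom(T_\sF,N_{\sF})$ and obstruction space $\Ext^1(T_\sF,N_{\sF})$, cf. \cite[Section 2.2]{HL}. Applying $\Hom(T_\sF,-)$ to the defining sequence $0\to T_\sF\to TX\to N_{\sF}\to0$ yields a long exact sequence, and the hypotheses $\Ext^1(T_\sF,TX)=\Ext^2(T_\sF,T_\sF)=0$ force $\Ext^1(T_\sF,N_{\sF})=0$; hence the Quot-scheme, and therefore $\mathcal{D}^{P,st}$, is nonsingular at $[\sF]$ of dimension $\dim\Hom(T_\sF,N_{\sF})$. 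Reading off the same sequence, now truncated to
$$0\to\Hom(T_\sF,T_\sF)\to\Hom(T_\sF,TX)\to\Hom(T_\sF,N_{\sF})\to\Ext^1(T_\sF,T_\sF)\to0,$$
and using that stability of $T_\sF$ gives $\Hom(T_\sF,T_\sF)=\C$, the alternating sum of dimensions delivers exactly $\dim\Hom(T_\sF,N_{\sF})=\dim\Ext^1(T_\sF,T_\sF)+\dim\Hom(T_\sF,TX)-1$, which is the claimed dimension.

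Finally, for the submersion statement I would identify the differential of $\varpi$ at $[\sF]$ with the connecting homomorphism $\Hom(T_\sF,N_{\sF})\to\Ext^1(T_\sF,T_\sF)$ appearing above: its source is the tangent space to $\mathcal{D}^{P,st}$, and its target is the tangent space $\Ext^1(T_\sF,T_\sF)$ to $M^{P,st}$ at the stable (hence simple) sheaf $[T_\sF]$, a point at which $M^{P,st}$ is itself nonsingular because $\Ext^2(T_\sF,T_\sF)=0$. Since $\Ext^1(T_\sF,TX)=0$ makes this connecting map surjective, $d\varpi$ is surjective and $\varpi$ is a submersion at $[\sF]$. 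The step I expect to be the main obstacle is precisely this last compatibility: one must verify that the Kodaira--Spencer map computing $d\varpi$ genuinely coincides with the cohomological connecting homomorphism, which requires tracing through the functorial identifications of tangent spaces on both the Quot-scheme side and the Gieseker--Maruyama side and confirming that they intertwine the forgetful construction.
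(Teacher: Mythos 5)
Your proposal is correct and follows essentially the same route as the paper: the functorial/Yoneda construction of $\varpi$, the vanishing of $\Ext^1(T_\sF,N_\sF)$ via the long exact sequence to get smoothness of the Quot-scheme (hence of $\mathcal{D}^{P,st}$) at $[\sF]$ by \cite[Proposition 2.2.8]{HL}, and the four-term sequence $0\to\Hom(T_\sF,T_\sF)\to\Hom(T_\sF,TX)\to\Hom(T_\sF,N_\sF)\to\Ext^1(T_\sF,T_\sF)\to0$ together with simplicity of the stable sheaf $T_\sF$ for the dimension count and submersion claim. The one point you flag as delicate --- that the differential of $\varpi$ is computed by the connecting homomorphism --- is simply asserted without comment in the paper's proof, so if anything you are being more scrupulous than the source.
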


\begin{proof}
Consider the following forgetful natural transformation $\Pi: \dist^{P,ss} \to \mathcal{M}^P$ given by
$$ \Pi_S : \dist^{P,ss}(S) \to \mathcal{M}^P(S) ~~,~~ \Pi_S([F,\phi]):=[F]. $$
Composing this with the isomorphism $\Hom(\cdot,\mathcal{D}^{P,ss})\simeq \dist^{P,ss}$, on one hand, and with the natural transformation $\mathcal{M}^P \to \Hom(\cdot,M^P)$, on the other, we obtain a natural transformation $\Pi:\Hom(\cdot,\mathcal{D}^{P,ss})\to \Hom(\cdot,M^P)$. The desired morphism is given by
$\varpi:=\Pi_{\mathcal{D}^{P,ss}}(1_{\mathcal{D}^{P,ss}})$.

Applying the functor $\Hom(T_{\sF},\cdot)$ to the exact sequence in display \eqref{eq:Dist} we obtain 
$$ \cdots \to \Ext^1(T_{\sF},TX) \to \Ext^1(T_{\sF},N_{\sF}) \to \Ext^2(T_{\sF},T_{\sF}) \to \cdots $$
so our hypotheses imply that $\Ext^1(T_{\sF},N_{\sF})=0$. According to \cite[Proposition 2.2.8]{HL}, it follows that $[\sF]$ is a nonsingular point of the Quot-scheme $\mathcal{Q}^P$, thus also of ${\mathcal D}^{P,st}$. Furthermore, the tangent space of $\mathcal{Q}^P$ at $[\sF]$ is precisely $\Hom(T_{\sF},N_{\sF})$; considering the exact sequence
$$ 0 \to \Hom(T_{\sF},T_{\sF}) \to \Hom(T_{\sF},TX) \to \Hom(T_{\sF},N_{\sF}) \to \Ext^1(T_{\sF},T_{\sF}) \to 0, $$
we conclude that $\varpi$ is a submersion at $[T_\sF]$ (since $\Ext^1(T_{\sF},T_{\sF})$ is the tangent space of $M^{P,st}$ at $[T_\sF]$). The formula for the dimension of ${\mathcal D}^{P,st}$ at $[\sF]$ follows immediately from the exact sequence in display just above and the fact that $\dim\Hom(T_{\sF},T_{\sF})=1$ since $T_{\sF}$ is stable.
\end{proof}

We observe that the fibre over a point $[F]\in M^{P,ss}$ is precisely the set of all distributions whose tangent sheaf is isomorphic to a given sheaf:
$$ {\mathcal D}(F) := 
\{ \phi\in\mathbb{P}\Hom(F,TX) ~~|~~ \ker\phi=0 ~\textrm{and}~ \coker\phi ~\textrm{is torsion free} ~\}, $$
which is an open subset of $\mathbb{P}\Hom(F,TX)$. Therefore if the image of $\varpi$ is irreducible and \linebreak $\dim\Hom(F,TX)$ is constant for all $[F]$ in the image of $\varpi$, then it follows that $\mathcal{D}^{P,ss}$ is also irreducible. Moreover, since the tangent sheaf of a distribution is always reflexive, the image of $\varpi$ is contained in the open subset of $M^P$ consisting of reflexive sheaves. These remarks lead us to the following statement; let $M^{P,r,st}$ denote the open subset of $M^P$ consisting of stable reflexive sheaves.

\begin{Lemma} \label{forget phi 2}
Assume that the forgetful morphism $\varpi:\mathcal{D}^{P,st} \to M^{P,r,st}$ is surjective, and that $M^{P,r,st}$ is irreducible. If $\dim\Hom(F,TX)$ is constant for all $[F]\in M^{P,r,st}$, then $\mathcal{D}^{P,st}$ is irreducible and
$$ \dim \mathcal{D}^{P,st} = \dim M^{P,r,st} + \dim\Hom(F,TX) - 1 . $$
If, in addition, $\Ext^2(T_{\sF},T_{\sF})=0$ for every $[\sF]\in\mathcal{D}^{P,st}$, then $\mathcal{D}^{P,st}$ is nonsingular and 
$$ \dim {\mathcal D}^{P,st} = \dim\Ext^1(T_{\sF},T_{\sF}) + \dim\Hom(T_\sF,TX) - 1. $$
\end{Lemma}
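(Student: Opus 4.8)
The plan is to treat $\varpi$ as a fibration with irreducible, equidimensional fibres, reading off both the irreducibility and the dimension of $\mathcal{D}^{P,st}$ from its fibre structure, and then to obtain nonsingularity at the end by a tangent-space count rather than by a direct vanishing of $\Ext^1(T_\sF,N_\sF)$. For the first assertion, I would note that, as explained just before the statement, the fibre of $\varpi$ over $[F]\in M^{P,r,st}$ is the set $\mathcal{D}(F)$, an open subset of $\mathbb{P}\Hom(F,TX)$; it is nonempty because $\varpi$ is surjective, hence it is irreducible of dimension $\dim\Hom(F,TX)-1$. Since $\dim\Hom(F,TX)$ is constant on the base, all fibres of $\varpi$ are irreducible of one and the same dimension. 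With $M^{P,r,st}$ irreducible and $\varpi$ surjective, the standard criterion for the irreducibility of the total space of a surjection onto an irreducible base with irreducible equidimensional fibres (the fact already anticipated in the paragraph preceding the statement) shows that $\mathcal{D}^{P,st}$ is irreducible; the fibre-dimension theorem then yields $\dim\mathcal{D}^{P,st}=\dim M^{P,r,st}+\dim\Hom(F,TX)-1$, which is the first displayed formula.

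For the second assertion, assume in addition that $\Ext^2(T_\sF,T_\sF)=0$ for every $[\sF]$. Since the obstruction to deforming a stable sheaf lies in $\Ext^2(T_\sF,T_\sF)$, the moduli space $M^{P,r,st}$ is then smooth, so its local dimension at every point $[T_\sF]$ equals the dimension of its Zariski tangent space; that is, $\dim M^{P,r,st}=\dim\Ext^1(T_\sF,T_\sF)$, a quantity which is consequently constant along $M^{P,r,st}$. Substituting this into the formula of the first part (and recalling that here $F=T_\sF$) already produces the claimed dimension formula, so only nonsingularity remains. This is the step I regard as the crux, and it is pleasantly formal. The scheme $\mathcal{D}^{P,st}$ is open in the Quot-scheme $\mathcal{Q}^{P_{TX}-P}$, whose Zariski tangent space at $[\sF]$ is $\Hom(T_\sF,N_\sF)$. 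Applying $\Hom(T_\sF,-)$ to $0\to T_\sF\to TX\to N_\sF\to 0$ and using $\dim\Hom(T_\sF,T_\sF)=1$ (stability) gives the exact sequence
\[ 0\to\Hom(T_\sF,T_\sF)\to\Hom(T_\sF,TX)\to\Hom(T_\sF,N_\sF)\stackrel{\delta}{\longrightarrow}\Ext^1(T_\sF,T_\sF), \]
whence $\dim\Hom(T_\sF,N_\sF)=\dim\Hom(T_\sF,TX)-1+\dim\img(\delta)\le\dim\Hom(T_\sF,TX)-1+\dim\Ext^1(T_\sF,T_\sF)$, which is exactly $\dim\mathcal{D}^{P,st}$ by the second part. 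On the other hand, the dimension of the Zariski tangent space is always at least the local dimension, and the latter equals $\dim\mathcal{D}^{P,st}$ at every point because $\mathcal{D}^{P,st}$ is irreducible. Hence the tangent space has dimension exactly $\dim\mathcal{D}^{P,st}$ everywhere, so $\mathcal{D}^{P,st}$ is nonsingular.

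The main obstacle is the irreducibility in the first part: one must verify carefully that each fibre $\mathcal{D}(F)$ is irreducible of the constant dimension $\dim\Hom(F,TX)-1$ — nonemptiness coming from surjectivity of $\varpi$, and openness in $\mathbb{P}\Hom(F,TX)$ from the injectivity and torsion-freeness conditions — and then confirm the hypotheses of the fibration-irreducibility criterion over the quasi-projective base. By contrast, the nonsingularity comes cheaply from the tangent-space bound above: it is essential only that $\Ext^2(T_\sF,T_\sF)=0$ forces $\dim M^{P,r,st}=\dim\Ext^1(T_\sF,T_\sF)$, and the argument deliberately bypasses any need to establish $\Ext^1(T_\sF,N_\sF)=0$, which in general does \emph{not} follow from $\Ext^2(T_\sF,T_\sF)=0$ alone.
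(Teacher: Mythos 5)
Your proof is correct and follows essentially the same route as the paper's: the first claim via the fibre-dimension and irreducibility theorems applied to $\varpi$, whose fibres are the nonempty open sets $\mathcal{D}(F)\subset\mathbb{P}\Hom(F,TX)$, and the second via the $\Hom(T_\sF,\cdot)$ long exact sequence together with the identifications of $\Hom(T_\sF,N_\sF)$ and $\Ext^1(T_\sF,T_\sF)$ as the tangent spaces of $\mathcal{D}^{P,st}$ and $M^{P,r,st}$. The only (presentational) difference is that the paper continues the sequence to $\Ext^1(T_\sF,N_\sF)$ and concludes that $\varpi$ is a submersion, whereas you truncate at the connecting map $\delta$ and obtain nonsingularity directly from the bound $\dim\Hom(T_\sF,N_\sF)\le \dim\Hom(T_\sF,TX)-1+\dim\Ext^1(T_\sF,T_\sF)$ combined with the inequality between Zariski tangent-space dimension and local dimension on the irreducible $\mathcal{D}^{P,st}$ --- a step the paper's ``counting dimension'' phrase uses implicitly, and which your write-up makes explicit.
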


\begin{proof}
The first claim is a well-known consequence of the theorem on the dimension of the fibers; see for instance \cite[Section 6.3, Theorems 7 and 8]{Shafa}.

Since $\varpi:\mathcal{D}^{P,st} \to M^{P,r,st}$ is surjective, the hypothesis $\Ext^2(T_{\sF},T_{\sF})=0$ for every $[\sF]\in\mathcal{D}^{P,st}$ imply that $ M^{P,r,st}$ is nonsingular, and $\dim M^{P,r,st}=\dim\Ext^1(T_{\sF},T_{\sF})$. 

To see that ${\mathcal D}^{P,st}$ is nonsigular, we must check that the forgetful morphism $\varpi$ is a submersion. Applying the functor $\Hom(T_\sF,\cdot)$ to the exact sequence in display \eqref{eq:Dist} we obtain
\begin{equation}\label{x.x.x}
\begin{aligned}
0 \to & \Hom(T_\sF,T_\sF) \to \Hom(T_\sF,TX) \to \Hom(T_\sF,N_\sF) \to \\
& \Ext^1(T_\sF,T_\sF) \to \Ext^1(T_\sF,TX) \to \Ext^1(T_\sF,N_\sF) \to 0
\end{aligned}
\end{equation}
since $\Ext^2(T_\sF,T_\sF)=0$. Note that $\Hom(T_\sF,N_\sF)$ is the tangent space of ${\mathcal D}^{P,st}$ at the point $[\sF]$, while $\Ext^1(T_\sF,T_\sF)$ is the tangent space of $ M^{P,r,st}$ at the point $[T_\sF]$; so counting dimension in the exact sequence in display \eqref{x.x.x}, we conclude that $\dim\Ext^1(T_\sF,TX)=\dim \Ext^1(T_\sF,N_\sF)$. It follows that the epimorphism in the second line of display \eqref{x.x.x} must be an isomorphism, thus the morphism $\Hom(T_\sF,N_\sF) \to \Ext^1(T_\sF,T_\sF)$ must also be an epimorphism, as desired.
\end{proof}

These last two lemmas will be used in Section \ref{sec:modspc} below to provide a precise description of the moduli space of codimension one distributions on $X=\p3$ for several choices of Hilbert polynomials.

Similarly, one can also define a moduli functor for foliations on $X$, assigning to each scheme $S$ the set of $S$-families of integrable distributions on $X$ whose tangent sheaves have fixed Hilbert polynomial $P$. Such a scheme is represented by a closed subscheme $\mathcal{F}^P$ of $\mathcal{D}^P$, or a locally closed subscheme of $Q^{P_{TX}-P}$, cf. \cite[Section 6]{Q}.

%%%%%%%%%%%%%%%%%%%%%%%%%%%%%%%%%%%%%%%%%%%%%%%%%%%%%%%%%%%%%%%%%%%%%%%%%%%%
%%%%%%%%%%%%%%%%%%%%%%%%%%%%%%%%%%%%%%%%%%%%%%%%%%%%%%%%%%%%%%%%%%%%%%%%%%%%

\section{Codimension one distributions on $\p3$} \label{ss:Sing}

From now on, we will only consider codimension one distributions on $X=\P^3$. In this case, the integer $d:=2-c_1(T_\sF)\geq0$ is called the \emph{degree} of the distribution $\sF$, and $N_{\sF}=I_{Z/\P^3}(d+2)$ where $Z$ is the singular scheme of $\sF$. Therefore, sequence (\ref{eq:Dist}) now reads
\begin{equation}\label{eq:Dist p3}
\mathscr{F}:\  0  \longrightarrow T_\sF \stackrel{\phi}{ \longrightarrow} T\p3 \stackrel{\pi}{ \longrightarrow}
I_{Z/\P^3}(d+2)  \longrightarrow 0,
\end{equation} 
where $T_\sF$ is a rank 2 reflexive sheaf.

As pointed out in Section \ref{dist+fol}, a codimension one distribution of degree $d$ on $\p3$ can also be represented by a %class of
section $\o\in H^0 (\Om(d+2))$, given by the dual of the morphism $\pi:T\p3\to I_{Z/\P^3}(d+2)$. On the other hand, such section yields a sheaf map $\o:\O \rightarrow\Om(d+2)$; taking duals, we get a cosection $\o^{\vee}:\Om(d+2))^{\vee}=T\P^3(-(d+2))\rightarrow\O$ whose image is the ideal sheaf $I_{Z/\P^3}$ of the singular scheme. The kernel of $\o^{\vee}$ is the tangent sheaf $\sF$ of the distribution twisted by $\O(-(d+2))$. From this point of view, the integrability condition is equivalent to $\o\wedge d\o=0$.

The 1-form $\o$ can be written down in homogeneous coordinates
\[ \o=\sum_{i=0}^3 A_i dz_i, \quad A_i\in H^0(\O(d+1)), \]
where $[z_0:z_1:z_2:z_3]$ homogeneous coordinates of $\P^3$; in addition, the coefficients $A_i$ must satisfy the condition
$$ i_R \omega = \sum_{i=0}^3 z_iA_i = 0, $$
where $i_R$ denotes the inner product of the vector field $R$ with differential forms. 

This information can also be packaged in a complex of sheaves
\begin{equation}\label{eq:monad}
\sF_{\bullet}: \O\xrightarrow{R}\O(1)^{\oplus4}\xrightarrow{\boldsymbol{\o}}\O(d+2),\quad R =\begin{pmatrix}
z_0 \\z_1\\ z_2 \\ z_3
\end{pmatrix}
\quad
\boldsymbol{\o}=(A_0,A_1,A_2,A_3)
\end{equation}
such that
\[
\begin{cases} \mathscr{H}^0=0 & \\
              \mathscr{H}^1=T_\sF \quad &\mbox{tangent sheaf}\equiv \ker(\omega)\\
              \mathscr{H}^2=I_{Z/\P^3}(d+2)\quad & \mbox{normal sheaf}\equiv \coker(\omega)
\end{cases}
\]
The complex $\sF_{\bullet}$ provides an alternative definition for codimension one distributions.

Since the Koszul complex associated to $R$ is exact, we also conclude that there exists an antisymmetric polynomial matrix $B$, whose entries are polynomials of degree $d$, such that
$$
(A_0,A_1,A_2,A_3)^t= B \cdot (z_0,z_1,z_2,z_3)^t.
$$
Therefore, a distribution induces a rational map
$$
\begin{array}{cccc}
& \P^3  &  \dashrightarrow & \mathbb{P}(\Omega_{\P^3}^1) \subset \P^3 \times (\P^3 )^* \\
\\
 & (z_0:z_1:z_2:z_3) & \longmapsto & ((z_0:z_1:z_2:z_3), B \cdot (z_0:z_1:z_2:z_3)^t)
\end{array}
$$
providing yet another point of view for codimension one distributions on $\p3$.

% We recall from Remark \ref{equiv-distributions} that 
% two isomorphic codimension one distributions $\sF$ and $\sF'$ on $\p3$   correspond   to a point $[\o]\in \P H^0(\wedge^2T\P^3\otimes\det(T_{\sF})\otimes K_{\P^3}^{\vee} )=\P H^0 (\Om(d+2))$.

%--------------------------------------------------------

\subsection{Numerical invariants of the singular locus}

Let $\sF$ be a codimension one distribution on $X=\p3$ given as in the exact sequence (\ref{eq:Dist p3}), with tangent sheaf $T_\sF$ and singular scheme $Z$. As in the exact sequence (\ref{OZ->OC}), let $\mathscr{U}$ denote the maximal 0-dimensional subsheaf of $\OZ$, so that the quotient sheaf is the structure sheaf of a subscheme $C\subset Z\subset \p3$ of pure dimension 1. We will say that $C$ is the \emph{1-dimensional component of $\sing(\sF)$}, while $\supp(\mathscr{U})$ is the \emph{0-dimensional component of $\sing(\sF)$}.

%We remark that, in general, $\mathscr{U}$ might not be the structure sheaf of a 0-dimensional subscheme of $\p3$, and $C$ might be reducible and/or non-reduced. However, if, for instance, the support of $\mathscr{U}$ is disjoint from $C$, then $\mathscr{U}$ is the structure sheaf of a 0-dimensional scheme $R\subset\p3$, and $\OZ=\OC+{\mathcal O}_{R/\p3}$.

\begin{Theorem}\label{P:SLocus}
In the notation of the previous paragraph, one has
\begin{equation}\label{eq:length}
\begin{array}{rcl}
c_2(T_\sF)&=& d^2+2-\deg(C) \\
c_3(T_\sF)={\rm length}(\mathscr{U})&=&d^3+2d^2+2d-\deg(C)\cdot(3d-2)+2p_a(C)-2
\end{array}
\end{equation}
%\begin{equation}\label{eq:length}
%{\rm length}(R)=d^3+2d^2+2d-deg(C)(3d-2)+2p_a(C)-2
%\end{equation}
where $p_a(C)$ denotes the arithmetic genus of $C$.
\end{Theorem}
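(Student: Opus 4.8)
The plan is to compute the Chern character of $T_\sF$ directly from the defining sequence \eqref{eq:Dist p3} and reduce everything to the numerical invariants of the singular scheme, and then to pin down $c_3(T_\sF)$ through the reflexivity of $T_\sF$. Since the Chern character is additive on short exact sequences, \eqref{eq:Dist p3} gives ${\rm ch}(T_\sF) = {\rm ch}(T\p3) - {\rm ch}(I_{Z/\P^3}(d+2))$, while the structure sequence $0\to I_{Z/\P^3}\to\O\to\OZ\to0$ gives ${\rm ch}(I_{Z/\P^3}) = 1 - {\rm ch}(\OZ)$. Thus the whole problem reduces to determining ${\rm ch}(\OZ)$, which splits via \eqref{OZ->OC} as ${\rm ch}(\OZ) = {\rm ch}(\OC) + {\rm ch}(\mathscr{U})$.

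Next I would evaluate the two pieces. Writing $h$ for the hyperplane class, the sheaf $\mathscr{U}$ is $0$-dimensional, so ${\rm ch}(\mathscr{U}) = {\rm length}(\mathscr{U})\,h^3$. For the pure $1$-dimensional part I would apply Hirzebruch--Riemann--Roch on $\p3$: from ${\rm td}(T\p3) = 1 + 2h + \tfrac{11}{6}h^2 + h^3$ and the Hilbert polynomial $\chi(\OC(n)) = \deg(C)\,n + 1 - p_a(C)$ one reads off ${\rm ch}(\OC) = \deg(C)\,h^2 + \bigl(1 - p_a(C) - 2\deg(C)\bigr)h^3$. Substituting ${\rm ch}(T\p3) = 3 + 4h + 2h^2 + \tfrac23 h^3$ and twisting $I_{Z/\P^3}$ by $\O(d+2)$ (i.e.\ multiplying by $e^{(d+2)h}$ in $A^*(\p3)$, where $h^4=0$), the displayed relation yields ${\rm ch}(T_\sF)$ explicitly in terms of $d$, $\deg(C)$, $p_a(C)$ and ${\rm length}(\mathscr{U})$. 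Converting from Chern character to Chern classes for a rank $2$ sheaf via $c_1 = {\rm ch}_1$, $c_2 = \tfrac12 c_1^2 - {\rm ch}_2$, and $c_3 = 2\,{\rm ch}_3 - \tfrac13 c_1^3 + c_1 c_2$, I expect $c_1(T_\sF) = (2-d)h$ (recovering the definition of degree, as a sanity check), the $c_2$-component to come out immediately as $c_2(T_\sF) = d^2 + 2 - \deg(C)$, and the $c_3$-component to yield a relation of the shape $c_3(T_\sF) = F(d,\deg(C),p_a(C)) + 2\,{\rm length}(\mathscr{U})$ for an explicit polynomial $F$.

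The remaining, conceptually essential, ingredient is the identity $c_3(T_\sF) = {\rm length}(\mathscr{U})$. Here I would invoke the fact that for a rank $2$ reflexive sheaf $\mathscr{E}$ on a nonsingular threefold, $c_3(\mathscr{E})$ equals the length of $\inext^1(\mathscr{E},\O)$ (cf.\ \cite{H2}); combined with the isomorphism $\inext^1(T_\sF,\O)\simeq\inext^3(\mathscr{U},\O)\otimes\mathscr{L}^\vee$ of \eqref{display exts}, together with the fact that Grothendieck duality and twisting by a line bundle preserve the length of a $0$-dimensional sheaf, this gives $c_3(T_\sF) = {\rm length}(\mathscr{U})$, which is the first asserted equality. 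Feeding this back into the relation $c_3(T_\sF) = F + 2\,{\rm length}(\mathscr{U})$ then solves for ${\rm length}(\mathscr{U}) = -F$ and produces the stated expression $d^3 + 2d^2 + 2d - \deg(C)(3d-2) + 2p_a(C) - 2$, completing both equalities. The main obstacle is not the bookkeeping, which is routine once the Todd class and the Hilbert polynomial of $\OC$ are in hand, but rather the clean justification that ${\rm length}(\mathscr{U})$ is exactly what measures the failure of $T_\sF$ to be locally free and hence computes $c_3(T_\sF)$; this is precisely the content of \eqref{display exts} and Lemma \ref{loc free tg}, and it is the step that must be handled with care.
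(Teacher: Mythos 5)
Your proposal is correct and takes essentially the same route as the paper: both extract the Chern data of $T_\sF$ from the exact sequences \eqref{eq:Dist p3} and \eqref{OZ->OC} (equivalently \eqref{IZ->IC}), and both hinge on the key identity $c_3(T_\sF)=\mathrm{length}(\mathscr{U})$, obtained from \cite[Proposition 2.6]{H2} together with the $\inext$ isomorphisms of display \eqref{display exts} and the duality $\Ext^3(\mathscr{U},\O)\simeq H^0(\mathscr{U})^*$. The only difference is that you do the bookkeeping via the Chern character and Todd class (Hirzebruch--Riemann--Roch) rather than via multiplicativity of the total Chern class $c(T\p3)=c(T_\sF)\cdot c(I_{Z/\P^3}(d+2))$ as in the paper, which is a cosmetic variation, and your intermediate values ($\mathrm{td}(T\p3)$, $\mathrm{ch}(T\p3)$, $\mathrm{ch}(\OC)$, and the conversion $c_3=2\,\mathrm{ch}_3-\tfrac13 c_1^3+c_1c_2$ yielding $\mathrm{length}(\mathscr{U})=-F$) all check out.
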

\begin{proof}
Considering the exact sequence (\ref{eq:Dist p3}), use $c(T\mathbb{P}^3)=c(T_\sF)\cdot c(I_{Z/\P^3}(d+2))$ to obtain
\begin{equation}\label{eq:IT}
\begin{array}{lll}
4 & = & c_1(T_\sF)+c_1(I_{Z/\P^3}(d+2))\\
6 & = & c_1(T_\sF)\cdot c_1(I_{Z/\P^3}(d+2))+c_2(T_\sF)+c_2(I_{Z/\P^3}(d+2))\\
4 & = & c_3(T_\sF) + c_3(I_{Z/\P^3}(d+2)) + c_1(T_\sF)\cdot c_2(I_{Z/\P^3}(d+2)) + c_2(T_\sF)\cdot c_1(I_{Z/\P^3}(d+2))
\end{array}
\end{equation}

The first equation gives $c_1(T_\sF)=2-d$, as pointed out in the previous section. The exact sequence (\ref{OZ->OC}) is equivalent to the sequence
\begin{equation}\label{IZ->IC}
0 \to I_{Z/\P^3} \to I_{C/\P^3} \to \mathscr{U} \to 0
\end{equation}
It follows that $c_2(I_{Z/\P^3}(d+2))=c_2(I_{C/\P^3}(d+2))=\deg(C)$, thus substitution into the second equation yields
$$ c_2(T_\sF)= d^2+2-\deg(C). $$

Moreover, substituting the expressions for the first and second Chern classes into the third equation we obtain
\begin{equation} \label{star}
d^3 + 2d^2 + 2d + c_3(I_{Z/\P^3}(d+2)) + c_3(T_\sF) - 2d\cdot\deg(C)=0.
\end{equation}
Note that
\begin{equation}\label{ast}
c_3(I_{Z/\P^3}(d+2))=c_3(I_{Z/\P^3})-(d+2)\cdot\deg(C),
\end{equation}
while it follows from the sequence (\ref{IZ->IC}) that
\begin{equation}\label{ast ast}
c_3(I_{Z/\P^3})=c_3(\IC)-c_3(\mathscr{U}) = 2  p_a(C) - 2 + 4\cdot\deg(C) - 2\cdot\mathrm{length}(\mathscr{U}).
\end{equation}
Finally, note that 
$$ \inext^1(T_\sF,\op3) \simeq \inext^2(I_{Z/\P^3}(d+2),\op3) \simeq \inext^3(\OZ(d+2),\op3)
\simeq \inext^3(\mathscr{U},\op3) .$$
It follows that 
\begin{equation}\label{c3}
c_3(T_\sF)=\mathrm{length}(\inext^1(T_\sF,\op3))=\mathrm{length}(\mathscr{U});
\end{equation}
indeed, the first equality is given in \cite[Proposition 2.6]{H2}, while the second one is obtained using the spectral sequence for local to global Ext's and Serre duality
$$ \Ext^3(\mathscr{U},\O) \simeq H^0(\inext^3(\mathscr{U},\op3))  \simeq H^0(\mathscr{U})^*, $$
respectively.

Substituting (\ref{ast}), (\ref{ast ast}), and (\ref{c3}) into the equation (\ref{star}) we obtain
$$ \mathrm{length}(\mathscr{U})=d^3+2d^2+2d-(3d-2)\deg(C)+2p_a(C)-2 $$
as claimed.
\end{proof}

By \cite[Theorem 2.3 page 87]{J}, it follows that the distribution corresponding to a generic section $\o\in H^0(\Om(d+2))$ has only isolated singularities. In this case $\OZ=\mathscr{U}$, and we also have
\begin{eqnarray}
\label{eq:c2} c_2(T_\sF) & = & d^2 + 2 ,~~{\rm and} \\
\label{eq:ising} c_3(T_\sF) & = & \mathrm{length}(Z) = d^3 + 2d^2 + 2d = c_3(\Om(d+2)).
\end{eqnarray}

If $T_\sF$ is locally free, then $Z=C$ by Lemma \ref{loc free tg}, and one obtains the following relations between the arithmetic genus, Euler characteristic, and degree of $C$: 
\begin{eqnarray}
\label{eq:gen} p_a(C) & = & d^3 - 2d^2 + 2d -\frac{1}{2}(3d-2)(d^2+2-\deg(C)) - 1 \\
\label{euler}
\chi(\OC) & = & 1 - p_a(C) = \frac{1}{2}\left( d^3+2d^2+2d-(3d-2)\deg(C) \right)
\end{eqnarray}

\begin{Obs}\rm
In \cite{Vai}, the author determines the number of isolated singularities under the hypothesis that $\supp(\mathscr{U})$ is disjoint from $C$.
\end{Obs}

The previous equalities impose a rather strong condition on the quantities  involved, since they are all integer numbers. We explore this fact in the next corollaries, using the software Mathematica \cite{Math}  to conclude that certain polynomial equations have no integer solutions.

\begin{Cor}
Let $\sF$ be a codimension one distribution of degree $d$ with locally free tangent sheaf. If  $d\geq 5$, then $C$ can neither be canonical, nor a nonsingular rational curve.
\end{Cor}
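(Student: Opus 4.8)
The plan is to reduce the statement to two elementary Diophantine conditions coming from the genus formula of Theorem \ref{P:SLocus}, and then to rule out integer solutions for $d\geq5$.

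First I would record the consequence of local freeness. Since $T_\sF$ is locally free, Lemma \ref{loc free tg} gives that the singular scheme has pure codimension $2$, so $Z=C$ and $\mathscr{U}=0$; equivalently $c_3(T_\sF)=\mathrm{length}(\mathscr{U})=0$. Feeding this into equation \eqref{eq:gen} of Theorem \ref{P:SLocus} leaves the single constraint
\[
2\,p_a(C)=(3d-2)\deg(C)-d^3-2d^2-2d+2 .
\]
This is the only relation I will need; the two cases of the corollary become substitutions of the numerical invariants $(\deg(C),p_a(C))$ of a canonical, respectively smooth rational, curve into this relation.

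For the canonical case, a canonically embedded curve in $\p3$ is a smooth non-hyperelliptic curve of genus $4$ and degree $2g-2=6$, so $p_a(C)=4$ and $\deg(C)=6$. Substituting gives $8=6(3d-2)-d^3-2d^2-2d+2$, that is $d^3+2d^2-16d+18=0$. I would then observe that the left-hand side has derivative $3d^2+4d-16>0$ for $d\geq2$, is therefore strictly increasing there, and already equals $113$ at $d=5$; hence it has no root with $d\geq5$ (in fact none with $d\geq1$), and $C$ cannot be canonical.

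For the rational case, a nonsingular rational curve has $p_a(C)=0$, while its degree $m:=\deg(C)$ may be any positive integer. The relation forces $m=(d^3+2d^2+2d-2)/(3d-2)$, so the question is whether $3d-2$ divides $d^3+2d^2+2d-2$. The clean way to settle this is to reduce modulo $3d-2$: since $3d\equiv2$, one computes $27(d^3+2d^2+2d-2)\equiv 8+24+36-54=14\pmod{3d-2}$, and as $3d-2\equiv1\pmod3$ we have $\gcd(27,3d-2)=1$, so the divisibility is equivalent to $3d-2\mid14$. The values $3d-2\in\{1,4,7,10,13,\dots\}$ that divide $14$ are only $1$ and $7$, corresponding to $d=1$ (giving $m=3$) and $d=3$ (giving $m=7$); for every $d\geq5$ one has $3d-2\geq13$ and $3d-2\neq14$, so $3d-2\nmid14$ and $m$ is not an integer. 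Therefore $C$ cannot be a nonsingular rational curve either. The whole argument is a finite computation once the genus relation is in hand, so there is no serious obstacle; the only point requiring a little care is the divisibility step, which the modular reduction above replaces by the finite check $3d-2\mid14$ — exactly the kind of integrality constraint the authors resolve with \cite{Math}.
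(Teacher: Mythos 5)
Your proof is correct, and at the top level it follows the same strategy as the paper: both arguments use local freeness to kill $c_3(T_\sF)$ in Theorem \ref{P:SLocus}, reduce to the genus relation \eqref{eq:gen}, and then rule out integer solutions for $d\geq5$. The difference is in how that last, purely arithmetic step is executed, and here your route is genuinely different and in two respects sharper. First, the paper simply asserts (delegating the verification to Mathematica \cite{Math}) that \eqref{eq:gen} has no integer solutions for $d\geq5$ under either $p_a(C)=0$ or $2p_a(C)-2=\deg(C)$; you replace this black box by hand-checkable arguments: monotonicity of the cubic $d^3+2d^2-16d+18$ in the canonical case, and in the rational case the modular reduction $27(d^3+2d^2+2d-2)\equiv 14 \pmod{3d-2}$ together with $\gcd(27,3d-2)=1$, so that the required divisibility holds iff $3d-2\mid 14$, i.e. only for $d\in\{1,3\}$. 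Second, in the canonical case you substitute the actual invariants $(\deg(C),p_a(C))=(6,4)$ of a canonically embedded curve in $\p3$, whereas the paper works with the weaker numerical condition $2p_a(C)-2=\deg(C)$ (i.e. $\omega_C\simeq\OC(1)$ numerically). This matters: read literally, the paper's condition does admit an integer solution with $d\geq5$, namely $d=6$, $\deg(C)=20$, $p_a(C)=11$ (indeed $16\cdot 20=298+2\cdot 11$, and $\deg(C)=20$ is compatible with the bound \eqref{eq:BdegZ}), so the claim that the weaker condition has no integer solutions for $d\geq 5$ fails at $d=6$; the corollary itself is unharmed precisely because of the observation you make, that a canonical curve in $\p3$ must have genus $4$ and degree $6$. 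In short: same reduction, but your execution trades a computer search for transparent elementary number theory and, in the canonical case, quietly repairs a small inaccuracy in the paper's own verification.
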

\begin{proof}
If either $p_a(C)=0$ or $2p_a(C)-2=\deg(C)$, then equation (\ref{eq:gen}) has no integer solutions for $d\geq 5$. 
\end{proof}

\begin{Cor}
If $\sF$ is a codimension one distribution with locally free tangent sheaf. If $d \notin \{1,2,12\}$, then $C$ can not be a nonsingular elliptic curve.
\end{Cor}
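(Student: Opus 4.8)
The plan is to feed the genus of a nonsingular elliptic curve into the numerical constraint \eqref{eq:gen} and turn the resulting equation into an integrality (divisibility) condition on the degree $d$. Since $T_\sF$ is assumed locally free, Lemma \ref{loc free tg} gives $Z=C$, so the relations of Theorem \ref{P:SLocus} apply with $C$ being the entire singular scheme. A nonsingular elliptic curve has $p_a(C)=1$, so I would substitute this value into \eqref{eq:gen} and solve for the degree, obtaining
\[
\deg(C)=d^2+2-\frac{2d^3-4d^2+4d-4}{3d-2}.
\]
Because $\deg(C)$ is forced to be a nonnegative integer, the question becomes purely arithmetic: for which $d$ is the displayed fraction an integer?

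Next I would reduce this to a single divisibility. The crux is the polynomial identity
\[
27\left(2d^3-4d^2+4d-4\right)=(3d-2)\left(18d^2-24d+20\right)-68,
\]
which I would obtain by dividing $2d^3-4d^2+4d-4$ by $3d-2$ and clearing denominators. It shows that $(3d-2)\mid 27(2d^3-4d^2+4d-4)$ if and only if $(3d-2)\mid 68$. Since $3d-2\equiv 1\pmod 3$, we have $\gcd(3d-2,27)=1$, so the factor of $27$ is harmless and the condition is equivalent to $(3d-2)\mid (2d^3-4d^2+4d-4)$, i.e.\ to $\deg(C)\in\Z$. Hence $C$ can be a nonsingular elliptic curve only if $(3d-2)\mid 68=2^2\cdot 17$.

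Finally I would enumerate the divisors of $68$, namely $1,2,4,17,34,68$, and solve $3d-2=k$ over integers $d\ge 0$. The only solutions are $d\in\{1,2,12\}$, together with the degenerate value $d=0$, for which the formula forces $\deg(C)=0$, so no genuine $1$-dimensional curve arises. Therefore, for $d\notin\{1,2,12\}$ the degree cannot be an integer and no nonsingular elliptic curve can occur as $C$, which is the claim. The only step requiring any care—the main, admittedly minor, obstacle—is the reduction of the cubic divisibility to $(3d-2)\mid 68$; it relies on the explicit identity above together with the coprimality $\gcd(3d-2,27)=1$, after which the finiteness of the divisor list makes the conclusion immediate. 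This is precisely the kind of integrality check the authors perform with Mathematica.
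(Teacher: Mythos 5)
Your proposal is correct and follows essentially the same route as the paper: the paper sets $\chi(\mathcal{O}_C)=0$ in equation \eqref{euler} to get $d^3+2d^2+2d=(3d-2)\deg(C)$, which is exactly the relation you derive from \eqref{eq:gen} with $p_a(C)=1$ (your fraction simplifies to $\deg(C)=(d^3+2d^2+2d)/(3d-2)$), and both arguments conclude by the integrality of $\deg(C)$. The only difference is that where the paper delegates the arithmetic to Mathematica, you verify it by hand via the identity $27(2d^3-4d^2+4d-4)=(3d-2)(18d^2-24d+20)-68$ and $\gcd(3d-2,27)=1$, reducing to $(3d-2)\mid 68$ and correctly disposing of the spurious solution $d=0$ --- a welcome explicit check, but not a different method.
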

\begin{proof}
If $\chi(\OC)=0$, then equation (\ref{euler}) implies $d^3+2d^2+2d=(3d-2)\deg(C)$, which has integer solutions if and only if $d \in \{1,2,12\}$.
\end{proof}

\begin{Cor}\label{sing_reta}
If $\sF$ is a codimension one distribution with locally free tangent sheaf, then $C$ is an irreducible plane curve if and only if $\deg(C)=1$ and $\deg(\sF)=0$.
\end{Cor}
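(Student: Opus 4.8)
The plan is to convert the geometric hypothesis that $C$ is an irreducible plane curve into a single numerical constraint on $p_a(C)$ and $\deg(C)$, and then to play it off against the genus formula \eqref{eq:gen}, which is available precisely because $T_\sF$ is locally free (so that $Z=C$ by Lemma~\ref{loc free tg}). The whole statement then collapses to the analysis of one Diophantine equation.

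I would dispose of the easy implication first. If $\deg(\sF)=0$ and $\deg(C)=1$, then $C$ is a degree-one curve in $\p3$, hence a line; a line is irreducible and lies in a plane, so it is an irreducible plane curve. (That such distributions actually occur follows from the classification of degree-$0$ distributions in Section~\ref{class0}.) For the converse, I would start from the observation that an irreducible plane curve of degree $m:=\deg(C)$ is a divisor of degree $m$ inside a plane $\P^2\subset\p3$, and therefore has arithmetic genus $p_a(C)=\tfrac12(m-1)(m-2)$.

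The core computation is to substitute this value of $p_a(C)$ into \eqref{eq:gen} and clear denominators. A routine algebraic reduction should turn the resulting identity into the quadratic relation
\[
m^2-(3d+1)m+(d^3+2d^2+2d)=0
\]
in the nonnegative integers $d$ and $m$. Viewing this as a quadratic in $m$, the existence of a real (a fortiori integer) root forces the discriminant
\[
\Delta=(3d+1)^2-4(d^3+2d^2+2d)=-4d^3+d^2-2d+1
\]
to be nonnegative.

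The decisive step, and the only one requiring any genuine care, is the sign analysis of $\Delta$: I would note that $\Delta=1>0$ for $d=0$, whereas for every $d\ge1$ one has $-4d^3+d^2\le 0$ and $-2d+1<0$, so $\Delta<0$. Thus $d=0$ is the only admissible degree, and in that case the quadratic reduces to $m^2-m=0$, whose unique root with $m\ge1$ is $m=1$; discarding the spurious root $m=0$ (excluded since $\deg(C)\ge1$) completes the argument. I do not anticipate any serious obstacle: once the plane-curve genus is inserted, everything reduces to an elementary one-variable cubic inequality, so the proof hinges only on carrying out the reduction to the quadratic correctly and on the straightforward estimate showing $\Delta<0$ for $d\ge1$.
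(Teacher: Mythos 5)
Your proof is correct and follows essentially the same route as the paper: the paper substitutes the plane-curve relation $2-2p_a(C)=-\deg(C)(\deg(C)-3)$ into \eqref{euler} (equivalent to your use of \eqref{eq:gen}), arriving at exactly your quadratic $m^2-(3d+1)m+(d^3+2d^2+2d)=0$ in $m=\deg(C)$. The only difference is that the paper merely asserts (via a Mathematica check, as announced before its corollaries) that $(d,m)=(0,1)$ is the unique integer solution with $d\ge0$, whereas your discriminant estimate $\Delta=-4d^3+d^2-2d+1<0$ for $d\ge1$ makes that step self-contained by hand.
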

\begin{proof}
If $C$ is a plane curve then $2\chi(C)=2-2p_a(C)=-\deg(C)(\deg(C)-3)$.
Then equation (\ref{euler})
$$-\deg(C)(\deg(C)-3)= d^3+2d^2+2d-(3d-2)\deg(C)$$
and the only integer solution is given by $\deg(C)=1$ and  $d=0$, for $d\geq 0.$
\end{proof}

%--------------------------------------------------------

\subsection{Sub-foliations of codimension one distributions}

Let 
$$ \mathscr{G}:\  0  \longrightarrow \O(-k) \stackrel{\phi'}{ \longrightarrow} T\p3 \longrightarrow
N_\sG \longrightarrow 0 . $$
be a sub-distribution of a codimension one distribution $\sF$ on $\p3$, with $k\ge-1$; since $\mathscr{G}$ has dimension 1, it is automatically integrable.

In this case, the relative normal sheaf is a torsion free sheaf, being a subsheaf of $N_\sG$; it has rank 1 and degree $c_1(T_\sF)-(-k)=k+2-d$, thus $N_{\sG/\sF}\simeq I_{Y/\P^3}(k+2-d)$, for some subscheme $Y\subset\p3$. From the sequence
\begin{equation} \label{y}
0  \longrightarrow \O(-k) \stackrel{\sigma}{\longrightarrow} T_\sF
\longrightarrow I_{Y/\P^3}(k+2-d) \longrightarrow 0
\end{equation} 
we can see that $Y$ is just the zero locus of $\sigma\in H^0(T_\sF(k))$; note that $Y$ might be empty, in which case $T_\sF$ splits as a sum of line bundles. Therefore, assuming that $T_\sF(k)$ does not split as a sum of line bundles implies that $Y$ is non-empty. In addition, $Y$ contains $\sing(T_\sF)$.

Since $T_\sF$ is reflexive, $Y$ is a Cohen--Macaulay, generically locally complete intersection subscheme of pure dimension 1, see \cite[first paragraph of the proof of Theorem 4.1]{H2}. In addition, the sequence in display (\ref{sqc relat normal}) simplifies to
\begin{equation} \label{rn1}
0 \to I_{Y/\P^3}(k+2-d) \to N_\sG \to I_{Z/\P^3}(d+2) \to 0 .
\end{equation}
Dualizing the sequence (\ref{rn1}) we obtain
\begin{align} 
0\to \O(-d-2) \to N_\sG^\vee \to \O(d-2-k) \to \omega_C(2-d) \to \label{rn1.1} \\
\to \inext^1(N_\sG,\O) \to \omega_Y(2+d-k) \to \inext^3(\mathscr{U},\O) \to 0, \nonumber
\end{align}
where $\omega_{Y}$ and $\omega_{C}$ are the dualizing sheaves of the curves $Y$ and $C$, respectively, and we used the following identifications:
$$ \inext^1(I_{Y/\P^3}(k+2-d),\O)\simeq \omega_{Y}(d-k+2),$$
$$ \inext^1(\IZ(d+2),\O) \simeq \inext^1(\IC(d+2),\O)\simeq \omega_{C}(2-d),~~{\rm and} $$
$$ \inext^2(\IZ(k+2),\O) \simeq \inext^3(\mathscr{U},\O). $$
Breaking the long exact sequence in display \eqref{rn1.1}, we obtain the following short exact sequences 
\begin{equation}\label{v1}
0 \to \O(-d-2) \to  N_\sG^\vee \to \IC(d-2-k) \to 0
\end{equation}
\begin{equation}\label{v2}
0 \to \OC (d-2-k) \to \omega_{C}(2-d) \to \mathscr{V} \to 0,
\end{equation}
where $\mathscr{V}$ is a 0-dimensional sheaf whose support is contained in $C$; furthermore,
\begin{equation}\label{v3}
0 \to \mathscr{V} \to \inext^1(N_\sG,\O) \to L \to 0
\end{equation}
where $L$ is a torsion free sheaf on $Y$ defined by the sequence
\begin{equation}\label{v4}
0\to L \to \omega_Y(2+d-k) \to \mathscr{U} \to 0.
\end{equation}
Note that $L$ must be a sheaf of pure dimension 1 supported on $Y$, since it is a subsheaf of $\omega_Y$ with 0-dimensional quotient.
We therefore conclude that 
\begin{equation} \label{incl}
\sing(T_\sF) \subset (\sigma=0) \subset \sing(N_\sG)=\sing(\sG).
\end{equation}
Furthermore, the 0-dimensional component of $\sing(N_\sG)$ is precisely the support of the sheaf $\mathscr{V}$, which in contained in $C$, while $Y$ is precisely the pure 1-dimensional component of $\sing(N_\sG)$.

In other words, we have proved the following claim.

\begin{Lemma}\label{sing sub-dist}
Let $\sF$ be a codimension one distribution in $\p3$, and let $\sG$ be a sub-foliation induced by a nonzero section $\sigma\in H^0(T_\sF(k))$. 
\begin{itemize}
\item If zero locus of $\sigma$ is non-empty, then the 1-dimensional component of $\sing(\sG)$ coincides with $(\sigma=0)$, while the 0-dimensional component of $\sing(\sG)$ is contained in the 1-dimensional component of $\sing(\sF)$.
\item If $\sigma$ does not vanish, which can only occur when $T_\sF$ splits as a sum of line bundles, then $N_\sG$ is a reflexive sheaf whose singular locus is contained in the 1 dimensional component of $\sing(\sF)$. 
\end{itemize}
\end{Lemma}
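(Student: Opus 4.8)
The plan is to read both assertions off the material already assembled in displays \eqref{y}--\eqref{incl}, splitting the argument according to whether the zero scheme $(\sigma=0)$ is empty. Two preliminary remarks organize everything. First, since $T_\sG=\O(-k)$ has rank one, $\sG$ is automatically integrable, so it is genuinely a sub-foliation. Second, $\sing(\sG)=\sing(N_\sG)$ is detected by the supports of the local $\inext$ sheaves of $N_\sG$, so the whole problem reduces to reading off the $1$- and $0$-dimensional parts of $\inext^1(N_\sG,\O)$ (and checking that the higher local Ext sheaves behave as required) from the dualized sequence \eqref{rn1.1} and its breakup \eqref{v1}--\eqref{v4}.

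For the first bullet I would assume $Y:=(\sigma=0)\neq\emptyset$. By \eqref{y}, $Y$ is a Cohen--Macaulay, generically locally complete intersection curve of pure dimension $1$, and the inclusions \eqref{incl} already give $(\sigma=0)\subset\sing(N_\sG)=\sing(\sG)$. To separate the components I would use that in \eqref{v4} the sheaf $L$ has pure dimension $1$ and is supported on $Y$ (being a subsheaf of $\omega_Y$ with $0$-dimensional quotient), whereas \eqref{v2} contributes only the $0$-dimensional sheaf $\mathscr{V}$, with $\supp(\mathscr{V})\subset C$. Feeding these into \eqref{v3} presents $\inext^1(N_\sG,\O)$ as an extension of $L$ by $\mathscr{V}$, so its pure $1$-dimensional part is exactly $Y=(\sigma=0)$ while its $0$-dimensional part is $\supp(\mathscr{V})\subset C$, the $1$-dimensional component of $\sing(\sF)$. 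This is precisely the first bullet.

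For the second bullet I would first establish the parenthetical claim. If $\sigma$ is nowhere vanishing, then $\O\xrightarrow{\sigma}T_\sF(k)$ is a subbundle with locally free rank-one quotient $\O(2k+2-d)$; since $H^1(\O(m))=0$ on $\p3$ this extension splits, so $T_\sF\cong\O(-k)\oplus\O(k+2-d)$ is a sum of line bundles. In particular $T_\sF$ is locally free, and Lemma \ref{loc free tg} gives $\mathscr{U}=0$ and $Z=C$. Putting $Y=\emptyset$ (so $\omega_Y=0$) in \eqref{rn1} reduces it to $0\to\O(k+2-d)\to N_\sG\to\IC(d+2)\to0$, exhibiting $N_\sG$ as torsion free of rank $2$; the same substitution kills $L$ in \eqref{v4}, so \eqref{v3} yields $\inext^1(N_\sG,\O)\cong\mathscr{V}$, a $0$-dimensional sheaf supported in $C$, while the termination of \eqref{rn1.1} records $\inext^2(N_\sG,\O)=0$. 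By the standard criterion that a torsion-free sheaf $F$ on $\p3$ with $\inext^1(F,\O)$ of dimension $0$ and $\inext^2(F,\O)=0$ is reflexive, we conclude that $N_\sG$ is reflexive, with $\sing(N_\sG)=\supp(\mathscr{V})\subset C$.

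The step I expect to be the main obstacle is the bookkeeping in the dualization: cleanly separating the pure $1$-dimensional contribution to $\inext^1(N_\sG,\O)$ (the $\omega_Y$/$L$ piece, supported on $Y$) from its $0$-dimensional contribution (the $\mathscr{V}$ piece, supported on $C$), and, in the second case, confirming that $\mathscr{V}$ is genuinely $0$-dimensional so that the reflexivity criterion applies. Once displays \eqref{v1}--\eqref{v4} are in place, the rest is a direct translation of their terms into statements about $\sing(\sG)$.
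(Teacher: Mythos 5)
Your proposal is correct and follows essentially the same route as the paper: Lemma \ref{sing sub-dist} is stated there as a summary of the preceding discussion (``we have proved the following claim''), namely the dualized sequence \eqref{rn1.1} and its breakup \eqref{v1}--\eqref{v4}, from which the $1$-dimensional part $Y=\supp(L)$ and the $0$-dimensional part $\supp(\mathscr{V})\subset C$ of $\inext^1(N_\sG,\O)$ are read off exactly as you do. Your only additions---the splitting of $0\to\O(-k)\to T_\sF\to\O(k+2-d)\to0$ via vanishing of $H^1$ of line bundles on $\p3$, and the reflexivity of $N_\sG$ via the codimension criterion of \cite[Proposition 1.1.10]{HL}---are correct fillings-in of details the paper leaves implicit.
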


\begin{Obs}\label{JEK}\rm
Let $\sG$ be a foliation of codimension 2 on $\p3$; recall that a curve $D\subset\p3$ is said to be \emph{invariant under $\sG$} if there is a nontrivial morphism $\omega_D\to T_\sG|_D$. Jouanolou \cite[Cor. 4.2.7 ]{J} and Esteves--Kleiman \cite[Prop. 3.3]{EK} have shown that if $D$ is invariant under $\sG$, then $\sing(\sG)\cap D\ne\emptyset$. It follows that the induced section $\nu\in H^0(D, (\omega_D)^{\vee}\otimes T_\sG|D)$  has nonempty zero locus $(\nu=0)$ of dimension $0$.
\end{Obs}

%--------------------------------------

\subsection{Connectedness of the singular locus}
Motivated by a conjecture due to Cerveau \cite{Ce} regarding the connectedness of the 1-dimensional component of the singular set a codimension one foliation on $\p3$, we consider the analogous problem for codimension one distributions on $\p3$. In this section, we present a homological criterion for connectedness, and explore the relation between connectedness and integrability.

\begin{Theorem}\label{con}
Let $\sF$ be a codimension one distribution on $\mathbb{P}^3$ of degree $d>0$  with singular scheme $Z$. If $h^2(T_\sF(-2-d))=0$, then $Z$ is connected and of pure dimension 1, so that $T_\sF$ is locally free. Conversely, for $d\ne2$, if $Z$ is connected, then $T_\sF$ is locally free and $h^2(T_\sF(-2-d))=0$.
\end{Theorem}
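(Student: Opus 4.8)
The plan is to translate everything into the cohomology of the twist of the defining sequence \eqref{eq:Dist p3} by $\O(-2-d)$,
\begin{equation*}
0 \to T_\sF(-2-d) \to T\p3(-2-d) \to \IZ \to 0,
\end{equation*}
and to detect connectedness through $h^0(\OZ)$. First I would record the cohomology of $T\p3(-2-d)$ from the twisted Euler sequence $0\to\O(-2-d)\to\O(-1-d)^{\oplus4}\to T\p3(-2-d)\to0$: since the intermediate cohomology of line bundles on $\p3$ vanishes, this gives $h^0(T\p3(-2-d))=h^1(T\p3(-2-d))=0$ for every $d>0$, while the tail of the sequence identifies $H^2(T\p3(-2-d))$ with the dual of the cokernel of the multiplication map $H^0(\O(d-3))^{\oplus4}\xrightarrow{(z_0,\dots,z_3)}H^0(\O(d-2))$. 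This cokernel vanishes precisely when $d\neq2$ (every monomial of degree $d-2\geq1$ is divisible by some $z_i$; for $d=2$ the cokernel is $\C$), which is exactly the source of the exceptional case.

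Feeding $h^1(T\p3(-2-d))=0$ into the long exact sequence of the twisted defining sequence yields an injection $0\to H^1(\IZ)\hookrightarrow H^2(T_\sF(-2-d))$, which becomes an isomorphism as soon as $d\neq2$. On the other hand, from $0\to\IZ\to\O\to\OZ\to0$ together with $h^0(\IZ)=h^1(\O)=0$ one obtains $h^0(\OZ)=1+h^1(\IZ)$; moreover $Z$ is nonempty, since $c_3(\Om(d+2))=d^3+2d^2+2d>0$ forces the section $\o$ to vanish. Thus $Z$ is connected with $h^0(\OZ)=1$ exactly when $h^1(\IZ)=0$. For the forward implication, $h^2(T_\sF(-2-d))=0$ then gives $h^1(\IZ)=0$, hence $h^0(\OZ)=1$ and $Z$ connected. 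To upgrade to purity I would use \eqref{OZ->OC}, which gives $h^0(\OZ)=\mathrm{length}(\mathscr{U})+h^0(\OC)$: if $C=\emptyset$ then $\OZ=\mathscr{U}$ has length $d^3+2d^2+2d\geq5$ by \eqref{eq:ising}, contradicting $h^0(\OZ)=1$; hence $C\neq\emptyset$, so $h^0(\OC)\geq1$ forces $\mathrm{length}(\mathscr{U})=0$. Then $Z=C$ has pure dimension $1$ and $T_\sF$ is locally free by Lemma \ref{loc free tg}.

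For the converse ($d\neq2$), the isomorphism $H^1(\IZ)\cong H^2(T_\sF(-2-d))$ reduces the whole statement to showing $h^0(\OZ)=1$, i.e.\ that a connected $Z$ carries no $0$-dimensional subsheaf $\mathscr{U}$ and satisfies $h^0(\OC)=1$; once $\mathscr{U}=0$ is known, local freeness follows from Lemma \ref{loc free tg}, and $h^1(\IZ)=0$ then transports to $h^2(T_\sF(-2-d))=0$ through the isomorphism above. I expect this reduction to be the main obstacle, because connectedness only guarantees that $H^0(\OZ)$ is a local Artinian $\C$-algebra, not that it is one-dimensional. Isolated $0$-dimensional points of $Z$ are excluded by connectedness once $C\neq\emptyset$ is established (using \eqref{eq:ising} again to rule out a single fat point with $C=\emptyset$); the genuinely delicate part is to exclude \emph{embedded} points of $Z$ lying on $C$, and to rule out nilpotents in $H^0(\OC)$, since neither follows formally from connectedness. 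Handling these requires the specific geometry of singular schemes of distributions — one must show that a connected such $Z$ is in fact a pure one-dimensional, locally Cohen--Macaulay curve with $h^0(\OC)=1$ — rather than pure cohomology, and in low degree this is presumably where the explicit classification of the singular schemes enters.
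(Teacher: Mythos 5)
Your forward implication is correct and is, step for step, the paper's own argument: twist \eqref{eq:Dist p3} by $\O(-2-d)$, use $h^1(T\p3(-2-d))=0$ to obtain the injection $H^1(\IZ)\hookrightarrow H^2(T_\sF(-2-d))$, deduce $h^0(\OZ)=1$ (your justification of $Z\ne\emptyset$ via $c_3(\Om(d+2))>0$ is fine), and then combine \eqref{OZ->OC} with the fact that \eqref{eq:ising} forces $\mathrm{length}(\mathscr{U})\ge5$ when $C=\emptyset$, so that $\mathscr{U}=0$, $Z=C$ is pure of dimension $1$, and Lemma \ref{loc free tg} applies. Your explicit identification of $H^2(T\p3(-2-d))$ with the dual of the cokernel of the multiplication map $H^0(\O(d-3))^{\oplus4}\to H^0(\O(d-2))$ is actually sharper than the paper, which simply invokes Bott's formula; it correctly isolates $d=2$ as the unique exceptional twist.

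The genuine gap is that you do not prove the converse — but you should know that the obstacle you flag is real and is not resolved by the paper either. The paper's converse occupies two sentences: a connected $Z$ ``cannot consist of a single point,'' hence ``must be of pure dimension~1,'' after which $h^1(\IZ)=0$ is said to follow. The first inference tacitly assumes that $\supp(\mathscr{U})$ cannot sit as embedded points on $C$, and the second tacitly assumes $h^0(\OZ)=1$, i.e., that a connected, possibly nonreduced $Z$ carries no extra global functions; neither is formal, exactly as you observe (nonplanar double lines, for instance, are connected with $h^0>1$). Indeed, the paper's own Example \ref{ex_grau1:5} — a degree $d=1$ distribution with $c_3(T_\sF)=2$ — has singular scheme the line $\{z_0=z_1=0\}$ together with an embedded length-two point at $[0:0:1:0]$, a point lying \emph{on} that line: this $Z$ is connected in the scheme-theoretic sense while $T_\sF$ fails to be locally free. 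So the converse is only tenable if ``connected'' is read as a condition on the decomposition \eqref{OZ->OC} (namely $\mathscr{U}=0$, $C$ connected, $h^0(\OC)=1$), in which case your cohomological reduction — purity, Lemma \ref{loc free tg}, and the isomorphism $H^1(\IZ)\simeq H^2(T_\sF(-2-d))$ for $d\ne2$ — closes the argument at once. In short, you did not miss an idea available in the paper: the step you single out as the main obstacle is precisely the step the paper elides, and your caution about embedded points and nilpotents is vindicated by the paper's own example.
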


\begin{proof}
Twisting the exact sequence (\ref{eq:Dist p3}) by $\op3(-2-d)$ and passing to cohomology we obtain, using Bott's formula:
$$ 0\to H^1(I_{Z/\P^3}) \to  H^2(T_\sF(-2-d)) \to H^2(T\p3(-2-d)). $$
If $h^2(T_\sF(-2-d))=0$, then $h^1(I_{Z/\P^3})=0$. It follows from the sequence
$$ 0 \to I_{Z/\P^3} \to \op3 \to \OZ \to 0 $$
that 
$$ H^0(\O)\rightarrow H^0(\OZ)\rightarrow 0, $$
hence $h^0(\OZ)=1$. From the sequence (\ref{OZ->OC}), we get
$$ 0 \to H^0(\mathscr{U}) \to H^0(\OZ) \to H^0(\OC) \to 0 $$ 
Thus either $h^0(\OC)=1$, and $\mathscr{U}=0$ and $C$ is connected, or
$\mathrm{length}(\mathscr{U})=1$ and $C$ is empty. This second possibility leads to a contradiction: by equation (\ref{eq:ising}) $\mathrm{length}(\mathscr{U})$ is either equal $0$ or $\ge5$. 
It follows that $Z=C$ must be connected and of pure dimension 1, and thus, by Lemma \ref{loc free tg}, $T_\sF$ is locally free. 

Conversely, assume that $Z$ is connected; by the argument of the previous paragraph, $Z$ cannot consist of a single point. Thus $Z$ must be of pure dimension 1, and Lemma \ref{loc free tg} implies that $T_\sF$ is locally free. It also follows that $h^1(I_{Z/\P^3})=0$; since $h^2(T\p3(-2-d))=0$ for $d\ne2$, we conclude that $h^2(T_\sF(-2-d))=0$, as desired.
\end{proof}

When the tangent sheaf $T_\sF$ splits as a sum of line bundles, Giraldo and Pan-Collantes proved that the singular locus $Z$ is arithmetically Cohen-Macaulay, hence, in particular, $Z$ is connected; see \cite[Theorem 3.3]{GP}. We now provide a new proof  of the connectedness, as a particular case of Theorem \ref{con}.

\begin{Cor}
If $\sF$ is a codimension one distribution on $\P^3$ whose tangent sheaf splits as a sum of line bundles, then its singular scheme is connected.
\end{Cor}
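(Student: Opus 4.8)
The plan is to deduce the statement directly from Theorem~\ref{con}, the point being that its hypothesis $h^2(T_\sF(-2-d))=0$ comes for free as soon as $T_\sF$ is a sum of line bundles. Write $T_\sF\simeq\O(a)\oplus\O(b)$, so that $T_\sF(-2-d)\simeq\op3(a-2-d)\oplus\op3(b-2-d)$. The central observation is Bott's formula, already invoked in the proof of Theorem~\ref{con}: on $\p3$ the intermediate cohomology of line bundles vanishes, that is $H^2(\op3(k))=0$ for every $k\in\Z$. Consequently $h^2(T_\sF(-2-d))=h^2(\op3(a-2-d))+h^2(\op3(b-2-d))=0$, with no constraint on $a$ and $b$.

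For $d>0$ this is exactly the hypothesis of Theorem~\ref{con}, which then gives that $Z=\sing(\sF)$ is connected (indeed of pure dimension $1$), finishing the argument in this range. The only genuine gap is the excluded value $d=0$, for which Theorem~\ref{con} does not apply; I expect this to be the main obstacle, and I would handle it by hand. In degree $0$ one has $c_1(T_\sF)=2$, hence $a+b=2$. Since the inclusions of the two summands compose with $\phi$ to give nonzero maps $\O(a)\to T\p3$ and $\O(b)\to T\p3$, both $H^0(T\p3(-a))$ and $H^0(T\p3(-b))$ must be nonzero; but the Euler sequence shows $H^0(T\p3(k))=0$ for $k\le-2$, forcing $a\le1$ and $b\le1$, and therefore $a=b=1$.

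With $T_\sF\simeq\O(1)\oplus\O(1)$ we get $c_2(T_\sF)=1$, so Theorem~\ref{P:SLocus} yields $\deg(C)=d^2+2-c_2(T_\sF)=1$. As a sum of line bundles is locally free, Lemma~\ref{loc free tg} gives $Z=C$, and a purely one-dimensional scheme of degree $1$ is a line, which is connected. This disposes of the degree-$0$ case and, together with the previous paragraph, proves the corollary for all $d\ge0$. Beyond this bookkeeping, the proof is immediate, since the entire content reduces to the vanishing $H^2(\op3(k))=0$ that makes the hypothesis of Theorem~\ref{con} automatic for split tangent sheaves.
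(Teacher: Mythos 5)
Your proof is correct and takes essentially the same route as the paper, whose entire argument is the one-line observation that writing $T_\sF=\O(1-d_1)\oplus\O(1-d_2)$ with $d_1,d_2\ge0$ forces $h^2(T_\sF(-2-d))=0$, so that Theorem \ref{con} applies. Your separate treatment of $d=0$ is a careful (and warranted) patch, since Theorem \ref{con} indeed assumes $d>0$ and the paper's proof passes over this silently; that case is in any event already settled by Proposition \ref{deg0}, whose argument your degree-$0$ paragraph reproduces.
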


\begin{proof}
Assuming that $T_\sF=\O(1-d_1)\oplus \O(1-d_2)$ with $d_1,d_2\ge0$, then clearly  $h^2(T_\sF(-2-d))=0$, where $d=d_1+d_2$
is the degree of $\sF$.
\end{proof}

\begin{Cor}
Let $\sF$ be a codimension one distribution on $\p3$ with locally free tangent sheaf. 
If $T_\sF^{\vee}$ is ample, then its singular scheme is connected.
\end{Cor}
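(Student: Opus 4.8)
The plan is to bypass the cohomological criterion of Theorem~\ref{con} and instead recognize the singular scheme $Z=\sing(\sF)$ as a degeneracy locus, to which the Fulton--Lazarsfeld connectedness theorem applies directly. Since $T_\sF$ is assumed locally free, $\phi\colon T_\sF\to T\p3$ is a genuine morphism of vector bundles of ranks $2$ and $3$. As observed in Section~\ref{dist+fol} just after \eqref{eq:Dist}, in the locally free case $Z$ coincides, as a set, with the locus where $\phi$ fails to be injective; writing $\phi_x$ for the fibre map at a point $x$, this means
$$ Z \;=\; \{\, x\in\p3 \;:\; \rk\phi_x \le 1 \,\} \;=\; D_1(\phi), $$
the first degeneracy locus of $\phi$. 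Because connectedness is a purely topological property, it suffices to prove that this set is connected, so the scheme structure on $D_1(\phi)$ is immaterial here.

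Next I would verify the positivity hypothesis needed for the connectedness theorem. The bundle governing $D_1(\phi)$ is the bundle of homomorphisms $T_\sF^\vee\otimes T\p3$ from $T_\sF$ to $T\p3$. By hypothesis $T_\sF^\vee$ is ample, while $T\p3$ is ample: the Euler sequence $0\to\O\to\O(1)^{\oplus4}\to T\p3\to 0$ exhibits $T\p3$ as a quotient of the ample bundle $\O(1)^{\oplus4}$, and quotients of ample bundles are ample. Since a tensor product of ample bundles is again ample, the bundle $T_\sF^\vee\otimes T\p3$ is ample.

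With this in place I would invoke the connectedness theorem of Fulton and Lazarsfeld for degeneracy loci: if $\psi\colon E\to F$ is a morphism of vector bundles of ranks $a\le b$ on an irreducible projective variety $X$ such that $E^\vee\otimes F$ is ample, then $D_r(\psi)$ is connected whenever $\dim X \ge (a-r)(b-r)+1$. Here $X=\p3$, $E=T_\sF$, $F=T\p3$, so $a=2$, $b=3$, and $r=1$; thus $(a-r)(b-r)=(1)(2)=2$, and the required inequality reads $3\ge 2+1=3$, which holds with equality. Therefore $Z=D_1(\phi)$ is connected, as claimed.

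The argument is essentially a matter of checking the Fulton--Lazarsfeld hypotheses, so there is no deep obstacle; the one point demanding care is that the numerical condition is satisfied exactly at the boundary. The expected codimension of $D_1(\phi)$ in $\p3$ is $(a-r)(b-r)=2$, consistent with $Z$ being a curve when $T_\sF$ is locally free (cf. Lemma~\ref{loc free tg}), and connectedness requires $\dim\p3$ to exceed this codimension by at least one, i.e.\ precisely the equality $3=2+1$. Thus the conclusion holds on $\p3$ but would fail on a variety of the same dimension if the expected codimension were larger, which is why the ampleness of the \emph{full} bundle $T_\sF^\vee\otimes T\p3$ (and not merely of $T_\sF^\vee$) is what the theorem consumes.
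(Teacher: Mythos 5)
Your proof is correct, but it follows a genuinely different route from the paper's. The paper deduces the corollary from its cohomological criterion (Theorem \ref{con}): by Serre duality, $H^2(T_\sF(-2-d))\simeq H^1\bigl(T_\sF^{\vee}\otimes\det(T_\sF^{\vee})\otimes\O(4)\otimes K_{\P^3}\bigr)$, and this group vanishes by the Griffiths vanishing theorem because $T_\sF^{\vee}$ and $\O(4)$ are ample; Theorem \ref{con} then yields connectedness (and pure dimension~$1$). You instead identify $\sing(\sF)$, as a set, with the degeneracy locus $D_1(\phi)$ --- legitimate, since the paper records exactly this identification in Section \ref{dist+fol} when $T_\sF$ is locally free, and connectedness of a scheme is a property of its underlying topological space --- and then invoke the Fulton--Lazarsfeld connectedness theorem, whose hypotheses you verify correctly: $T_\sF^{\vee}\otimes T\p3$ is ample because $T\p3$ is a quotient of $\O(1)^{\oplus 4}$ and, in characteristic zero, tensor products of ample bundles are ample; and $\dim\p3=3>(2-1)(3-1)=2$, so the numerical condition holds (with equality at the threshold, as you note). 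As for what each approach buys: the paper's route produces the vanishing $h^2(T_\sF(-2-d))=0$ and the pure one-dimensionality of $Z$ as byproducts, and stays within the cohomological machinery used elsewhere in the paper (e.g., in Example \ref{exeNonsemi}); your route is more geometric, dispenses with the hypothesis $d>0$ of Theorem \ref{con} entirely (though ampleness of $T_\sF^{\vee}$ forces $c_1(T_\sF^{\vee})=d-2\ge 2$ upon restriction to a line, so that hypothesis was never actually at risk), yields nonemptiness of $Z$ for free via the companion Fulton--Lazarsfeld statement $\dim X\ge (a-r)(b-r)$, and generalizes verbatim to distributions with ample conormal-type hypotheses on other projective manifolds. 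Both proofs consume the ampleness hypothesis in an essential and parallel way --- yours through the ampleness of the Hom bundle, the paper's through Griffiths vanishing.
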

\begin{proof}
We have, by Serre duality, 
$$ H^2(T_\sF(-2-d)) \simeq H^1(T_\sF^{\vee}(2+d)\otimes K_{\P^3})=H^1(T_\sF^{\vee}(d-2)\otimes \O(4)\otimes K_{\P^3}). $$
Observe that $T_\sF^{\vee}(d-2)\otimes \O(4)\otimes K_{\P^3}=T_\sF^{\vee}\otimes \det(T_\sF^{\vee})\otimes \O(4)\otimes K_{\P^3}$; since $T_\sF^\vee$ and $ \O(4)$ are ample, then, by Griffiths Vanishing Theorem \cite[section 7.3.A, pg 335 ]{La}, we get
$$
h^2(T_\sF(-2-d)) = h^1(T_\sF^{\vee}\otimes \det(T_\sF^{\vee})\otimes \O(4)\otimes K_{\P^3}) =0 .
$$
The result follows from Theorem \ref{con}.
\end{proof}

We now examine the relation between connectedness and integrability. Let $\o$ be a germ of an integrable 1-form in $\C^3$, singular at $0\in\C^3$. There are three possibilities.

\begin{itemize}
\item[(1)] Isolated singularities.  If $0$ is an isolated singularity of $\o$, then it  follows from Malgrange's Theorem \cite{Ma} that 
there are germs of holomorphic functions $f\in \mathcal{O}_0$ and
$g\in \mathcal{O}_0^{\ast}$, such that $\o=gdf$. In this case,  the tangent sheaf $\ker(\o)$ is not  locally free at $0$.

\item[(2)] Kupka type singularities. Define $K(\o)=\{\omega(0)=0,\,\, d\omega(0)\neq0\}$. It follows from Kupka's Theorem \cite{K} that $K(\o)$ is a smooth curve and, locally, the form $\o$ can  be written in two variables. In this case,  the tangent sheaf $\ker(\o)$ is locally free at $0$.

\item[(3)] Simple singularities. $\o(0)=0$ and $d\o$ has an isolated singularity at $0$. These singularities are contained in the closure $\overline{K(\o)}$. The tangent sheaf is locally free at $0$. See   \cite{CCGL} or \cite[Theorem 4.1]{CCF}.
\end{itemize}

%We define the non-Kupka set of $\o$ by $NK(\o)=\sing(\o)\setminus K(\o)$. This type of singularities was studied in \cite{CCF}.

Finally, we recall a particular case of the Baum--Bott residues formula for a codimension one foliation $\sF$ on a complex compact manifold $X$. Set $X^0=X\setminus \sing(\sF)$ and take $p_{0}\in X^0$. Then, in a neighborhood $U_{\alpha}$ of $p_{0}$, the foliation $\sF$ is induced by a holomorphic  $1$--form $\o_{\a}$ and there exists a differentiable  $1$--form $\theta_{\a}$ such that
\[
d\o_\a=\theta_\a \wedge \o_\a
\]
Denote by $C$ the codimension 2 component of $\sing(\sF)$. Let $C_j$ be an irreducible component of $C$. Take a generic point $p\in c_j$, that is, $p$ is a point where $C_j$ is smooth and disjoint from the other singular components. Pick $B_{p}$ a sufficiently small ball centered at $p$, so that $S(B_p)$ is a sub-ball of $B_p$ of codimension $2$. Then the De Rham class can be integrated over an oriented $3$-sphere $L_{p}\subset B^{*}_{p}$ positively linked with $S(B_{p})$:
\[
\BB(\sF, C_j)=\frac{1}{(2\pi i)^{2}}\int_{L_{p}}\theta\wedge d\theta.
\]
This complex number is the \textit{Baum--Bott residue of $\sF$ along $C_j$}. This formula is a particular case of the general Baum--Bott residue Theorem \cite{BB} reproved by Brunella and Perrone in \cite{BP} and \cite{CF} for foliations of higher codimension.  

If $\sF$ is a codimension one foliation on $\mathbb{P}^{3}$ of degree $d$, then the Baum--Bott Theorem yields the following equation 
\begin{equation}\label{Baum-Bott}
\sum_{C_j \subset C} \BB(\sF, C_j) \deg(C_j)=(d+2)^{2}.
\end{equation}
For more details see \cite[p. 2075]{CCF}. 
\begin{Theorem} \label{sm-conn}
Let $\sF$ be a codimension one distribution on $\mathbb{P}^3$, of degree $d > 0$. If $T_\sF$ is locally free and $\sing(\sF)$ is smooth and connected, then $\sF$ is not integrable. 
\end{Theorem}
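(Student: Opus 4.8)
The plan is to assume, for contradiction, that $\sF$ is integrable and to extract an impossible numerical relation. Since $T_\sF$ is locally free, Lemma \ref{loc free tg} forces $Z:=\sing(\sF)$ to have pure dimension $1$; being smooth and connected, $Z$ is a smooth irreducible curve, which I denote $C$. By the local classification of germs of integrable $1$-forms recalled above, each singular point is of Kupka or of simple type: isolated singularities are excluded because there $T_\sF$ is not locally free, and simple singularities are excluded because at such points the singular scheme fails to be smooth. Thus $C$ is an entirely Kupka curve; as $C$ is connected its transverse type is constant, and the global structure of Kupka components forces it to be the linear form $\tt$ for a pair of coprime positive integers $p,q$.

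First I would extract a relation from the normal bundle. At every point of $C$ one has $d\o\ne 0$, and the restriction $d\o|_C$ is a nowhere-vanishing section of $\det N^\vee_{C/\p3}\otimes\O_C(d+2)$; hence $\det N_{C/\p3}\cong\O_C(d+2)$. Adjunction on the smooth curve $C$ then gives $2p_a(C)-2=(d-2)\deg(C)$. Combining this with the genus formula \eqref{eq:gen}, valid since $T_\sF$ is locally free, a direct computation yields
$$ \deg(C)=\frac{d^2+2d+2}{2}. $$

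Next I would use the Baum--Bott formula. For the transverse type $\tt$ the residue is $\BB(\sF,C)=\frac{(p+q)^2}{pq}$, so by irreducibility of $C$ equation \eqref{Baum-Bott} becomes $\frac{(p+q)^2}{pq}\deg(C)=(d+2)^2$. Since $\gcd(p,q)=1$ implies $\gcd\big((p+q)^2,pq\big)=1$, it follows that $pq\mid\deg(C)$ and $(p+q)\mid(d+2)$; writing $d+2=k(p+q)$ gives $\deg(C)=pq\,k^2$. Equating the two expressions for $\deg(C)$ and substituting $d=k(p+q)-2$ reduces, after simplification, to
$$ k^2(p^2+q^2)-2k(p+q)+2=0. $$
Read as a quadratic in $k$, its discriminant equals $-4(p-q)^2\le 0$, so a solution forces $p=q$; with $\gcd(p,q)=1$ this gives $p=q=1$, hence $k=1$ and $d=k(p+q)-2=0$, contradicting $d>0$. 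Therefore $\sF$ cannot be integrable.

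The main obstacle I anticipate is the structural input of the first paragraph: ruling out simple singularities using smoothness of the singular scheme, and showing that the constant transverse type of the connected Kupka curve $C$ is a linear form $\tt$ with coprime integer weights, that is, the global rationality and linearizability of the transverse type. Once these facts are in place, the computation of $\det N_{C/\p3}$, together with \eqref{eq:gen} and the Baum--Bott residue \eqref{Baum-Bott}, is routine, and the contradiction follows from the elementary Diophantine analysis above.
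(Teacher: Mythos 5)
Your Diophantine endgame is correct (I checked: adjunction along a Kupka curve gives $\det N_{C/\p3}\simeq\OC(d+2)$, hence $2p_a(C)-2=(d-2)\deg(C)$, which against \eqref{eq:gen} yields $\deg(C)=(d^2+2d+2)/2$, and the quadratic $k^2(p^2+q^2)-2k(p+q)+2=0$ indeed forces $p=q=1$, $k=1$, $d=0$), but the reduction in your first paragraph has a genuine hole, and it is exactly where the paper's proof spends all of its effort. Smoothness of the singular scheme does \emph{not} exclude non-Kupka points: the three-case local classification you invoke is not exhaustive, and there are integrable germs with smooth singular curve along which $d\omega$ vanishes identically --- e.g.\ $\omega=(x_1+x_2^2)\,dx_1+x_2\,dx_2$, whose singular scheme is the smooth curve $\{x_1=x_2=0\}$ with $d\omega=-2x_2\,dx_1\wedge dx_2$ vanishing on it, so no point of the curve is Kupka. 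Your claim that ``simple singularities are excluded because the singular scheme fails to be smooth'' is therefore unjustified, and the case in which the Kupka set misses $C$ entirely is not addressed. The paper's proof handles precisely this: it first shows $j^1\omega\neq0$ at each point of $Z$ (using smoothness to compare the ideals $I(a,b)$ and $I(x_1,x_2)$), then splits into $K(\omega)\cap Z=\emptyset$, killed by the residue-vanishing theorem of \cite{CCF} against the Baum--Bott formula \eqref{Baum-Bott}, and $K(\omega)\cap Z\neq\emptyset$, where a flow-box argument shows the Kupka set is open and closed in the connected curve $Z$, whence $K(\omega)=Z$. Without an argument of this kind, ``$C$ is an entirely Kupka curve'' is an assertion, not a deduction.

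The second structural input you flag --- constancy plus rationality and linearizability of the transverse type, i.e.\ that it is $px\,dy-qy\,dx$ with $p,q$ coprime positive integers --- is not a routine fact either: it is essentially the deep circle of theorems (\cite{CL2,Br1,C1,C2}) that the paper itself quotes at this point, and your numerology genuinely depends on it. If one only knows $C$ is Kupka with transverse eigenvalue ratio $t$, then your two relations combine to $t+t^{-1}=4(d+1)/(d^2+2d+2)\in(0,2)$, which is perfectly satisfiable by a non-real unimodular $t$; so without rationality (or at least reality and positivity) of the type, no contradiction follows. Moreover, once you do import those theorems, they deliver directly that $Z$ is a complete intersection $\{f=g=0\}$ with $\deg f+\deg g=d+2$ and that $\sF$ has a rational first integral $f^p/g^q$; at that point the paper concludes in one line, since rational foliations of positive degree have $c_3(T_\sF)\neq0$ (Theorem \ref{racionais}, or \cite{CSV}), contradicting local freeness --- which makes your (correct) adjunction-plus-Baum--Bott computation an attractive but redundant detour. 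In short: the proposal is not a complete proof; the missing pieces are the open-closed/residue-vanishing analysis ruling out non-Kupka behaviour on $Z$, and the justification of the rational linear transverse type, both of which the paper supplies.
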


\begin{proof}
Assume that $\sF$ is integrable. Consider the morphism $\varphi: T_\sF \longrightarrow T\mathbb{P}^3$ defining the distribution and the associated twisted $1$-form $\omega \in H^0(\Omega_{\P^3}^1(d+2))$. Since $T_\sF$ is locally free, the singular set of the distribution is 
$$ \sing(\sF) = \{ p \in \mathbb{P}^3 |\ \mathrm{rk} ( \varphi_p) \leq 1 \} = 
\{ p \in \mathbb{P}^3 |\ \omega(p)=0 \}. $$
In fact,  by taking wedge product we get the morphism  $\wedge^2\varphi : \det(T_\sF) \simeq \O(-d+2) \longrightarrow \wedge^2T\P^3  $. Since $\wedge^2T\P^3 \simeq \Omega_{\P^3}^1(4)$ we have that   $\wedge^2\varphi : \O(-d+2) \longrightarrow \Omega_{\P^3}^1(4)$ which induces the twisted $1$-form $\omega:  \O \longrightarrow \Omega_{\P^3}^1(d+2)$. 
Consider the dual morphism $\omega^{\vee}:T\P^3(-d-2) \longrightarrow \O$. Then, $\sing(\sF)$ is the subscheme of  $\P^3$ whose ideal is precisely the sheaf $\img(\omega^{\vee})$, and, as a set, it coincides with the zeros of $\omega$ and with the degeneracy locus of the map $\varphi : T_\sF \longrightarrow T\mathbb{P}^3$, see \cite[section 2.2]{CJV}. 

Since $Z$ is connected and smooth we  can consider a small neighborhood $U$ of $Z$  with  an analytic local coordinates system $(x_1,x_2,x_3)$ such  that $Z\cap U=\{x_1=x_2=0\}$. Now, take in $U$ a local frame $\{u, v\}$ of $T_\sF|_{U}$.  Then $\varphi(u) =\mathbf{X}$ and
$\varphi(v) =\mathbf{Y}$ are holomorphic vector fields such that
$\omega = \imath_{\mathbf{X}} \imath_{\mathbf{Y}} d x_1 \wedge d x_2 \wedge d x_3$, up to a unit in $\mathcal{O}_U$ , where $\imath_{\mathbf{X}}$ denotes the inner product of the vector field $\mathbf{X}$ with differential forms. 

Since  $\sing(\sF)$ is smooth, then
$$
D_0 ( \varphi) := \{ p \in \mathbb{P}^3 | \  \mathrm{rk} ( \mathrm{Im}(\varphi))_p = 0 \}=\emptyset . 
$$ 
That is, for all $p \in Z$ we have $\mathrm{rk} ( \varphi)_p = 1$. Therefore,  we can assume that $\mathbf{X}$ is a germ of a tangent vector field at $p\in U$ that does not vanish at $p$.  We can suppose that 
in the coordinate system  $\zeta = ( x_1, x_2, x_3)\in U$ that 
the vector field  is given by $\mathbf{X}= \partial / \partial x_3$ and that the foliation is induced by the $1$-form 
$\omega = A
( \zeta) d x_1 + B ( \zeta) d x_2 + C ( \zeta) d x_3$. 
Then,    we have that 
\begin{eqnarray*}
  \omega = e^{\alpha x_3} [ a ( x_1, x_2)d x_1 +   b( x_1, x_2) d x_2] ;&  \alpha \in \mathbb{C},  & 
\end{eqnarray*}
where $a ( x_1, x_2)$ and $b( x_1, x_2) $ are analytic functions. In fact,   since $\imath_{\mathbf{X}}
\omega \equiv 0,$ we have $C \equiv 0.$ 
Now, by integrability condition we have
$$ 0 \equiv  \omega \wedge d \omega = \left( A ( \zeta)
 \partial_{x_3} B  - B ( \zeta)  \partial_{x_3} A
\right) d x_1 \wedge d x_2 \wedge d x_3, $$
where $\partial_{x_3} $ denotes the derivation with respect to $x_3$. 
Then
$$
 A ( \zeta)
 \partial_{x_3} B  - B ( \zeta)  \partial_{x_3} A  \equiv 0. 
$$
This implies
\begin{eqnarray*}
A ( \zeta)
 \partial_{x_3} B  - B ( \zeta)  \partial_{x_3} A   = 0 \Longrightarrow A (
  \zeta) = e^{\alpha x_3} a ( x_1, x_2, 0), \hspace{1em} \mathrm{and} \hspace{1em} B (
  \zeta) = e^{\alpha x_3} b ( x_1, x_2, 0),  
  \alpha \in \mathbb{C}. &  & 
\end{eqnarray*}
Let $\omega =  j^1\omega+ j^2\omega +\cdots$ be the Taylor series of $\omega$ in a neighborhood of $p$.
We observe that $j^1 \omega \neq 0$: indeed, since $\{a=b=0\}=Z\cap U=\{x_1=x_2=0\}$, the ideals $I(a,b)$ and $I(x_1,x_2)$ must be equal, which implies that there exists an invertible matrix $G\in GL(2,\mathcal{O}_U)$  such that 
\[
\begin{pmatrix}
x _1\\ x_2
\end{pmatrix}=
\begin{pmatrix}
g_{11} & g_{12} \\ g_{21} & g_{22}
\end{pmatrix}
\cdot 
\begin{pmatrix}
a \\b
\end{pmatrix}\quad G=\begin{pmatrix}
g_{11} & g_{12} \\ g_{21} & g_{22}
\end{pmatrix},\quad g_{ij}\in \mathcal{O}_U
\]
Now, we take the Taylor series of $g_{ij},a,b$, for instance $a=a_1+a_2+\cdots + a_k+\cdots$, where $a_k$ has degree $k$.  Observe that $j^1\omega= a_1dx_1+b_1dx_2$. Thus, 
\[
\begin{pmatrix}
x _1\\ x_2
\end{pmatrix}=
(G_0+\cdots)\begin{pmatrix}
a_1+a_2+\cdots \\
b_1+b_2+\cdots
\end{pmatrix}
\]
where $G(0)=G_0 \in GL(2,\mathbb{C})$. We can rewrite this last equation as follows 
\[
\begin{pmatrix}
x _1\\ x_2
\end{pmatrix}=G_0\cdot \begin{pmatrix}
a_1=a_x(0) x_1+ a_y(0) x_2 \\ b_1=b_x(0) x_1 + b_y(0) x_2
\end{pmatrix}+\mbox{higher degrees terms},\quad 
\begin{pmatrix}
a_{x_1}(0) & a_{x_2}(0) \\ b_{x_1}(0) & b_{x_2}(0)
\end{pmatrix}\in GL(2,\mathbb{C}),
\]
where $a_{x_1},a_{x_2},b_{x_1},b_{x_2}$ denote the respective partial derivatives. Comparing the linear terms, we conclude that 
\[
\begin{pmatrix}
x _1\\ x_2
\end{pmatrix}=G_0\cdot \begin{pmatrix}
a_1=a_x(0) x_1+ a_y(0) x_2 \\ b_1=b_x(0) x_1 + b_y(0) x_2
\end{pmatrix}.
\]
In particular, either $a_1\neq 0$  or $b_1\neq 0$, hence $j^1\omega= a_1dx_1+b_1dx_2 \neq 0$.    

If $K(\omega) \cap Z = \emptyset$, then by \cite{CCF} the Baum--Bott residue $\BB(\sF,Z)$ along $Z$ vanishes, providing a contradiction with the Baum--Bott formula \ref{Baum-Bott}, which yields
\begin{eqnarray*}
\BB(\sF,  Z) \deg ( Z) = ( d + 2)^2. &  & 
\end{eqnarray*}

Now, suppose that $K(\omega)\cap Z \neq\emptyset$. Since $Z$ is connected and smooth, we conclude that $K(\omega)=Z$. 
Indeed, since $Z$ is connected it is enough to show that $K(\sF)\cap Z$ is an open and closed subset of $Z$. It is clearly an open set $\{p| \omega(p)=0,\quad d\omega(p)\neq0\}$ by definition. 

It remains for us to show that $K(\sF)\cap Z\subseteq Z$ is closed. Let $p\in Z$ be a point in the closure $\overline{K(\sF)\cap Z}$; in a small neighborhood $V$ of $p$, we can take a disc $\Sigma$ transversal to $\sF$, and we have  a foliation  $\sF|_{V}\cap \Sigma$  on $\Sigma$ such that $\{p\}=\sing(\sF|_{V}\cap \Sigma)$. 

Similarly to what was done above, we consider a vector field $\mathbf{W}$ tangent to the foliation defined in $V$ such that $\mathbf{W}(p)\neq0$.  Consider $\Phi_{\epsilon}$ the flow of $\mathbf{W}$ such that in the time $\epsilon$ we have $\Phi_{\epsilon}(p):=q\in K(\sF)\cap Z$. Fixed $\epsilon$,  we have that $\Phi_{\epsilon}: (V,p)\to  (\Phi_{\epsilon}(V),q) $ is the germ of a biholomorphism, so that $\sF\cap \Sigma_{\epsilon}(V)$ is biholomorphic to $\sF|_{V}$. Since the Kupka property is preserved by biholomorphisms, we conclude that $\sF|_{V}$ is Kupka at $p$. This shows that $ K(\sF)\cap Z$ is closed.

It follows from \cite{Br1,C1,C2} that $K(\omega) = Z$ is a complete intersection curve, given by $\{ f=g=0 \}$, where $\deg(f)+\deg(g) = d + 2$; in addition, since $\deg(\sF)>0$, a result due to Cerveau and Lins Neto \cite{CL2} implies that $\sF$ has a  rational  first integral of the type $f^p / g^q$. However, such foliations always have isolated singularities, thus contradicting our hypothesis of $T_\sF$ being locally free; see \cite{CSV} or Theorem \ref{racionais} below.
\end{proof}

% --------------------------

\subsection{Martinet surface}

The \emph{O'Neill tensor} of a codimension one distribution $\sF$ on $X=\p3$ is defined by the morphism \cite{JZ}:
$$
\begin{array}{cccc}
& \bigwedge^2 T_\sF& \longrightarrow & N_{\sF}\simeq I_Z(d+2) \\
\\
 & u \wedge v & \longmapsto &\pi[\phi(u),\phi(v)],
\end{array}
$$
This morphism is trivial precisely when $\sF$ is integrable. Otherwise, since $\bigwedge^2 T_\sF= \O(2-d)$, we get a nonzero element of $H^0(\IZ(2d))$. This means that $\sing(\sF)$ is contained in a surface $V$ of degree $2d$, called the \emph{Martinet surface} of $\sF$, which is given by the zero set of the non-trivial section  
$$
\omega \wedge d\omega \in H^0(\P^3, \Omega^3_{\P^3}(2d+4)) =H^0(\P^3, \O(2d)). 
$$
Observe that the Martinet surface of a non-integrable distribution of degree 0 is empty, since in this case  $\omega \wedge d\omega$ is a non-trivial section of $H^0(\P^3,\O)$.  

Our next result poses certain restrictions on the Martinet surface of non-integrable distributions on $\p3$. % of higher degree.

\begin{Prop}
Let $\sF$ be a non-integrable locally free distribution on $\P^3$ of degree $d>0$. 
Every smooth hypersurface of degree $2d$ containing $\sing(\sF)$ has Picard rank larger than 1.
\end{Prop}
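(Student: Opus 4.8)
The plan is to derive the statement from the Hodge index theorem, by exhibiting on $V$ a divisor class that is numerically independent from the hyperplane class $H=\O(1)|_V$; the natural candidate is the curve $C:=\sing(\sF)$ itself. First I would record the structural input. Since $T_\sF$ is locally free, Lemma \ref{loc free tg} gives $\mathscr{U}=0$ and $Z=C$ of pure dimension $1$, with $\OC$ Cohen--Macaulay (no embedded points); and since $d>0$ forces $c_3(\Om(d+2))=d^3+2d^2+2d>0$, the defining section of $\sF$ must vanish, so $C$ is nonempty and $\deg C\ge 1$. As $C\subset V$ is a pure codimension-one Cohen--Macaulay subscheme of the \emph{smooth} surface $V$, the Auslander--Buchsbaum formula gives it projective dimension one locally, which forces its ideal to be locally principal; hence $C$ is an effective Cartier divisor on $V$, and adjunction applies to it.

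Next I would compute the self-intersection $C\cdot C$ on $V$. Writing $\delta:=\deg C=C\cdot H$ and using $K_V=\O(2d-4)|_V$ (adjunction for a degree-$2d$ surface in $\p3$), the adjunction formula $2p_a(C)-2=C\cdot(C+K_V)$ gives
\[
C\cdot C \;=\; 2p_a(C)-2-(2d-4)\,\delta .
\]
Here the arithmetic genus produced by adjunction on $V$ coincides with $p_a(C)=1-\chi(\OC)$, so I may substitute the value of $p_a(C)$ from equation \eqref{eq:gen}, which is available precisely because $T_\sF$ is locally free. A direct simplification then yields the clean expression
\[
C\cdot C \;=\; -d^3-2d^2-2d+(d+2)\,\delta .
\]
Forming the Gram determinant of the pair $(H,C)$ for the intersection pairing, namely $2d\,(C\cdot C)-\delta^2$, and completing the square produces
\[
2d\,(C\cdot C)-\delta^2 \;=\; -\bigl(\delta-d(d+2)\bigr)^2-d^4 \;\le\; -d^4 \;<\;0
\]
for every $d>0$ and every value of $\delta$.

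Finally, since $H$ is ample with $H^2=2d>0$, the Hodge index theorem guarantees $2d\,(C\cdot C)-\delta^2\le 0$, with equality if and only if $[C]$ is numerically proportional to $H$. The strict inequality above rules out proportionality, so $[C]$ and $H$ span a rank-two subgroup of $\mathrm{NS}(V)$ and the Picard number satisfies $\rho(V)\ge 2$, as claimed. The point that must be secured carefully is the reduction to an honest divisor computation: one has to justify that $C$ is an effective Cartier divisor (via its Cohen--Macaulayness) so that adjunction on $V$ is legitimate and returns the same genus as \eqref{eq:gen}. The remaining algebra is routine, and the decisive feature is that the discriminant of the resulting quadratic in $\delta$ is the constant $-4d^4$, independent of the a priori unknown degree of $C$; this is what makes the estimate uniform. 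Note that non-integrability enters only through the existence of a degree-$2d$ Martinet surface containing $\sing(\sF)$: the inequality itself uses solely that $T_\sF$ is locally free and that $d>0$.
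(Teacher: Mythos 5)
Your proof is correct, and its skeleton is the same as the paper's: both arguments combine adjunction on the smooth surface $V$ (with $K_V=\mathcal{O}_V(2d-4)$) with the genus--degree relation of Theorem \ref{P:SLocus} in the locally free case ($\mathscr{U}=0$, so $2p_a(C)-2=-(d^3+2d^2+2d)+(3d-2)\deg(C)$), and both extract a numerical contradiction. The difference lies in the endgame, and yours is genuinely cleaner on three counts. First, the paper assumes $\Pic(V)=\mathbb{Z}\cdot\mathcal{O}_V(1)$, writes $\mathcal{O}_V(C)=\mathcal{O}_V(r)$, and reduces to the Diophantine equation $d^3+2d^2+2d-(3d-2)(2dr)+2dr(r+2d-4)=0$, whose unsolvability for $d>0$ is asserted (the paper's surrounding remarks indicate such claims were checked with Mathematica); your completing of the square, $2d\,(C\cdot C)-\delta^2=-(\delta-d(d+2))^2-d^4$, shows the equation has no \emph{real} solutions and makes the verification transparent --- indeed, substituting $\delta=2dr$ and $C\cdot C=2dr^2$ into your Gram determinant recovers the paper's equation exactly. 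Second, by phrasing the conclusion through the Hodge index theorem (the pair $(H,C)$ has negative Gram determinant, hence spans a rank-two subgroup of the N\'eron--Severi group), you prove Picard rank $\geq 2$ directly, without assuming the hyperplane class generates $\Pic(V)$; this sidesteps a primitivity subtlety that the paper's formulation quietly elides. Third, you supply the justification the paper omits, namely that $C$ is an effective Cartier divisor on $V$ so that adjunction is legitimate: since $T_\sF$ is locally free, $Z=C$ is Cohen--Macaulay of pure dimension one ($\mathscr{U}=0$ exactly says $\mathcal{O}_Z$ has no zero-dimensional associated points), and Auslander--Buchsbaum on the smooth surface $V$ then forces $I_{C/V}$ to be locally free of rank one, hence invertible. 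Your nonemptiness argument via $c_3(\Om(d+2))=d^3+2d^2+2d>0$ is also sound, and your closing observation correctly locates the role of non-integrability: it enters only through the existence of the degree-$2d$ Martinet surface, while the inequality itself uses only local freeness and $d>0$, just as in the paper.
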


In particular, the Martinet surface of a locally free non-integrable distribution either is singular or has Picard rank at least 2. It would be interesting to know what is the smallest Picard rank of the Martinet surface of a non-integrable distribution of degree $d$ and whether this surface is smooth. 

\begin{proof}
Let $V\subset\p3$ be a smooth hypersurface of degree $2d$ such that $\Pic(V)=\mathbb{Z}\cdot \mathcal{O}_V(1)$, where $\mathcal{O}_V(1)=\mathcal{O}_{\mathbb{P}^3}(1)|_V$. If $\sing(\sF)=C\subset V$, then $\mathcal{O}_V(C)=\mathcal{O}_V(r)$, for some $r$. By the adjunction formula, we have
$$ 2p_a(C)-2= c_1(\mathcal{O}_V(C))\cdot(c_1(\mathcal{O}_V(C)) +c_1(K_V)) = 2dr(r+2d-4), $$
since $K_V=\mathcal{O}_V(2d-4)$. This previous formula together with the fact that $T_\sF$ is locally free and Theorem \ref{P:SLocus} implies that
$$ d^3+2d^2+2d-(3d-2)(2dr)+ 2dr(r+2d-4)= 0, $$
since $\deg(C)=2dr$. However, this equation has no integer solutions for $d>0$.
\end{proof}
 
%%%%%%%%%%%%%%%%%%%%%%%%%%%%%%%%%%%%%%%%%%%%%%%%%%%%%%%%%%%%%%%%%%%%%%%%%%%%%%%%%%%%%%
%%%%%%%%%%%%%%%%%%%%%%%%%%%%%%%%%%%%%%%%%%%%%%%%%%%%%%%%%%%%%%%%%%%%%%%%%%%%%%%%%%%%%%

\section{Cohomology of distributions}\label{ss:cohomology}

We start by listing a few elementary facts about the cohomology of the tangent sheaf $T_\sF$ of a codimension one distribution $\sF$ as in display (\ref{eq:Dist p3}).

\begin{Lemma}\label{L:CH}
The tangent sheaf $T_\sF$ of a codimension one distribution $\sF$ of degree $d$ satisfies:
\begin{enumerate}
\item[(i)] $T_\sF^{\vee} =T_\sF(d-2)$;
\item[(ii)] $h^0(T_\sF(p))=0\mbox{ for } p\leq -2$;
\item[(iii)] $h^1(T_\sF(p))=0$ for $p\leq -d-2$; 
\item[(iv)] $h^3(T_\sF(p))=0$ for $p\geq d-4$.
\end{enumerate}
\end{Lemma}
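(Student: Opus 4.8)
The plan is to establish each of the four statements using the key structural facts available: the reflexivity of $T_\sF$, the defining exact sequence (\ref{eq:Dist p3}), the self-duality relation, and standard Bott vanishing for the cohomology of $T\p3(p)$ and $\op3(p)$. I would treat (i) first, since items (ii)--(iv) can then be reduced to one another by Serre duality through (i).

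For (i), recall that $T_\sF$ is a rank $2$ reflexive sheaf with $\det(T_\sF)=\O(2-d)$, as computed from $c_1(T_\sF)=2-d$. For any rank $2$ reflexive sheaf on a smooth threefold one has the canonical isomorphism $\sE^\vee\simeq\sE\otimes\det(\sE)^\vee$; applying this with $\sE=T_\sF$ gives $T_\sF^\vee\simeq T_\sF\otimes\O(d-2)=T_\sF(d-2)$, which is exactly (i). This is the most important step conceptually, but it is also the most routine, being a general fact about rank $2$ reflexive sheaves.

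For (ii), I would twist the defining sequence (\ref{eq:Dist p3}) by $\op3(p)$ and take cohomology to get an injection $H^0(T_\sF(p))\hookrightarrow H^0(T\p3(p))$. By Bott's formula, $H^0(T\p3(p))=0$ for $p\leq-2$, which forces $h^0(T_\sF(p))=0$ in that range. For (iii), the same twisted sequence yields the exact piece
\[
H^0(I_{Z/\P^3}(d+2+p)) \to H^1(T_\sF(p)) \to H^1(T\p3(p)).
\]
Since $H^1(T\p3(p))=0$ for all $p$ by Bott, and $H^0(I_{Z/\P^3}(d+2+p))\hookrightarrow H^0(\op3(d+2+p))$ vanishes once $d+2+p<0$, i.e.\ $p\leq-d-3$, this already gives vanishing for $p\leq-d-3$; the remaining boundary case $p=-d-2$ requires checking that $H^0(I_{Z/\P^3})=0$, which holds because $Z$ has codimension at least $2$ so its ideal sheaf has no global sections. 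Finally, (iv) follows by combining (i) with Serre duality on $\p3$: one has $H^3(T_\sF(p))\simeq H^0(T_\sF^\vee(-p)\otimes K_{\p3})^\vee = H^0(T_\sF(d-2-p-4))^\vee = H^0(T_\sF(d-6-p))^\vee$, using $K_{\p3}=\O(-4)$ and (i); this vanishes by (ii) precisely when $d-6-p\leq-2$, i.e.\ $p\geq d-4$.

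I expect no serious obstacle here; the only care needed is in the boundary cases, particularly ensuring the $p=-d-2$ endpoint of (iii) is handled via the vanishing of $H^0(I_{Z/\P^3})$ rather than by a naive dimension count, and in tracking the twists correctly through Serre duality for (iv). Everything else is a direct application of the self-duality (i), the sequence (\ref{eq:Dist p3}), and Bott vanishing.
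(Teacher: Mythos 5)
Your argument is correct and is essentially the paper's own proof: (i) is the rank-two reflexive identity $T_\sF^\vee\simeq T_\sF\otimes(\det T_\sF)^\vee$ from Hartshorne, (ii) and (iii) come from the cohomology sequence of the twisted sequence \eqref{eq:Dist p3} together with Bott vanishing, and (iv) is Serre duality, which the paper simply cites while you write out the twists explicitly (correctly landing on $H^0(T_\sF(d-6-p))^\vee$ and the range $p\geq d-4$). The one quibble is your justification of the endpoint $p=-d-2$ in (iii): $H^0(I_{Z/\P^3})=0$ holds because $Z$ is \emph{nonempty} (a constant vanishing on a nonempty scheme is zero), not because of its codimension -- indeed $Z=\emptyset$ does occur for the degree-$0$ null-correlation distribution \eqref{nc-bdl dual}, where $h^1(T_\sF(-2))=h^1(N(-1))=1$, an edge case the paper's proof glosses over as well. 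For $d\geq 1$ nonemptiness is automatic, e.g.\ because $c_3(\Om(d+2))=d^3+2d^2+2d\neq 0$ forbids a nowhere-vanishing defining $1$-form, so your conclusion is sound in that range.
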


\begin{proof}
Since $T_\sF$ is a rank 2 reflexive sheaf, the first claim is just $T_\sF^{\vee}=T_\sF\otimes (\det T_\sF)^{\vee}$ and $(\det T_\sF)^{\vee}=\O(-c_1(T_\sF))$ \cite[Prop 1.10]{H2}. The second item follows easily from (\ref{eq:Dist}), since $h^0(T\mathbb{P}^3(-2))=0$. For the third one, we have the exact sequence in cohomology
\[
H^0(I_{Z/\P^3}(d+p+2))\rightarrow H^1(T_\sF(p))\rightarrow H^1(T\mathbb{P}^3(p)).
\]
The term on the left vanishes for $p+d+2\leq 0$; on the other hand, the term of the right vanishes for all $p$. Finally, the last claim is just Serre duality, see \cite[Theorem 2.5]{H2}. 
\end{proof}

\begin{Obs} \label{min twist} \rm
Let $E$ be a reflexive sheaf rank two on $\p3$, and consider 
$$ r:= \min \{t\in\Z ~|~ h^0(E(t))\ne 0 \} .$$
Given a nontrivial section $\sigma\in H^0(E(r))$, we note that the cokernel $K$ of the associated morphism $\op3(-r)\to E$ is a torsion free sheaf.

Indeed, assume that $K$ is not torsion free, and let $T$ be its maximal torsion subsheaf. Let $L$ be the kernel of the composed epimorphism $E \twoheadrightarrow K \twoheadrightarrow K/T$. Since $E$ is reflexive and $K/T$ is torsion free, $L$ must be reflexive \cite[Lemma II.1.1.16 and Lemma II.1.1.12]{OSS}; but $L$ has rank 1, so $L=\op3(-p)$ for some $p$ \cite[Lemma II.1.1.15]{OSS}. 

\begin{equation}
\xymatrix{
        &                              &                     & 0 \ar[d]          & \\
        &                              &                     & T \ar[d]          & \\
0\ar[r] & \op3(-r) \ar@{..>}[d] \ar[r] & E \ar@{=}[d] \ar[r] & K \ar[r] \ar[d]   & 0 \\
0\ar[r] & L \ar[r]                     & E \ar[r]            & K/T \ar[r] \ar[d] & 0 \\
        &                              &                     & 0                 & \\
}
\end{equation}

From the exact sequence
$$ 0 \to \op3(-r) \to L \to T \to 0, $$
we must have $-p>-r$, contracting the fact the $r$ is the minimal twist for which $E$ has a section. 
\end{Obs}

\begin{Lemma}\label{coho_non_split}
Let $T_\sF$ be the tangent sheaf of a codimension one distribution of degree $d$. If $T_\sF$ does not split as a sum of line bundles, then $h^0(T_\sF(-1))=0$.
\end{Lemma}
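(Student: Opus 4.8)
The plan is to argue by contraposition: I will show that if $h^0(T_\sF(-1))\ne0$, then $T_\sF$ must split as a sum of line bundles. So suppose there is a nonzero section $\sigma\in H^0(T_\sF(-1))$. By Lemma \ref{L:CH}(ii) we know $h^0(T_\sF(p))=0$ for $p\le-2$, so the integer $r:=\min\{t\mid h^0(T_\sF(t))\ne0\}$ defined in Remark \ref{min twist} satisfies $r\le-1$; in fact, given our assumption, $r=-1$. Thus $\sigma$ realizes the \emph{minimal} twist at which $T_\sF$ has a section, and Remark \ref{min twist} applies directly.

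Next I would invoke Remark \ref{min twist} to conclude that the cokernel $K$ of the associated morphism $\op3(-r)=\op3(1)\to T_\sF$ is torsion free. This gives a short exact sequence
$$
0\to \op3(1)\xrightarrow{\sigma} T_\sF\to K\to 0,
$$
where $K$ is a torsion free rank $1$ sheaf, hence of the form $I_{Y/\P^3}(a)$ for some subscheme $Y$ of codimension at least $2$ and some twist $a$ determined by Chern class bookkeeping: since $c_1(T_\sF)=2-d$, one gets $a=2-d-1=1-d$. This is exactly the sub-foliation setup of sequence \eqref{y} with $k=-1$ (so $\O(-k)=\op3(1)$), giving the zero locus $Y=(\sigma=0)$.

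The heart of the argument is then to show that $Y$ must be empty, for if $Y=\emptyset$ the sequence splits $K=\op3(1-d)$ and $T_\sF=\op3(1)\oplus\op3(1-d)$ is a sum of line bundles, contradicting the hypothesis (and completing the contrapositive). To force $Y=\emptyset$, I expect to use the fact that $r=-1$ is the minimal twist: a section of $T_\sF(-1)$ vanishing on a nonempty $Y$ of pure dimension $1$ would, by the structure of rank $2$ reflexive sheaves (cf.\ the Serre correspondence as in \cite{H2}), produce sections in lower twists or otherwise contradict minimality. More concretely, I would dualize the sequence \eqref{y} with $k=-1$ and extract cohomological constraints: using $T_\sF^\vee=T_\sF(d-2)$ from Lemma \ref{L:CH}(i), the vanishing in Lemma \ref{L:CH}(iii) on $h^1$, and the degree bounds on $Y$, I would show that a nonempty $Y$ is numerically or cohomologically impossible, so $\sigma$ cannot vanish anywhere and the sequence splits.

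The main obstacle will be precisely this last step—ruling out a nonempty vanishing locus $Y$. The delicate point is that a priori $\sigma$ could vanish on a genuine curve, and one must leverage the \emph{minimality} of the twist $r=-1$ together with the reflexivity of $T_\sF$ to derive a contradiction. I anticipate that the cleanest route is to observe that if $Y\ne\emptyset$, then twisting down the sequence $0\to\op3(1)\to T_\sF\to I_{Y/\P^3}(1-d)\to0$ and applying $h^0$ would either contradict $r=-1$ being minimal or force $T_\sF$ to have an unexpected splitting; alternatively one shows the extension class in $\Ext^1(I_{Y/\P^3}(1-d),\op3(1))$ governing the sequence must vanish, again yielding a direct sum decomposition. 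Either way, the splitting contradicts the hypothesis, and the contrapositive is established.
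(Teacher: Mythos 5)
Your setup (contrapositive, minimality of the twist $r=-1$, torsion-free cokernel $I_{Y/\P^3}(1-d)$, and the observation that an empty $Y$ forces splitting since $\Ext^1(\O(1-d),\O(1))=H^1(\O(d))=0$) agrees with the paper's first half. But the step you flag as the main obstacle — ruling out a nonempty $Y$ — is a genuine gap, and neither of the routes you sketch can close it. Minimality of $r$ yields no contradiction: twisting $0\to\O(1)\to T_\sF\to I_{Y/\P^3}(1-d)\to0$ down by one gives $h^0(T_\sF(-2))\le h^0(\O)\cdot 0+h^0(I_{Y/\P^3}(-d))=0$, perfectly consistent with $r=-1$. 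Nor can one show the extension class in $\Ext^1(I_{Y/\P^3}(1-d),\O(1))$ vanishes: when $Y\ne\emptyset$ it is \emph{automatically nonzero}, because $\O(1)\oplus I_{Y/\P^3}(1-d)$ is not reflexive while $T_\sF$ is. More fundamentally, no argument using only the abstract sheaf $T_\sF$ can work: by Hartshorne's Serre correspondence \cite{H2}, there exist plenty of nonsplit rank 2 reflexive sheaves $E$ with $h^0(E(-2))=0$ and a section of $E(-1)$ vanishing along a curve. The lemma is false for such abstract sheaves; what makes it true is that $T_\sF$ sits inside $T\p3$ with torsion-free quotient.

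The paper's decisive idea, which your proposal never invokes, is to push the section through $\phi$: composing $\O(1)\hookrightarrow T_\sF\stackrel{\phi}{\to}T\p3$ produces a one-dimensional sub-foliation $\sG:\ 0\to\O(1)\to T\p3\to N_\sG\to0$ of degree $0$. By Remark \ref{Obs-subzero} (equivalently, a direct computation with the Euler sequence: every nonzero section of $T\p3(-1)$ corresponds to a vector $v\in\C^4$ and vanishes exactly at the point $[v]$), such a foliation is a linear projection whose singular scheme is a \emph{single point}. On the other hand, the inclusion in display (\ref{incl}) — established in the sub-foliation analysis via the sequences (\ref{v1})--(\ref{v4}) — gives $(\sigma=0)\subset\sing(N_\sG)=\sing(\sG)$, and the zero locus of $\sigma$, being nonempty (as $T_\sF$ is nonsplit) and the vanishing scheme of a section of a rank 2 reflexive sheaf with torsion-free cokernel, has pure dimension $1$. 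A point cannot contain a curve, and this contradiction is what forces $h^0(T_\sF(-1))=0$. Even Sancho's bound, which your ``degree bounds on $Y$'' hint gestures at, only gives $\deg(Y)\le1$ for $r=-1$, leaving a line as a possibility; the classification of degree $0$ foliations is what eliminates it.
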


\begin{proof}
Given a nontrivial section $\sigma\in H^0(T_\sF(-1))$, note from Remark \ref{min twist} that the cokernel of the associated morphism $\O(1)\to T_\sF$ is torsion free because $h^0(T_\sF(p))=0$ for every $p<-1$; in addition, its zero locus is nonempty, since $T_\sF$ does not split as a sum of line bundles. We obtain a sub-foliation
$$ \mathscr{G}:\  0  \longrightarrow \O(1) \longrightarrow T\p3 \longrightarrow N_\sG \longrightarrow 0 , $$
whose singular locus consists of a single point, see Remark \ref{Obs-subzero}. However, $\sing(N_\sG)$ should contain, by the inclusion in display (\ref{incl}), the zero locus of $\sigma$, which is 1-dimensional. It thus follows that $h^0(T_\sF(-1))=0$.
\end{proof}

Now let $c:=c_2(T_\sF)$ and $\ell=c_3(T_\sF)$ denote the second and third Chern classes of the tangent sheaf $T_\sF$. The Hilbert polynomial of $T_\sF$, defined by $P_\sF(t)=\chi(T_\sF(t))$, is given by
\begin{equation}\label{eq:HilbP}
P_\sF(t)=\frac{1}{3}(t+3)(t+2)(t+1)+\frac{1}{2}(t+2)(t+1)(2-d)-(t+2)c+\frac{1}{2}(\ell +(d-2)c)
\end{equation}

\begin{Obs}\label{R:CC}\rm
Note that $\ell\geq0$, and $\ell=0$ if and only if $T_\sF$ is locally free \cite[Proposition 2.6]{H2}. Moreover, $dc\equiv \ell (mod2)$ \cite[Cor. 2.4]{H2}, and if $T_\sF$ is locally free, then $c$ must be even whenever $d$ is odd.
\end{Obs}

Now, we focus on distributions with locally free tangent sheaf.

\begin{Lemma}\label{L:Csdg}
Let $T_\sF$ be the tangent sheaf of a codimension one distribution of degree $d$ with reduced singular scheme. If $T_\sF$ is locally free, then:
\begin{enumerate}
\item[(i)] For $d=0,\, h^1(T_\sF(p))=0$ for $p\neq-2$ and $h^1(T_\sF(-2))\leq 1$

\item[(ii)] For $d=1,\, h^1(T_\sF(p))=0$ for $p\leq -3$ and $p\geq0$, with $h^1(T_\sF(-1))\leq 1$.

\item[(iii)] For $d\geq 2,\, h^1(T_\sF(p))=0$ for $p\leq -d-2$ and $p\geq 2d-3$.
\end{enumerate}
\end{Lemma}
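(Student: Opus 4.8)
The plan is to control $h^1(T_\sF(p))$ in two complementary regimes. The low-twist regime $p\le -d-2$ is already covered by Lemma \ref{L:CH}(iii), so the substantive content is the vanishing for large $p$, together with the two ``$\le 1$'' bounds. For large $p$ the forward cohomology sequence of \eqref{eq:Dist p3} is useless, since $h^0(I_{Z/\P^3}(d+2+p))$ grows with $p$; so the first move is to reflect via Serre duality. As $T_\sF$ is locally free of rank $2$ with $c_1=2-d$, Lemma \ref{L:CH}(i) gives $T_\sF^\vee=T_\sF(d-2)$, and Serre duality on $\p3$ (with $\omega_{\P^3}=\op3(-4)$) yields
$$ h^1(T_\sF(p)) = h^2(T_\sF(d-6-p)). $$
Thus large-$p$ vanishing of $h^1$ becomes very-negative-twist vanishing of $h^2$.

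To control $h^2(T_\sF(q))$ for $q\ll 0$, I twist \eqref{eq:Dist p3} by $\op3(q)$ and extract
$$ H^1(I_{Z/\P^3}(d+2+q)) \to H^2(T_\sF(q)) \to H^2(T\p3(q)), $$
where, by Bott's formula, $h^2(T\p3(q))=0$ for $q\ne-4$ and $h^2(T\p3(-4))=1$. This is where the reduced hypothesis enters. Since $T_\sF$ is locally free, $Z=C$ is a curve of pure dimension $1$ (Lemma \ref{loc free tg}), reduced by assumption; from $0\to I_{Z/\P^3}(m)\to\op3(m)\to\OC(m)\to0$ and $h^1(\op3(m))=0$ one gets $H^1(I_{Z/\P^3}(m))\simeq\coker(H^0(\op3(m))\to H^0(\OC(m)))$. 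For $m<0$ a line bundle of negative degree on a reduced curve has no sections, so $H^0(\OC(m))=0$ and hence $H^1(I_{Z/\P^3}(m))=0$. Applying this with $m=d+2+q<0$, i.e. $q\le-d-3$, I conclude $h^2(T_\sF(q))\le h^2(T\p3(q))$, which is $0$ when $q\ne-4$ and at most $1$ when $q=-4$.

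Translating back through $q=d-6-p$, the range $q\le-d-3$ corresponds to $p\ge 2d-3$; there $h^1(T_\sF(p))=0$, with the sole exception of the self-dual twist $q=-4$, i.e. $p=d-2$, which lies in this range precisely when $d\le1$ and where one only gets $h^1\le1$. Specializing: for $d\ge2$ the twist $q=-4$ falls outside the range, so $h^1(T_\sF(p))=0$ for all $p\ge 2d-3$, and together with Lemma \ref{L:CH}(iii) (the range $p\le-d-2$) this gives (iii). For $d=1$ the exceptional twist is $p=d-2=-1=2d-3$, giving $h^1(T_\sF(-1))\le1$, while $p\ge0$ yields $0$ and $p\le-3$ is Lemma \ref{L:CH}(iii); this is (ii). For $d=0$ the exceptional twist is $p=-2$, giving $h^1(T_\sF(-2))\le1$, while all other $p$ vanish ($p\ge-1$ by the above, and $p\le-3$ directly, since then $H^0(I_{Z/\P^3}(p+2))=0$); this is (i).

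The main difficulty I anticipate is bookkeeping rather than conceptual: one must track precisely the unique one-dimensional group $h^2(T\p3(-4))=1$, which is exactly what produces the ``$\le1$'' bounds at the self-dual twist $p=d-2$, and verify that this twist is controllable only for $d\le1$. A secondary subtlety is the degenerate case $d=0$ with $Z$ possibly empty; the Serre-duality route is robust here, since $H^1(I_{Z/\P^3}(m))=0$ for $m<0$ holds vacuously when $Z=\emptyset$, so (i) follows uniformly without assuming the singular scheme is nonempty.
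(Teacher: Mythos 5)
Your proposal is correct and follows essentially the same route as the paper's proof: reducedness of $Z$ gives $h^0(\mathcal{O}_Z(k))=0$, hence $h^1(I_{Z/\P^3}(k))=0$, for $k\le-1$; the cohomology sequence $H^1(I_{Z/\P^3}(p+d+2))\to H^2(T_\sF(p))\to H^2(T\p3(p))$ together with Bott's formula (the right-hand term vanishing except for the one-dimensional group at twist $-4$) controls $h^2$ in low twists; and Serre duality $h^1(T_\sF(p))=h^2(T_\sF(d-p-6))$ plus Lemma \ref{L:CH} finishes the case analysis. Your explicit treatment of the possibly empty singular scheme when $d=0$ (avoiding Lemma \ref{L:CH}(iii) at the boundary twist $p=-2$, where its proof requires $Z\ne\emptyset$) is a small refinement the paper glosses over, but it is not a different approach.
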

\begin{proof}
If $T_\sF$ is locally free, then the singular scheme $Z$ has pure dimension 1; if, in addition, $Z$ is reduced, then $h^0(\OZ(k))=0$ for $k\leq -1$, hence $h^1(I_{Z/\P^3}(k))=0$ in the same range. Therefore, in the cohomology sequence
\[
H^1(I_{Z/\P^3}(p+d+2))\rightarrow H^2(T_\sF(p))\rightarrow H^2(T\mathbb{P}^3(p))
\]
the left term vanishes for $p\leq -d-3$, while the right vanishes except for $p=-4$, and $h^1(T\mathbb{P}^3(-4))=1$. It follows that
\begin{enumerate}
\item[(i)] for $d=0,\, h^2(T_\sF(p))=0$ for $p=-3$ and $p\leq -5$, with $h^2(T_\sF(-4))\leq1$.

\item[(ii)] for $d=1,\, h^2(T_\sF(p))=0$ for $p\leq-5$, with $h^2(T_\sF(-4))\leq1$
\item[(iii)] for $d\geq2,\, h^2(T_\sF(p))=0$ for $p\leq -d-3$.
\end{enumerate}

The lemma follows easily from this observation, item (i) of Lemma 2.1, and Serre duality: $h^1(T_\sF(p))=h^2(T_\sF(d-p-6))$.
\end{proof}

The next result will be used in the classification of distributions with locally free tangent sheaf of small degree.

\begin{Lemma}\label{L:C2}
If $T_\sF$ is the tangent sheaf of a codimension one distribution of degree $d\geq2$, then $c_2(T_\sF)\leq d^2+2$. If, in addition, $T_\sF$ is locally free and its singular scheme is reduced, then
\[
c_2(T_\sF)\leq \dfrac{(2d-1)(2d-2)(d/3+2)}{3d}
\]
\end{Lemma}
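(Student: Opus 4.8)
The plan is to treat the two inequalities separately, since the first is essentially immediate while the second rests on a Riemann--Roch computation at one carefully chosen twist.

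For the first bound I would simply invoke Theorem \ref{P:SLocus}. By the first line of \eqref{eq:length} we have $c_2(T_\sF) = d^2 + 2 - \deg(C)$, where $C$ is the one-dimensional component of $\sing(\sF)$. Since $C$ is either empty or a curve of positive degree, $\deg(C) \geq 0$, and therefore $c_2(T_\sF) \leq d^2 + 2$; note that this requires no hypothesis beyond $d \geq 2$ (in fact it holds for every $d \geq 0$ and for arbitrary, not necessarily locally free, tangent sheaf).

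For the sharper bound I would pass to the Euler characteristic of a suitable twist of $T_\sF$ and exploit the vanishing theorems already established. Because $T_\sF$ is locally free, Remark \ref{R:CC} gives $c_3(T_\sF) = 0$, so the Hilbert polynomial \eqref{eq:HilbP} becomes, writing $c := c_2(T_\sF)$,
$$ P_\sF(t) = \tfrac13(t+1)(t+2)(t+3) + \tfrac12(2-d)(t+1)(t+2) + \tfrac{d-2t-6}{2}\,c. $$
The decisive step is to evaluate this at $t = 2d - 3$, where all higher cohomology of $T_\sF(2d-3)$ vanishes: $h^1(T_\sF(2d-3)) = 0$ by Lemma \ref{L:Csdg}(iii); $h^3(T_\sF(2d-3)) = 0$ by Lemma \ref{L:CH}(iv), since $2d - 3 \geq d - 4$; and $h^2(T_\sF(2d-3)) = h^1(T_\sF(-d-3)) = 0$ by Serre duality ($h^2(T_\sF(q)) = h^1(T_\sF(d-q-6))$) combined with Lemma \ref{L:CH}(iii). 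Hence $P_\sF(2d-3) = \chi(T_\sF(2d-3)) = h^0(T_\sF(2d-3)) \geq 0$. Since the coefficient of $c$ at $t = 2d-3$ equals $-\tfrac{3d}{2} < 0$, this inequality bounds $c$ from above, and solving it (factoring out $(2d-1)(2d-2)$ before clearing denominators) yields
$$ c \leq \frac{2}{3d}\left[\frac{2d(2d-1)(2d-2)}{3} + \frac{(2-d)(2d-1)(2d-2)}{2}\right] = \frac{(2d-1)(2d-2)(d/3+2)}{3d}, $$
which is exactly the claimed estimate.

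The main obstacle is really the location of the twist $t = 2d-3$ rather than any hard analysis. It is the smallest value for which Lemma \ref{L:Csdg}(iii) forces $h^1$ to vanish, and thus the smallest twist at which $\chi$ collapses to the manifestly nonnegative $h^0$ while keeping the $c$-coefficient negative; at any larger twist the constant part of $P_\sF$ grows and the bound strictly weakens (for comparison, the symmetric choice $t = -d-3$, where instead $h^0 = h^2 = 0$, gives only the weaker estimate $c \leq \tfrac{(d+1)(d+2)(5d-6)}{9d}$). I expect the only genuinely delicate points to be checking that the \emph{reduced} and \emph{locally free} hypotheses are precisely what license the three simultaneous vanishings at this twist through Lemma \ref{L:Csdg}, and verifying the final arithmetic simplification.
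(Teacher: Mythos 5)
Your proposal is correct and follows essentially the same route as the paper: the first bound comes directly from Theorem \ref{P:SLocus} (here $\deg(C)\ge 0$ suffices, as you note), and the second from evaluating the Hilbert polynomial \eqref{eq:HilbP} with $c_3(T_\sF)=0$ at the twist $t=2d-3$, where Lemma \ref{L:CH}, Lemma \ref{L:Csdg}(iii) and Serre duality force $\chi(T_\sF(2d-3))=h^0(T_\sF(2d-3))\ge 0$, yielding exactly the paper's inequality $\tfrac{1}{3}2d(2d-1)(2d-2)+\tfrac{1}{2}(2d-1)(2d-2)(2-d)-\tfrac{3d}{2}c\ge 0$. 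The only differences are cosmetic: the paper's proof cites Lemma \ref{L:Csdg} item (ii), an apparent typo for item (iii), which you invoke correctly, and your remark on why $t=2d-3$ is the optimal twist is extra commentary not present in the paper.
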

\begin{proof}
The first inequality comes from Theorem \ref{P:SLocus}, using the fact that $\deg(C)\ge1$ when $d\geq2$.

From Lemma \ref{L:CH} item (iv) and Lemma \ref{L:Csdg} item (ii) plus Serre duality, we have that $h^q(T_\sF(p))=0,\, q=1,2,3$ and for $p\geq 2d-3$. It follows that
\[ \chi(T_\sF(2d-3))=h^0(T_\sF(2d-3))\geq0. \]
Applying this information into formula (\ref{eq:HilbP}), we obtain the inequality
\[ \frac{1}{3}2d(2d-1)(2d-2)+\frac{1}{2}(2d-1)(2d-2)(2-d)-\frac{3d}{2}c\geq0 \]
from which we obtain the desired estimate.
\end{proof}

An effective lower bound for the second Chern class of the tangent sheaf of a codimension one distribution can be obtained as follows. Consider a generic plane $P\hookrightarrow \P^3$; a codimension one distribution $\sF$ on $\p3$ induces a dimension one foliation on $P\simeq\p2$ of the same degree; the singular scheme of this distribution comes from the intersection of $P$ with the singular scheme of $\sF$, plus some tangency scheme. Using the bounds for foliations found in \cite[Corollary 4.8]{Soares}, we have that 
\begin{equation}\label{eq:BdegZ}
\deg(C)\leq d^2+d+1 .
\end{equation}
Using the identities of Theorem \ref{P:SLocus}, we obtain
\begin{equation}\label{lb c2}
c_2(T_\sF) \ge 1-d .
\end{equation}

The lower bound in inequality (\ref{lb c2}) is realized in every degree by foliations admitting a sub-foliation of degree 0. Indeed, Let $T_\sF$ be the tangent sheaf of a codimension one distribution of degree $d$; this admits a sub-foliation of degree 0 if and only if $h^0(T_\sF(-1))\ne0$. By Lemma \ref{coho_non_split}, it follows that $T_\sF$ must split as a sum of line bundles, so that $T_\sF=\O(1-a)\oplus\O(1-b)$ with $0\le a\le b$ and $d=a+b$. Since $h^0(T_\sF(-1))\ne0$, we must have $a=0$, thus in fact $T_\sF=\O(1)\oplus\O(1-d)$, hence $c_2(T_\sF) = 1-d$.

\begin{Obs}\label{Obs-subzero}\rm
We recall that a one-dimensional foliation $\G$ of degree 0 on $\p3$ is given by the short exact sequence
$$ \G ~:~ 0 \to \O(1)  \to  T\p3 \to N_\G \to O, $$
where $N_\sG$ is a reflexive sheaf  since the  singular set of $\G$ consists of a single point. Explicitly, such a foliation is defined by a linear projection  $\rho :\p3 \dashrightarrow \p2$, see \cite[Th\'eor\`eme 3.8]{DC}; without loss of generality, we can suppose that $\rho(z_0:z_1:z_2:z_3) =(z_0:z_1:z_2)$, so that its singular point is precisely $[0:0:0:1]$. Since the foliation $\G$ is tangent to $\rho$, we have that it is induced by the vector field $\frac{\partial}{\partial z_3}$.   
\end{Obs}

With the previous observation in mind, we remark that distributions $\sF$ of degree $d$ with $T_\sF=\O(1)\oplus\O(1-d)$ are given, in homogeneous coordinates, by a 1-form of the type
\[
\omega=A_0(z_0,z_1,z_2,z_3)dz_0+A_1(z_0,z_1,z_2,z_3)dz_1+A_2(z_0,z_1,z_2,z_3)dz_2,
\]
where $A_j,\, j=0,1,2$ are homogeneous of degree $d+1$ and $\sum_{i=0}^2 z_iA_i\equiv 0$.
In fact, it follows from the Remark \ref{Obs-subzero} that the sub-foliation given by the inclusion $\O(1) \hookrightarrow \O(1)\oplus\O(1-d)=T_\sF$ can be represented by the constant vector field $\frac{\partial}{\partial z_3}$. This implies that $\o(\frac{\partial}{\partial z_3})=A_3 = 0$.

Observe that $\sF$ is integrable, i.e. $\o\wedge d\o=0$, if and only if each $A_j$ depends only on the variables $z_0,z_1,z_2$; this case corresponds to foliations which are a linear pull--back of foliations of degree $d$ on the plane $\P^2$.

%%%%%%%%%%%%%%%%%%%%%%%%%%%%%%%%%%%%%%%%%%%%%%%%%%%%%%%%%%%%%%%%%%%%%%%%%%%%%%%%%%%%%%%%%%%%%%%
%%%%%%%%%%%%%%%%%%%%%%%%%%%%%%%%%%%%%%%%%%%%%%%%%%%%%%%%%%%%%%%%%%%%%%%%%%%%%%%%%%%%%%%%%%%%%%%

\section{Distributions induced by globally generated sheaves}
\label{ss:ex}

In this section we describe a simple way to construct new examples of codimension one distributions on $\p3$, based on the following result, proved in the Appendix.

\begin{Prop}\label{global}
Let $E$ be a globally generated rank 2 reflexive sheaf on $\P^3$.
Then $E(1-c_1(E))$ is a tangent sheaf of a codimension one distribution $\sF$ of degree $d=c_1(E)$, with
$c_2(T_\sF)=c_2(E)-c_1(E)+1$ and $c_3(T_\sF)=c_3(E)$. %Moreover, if $E$ is locally free, then the singular set of the induced distribution is a smooth curve. 
\end{Prop}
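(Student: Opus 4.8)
The plan is to set $T_\sF:=E(1-c_1(E))$ and to realize it as the tangent sheaf of a distribution by producing a single morphism $\phi\colon T_\sF\to T\p3$ that is injective with torsion-free cokernel; the Chern class identities are then a formal consequence. Writing $d:=c_1(E)$, note first that $T_\sF$ is a rank $2$ reflexive sheaf with $c_1(T_\sF)=c_1(E)+2(1-d)=2-d$, so the resulting distribution will have degree $2-c_1(T_\sF)=d$, as claimed. Moreover, since $E$ has rank $2$ one has $E^\vee=E(-d)$, hence $T_\sF=E^\vee(1)$ and $T_\sF^\vee=E(-1)$, in agreement with Lemma \ref{L:CH}(i).

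For the construction I would identify the space of candidate morphisms as
\[
\Hom(T_\sF,T\p3)=H^0\big(E^\vee(d-1)\otimes T\p3\big)=H^0\big(E\otimes T\p3(-1)\big).
\]
Twisting the Euler sequence by $\O(-1)$ exhibits $T\p3(-1)$ as a quotient of $\O^{\oplus4}$, so $T\p3(-1)$ is globally generated; since $E$ is globally generated by hypothesis, the rank $6$ reflexive sheaf $E\otimes T\p3(-1)$ is globally generated as well, and in particular $\Hom(T_\sF,T\p3)\ne0$ with the evaluation map surjective at every point. A morphism $\phi$ fits into a short exact sequence $0\to T_\sF\xrightarrow{\phi}T\p3\to N\to0$ precisely when it is injective with torsion-free cokernel $N$, and by construction the resulting $\sF$ then has tangent sheaf $T_\sF=E(1-d)$.

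The crux is to produce such a $\phi$, that is, to control the degeneracy loci $D_k(\phi)=\{x\in\p3\mid \rk\phi_x\le k\}$. For a general $\phi$ I would invoke a Bertini-type theorem for degeneracy loci of morphisms out of a globally generated sheaf---Ottaviani's Theorem \cite[Teorema 2.8]{O}, generalized to the reflexive setting in the Appendix---to conclude that $D_1(\phi)$ has the expected codimension $(2-1)(3-1)=2$, while $D_0(\phi)$ has expected codimension $(2)(3)=6$ and is therefore empty. Since $\phi$ then has generic rank $2$, it is injective (its kernel is a torsion subsheaf of the torsion-free $T_\sF$, hence zero), and degeneration in codimension $2$ together with $D_0(\phi)=\emptyset$ forces the cokernel to be torsion-free, of the form $N=I_{Z/\P^3}(d+2)$ with $Z=D_1(\phi)$. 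This genericity statement is the main obstacle: the classical Ottaviani theorem is stated for locally free sheaves, so the substantive point is the extension to a globally generated \emph{reflexive} $E$, which is exactly what is carried out in the Appendix.

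Finally, the Chern class identities are a routine computation. From $c(E\otimes L)$ with $L=\O(1-d)$ one obtains $c_2(T_\sF)=c_2(E)+(1-d)c_1(E)+(1-d)^2=c_2(E)-c_1(E)+1$; and comparing the degree-three parts of $\mathrm{ch}(E\otimes L)=\mathrm{ch}(E)\,\mathrm{ch}(L)$ shows that, for a rank $2$ sheaf, all twist-dependent contributions cancel, giving $c_3(T_\sF)=c_3(E)$ (equivalently, $c_3$ of a rank $2$ reflexive sheaf is invariant under twisting by line bundles).
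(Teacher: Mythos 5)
Your proposal is correct and takes essentially the same route as the paper's own proof (Corollary \ref{glob gen p3}, deduced from Theorem \ref{TeoGlobal1} and Lemmas \ref{LemmaGlobal1}--\ref{LemmaGlobal2} of the Appendix): you identify $\Hom(E(1-c_1(E)),T\p3)$ with $H^0(E\otimes T\p3(-1))$, observe this sheaf is globally generated, invoke the reflexive generalization of Ottaviani's Bertini-type theorem to get a generic $\phi$ degenerating in codimension $\ge 2$, deduce injectivity and torsion-freeness of the cokernel, and compute the Chern classes formally. The only cosmetic difference is that you argue torsion-freeness via emptiness of $D_0(\phi)$ and normality of the reflexive kernel, whereas the paper's Lemma \ref{LemmaGlobal2} uses the local-Ext criterion of \cite[Proposition 1.1.10]{HL}; both arguments are valid.
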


Since every coherent sheaf becomes globally generated after a sufficiently large twist, it follows that for every reflexive sheaf $E$ there exists $p\in\Z$ such that $E(1-c_1(E)-p)$ is the tangent sheaf of a codimension 1 distribution on $\P^3$. However, there are codimension one distributions on $\p3$ that do not arise as in Proposition \ref{global}.

\begin{Ex} \rm \label{ex nc bdl}
Recall that a \emph{null-correlation bundle} is a locally free sheaf $N$ of rank 2 defined by the following exact sequence \cite[Lemma 4.3.2, p. 362]{OSS}
\begin{equation}\label{nc-bdl}
0 \to \O(-1) \to \Omega^1_{\p3}(1) \to N \to 0 ; %\stackrel{\sigma}{\to}
\end{equation}
%where $\sigma\in H^0(\Omega^1_{\p3}(2))$ is a non-vanishing section;
note that $c_1(N)=0$ and $c_2(N)=1$. Since $\Omega^1_{\p3}(1+k)$ is globally generated for $k\ge1$, then so is $N(k)$. Proposition \ref{global} provides us with codimension one distributions of degree $2k$ whose tangent sheaf is $N(1-2k)$.

Note that $N^\vee\simeq N$, since $N$ is a rank 2 locally free sheaf with trivial determinant. Therefore, dualizing sequence (\ref{nc-bdl}), and twisting the result by $\O(1)$ we obtain the following codimension one distribution of degree 0
\begin{equation}\label{nc-bdl dual}
0 \to N(1) \to T\p3 \to \O(2) \to 0 . %\stackrel{\sigma^\vee}{\to}
\end{equation}
Since $N$ is not globally generated, this distribution does not arise as in Proposition \ref{global}.

Moreover, we have that there is an epimorphism
$$
\O(k-1)^{\oplus 4} \rightarrow N(k)\rightarrow 0.
$$
for each $k\ge1$. Since $\O(k-1)^{\oplus 4}$ is ample for $k\geq 2$, then $N(k)$ is also ample in this range. Therefore, $T_\sF= N(1-2k)$ is a tangent sheaf of a codimension one distribution of degree $2k$ with $T_\sF^{\vee}=N(2k-1)$ ample, for all $k\geq 1$.
\end{Ex}

Another interesting property of the null correlation bundle as the tangent sheaf of a codimension one distribution is provided by the classification of indecomposable rank 2 globally generated locally free sheaves given in \cite[Theorem 1.1]{ChE}.

\begin{Cor}
Let $\sF$ a codimension one distribution of degree $\leq 3$. If its tangent sheaf $T_\sF$ is a globally generated locally free sheaf which does not split as a sum of line bundles, then $\deg(\sF)=2$ and  $T_\sF$ is a null correlation bundle twisted by $\O(-1)$.
\end{Cor}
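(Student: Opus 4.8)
The plan is to forget the foliation structure and treat $T_\sF$ purely as an abstract rank $2$ bundle, then feed it into the classification of indecomposable globally generated rank $2$ bundles on $\p3$ of \cite[Theorem 1.1]{ChE}, using the degree constraint to cut down the list. The only numerical input I need is the identity $c_1(T_\sF)=2-d$ recorded in Section \ref{ss:Sing}: the hypothesis $0\le d\le 3$ confines the first Chern class to the window $-1\le c_1(T_\sF)\le 2$, and the whole argument amounts to showing that, among indecomposable globally generated bundles, this window is realized only by $c_1(T_\sF)=2$.

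First I would narrow the window from the globally generated hypothesis alone. A globally generated locally free sheaf is nef, so $c_1(\det T_\sF)\ge 0$, which immediately discards $c_1=-1$ (that is, $d=3$). To rule out $c_1=0$ I would argue directly: a general section $\sigma\in H^0(T_\sF)$ either is nowhere vanishing, in which case $\O$ splits off as a subbundle and the rank one quotient $L$ satisfies $\Ext^1(L,\O)=H^1(\O(-c_1))=0$ on $\p3$, forcing $T_\sF=\O\oplus L$ against indecomposability; or $\sigma$ vanishes on a nonempty curve $Z$, giving a surjection $T_\sF\twoheadrightarrow I_{Z/\p3}(c_1)$, and for $c_1=0$ this exhibits a globally generated sheaf $I_{Z/\p3}$ with $H^0(I_{Z/\p3})=0$, which is absurd. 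The value $c_1=1$ I would then exclude by invoking \cite[Theorem 1.1]{ChE} directly, since that classification contains no indecomposable globally generated rank $2$ bundle on $\p3$ with $c_1\in\{0,1\}$; its minimal entry has $c_1=2$. Hence $c_1(T_\sF)=2$.

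With $c_1(T_\sF)=2$ pinned down, the classification \cite[Theorem 1.1]{ChE} leaves a unique indecomposable globally generated rank $2$ bundle on $\p3$, namely the null-correlation bundle in its globally generated normalization $N(1)$, with $(c_1,c_2)=(2,2)$; thus $T_\sF$ is a twist of the null-correlation bundle and the degree is read off from $d=2-c_1(T_\sF)$. That this bundle genuinely arises as a tangent sheaf is already witnessed in Example \ref{ex nc bdl} through the sequence \eqref{nc-bdl dual}, so no separate realization argument is needed. The main obstacle I anticipate is the clean elimination of the intermediate Chern classes: the nef bound handles $c_1=-1$ cheaply and the section argument disposes of $c_1=0$ self-containedly, but ruling out $c_1=1$ without brute force relies on having the full ChE list available, together with the locally free constraint $\ell=c_3(T_\sF)=0$ and the parity relation $dc\equiv \ell \pmod 2$ from Remark \ref{R:CC}, which together ensure no stray $(c_1,c_2)$ pair slips through. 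Once the window is closed to $c_1=2$, the identification is a direct lookup.
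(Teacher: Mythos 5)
Your individual steps are fine and follow the same route the paper itself takes --- the corollary is stated with no proof of its own, as a direct application of \cite[Theorem 1.1]{ChE} --- but your endgame does not deliver the statement you set out to prove, and this is the genuine gap. From $c_1(T_\sF)=2$ your own relation $d=2-c_1(T_\sF)$ gives $\deg(\sF)=0$, not $2$, and $T_\sF\simeq N(1)$ is a null correlation bundle twisted by $\O(1)$, not by $\O(-1)$; consistently, the realization you invoke, the sequence \eqref{nc-bdl dual}, is a distribution of degree $0$ (compare Proposition \ref{deg0}, item (ii)). You write that ``the degree is read off from $d=2-c_1(T_\sF)$'' without ever performing the substitution, and so you never confront the fact that your conclusion contradicts the printed one. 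Indeed, under your literal reading of the hypothesis the printed conclusion is outright impossible: $\deg(\sF)=2$ forces $c_1(T_\sF)=0$, and a globally generated rank two bundle on $\p3$ with $c_1=0$ is trivial, hence split.

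What the corollary is actually recording --- it sits immediately after Example \ref{ex nc bdl} and the discussion of Proposition \ref{global} --- is the statement for distributions \emph{induced by} a globally generated sheaf in the sense of that proposition: if $E:=T_\sF^{\vee}(1)$ is globally generated, locally free and nonsplit, so that $T_\sF=E(1-c_1(E))$ and $\deg(\sF)=c_1(E)\le 3$, then \cite[Theorem 1.1]{ChE}, whose only nonsplit entry on $\p3$ with $c_1\le 3$ is $N(1)$ (this is also what excludes $c_1(E)=3$, a case your window never has to face), forces $E\simeq N(1)$; hence $\deg(\sF)=c_1(E)=2$ and $T_\sF\simeq N(1)\otimes\O(-1)\simeq N$, which is exactly the degree-two distribution \eqref{eq:Null} used in the proof of Theorem \ref{classification_degre_tow}. (The phrase ``twisted by $\O(-1)$'' inherits the slip in Example \ref{ex nc bdl}, where the tangent sheaf should read $N(1-k)$ rather than $N(1-2k)$.) So the defect is not in your case analysis --- the nef bound for $c_1=-1$, the section argument for $c_1=0$, and the appeal to \cite{ChE} for $c_1=1$ are all correct --- but in proving, without flagging it, a different statement ($\deg(\sF)=0$ and $T_\sF=N(1)$) than the one claimed; a complete solution had to notice the inconsistency in the printed hypothesis and identify the reading under which $\deg(\sF)=2$ actually holds.
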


%%%%%%%%%%%%%%%%%%%%%%%%%%%%%%%%%%%%%%%%%%%%%%%%%%%%%%%%%%%%%%%%%%%%%%%%%%%%%%%%%%%%%%%%%%%%%
%%%%%%%%%%%%%%%%%%%%%%%%%%%%%%%%%%%%%%%%%%%%%%%%%%%%%%%%%%%%%%%%%%%%%%%%%%%%%%%%%%%%%%%%%%%%%

\section{Stability of the tangent sheaves of distributions with isolated singularities} \label{isol sing}

Let $L$ be a very ample line bundle on a variety $X$ of dimension $n$; the slope of a torsion free sheaf $E$ on $X$ is given by
$$ \mu_L(E) := \frac{c_1(E)\cdot L^{n-1}}{\rk(E)} . $$
Recall that $E$ is said to be $\mu$-(semi)stable if every proper nontrivial subsheaf $F\subset E$ with $\rk(F)<\rk(E)$ and such that $E/F$ is torsion free satisfies $\mu_L(F)<\mu_L(E)$ ($\mu_L(F)\le\mu_L(E)$).

In this section, we study the stability of the tangent sheaf of codimension one distributions with only isolated singularities on $X=\p3$, setting $L=\O(1)$; in this case, the first Chern class of a sheaf will simply be regarded as an integer number, indicating the appropriate multiple of the ample generator of $\Pic(\p3)$. Recall that one has the following chain of strict implications \cite[Lemma 1.2.13]{HL}:
\begin{center}
$\mu$-stable $~~\Longrightarrow~~$ stable $~~\Longrightarrow~~$ semistable $~~\Longrightarrow~~$ $\mu$-semistable. 
\end{center}
However, a rank 2 reflexive sheaf is stable if and only if it is $\mu$-stable \cite[Corollary, page 176]{OSS}; by contrast, semistability and $\mu$-semistability are not equivalent.

% In this section, we study the stability of the tangent sheaf of codimension one distributions with only isolated singularities on a smooth weighted projective complete intersection Fano 3-fold $X$ with rank one Picard group. Such varieties have been classified by Iskovskikh \cite{I1,I2} and Mukai \cite{Mu}; they are: 
% \begin{enumerate}
% \item the projective space $\mathbb{P}^{3}$;
% \item a quadric  hypersurface $X_2\subset\mathbb{P}^{4}$;
% \item a cubic hypersurface $X_3\subset\mathbb{P}^{4}$;
% \item an intersection $X_{2,2}$ of two quadric hypersurfaces in $\mathbb{P}^{5}$;
% \item a hypersurface of degree $4$ in the weighted projective space $X_4\subset\mathbb{P}(1,1,1,1,2)$; 
% \item a hypersurface of degree $6$ in the weighted projective space $X_6\subset \mathbb{P}(1,1,1,2,3)$;
% \item an intersection of a quadratic cone and a hypersurface of degree $4$ in $\mathbb{P}(1,1,1,1,1,2)$;
% \item an intersection $X_{2,3}$ of a quadric and a cubic in $\mathbb{P}^5$;
% \item an intersection $X_{2,2,2}$ of three quadrics in $\mathbb{P}^6$.
% \end{enumerate}
%Let $\Pic(X)=\mathbb{Z}\cdot H$ with $H$ ample
%Recall that the \emph{index} of $X$ is the integer $i_X>0$ such that $-K_X=i_X\cdot H$. 

The goal is to prove the following result.

% Recall that if $E$ is a rank 2 reflexive sheaf, recall that $E$ is (semi)stable if and only if $h^0(E(k))=0$ ($h^0(E(k-1))=0$) where $k=-c_1(E)/2$ if $c_1(E)$ is even, and $k=-(c_1(E)+1)/2$ if $c_1(E)$ is odd, see \cite[Lemma 1.2.5, p. 165]{OSS}.

\begin{Theorem}\label{StGDist}
Let $\sF$ be a codimension one distribution on $\p3$. If $\sing(\sF)$ is a nonempty, 0-dimensional scheme, then $T_\sF$ is stable.
\end{Theorem}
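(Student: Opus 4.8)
The plan is to prove that $T_\sF$ is $\mu$-stable, which for a rank-two reflexive sheaf is equivalent to stability by \cite[Corollary, p.~176]{OSS}. Since $\sing(\sF)=Z$ is nonempty and $0$-dimensional, we have $\mathscr{U}=\OZ\ne0$, so \eqref{eq:ising} forces $d\ge1$; moreover Lemma \ref{loc free tg} shows that $T_\sF$ is not locally free, hence does not split as a sum of line bundles. As $\mu(T_\sF)=(2-d)/2$, $\mu$-stability amounts to the vanishing $h^0(T_\sF(k))=0$ for every integer $k\le(d-2)/2$: a nonzero section of $T_\sF(k)$ is a sub-line-bundle $\O(-k)\hookrightarrow T_\sF$, which destabilizes precisely when $-k\ge\mu(T_\sF)$. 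For $k\le-1$ these vanishings are already available: Lemma \ref{L:CH}(ii) gives $h^0(T_\sF(k))=0$ for $k\le-2$, while Lemma \ref{coho_non_split} (applicable since $T_\sF$ is non-split) gives $h^0(T_\sF(-1))=0$. In particular the cases $d=0,1$ are complete, so I may assume $d\ge2$, and it remains only to exclude destabilizing sections in the range $0\le k\le(d-2)/2$.

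Suppose, for contradiction, that such a section $\sigma\in H^0(T_\sF(k))$ exists with $0\le k\le(d-2)/2$. After replacing $\sigma$ by its saturation I obtain a sequence of the form \eqref{y}; since $T_\sF$ does not split, the zero locus $Y=(\sigma=0)$ is nonempty of pure dimension $1$ (were it empty, $T_\sF$ would be an extension of two line bundles, hence locally free, a contradiction). Thus $\sigma$ induces a one-dimensional sub-foliation $\sG$ of degree $e=k+1\ge1$. I now invoke Lemma \ref{sing sub-dist}: the $0$-dimensional component of $\sing(\sG)$ is contained in the $1$-dimensional component of $\sing(\sF)$, which is empty because $Z$ is $0$-dimensional. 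Hence $\sing(\sG)=Y$ is purely $1$-dimensional of some degree $\deg(Y)=:m$.

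The contradiction is then numerical. Comparing second Chern classes in \eqref{y} and using \eqref{eq:c2} yields $m=d^2+2+k^2+2k-kd$. On the other hand, $Y$ is the \emph{entire} singular scheme of a one-dimensional foliation $\sG$ of degree $e=k+1$, and the degree of the one-dimensional part of the singular scheme of such a foliation is bounded above by a quadratic polynomial in $e$ — the analogue for one-dimensional foliations of the bound \eqref{eq:BdegZ}, obtained by restricting to a general plane $P\cong\p2$ and applying the estimates of \cite{Soares}. Since $e=k+1$ with $k\le(d-2)/2$, this upper bound is of order $k^2$, whereas a direct computation gives $m-\bigl((k+1)^2+(k+1)+1\bigr)=d^2-1-k(d+1)\ge d(d+1)/2>0$ throughout the range $0\le k\le(d-2)/2$. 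Thus $\sing(\sG)$ would be forced to have degree strictly larger than any foliation of degree $k+1$ can support, which is the desired contradiction; therefore no destabilizing section exists and $T_\sF$ is stable.

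The hard part is precisely this last step: pinning down a sufficiently sharp upper bound for the degree of the one-dimensional component of the singular scheme of a one-dimensional foliation of prescribed degree, and verifying that it applies to a $Y$ that may be non-reduced, reducible, or disconnected. The cohomological reductions and the identification of $\sing(\sG)$ as purely one-dimensional are routine given Lemmas \ref{loc free tg}, \ref{coho_non_split}, and \ref{sing sub-dist}; all of the genuine content lies in converting the statement "the destabilizing curve $Y$ is forced to be of large degree" into an impossibility, via a bound on singular schemes of one-dimensional foliations that beats the growth $m\sim d^2-kd$ of the forced degree.
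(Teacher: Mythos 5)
Your argument is correct, but it is not the route the paper takes for this theorem, so let me compare. The paper's own proof is a short, self-contained Koszul-complex argument: because $Z$ is nonempty and $0$-dimensional, the Koszul complex of the cosection $\omega^\vee:T\p3(-4+c_1)\to\O$ with image $\IZ$ is exact, and twisting it yields, for any candidate sub-line-bundle $\O(p)\subset T_\sF$ with torsion-free quotient, the exact sequence $0\to\O(2c_1-4-p)\to\Omega^1_{\p3}(c_1-p)\to T_\sF(-p)\to0$ where $c_1=c_1(T_\sF)$; since $H^1$ of a line bundle on $\p3$ vanishes, every section of $T_\sF(-p)$ lifts to a twisted $1$-form, so Bott's formula forces $c_1-p\ge2$, and stability of $T\p3$ (which gives $\mu(T_\sF)<4/3$) then yields $\mu(T_\sF)-\mu(\O(p))\ge2/3>0$. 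That proof uses the $0$-dimensionality of $Z$ exactly where it matters (exactness of the Koszul complex), needs no external bounds, and in fact proves more than stability, namely $h^0(T_\sF(t))=0$ for all $t\le d-1$. Your route --- saturate a would-be destabilizing section, observe via Lemmas \ref{loc free tg}, \ref{coho_non_split} and \ref{sing sub-dist} that it produces a sub-foliation by curves of degree $k+1$ whose singular curve $Y=(\sigma=0)$ has degree $m=c_2(T_\sF(k))=d^2+2+k^2+2k-kd$ (using \eqref{eq:c2}), then contradict an upper bound on $\deg Y$ --- is precisely the mechanism of the paper's Proposition \ref{non_stable_prop} (stated there for non-isolated singularities); specializing that proposition to $\deg(C)=0$ recovers the theorem, and your numerics are right: $m-\bigl((k+1)^2+(k+1)+1\bigr)=d^2-1-k(d+1)\ge d(d+1)/2>0$ on $0\le k\le (d-2)/2$. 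The ``hard part'' you flag is genuinely available in the literature: the inequality $\deg Y\le (k+1)^2+(k+1)+1$ is exactly \cite[Theorem 1.1]{Sancho}, which holds scheme-theoretically for the $1$-dimensional component of the singular scheme (so non-reducedness or disconnectedness of $Y$ is no obstruction), and it is the bound the paper itself invokes in Proposition \ref{non_stable_prop}; one small correction, though: your suggested derivation by restricting to a general plane and applying \cite{Soares} is how the paper obtains \eqref{eq:BdegZ} for \emph{codimension one} distributions, not for foliations by curves, so you should cite Sancho directly rather than sketch that reduction. In sum, both proofs are valid; yours has the virtue of extending immediately to distributions with non-isolated singularities of small $\deg(C)$ (as the paper's Proposition \ref{non_stable_prop} shows), whereas the paper's Koszul argument is independent of Sancho's theorem and delivers the sharper cohomological vanishing as a bonus.
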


% The proof makes use of a result due to Flenner, cf. \cite[Satz 8.11]{flenner81}, providing the following formulas for the cohomology of $\Omega_X^q$:
% \begin{enumerate}
% \item[(i)] $h^q(X,\Omega_{X}^q) =1 $ for $0\le q\le 3$, 
% \item[(ii)] $h^p\big(X,\Omega_{X}^q(t)\big) = 0$ in the following cases
% \begin{itemize}
% \item $0<p<3$, $p+q\neq 3$ and either $p\neq q$ or $t\neq 0$;
% \item  $p+q > 3$ and $t>q-p$;
% \item $p+q < 3$ and $t<q-p$.
% \end{itemize} 
% \end{enumerate}

\begin{proof}
Consider a codimension one distribution $\sF$ on $\p3$ of degree $d=2-c_1(T_\sF)$, given by the exact sequence
$$ 0 \to T_\sF \stackrel{\phi}{\longrightarrow} T\p3 \stackrel{\o}{\longrightarrow} \IZ(4-c_1(T_\sF)) \to 0, $$
where $\o\in H^0(\Omega_{\p3}^1(4-c_1(T_\sF))$ is the 1-form representing $\sF$; note that
$$ \IZ = \img\{ ~ \o:T\p3(-4+c_1(T_\sF))\to\O ~ \}. $$
Since $\dim Z=0$ and $Z\ne\emptyset$, the Koszul complex associated to $Z$
\begin{equation}\label{B2}
0 \to \bigwedge^3 \left(T\p3(-4+c_1(T_\sF))\right) \to
\bigwedge^{2}\left(T\p3(-4+c_1(T_\sF))\right) \to  
T\p3(-4+c_1(T_\sF)) \stackrel{\o}{\longrightarrow} \IZ \to 0,
\end{equation}
is exact. 

Every rank 1 subsheaf of a reflexive sheaf with torsion free quotient is of the form $\O(p)$ \cite[p.166, proof of Lemma 1.2.5]{OSS}, so it is enough to consider nonzero sections $\sigma\in H^0(T_\sF(-p))$. Twist the sequence \eqref{B2} by $\O(4-c_1(T_\sF)-p)$ to obtain
\begin{equation}\label{B22}
0 \to \O(-4+2c_1(T_\sF)-p) \to \Omega^1_{\p3}(c_1(T_\sF)-p) \to
T\p3(-p) \stackrel{\o}{\longrightarrow} \IZ(4-c_1(T_\sF)-p) \to 0 
\end{equation}
where we used that $\bigwedge^3T\p3 \simeq \O(4)$ and $\bigwedge^2T\p3 \simeq \Omega^1_{\p3}(4)$. Clearly, the kernel of the last morphism is just $T_\sF(-p)$, thus we obtain the sequence
$$ 0 \to \O(-4+2c_1(T_\sF)-p) \to \Omega^1_{\p3}(c_1(T_\sF)-p) \to
T_\sF(-p) \to 0. $$

Computing cohomology, we have
\begin{equation}\label{B222}
0 \to H^0(\O(-4+2c_1(T_\sF)-p)) \to H^0(\Omega^1_{\p3}(c_1(T_\sF)-p))
\to H^0(T_\sF(-p))\to 0;
\end{equation}
it follows that $H^0(\Omega^1_{\p3}(c_1(T_\sF)-p))\ne0$ thus $c_1(T_\sF)-p\ge2$, or, equivalently $\mu(T_\sF) - \mu(\O(p)) \ge 2 - \mu(T_\sF)$. However, $\mu(T_\sF)<\mu(T\p3)=4/3$, because $T\p3$ is stable; it follows that $\mu(T_\sF) - \mu(\O(p)) \ge 2/3>0$, as desired.

\end{proof}

We complete this section by emphasizing that the hypothesis that $\sF$ has only isolated singularities is essential. First, Proposition \ref{global} allows us to construct the following example of a codimension one distribution whose tangent sheaf is locally free (hence its singular scheme has pure dimension 1), and not $\mu$-semistable. Indeed, recall that a monad is a complex of locally free sheaves of the form
$$ A \stackrel{\alpha}{\longrightarrow} B \stackrel{\beta}{\longrightarrow} C $$
such that $\alpha$ is injective and $\beta$ is surjective; the sheaf $\ker\beta/\img\alpha$ is called the cohomology of the monad; see \cite[Section II.3]{OSS} for general facts on monads.

\begin{Ex}\label{exeNonsemi}\rm
Start by considering a monad of the form
$$ \O(-3) \stackrel{\alpha}{\longrightarrow}
\O(-2)\oplus\O(-2)\oplus\O(2)\oplus\O(2) \stackrel{\beta}{\longrightarrow} \O(3) $$
where the morphisms $\alpha$ and $\beta$ are given by
$$ \alpha = \left( \begin{array}{c} -x_3 \\ x_2 \\ -x_1^5 \\ x_0^5 \end{array} \right) ~~{\rm and}~~
\beta = \left( \begin{array}{cccc} x_0^5 ~ & ~ x_1^5 ~ & ~ x_2 ~ & ~ x_3  \end{array} \right) $$
The cohomology of this monad is a rank 2 locally free sheaf $E$ with $c_1(E)=0$ and $c_2(E)=1$. In addition, note that $E$ is not $\mu$-semistable, since $h^0(E(-1))=h^0(\ker\beta(-1))\ne0$. Since, according to \cite[Theorem 3.2]{CMR}, $E(7)$ is regular (in the sense of Castelnuovo--Mumford), hence globally generated, it follows from Proposition \ref{global} that $E(-6)$ is the tangent sheaf of a codimension one distribution of degree 14, i.e. there are codimension one distributions of the form
\begin{equation}\label{ex deg 14}
\sF ~:~ 0 \to E(-6) \to T\p3 \to \IC(16) \to 0
\end{equation}
where $C$ is a smooth curve of degree 161 and genus 1639.

Note, in addition, that $h^2(T_\sF(-16))=h^2(E(-22))=h^1(E(18))$, where the last equality follows from Serre duality. However, $h^1(E(18))=0$ because $E$ is 7-regular. It follows from Proposition \ref{con} that $C$ is connected. Thus, by Theorem \ref{sm-conn}, we conclude that codimension one distributions as the one in display (\ref{ex deg 14}) are not integrable.
\end{Ex}

Another example will be described in Theorem \ref{logStable} below: a codimension one foliation of degree 2 whose tangent sheaf is \emph{strictly $\mu$-semistable} (that is, $\mu$-semistable but not stable), and whose singular scheme consists of a degree 5 curve of genus 2 plus two (possibly embedded) points, counted with multiplicity, see Section \ref{seclog}. 

It would be interesting to determine what is the smallest $\delta$ for which there exists a codimension one distribution of degree $\delta$ whose tangent sheaf is not $\mu$-semistable. In the next three sections, we show that the tangent sheaf of every codimension one distribution of degree at most 2 is $\mu$-semistable; moreover, if the tangent sheaf is locally free, then it is stable. In other words, we so far know that $3\le \delta \le 13$.

For distributions with non-isolated singularities, we have the following general result regarding the stability of their tangent sheaves. First, recall that the \emph{normalization} $E_\eta$ of a torsion free sheaf $E$ on $\p3$ is defined as follows:
$$ E_\eta := \left\{ \begin{array}{l} 
E(-c_1(E)/2),~~ \mbox{if } c_1(E) \mbox{ is even,} \\
E(-(c_1(E)+1)/2),~~ \mbox{if } c_1(E) \mbox{ is odd.}
\end{array} \right. $$
Furthermore, a rank 2 reflexive sheaf $E$ on $\p3$ is $\mu$-semistable if and only if $H^0(E_\eta(-1))=0$, and it is stable if and only if $H^0(E_\eta)=0$ \cite[Lemma 1.2.5, p.165]{OSS}. Moreover, if $c_1(E)$ is odd, then $E$ is $\mu$-stable if and only if it is $\mu$-semistable.

\begin{Prop}\label{non_stable_prop}
Let $\sF$ be a codimension one distribution of degree $d$ on $\P^3$ with nonsplit tangent sheaf, and let $C$ be the pure 1-dimensional subscheme of $\sing(\sF)$ as defined in display \eqref{OZ->OC}. If
\begin{enumerate}
\item[(i)] If $d$ is even, then $T_\sF$ is stable when $2\deg(C) < d^2+d$, and $T_\sF$ is $\mu$-semistable when $2\deg(C)<d^2+3d+2$.
\item[(ii)] If $d$ is odd, then $T_\sF$ is stable when $2\deg(C) < (d+1)^2 $.
\end{enumerate}
\end{Prop}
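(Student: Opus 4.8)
The plan is to run the argument through the normalization criterion for rank $2$ reflexive sheaves recalled just before the statement. Since $c_1(T_\sF)=2-d$, the normalization is $(T_\sF)_\eta=T_\sF((d-2)/2)$ when $d$ is even and $(T_\sF)_\eta=T_\sF((d-3)/2)$ when $d$ is odd, so the criterion reduces each assertion to the vanishing of a single group $H^0(T_\sF(m))$: stability corresponds to $m=(d-2)/2$ ($d$ even) and $m=(d-3)/2$ ($d$ odd), while $\mu$-semistability (for $d$ even) corresponds to $m=(d-4)/2$. This also explains why the odd case carries only a stability conclusion, as $c_1$ odd forces $\mu$-stable and $\mu$-semistable to coincide. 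I would then prove the required vanishing by contradiction.

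So suppose $H^0(T_\sF(m))\neq0$ in the relevant case. A nonzero section, after saturating, yields a sub-line-bundle $\O(-r)\hookrightarrow T_\sF$ with torsion-free quotient; since $T_\sF$ is reflexive, Remark \ref{min twist} and the sub-distribution sequence \eqref{y} identify this quotient with $I_{Y/\P^3}(r+2-d)$ for a subscheme $Y$ of pure dimension $1$. Comparing total Chern classes in \eqref{y}, together with $c_2(T_\sF)=d^2+2-\deg(C)$ from Theorem \ref{P:SLocus}, gives $\deg(Y)=d^2+2-\deg(C)+r(r+2-d)$. By Lemma \ref{L:CH}(ii) and Lemma \ref{coho_non_split} one has $H^0(T_\sF(p))=0$ for $p\le-1$, hence $r\ge0$; and since the saturation has slope $\ge\mu(T_\sF)$, one gets $r\le m$. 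Composing $\O(-r)\hookrightarrow T_\sF\hookrightarrow T\p3$ exhibits a one-dimensional sub-foliation $\sG$ of degree $e:=r+1\ge1$ whose one-dimensional singular component is precisely $Y$, by Lemma \ref{sing sub-dist}.

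The decisive estimate is $\deg(Y)\le e^2+e+1$, which I would obtain exactly as in the derivation of \eqref{eq:BdegZ}: the vector field defining $\sG$ restricts on a general plane $P\cong\p2$ to a degree $e$ one-dimensional foliation $v_T$ whose singular scheme is $0$-dimensional of length $c_2(T_{\p2}(e-1))=e^2+e+1$ (cf. \cite{Soares}); since every point of $Y\cap P$ is a zero of $v_T$, we get $\deg(Y)=\#(Y\cap P)\le e^2+e+1$. Substituting the formula for $\deg(Y)$ into this inequality yields $\deg(C)\ge d^2-1-(d+1)r$, which is decreasing in $r$. Evaluating at the maximal admissible value $r=m$ then gives, in the three cases, $2\deg(C)\ge d^2+d$, $2\deg(C)\ge d^2+3d+2$, and $2\deg(C)\ge(d+1)^2$ respectively, each of which contradicts the corresponding hypothesis.

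The hard part is the estimate $\deg(Y)\le e^2+e+1$: one must check that on a general plane the restricted foliation $v_T$ is nonzero, has $0$-dimensional singular scheme of the expected length, and genuinely contains $Y\cap P$ (the locus where the field defining $\sG$ is normal to $P$ is codimension $2$, which gives the $0$-dimensionality). The remainder is Chern-class bookkeeping, but care is needed with the numerology: the three bounds in the statement are exactly the outputs of one and the same computation specialized to the three normalizing twists $m$, and in each case the inequality is sharp precisely at $r=m$, so there is no slack to lose.
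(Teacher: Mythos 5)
Your proposal is correct and takes essentially the same route as the paper: a (saturated) minimal section $\O(-r)\hookrightarrow T_\sF$ with $0\le r\le m$ yields a degree $r+1$ sub-foliation whose pure 1-dimensional singular locus $Y$ has $\deg(Y)=c_2(T_\sF(r))$, the bound $\deg(Y)\le (r+1)^2+(r+1)+1$ gives $\deg(C)\ge d^2-(d+1)r-1$, and evaluating at the three normalizing twists produces exactly the three stated inequalities. The only divergence is that the paper simply cites \cite[Theorem 1.1]{Sancho} for the key degree bound, whereas you sketch its proof by restricting to a general plane (as in the derivation of \eqref{eq:BdegZ}); your sketch is sound, so this amounts to reproving the cited result rather than a genuinely different argument.
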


\begin{proof}
Given a codimension one distribution $\sF$ of degree $d$, let
$$ r:= \min \{t\in\Z ~|~ h^0(T_\sF(t))\ne 0 \} .$$
Note that every nonzero section $\sigma\in H^0(T_\sF(r))$ induces a sub-foliation
$$ \G~:~ 0 \to \O(-r) \to T\p3 \to N_\G \to 0 $$
of degree $r+1$, since the cokernel of the morphism $\O(-r)\to T_\sF$ induced by $\sigma$ is torsion free, see Remark \ref{min twist}. Let $S$ denote the 1-dimensional component of $\sing(\G)$; Lemma \ref{sing sub-dist} implies that $S$ coincides with the zero locus of $\sigma$, which is a curve of degree $c_2(T_\sF(r))$. A result due to Sancho \cite[Theorem 1.1]{Sancho} tells us that
\begin{equation}\label{nonStable}
c_2(T_\sF(r)) = \deg(S) \leq (r+1)^2 + (r+1) + 1 = r^2 + 3r + 3;
\end{equation}
On the other hand, we have 
$$ c_2(T_\sF(r)) = c_2(T_\sF) +(2-d)r + r^2 = d^2 +2 - \deg(C) + (2-d)r + r^2 $$
since $c_2(T_\sF)=d^2+2-\deg(C)$. Substituting in equation \eqref{nonStable} we obtain
\begin{equation}\label{nonStable2}
\deg(C) \ge d^2 - (d+1) r - 1.
\end{equation}

If $T_\sF$ does not split as a sum of line bundles, then $h^0(T_\sF(-1))=0$ by Lemma \ref{coho_non_split}, thus $r\ge0$. Assuming that $d$ is even, note that if $T_\sF$ is not stable, then $h^0(T_\sF((d-2)/2))\ne0$, thus $r\le d/2-1$. Using these inequalities in \eqref{nonStable2}, we obtain
$$ \deg(C) \ge \dfrac{1}{2}(d^2 + d). $$
Similarly, if $T_\sF$ is not $\mu$-semistable, then $h^0(T_\sF((d-2)/2-1)\ne0$, thus $r\le d/2-2$ and we obtain
$$ \deg(C) \ge \dfrac{1}{2}(d^2 + 3d +2). $$

Finally, assume that $d$ is odd; if $T_\sF$ is not stable, then $h^0(\P^3,T_\sF((d-3)/2))\ne0$, so $r\le (d-3)/2$ and we obtain
$$ \deg(C) \ge \dfrac{1}{2}(d+1)^2. $$
\end{proof}

%%%%%%%%%%%%%%%%%%%%%%%%%%%%%%%%%%%%%%%%%%%%%%%%%%%%%%%%%%%%%%%%%%%%%%%%%%%%%%%%%%%%%%%%%%%%%
%%%%%%%%%%%%%%%%%%%%%%%%%%%%%%%%%%%%%%%%%%%%%%%%%%%%%%%%%%%%%%%%%%%%%%%%%%%%%%%%%%%%%%%%%%%%%

\section{Classification of degree 0 distributions} \label{class0}

The classification of degree 0 distributions is well known, see \cite{J} for the integrable case. Below, we provide a new proof.

\begin{Prop}\label{deg0}
If $\sF$ is a codimension one distribution of degree 0, then
\begin{enumerate}
\item[(i)] either $T_\sF=\O(1)\oplus\O(1)$, and its singular scheme is a line;
 \item[(ii)] or $T_\sF=N(1)$ for some null-correlation bundle $N$, and its singular scheme is empty.
\end{enumerate}

\end{Prop}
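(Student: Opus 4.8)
The plan is to classify codimension one distributions of degree $0$ by exploiting the rank $2$ reflexive tangent sheaf $T_\sF$ and its numerical invariants. Since $d=0$, we have $c_1(T_\sF)=2-d=2$, so by Theorem \ref{P:SLocus} we obtain $c_2(T_\sF)=0^2+2-\deg(C)=2-\deg(C)$ and the length formula for $\mathscr{U}$. The first task is to pin down $\deg(C)$. By inequality (\ref{eq:BdegZ}) applied with $d=0$ we get $\deg(C)\le 1$, so either $\deg(C)=1$ or $\deg(C)=0$. These two cases should correspond exactly to the two alternatives in the statement.

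First I would consider the normalized sheaf. Since $c_1(T_\sF)=2$ is even, the normalization is $T_\sF(-1)$, which has $c_1=0$. By Remark \ref{R:CC} and the stability criteria recalled in Section \ref{isol sing}, $T_\sF$ is $\mu$-semistable if and only if $h^0(T_\sF(-2))=0$, which holds by Lemma \ref{L:CH}(ii). Thus $T_\sF(-1)$ is always $\mu$-semistable with $c_1=0$. I would then split according to whether $h^0(T_\sF(-1))\ne0$ or $h^0(T_\sF(-1))=0$. In the first case, a nonzero section $\sigma\in H^0(T_\sF(-1))$, by Lemma \ref{coho_non_split}, forces $T_\sF$ to split as a sum of line bundles; writing $T_\sF=\O(1-a)\oplus\O(1-b)$ with $a+b=d=0$ and $a,b\ge0$ forces $a=b=0$, giving $T_\sF=\O(1)\oplus\O(1)$. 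Then the singular scheme is the zero locus of the induced section of $T\P^3(-2)=\Om(2)$, a line, which matches alternative (i); this also agrees with $\deg(C)=1$, hence $c_2(T_\sF)=1$, consistent with the Chern classes of $\O(1)\oplus\O(1)$.

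In the second case $h^0(T_\sF(-1))=0$, so $T_\sF(-1)$ is stable with $c_1=0$, $c_2=c_2(T_\sF(-1))=c_2(T_\sF)-1=1-\deg(C)$. Since a stable rank $2$ reflexive sheaf must have $c_2\ge1$ (equivalently $c_2(T_\sF(-1))\ge0$, and stability rules out $c_2=0$), I would argue $\deg(C)=0$, so $C$ is empty, $c_2(T_\sF(-1))=1$, and the length formula forces $\mathrm{length}(\mathscr{U})=0$; hence the singular scheme is empty, $T_\sF$ is locally free by Lemma \ref{loc free tg}, and $T_\sF(-1)$ is a stable rank $2$ bundle with $c_1=0$, $c_2=1$. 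Such bundles are precisely the null-correlation bundles $N$ (see Example \ref{ex nc bdl} and \cite[Section 4.3]{OSS}), giving $T_\sF=N(1)$, which is alternative (ii).

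The main obstacle, and the point requiring the most care, is identifying the locally free case cleanly: establishing that a stable rank $2$ bundle on $\P^3$ with $c_1=0$ and $c_2=1$ is exactly a null-correlation bundle, and confirming via the length formula that the singular scheme is genuinely empty rather than merely zero-dimensional. I would handle the classification of the $(c_1,c_2)=(0,1)$ case by invoking the standard description of null-correlation bundles from \cite{OSS}, and verify the emptiness of $\sing(\sF)$ directly from Theorem \ref{P:SLocus} with $d=0$ and $\deg(C)=0$, which yields $\mathrm{length}(\mathscr{U})=0^3+0+0-0+0-2=-2$; since length is nonnegative this would actually need re-examination, indicating that the empty-$C$ computation should instead be read off from $c_3(T_\sF)=\mathrm{length}(\mathscr{U})=0$ for a locally free sheaf, so the consistency check is that $C=\emptyset$ forces $T_\sF$ locally free and hence $\ell=0$, closing the argument.
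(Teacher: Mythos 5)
Your overall strategy is the same as the paper's: split on $h^0(T_\sF(-1))$, apply Lemma \ref{coho_non_split} in the split case, and use stability of the normalized sheaf plus $c_2(T_\sF(-1))\geq 1$ to force $\deg(C)=0$ in the other case. However, there is a genuine gap at the decisive step of case (ii): after $\deg(C)=0$ you must still rule out a nonempty \emph{zero-dimensional} singular scheme, i.e.\ prove $c_3(T_\sF)=\mathrm{length}(\mathscr{U})=0$, and your closing paragraph does not do this. The patch ``$C=\emptyset$ forces $T_\sF$ locally free'' reverses Lemma \ref{loc free tg}: that lemma says $T_\sF$ is locally free if and only if $Z$ has pure codimension $2$, so a nonempty zero-dimensional $Z$ --- which is exactly the possibility left open when $C=\emptyset$ --- would make $T_\sF$ \emph{not} locally free. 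A priori the tangent sheaf could be a stable reflexive sheaf with $c_1(T_\sF(-1))=0$, $c_2(T_\sF(-1))=1$ and $c_3>0$; the paper excludes this by citing \cite[Lemma 2.1]{Chang}, which asserts that no such stable reflexive sheaf exists, and this external input (or an equivalent argument) is what your write-up is missing.

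Ironically, your first instinct --- the length formula --- does close the gap, and your ``re-examination'' failed only because of a convention error. In Theorem \ref{P:SLocus} the term $2p_a(C)-2$ comes from the identity $c_3(\IC)=2p_a(C)-2+4\deg(C)$; when $C=\emptyset$ one has $\IC=\O$ and $c_3=0$, so the formula's built-in convention is $\chi(\OC)=0$, i.e.\ $p_a(\emptyset)=1$, not $p_a(\emptyset)=0$ as you plugged in. With $d=0$, $\deg(C)=0$ and $p_a(C)=1$ the formula gives $\mathrm{length}(\mathscr{U})=0+2\cdot 1-2=0$ directly (consistent with display (\ref{eq:ising}), which shows a degree $0$ distribution admits no isolated singularities), hence $Z=\emptyset$, $T_\sF$ is locally free, and the identification of stable rank $2$ bundles with $(c_1,c_2)=(0,1)$ as null-correlation bundles finishes the proof via \cite{OSS}; corrected this way, your argument is actually self-contained where the paper invokes Chang. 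Two smaller slips: $T\p3(-2)\neq\Om(2)$ --- the correct identification is $\wedge^2T\p3(-2)\simeq\Om(2)$, the distribution being given by $\omega\in H^0(\Om(2))$ --- and the conclusion that $Z$ is a line in case (i) should be drawn, as the paper does, from $\deg(C)=2-c_2(T_\sF)=1$ together with purity of $Z$ (local freeness); your Chern class consistency check effectively contains this, but it is the derivation, not a mere check.
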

\begin{proof}
Since $c_1(T_\sF)=2$, the normalization of $T_\sF$ is $T_\sF(-1)$. If $h^0(T_\sF(-1))\neq 0$, then, by Lemma \ref{coho_non_split}, $T_\sF$ is split, hence $T_\sF=\O(1)\oplus\O(1)$. It follows that its singular scheme has pure dimension 1 (since $T_\sF$ is locally free) and degree 1 (by the first formula in display (\ref{eq:length})), hence it must be a line.  

On the other hand, if $h^0(T_\sF(-1)) = 0$, then $T_\sF$ is a stable reflexive sheaf with Chern classes $c_1(T_\sF(-1))=0$ and $ c_2(T_\sF(-1))=1-\deg(C)$. The Bogomolov inequality guarantees that
$$ 0<c_2(T_\sF(-1))=1-\deg(C), $$ 
hence $\deg(C)=0$ and $c_2(T_\sF(-1))=1$. By \cite[Lemma 2.1]{Chang}, $T_\sF(-1)$ must be locally free. However, every stable rank 2 locally free sheaf $N$ with $c_1(N)=0$ and $c_2(N)=1$ is a null-correlation bundle, see \cite[8.4.1]{H2}. It follows that $T_\sF=N(1)$. 
\end{proof}

We observe that both examples can be realized as codimension one distributions. In fact,  for case  (i), since $h^0(T_\sF(-1))=4$, one can choose two linearly independent sections of $H^0(T_\sF(-1))$ to obtain an injective morphism $\O(1)\oplus\O(1) \to T\p3$. Distributions of type  (ii) were already described in Example \ref{ex nc bdl} above, see display \eqref{nc-bdl dual}.

% REPETITION OF EXAMPLE 5.2!!
% Recall that a null-correlation bundle $N$ is defined as the cokernel of a morphism $\O(-1)\to \Omega^1_{\p3}(1)$, given by a non vanishing section in $H^0(\Omega^1_{\p3}(2))$:
% $$ 0 \to \O(-1) \to \Omega^1_{\p3}(1) \to N \to 0 . $$
% Dualizing this exact sequence and twisting the result by $\O(1)$ we obtain
% $$ \sF: 0\rightarrow N(1)\rightarrow T\mathbb{P}^3\rightarrow \O(2)\rightarrow0, $$
% thus providing a distribution of type \emph{(ii)}. 

%%%%%%%%%%%%%%%%%%%%%%%%%%%%%%%%%%%%%%%%%%%%%%%%%%%%%%%%%%%%%%%%%%%%%%%%%%%%%%%%%%%%%%%%%%%%%
%%%%%%%%%%%%%%%%%%%%%%%%%%%%%%%%%%%%%%%%%%%%%%%%%%%%%%%%%%%%%%%%%%%%%%%%%%%%%%%%%%%%%%%%%%%%%

\section{Classification of degree 1 distributions} \label{class1}

Let $\sF$ be a codimension one distribution of degree 1 on $\p3$ with singular scheme $Z$; let $\mathscr{U}$ be the maximal 0-dimensional subsheaf of $\OZ$, and let $C\subset Z$ be the corresponding subscheme of pure dimension 1. The inequality (\ref{eq:BdegZ}) becomes $\deg(C)\leq 3$; using Theorem \ref{P:SLocus}, we obtain
$$ c_2(T_\sF) = 3 - \deg(C) ~~{\rm and}~~ c_3(T_\sF) = 3 - \deg(C) + 2 p_a(C). $$

Note also that the normalization of $T_\sF$ is again $T_\sF(-1)$. It follows that
\begin{itemize}
\item[(i)] either $h^0(T_\sF(-1))\neq 0$, hence, by Lemma \ref{coho_non_split}, $T_\sF=\O(1)\oplus\O$; \label{page20}
\item[(ii)] or $h^0(T_\sF(-1))= 0$, and $T_\sF$ is a stable reflexive sheaf.
\end{itemize}

In the first case, $c_2(T_\sF)=c_3(T_\sF)=0$, and $Z$ has pure dimension 1 because $T_\sF$ is locally free. Additionally, $\deg(Z)=3$ and $p_a(Z)=0$ and 
$$ h^0(I_Z(1)) = h^0(T_\sF(-2)) = 0, $$
hence $Z$ cannot be contained in any plane. We conclude that $Z$ is a (possibly degenerate) twisted cubic curve.

In the second case, Bogomolov inequality forces $c_2(T_\sF)\ge1$, so that $\deg(C)=0,1,2$. Let us analyze each possibility.

If $\deg(C)=0$, so that $C=\emptyset$, it follows that $c_2(T_\sF)=3$ and $c_3(T_\sF)=5$.

If $\deg(C)=1$, so that $C$ is a line and hence $p_a(C)=0$, it follows that $c_2(T_\sF)=2$ and $c_3(T_\sF)=2$.

If $\deg(C)=2$, it follows that $c_2(T_\sF)=1$ and $c_3(T_\sF)=1+2p_a(C)$. Since $T_\sF$ is reflexive, we must have $c_3(T_\sF)\ge0$, hence $p_a(C)\ge0$. On the other hand, since $T_\sF$ is stable and $c_1(T_\sF)=-1$, it follows from \cite[Theorem 8.2(d)]{H2} that $c_3(T_\sF)\le1$, thus actually $p_a(C)=0$ and $C$ is a (possibly degenerate) conic.

We summarize the facts obtained above in the following table.

\begin{table}[h] %\large{\scshape{ }}
\begin{tabular}{|c | c | c | c | c | c | } \hline
$\deg(C) $ & $c_2(T_\sF)$ & $c_3(T_\sF)$ & $T_\sF$ & $\sing(\sF)$ \\ \hline
0          &  3           &  5           & stable  & 5 points \\ \hline
1          &  2           &  2           & stable  & a line and 2 points \\ \hline
2          &  1           &  1           & stable  & a conic and a point \\ \hline
3          &  0           &  0           & split   & a twisted cubic \\ \hline
\end{tabular}
\medskip
\caption{Codimension one distribution of degree 1; all possibilities are realized.}
 \label{table deg 1}
\end{table}

% Let us now look in detail each of the four classes of distributions described above, showing that each one can be realized concretely. Below, in the proof of Lemma \ref{conic+point}, we will use the \emph{spectrum} of a rank 2 reflexive sheaf on $\p3$, so we pause for a moment to recall it.
% \label{spectrum}
% Let $E$ be a rank 2 reflexive sheaf on $\p3$ with $c_1(E)$ equal to 0 or $-1$ satisfying $H^0(E(-1))=0$. Following Hartshorne \cite[Theorem 7.1]{H2}, the spectrum of $E$ is the unique set of integers $(k_1,\dots,k_{c_2(E)})$, in increasing order, satisfying the following two conditions, where $S=\oplus_{i}\mathcal{O}_{p1}(k_i)$:
% \begin{enumerate}
% \item $h^1(E(l))=h^0(\p1,S(l+1))$, for $l\le-1$;
% \item $h^2(E(l))=h^1(\p1,S(l+1))$, for $l\ge-3$ when $c_1(E)=0$ and $l\ge-2$ when $c_1(E)=-1$.
% \end{enumerate}
% Hartshorne establishes various properties for the spectrum of rank 2 reflexive sheaves in \cite[Section 7]{H2}; the most relevant for us are:
% \begin{itemize}
% \item $\sum_i k_i = -c_3(E)/2$ \cite[Proposition 7.3]{H2};
% \item $(k_1,\dots,k_{c_2(E)})$ is connected, that is $k_{i+1}\le k_{i}+1$, except possibly for a gap at 0 \cite[Corollary 7.6]{H2}.
% \end{itemize}

%----------------------------------

\subsection{Distributions with isolated singularities}

\begin{Theorem} \label{generic_deg1}
Let $\sF$ be a codimension one distribution of degree 1 with isolated singularities. Then its tangent sheaf $T_\sF$ is given as the cokernel of a morphism
$$ 0\to \O(-2)\oplus T\P^3(-3) \to  \O(-1)^{\oplus 6} \to T_\sF \to 0, $$
and there exists an unique (up to scalar) morphism $\phi: T_\sF \to T\P^3$. In particular, $\sF$ does not admit sub-foliations of degree $<2$, and possesses sub-foliations of degree $2$ singular along a curve $Y$ of degree $5$ and arithmetic genus $-4$.
\end{Theorem}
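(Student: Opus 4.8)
The plan is to build everything on one short resolution of $T_\sF$ coming from the hypothesis that $\sing(\sF)=Z$ is $0$-dimensional. Since $Z$ has the expected codimension $3$, the defining section $\omega\in H^0(\Om(3))$ (the dual of $\pi$ in \eqref{eq:Dist p3}) vanishes in codimension $3$, so the Koszul complex of $\omega$ is exact:
\[
0\to\O(-5)\to\Om(-2)\to T\p3(-3)\xrightarrow{\ \omega^\vee\ }\O\to\OZ\to0 .
\]
Twisting by $\O(3)$ and recalling from \eqref{eq:Dist p3} that $T_\sF=\ker(T\p3\to\O(3))=\img(\Om(1)\to T\p3)$, I read off the short resolution
\[
(\ast)\qquad 0\to\O(-2)\to\Om(1)\to T_\sF\to0 ,
\]
whose Chern classes recover $c_1=1$, $c_2=3$, $c_3=5$ in agreement with Theorem \ref{P:SLocus}; stability of $T_\sF$ is already furnished by Theorem \ref{StGDist}.

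From $(\ast)$ and Bott's formula I would extract all the cohomology needed. Twisting $(\ast)$ by $\O(j)$ gives $h^0(T_\sF(j))=h^0(\Om(j+1))$ for $j\le1$, so $h^0(T_\sF(j))=0$ for $j\le0$ while $h^0(T_\sF(1))=h^0(\Om(2))=6$; moreover $T_\sF(1)$, being a quotient of the globally generated $\Om(2)$, is globally generated. The vanishing $h^0(T_\sF(j))=0$ for $j\le0$ shows immediately that $\sF$ admits no sub-foliation of degree $<2$, since such a sub-foliation would come from a section of $T_\sF(k)$ with $k\le0$ (Lemma \ref{sing sub-dist}, Proposition \ref{non_stable_prop}). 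On the other hand the $6$-dimensional $H^0(T_\sF(1))$ does produce sub-foliations of degree $2$; for such a section $\sigma$ the $1$-dimensional component of the singular locus is $Y=(\sigma=0)$, of degree $c_2(T_\sF(1))=5$, and its arithmetic genus is determined by $c_3(T_\sF)=5$ through the relative normal sequence \eqref{y} and the genus formula for the zero scheme of a section of a rank $2$ reflexive sheaf.

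To produce the stated resolution I would splice $(\ast)$ with the evaluation sequence of $\Om(2)$. Taking $\wedge^2$ of the Euler sequence gives
\[
0\to T\p3(-2)\to\O^{\oplus6}\to\Om(2)\to0 ,
\]
and twisting by $\O(-1)$ exhibits $\Om(1)$ as a quotient $a:\O(-1)^{\oplus6}\twoheadrightarrow\Om(1)$ with kernel $T\p3(-3)$. Composing $a$ with the surjection $b:\Om(1)\twoheadrightarrow T_\sF$ of $(\ast)$ and applying the snake lemma yields
\[
0\to T\p3(-3)\to\ker(b\circ a)\to\O(-2)\to0 ,
\]
an extension classified by $\Ext^1(\O(-2),T\p3(-3))=H^1(T\p3(-1))=0$. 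Hence it splits, $\ker(b\circ a)\cong\O(-2)\oplus T\p3(-3)$, and $b\circ a$ is the asserted presentation $0\to\O(-2)\oplus T\p3(-3)\to\O(-1)^{\oplus6}\to T_\sF\to0$.

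The main obstacle is the uniqueness of $\phi$, that is $\dim\Hom(T_\sF,T\p3)=1$. I would first note that every nonzero $\phi:T_\sF\to T\p3$ is injective: otherwise $\ker\phi=\O(b)\hookrightarrow T_\sF$ forces $b\le-1$ (as $h^0(T_\sF(j))=0$ for $j\le0$), so $\img\phi$ would saturate to some $\O(c)\subset T\p3$ with $c\ge c_1(T_\sF)-b\ge2$, contradicting $h^0(T\p3(-c))=0$ for $c\ge2$. For the dimension count I would apply $\Hom(T_\sF,-)$ to the Euler sequence; using $\Hom(T_\sF,\O(m))=H^0(T_\sF^\vee(m))=H^0(T_\sF(m-1))=0$ for $m\le1$ (Lemma \ref{L:CH}), this identifies $\Hom(T_\sF,T\p3)$ with $\ker\bigl(\Ext^1(T_\sF,\O)\to\Ext^1(T_\sF,\O(1))^{\oplus4}\bigr)$. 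From $(\ast)$ the two groups are the cokernels of the injective contraction maps $H^0(T\p3(-1))\to H^0(\O(2))$ and $H^0(T\p3)\to H^0(\O(3))$, of dimensions $6$ and $5$ respectively, and the connecting map is contraction of the $\omega$-classes against the coordinate vector fields. Showing that the resulting map $\C^{6}\to\C^{20}$ has a one-dimensional kernel is the technical heart of the proof, and it is exactly here that the isolated-singularity hypothesis, through the exactness of the Koszul complex, must be used; I expect this step to be the hardest.
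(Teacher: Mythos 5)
Your route to the resolution is correct and genuinely different from the paper's. The paper first proves $h^0(T_\sF)=0$ by a residue-theoretic argument (a degree-one sub-foliation would force the length-5 scheme $\sing(\sF)$ inside a zero-dimensional scheme of length at most 4, by the results of Brasselet--Corr\^ea--Louren\c{c}o and Sancho), and only then invokes Chang's description of the general point of ${\mathcal M}(-1,3,5)$ to quote the presentation $0\to \O(-2)\oplus T\P^3(-3)\to\O(-1)^{\oplus 6}\to T_\sF\to 0$. You instead obtain it self-containedly: exactness of the Koszul complex of $\omega$ (exactly the isolated-singularity hypothesis) gives $0\to\O(-2)\to\Om(1)\to T_\sF\to0$, and splicing with $0\to T\P^3(-3)\to\O(-1)^{\oplus6}\to\Om(1)\to0$ together with $\Ext^1(\O(-2),T\P^3(-3))=H^1(T\P^3(-1))=0$ splits the kernel. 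This simultaneously yields $h^0(T_\sF(j))=h^0(\Om(j+1))$ for $j\le1$, hence the vanishing needed to exclude sub-foliations of degree $<2$ and $h^0(T_\sF(1))=6$, bypassing both Chang and the residue argument. That part of your proposal I would accept as a cleaner alternative.

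The genuine gap is the uniqueness of $\phi$, which you explicitly leave open after reducing it to showing that a map $\C^6\to\C^{20}$ between $\Ext$-groups has one-dimensional kernel. That computation is never needed: your own resolution finishes the job in two lines, and this is precisely what the paper does. Since $T_\sF^{\vee}\simeq T_\sF(-1)$ (Lemma \ref{L:CH}(i)), tensoring the presentation with $T\P^3(-1)$ gives
\[
0\to T\P^3(-3)\oplus \bigl(T\P^3\otimes T\P^3(-4)\bigr)\to T\P^3(-2)^{\oplus 6}\to T\P^3\otimes T_\sF^{\vee}\to 0,
\]
and since $h^0(T\P^3(-2))=h^1(T\P^3(-2))=h^1(T\P^3(-3))=0$ by Bott, one gets $\dim\Hom(T_\sF,T\P^3)=h^1(T\P^3\otimes T\P^3(-4))=h^2(T\P^3(-4))=1$, the last step via the Euler sequence. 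So your proposal, as written, fails to establish a central assertion of the theorem, even though the missing step is easy from the material you already set up. A second, smaller issue: you also defer the genus of $Y$ to ``the genus formula'' without evaluating it, and if you actually run the computation you outline --- the sequence \eqref{y} with $k=1$ and Hartshorne's relation $c_3(E)=2p_a(Y)-2+\deg(Y)\,(4-c_1(E))$ for $E=T_\sF(1)$, using that $c_3$ of a rank-2 sheaf is twist-invariant --- you get $p_a(Y)=1$, not $-4$; this is corroborated by $\chi(T_\sF)=0$ (immediate from your resolution) together with $\chi(I_{Y/\P^3}(2))=p_a(Y)-1$. The value $-4$ matches what one obtains by applying the untwisted formula $c_3(I_{Y/\P^3})=2p_a(Y)-2+4\deg(Y)$ to the twisted class $c_3(I_{Y/\P^3}(2))$; note the paper's own proof never verifies this figure either, so you should carry out the computation explicitly rather than appeal to the stated value.
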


\begin{proof}
In the case at hand, we have $c_1(T_\sF)=1$, $c_2(T_\sF)=3$ and $c_3(T_\sF)=5$; the normalized sheaf $(T_\sF)_{\eta}=T_\sF(-1)$ satisfies $c_1((T_\sF)_{\eta})=-1$, $c_2((T_\sF)_{\eta})=3$ and $c_3((T_\sF)_{\eta})=5$.

We observe that $h^0((T_\sF)_{\eta}(1))=h^0(T_\sF)=0$. Indeed, Lemma \ref{coho_non_split} implies that $h^0(T_\sF(-1))=0$ since $T_\sF$ does not split as a sum of line bundles; therefore, a nonzero section of $T_\sF$ induces a sub-foliation $\sG$ of $T_\sF$ of dimension one and degree 1. Since $\sing(\sF)$ is 0-dimensional, it follows from \cite[Corollary 3]{BCL} that $\sing(\sF)$  must be contained on the 0-dimensional component $S_0(\sG)$ of $\sing(\sG)$. However, $\sing(\sF)$ has length 5, while, following \cite{Sancho}, the length of $S_0(\sG)$ is at most 4.
 
This shows that $(T_\sF)_{\eta}$ corresponds to a general point, in the sense of \cite[Lemma 3.10 and the proof of Theorem 3.14]{Chang}, of the moduli space of stable rank 2 reflexive sheaves with Chern classes $c_1=-1$, $c_2=3$ and $c_3=5$. Therefore, it follows from the proof of \cite[Theorem 3.14]{Chang} that $T_\sF$ is given as the cokernel of a morphism of locally free sheaves:
\begin{equation}\label{seq:gen}
0\to \O(-2)\oplus T\P^3(-3) \to  \O(-1)^{\oplus 6} \to T_\sF \to 0.
\end{equation}
One can then check that $h^0(T_\sF(1))=6$. 

Now, we will show that $h^0(T\P^3\otimes T_\sF^{\vee})=1$.
Since $T_\sF^{\vee}=T_\sF(-1)$, twisting the sequence (\ref{seq:gen}) by $T\P^3(-1)$ we get 
\begin{equation}\label{seq:gen2}
0\to T\P^3(-3)\oplus T\P^3 \otimes T\P^3(-4) \to  T\P^3(-2)^{\oplus 6} \to T\P^3\otimes T_\sF^{\vee } \to 0.
\end{equation}
Twisting the sequence (\ref{seq:gen}) by $T\P^3$, taking cohomology  and using Bott's Formulae  we get 
$$0 =H^0(T\P^3(-2))^6 \to H^0(T\P^3\otimes T_\sF^{\vee }) \to 
H^1(T\P^3(-3)) \oplus  H^1(T\P^3 \otimes T\P^3(-4) ) \to H^1(T\P^3(-2))^6 =0. $$
Since $h^1(T\P^3(-3))=0$, we conclude that $h^0(T\P^3\otimes T_\sF^{\vee })= h^1(T\P^3 \otimes T\P^3(-4) )$. Now, to calculate $h^1(T\P^3 \otimes T\P^3(-4) )$
Taking cohomology and using Bott's Formulae we conclude that $h^1(T\P^3 \otimes T\P^3(-4) )=h^2( T\P^3(-4) )=1$. Therefore
$$h^0(T\P^3\otimes T_\sF^{\vee })= h^1(T\P^3 \otimes T\P^3(-4) )=1.$$
\end{proof}

\begin{Ex}\rm
Consider the distribution  $\sF$ with $c_1(T_\sF)=1$,$c_2(T_\sF)=3,$ $c_3(T_\sF)=5$ induced by 
$$ \omega = (z_0^2+z_1^2+z_2^2)dz_3 - (z_3z_0+z_2z_1)dz_0 + (z_2z_0-z_3z_1)dz_1-z_3z_2dz_2 $$
The singular scheme of $\sF$ is $\{ 2[  i:- 1:0:0], 2[i:1:0:0], [0:0:0:1] \} $.
\end{Ex}

%----------------------------

\subsection{Distributions with singular set being a line and two points}

\begin{Prop} 
Let $\sF$ be a codimension one distribution of degree 1 whose singular scheme is the union of a line with two points. Then its tangent sheaf is a stable reflexive sheaf with Chern classes $c_2(T_\sF)=c_3(T_\sF)=2$, and $\sing(\sF)$ cannot be contained in a plane. In addition, $\sF$ possesses a unique sub-foliation of degree $1$ singular along a pair of skew lines (or its degeneration, a nonplanar degree two structure on a line).
\end{Prop}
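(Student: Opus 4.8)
The plan is to read off the numerical and stability statements from the general analysis of degree-$1$ distributions carried out above, and then to produce and count the degree-$1$ sub-foliations by studying $h^0(T_\sF)$. For Part 1: since $\sing(\sF)$ is a line $C$ together with two points, we have $\deg(C)=1$ and $\mathrm{length}(\mathscr{U})=2$, so Theorem \ref{P:SLocus} gives $c_2(T_\sF)=c_3(T_\sF)=2$. Because $\deg(C)\neq 3$, the sheaf $T_\sF$ cannot split as a sum of line bundles, whence $h^0(T_\sF(-1))=0$ by Lemma \ref{coho_non_split}; as $c_1(T_\sF)=1$ is odd, the normalization of $T_\sF$ is $T_\sF(-1)$, and this vanishing is exactly the criterion for stability. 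This is the $\deg(C)=1$ row of Table \ref{table deg 1}.

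The heart of the argument is to prove $h^0(T_\sF)=1$. For existence I would compute $\chi(T_\sF)=1$ from Riemann--Roch (equivalently, from the defining sequence). Then $h^3(T_\sF)=0$ by Lemma \ref{L:CH}(iv), while the defining sequence gives $H^2(T_\sF)\cong H^1(\IZ(3))$ since $T\p3$ has no intermediate cohomology; the sequence $0\to \IZ(3)\to \IC(3)\to \mathscr{U}\to 0$, together with the global generation of $\IC(3)$ (here $C$ is a line), forces the restriction $H^0(\IC(3))\to H^0(\mathscr{U})$ onto the length-$2$ sheaf $\mathscr{U}$ to be surjective, so $H^1(\IZ(3))=0$ and hence $h^2(T_\sF)=0$. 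Therefore $h^0(T_\sF)=1+h^1(T_\sF)\geq 1$. A nonzero section $\sigma\in H^0(T_\sF)$ has a zero scheme $Y$ of pure codimension $2$ (Remark \ref{min twist} and stability), yielding a degree-$1$ sub-foliation
\[ \sG:\ 0\to \O \xrightarrow{\ \sigma\ } T_\sF \to I_{Y/\P^3}(1)\to 0, \]
from which $\deg(Y)=c_2(T_\sF)=2$ and, using $\chi(T_\sF)=1$, $p_a(Y)=-1$.

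For uniqueness and the shape of $Y$ I would observe that $p_a(Y)=-1$ precludes $Y$ from lying in a plane: a degree-$2$ Cohen--Macaulay curve inside a plane is a conic, which has $p_a=0$. Hence $h^0(I_{Y/\P^3}(1))=0$, and the displayed sequence gives $h^0(T_\sF)=1+h^0(I_{Y/\P^3}(1))=1$, so the sub-foliation is unique; being a non-planar degree-$2$ curve of arithmetic genus $-1$, $Y$ is a pair of skew lines, or its flat degeneration a nonplanar double line, and by Lemma \ref{sing sub-dist} it is precisely the $1$-dimensional part of $\sing(\sG)$. To close Part 2, that $Z=\sing(\sF)$ is non-planar, I would twist the defining sequence by $\O(-2)$ to get $h^0(\IZ(1))\cong H^1(T_\sF(-2))$ (using $h^0(T\p3(-2))=h^1(T\p3(-2))=0$), and twist the sub-foliation sequence by $\O(-2)$ to identify $H^1(T_\sF(-2))\cong H^1(I_{Y/\P^3}(-1))\cong H^0(\mathcal{O}_Y(-1))$, which vanishes by a direct check on the two possible shapes of $Y$ (equivalently, $\chi(\mathcal{O}_Y(-1))=0$ with $h^1=0$).

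The step I expect to be most delicate is pinning $h^0(T_\sF)$ down to exactly $1$ rather than merely bounding it, and in particular the vanishing $h^2(T_\sF)=H^1(\IZ(3))=0$. The clean and uniform way to obtain it is the surjection of the globally generated sheaf $\IC(3)$ onto the length-$2$ sheaf $\mathscr{U}$; this survives all the degenerate configurations --- the two points lying on the line $C$, or the zero scheme $Y$ being a double line --- where a naive general-position count would fail.
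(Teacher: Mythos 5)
Your numerical and stability arguments are correct and match the paper's general analysis of degree~1 distributions; your route to the sub-foliation, however, is genuinely different from the paper's. The paper simply reads $h^0(T_\sF)=h^0((T_\sF)_\eta(1))=1$ and $h^0(\IZ(1))=h^1((T_\sF)_\eta(-1))=0$ off Chang's Table 2.6.1 for stable reflexive sheaves with $(c_1,c_2,c_3)=(-1,2,2)$, and quotes Chang's Lemma 2.4 for the shape of the zero scheme, whereas you reconstruct these facts by hand ($\chi(T_\sF)=1$, $p_a(Y)=-1$ forcing $Y$ non-planar, $h^0(T_\sF)=1+h^0(I_{Y/\P^3}(1))=1$, and $h^0(\IZ(1))\simeq H^1(T_\sF(-2))\simeq H^0(\mathcal{O}_Y(-1))=0$ --- all of which check out). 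But the pivotal step you flagged as delicate contains a genuine gap: the implication ``$\IC(3)$ is globally generated, hence $H^0(\IC(3))\to H^0(\mathscr{U})$ is surjective'' is false as a general principle. Global generation only gives surjectivity onto fibers, i.e.\ onto length-one quotients. Counterexample: $\mathcal{O}_{\p3}$ is globally generated (indeed $0$-regular) and surjects as a sheaf onto the structure sheaf $\mathcal{O}_W$ of a length-two scheme $W$ supported at a single point, yet $H^0(\mathcal{O}_{\p3})=\C$ cannot surject onto $H^0(\mathcal{O}_W)=\C^2$. So your ``clean and uniform'' justification of $h^1(\IZ(3))=0$ is a non sequitur, and it is precisely the degenerate configurations (embedded length-two structures on the line, as in Example \ref{ex_grau1:5}) where the inference needs real work.

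The vanishing itself is true and your proof can be repaired, but only by an argument that uses more than global generation. One fix is a short local analysis: sections of $\IC(3)$ have the form $Ax+By$, where $\{x=y=0\}$ is the line $C$ and $A,B$ are arbitrary quadrics; every length-two quotient $\mathscr{U}=I_{C/\P^3}/I_{Z/\P^3}$ (two reduced points on or off $C$, a double point off $C$, or an embedded length-two structure of any of the possible local module types) is determined by the values and at most one first-order jet of $(A,B)$ at one or two points, and since $A,B$ are unconstrained quadrics these conditions are always independent; hence $H^0(\IC(3))\to H^0(\mathscr{U})$ is onto, giving $h^1(\IZ(3))=h^2(T_\sF)=0$ as you need. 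Alternatively, you can bypass the issue entirely by invoking Chang's cohomology tables, as the paper does: stability of $T_\sF$ with the stated Chern classes already pins down $h^0(T_\sF)=1$ and $h^1(T_\sF(-2))=0$, and Chang's Lemma 2.4 then identifies $Y$ as a pair of skew lines or a non-planar double line in one stroke. With that single step repaired, the rest of your argument is sound and somewhat more self-contained than the paper's.
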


\begin{proof}
In this case $c_1(T_\sF)=1$, $c_2(T_\sF)=c_3(T_\sF)=2$ and the normalized sheaf  $(T_\sF)_{\eta}=T_\sF(-1)$ satisfies $c_1((T_\sF)_{\eta})=-1$, $c_2((T_\sF)_{\eta})=c_3((T_\sF)_{\eta})=2$. It follows from \cite[Table 2.6.1]{Chang} that
$$ h^0(I_{Z/\p3}(1)) = h^1((T_\sF)_{\eta}(-1)) = 0 ,$$
thus $\sing(\sF)$ cannot be contained in a plane.

Moreover, $h^0(T_\sF)=h^0((T_\sF)_{\eta}(1))=1$, see \cite[Table 2.6.1]{Chang}; according with \cite[Lemma 2.4]{Chang} a nonzero section of $T_\sF$ vanishes along a pair of skew lines (or its degeneration, a nonplanar degree two structure on a line). This section induces a codimension two sub-foliation of degree one.
\end{proof}

\begin{Ex}\label{ex_grau1:5}\rm
Consider the non-integrable distribution $\sF$ with $c_1(T_\sF)=1$,$c_2(T_\sF)=c_3(T_\sF)=2$ induced by 
$$ \omega= (z_0^2+z_1^2)dz_3 - z_3(z_0dz_0+z_1dz_1) + z_1(z_0dz_2-z_2dz_0). $$
The singular scheme of $\sF$ is $\{z_0=z_1=0\}  \cup \{2 [0:0:1:0]\}$.
\end{Ex}

%-------------------------------------

\subsection{Distributions whose singular set is a conic and a point}

In this case, the tangent sheaf must be a stable reflexive sheaf with $c_1(E)=c_2(E)=c_3(E)=1$. Indeed, more is true.

\begin{Theorem} \label{conic+point}
Every stable rank 2 reflexive sheaf $E$ with Chern classes $c_1(E)=c_2(E)=c_3(E)=1$ is the tangent sheaf of a codimension one distribution on $\P^3$ whose singular set is the union of a degree 2 curve of genus 0 with a point not contained in the same plane.  
\end{Theorem}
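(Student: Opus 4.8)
The plan is to realize $E$ as a tangent sheaf by exhibiting an injective morphism $\phi\colon E\to T\p3$ with torsion-free cokernel, and then to read off the singular scheme from the numerics of Theorem \ref{P:SLocus}. Since the condition ``$\phi$ injective and $\coker\phi$ torsion free'' cuts out the open set $\mathcal{D}(E)\subset\P\Hom(E,T\p3)$ appearing in the discussion preceding Lemma \ref{forget phi 2}, it suffices to show this set is nonempty. As $c_1(E)=1$ is odd, the normalization is $E_\eta=E(-1)$, with $(c_1,c_2,c_3)=(-1,1,1)$; stability gives $h^0(E(-1))=h^0(E_\eta)=0$, and the Hilbert polynomial formula \eqref{eq:HilbP} yields $\chi(E)=1$ and $\chi(E_\eta)=0$. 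First I would extract from Chang's classification \cite{Chang} of stable reflexive sheaves with these normalized invariants the values $h^0(E)=1$ and the key vanishing $h^1(E(-2))=h^1(E_\eta(-1))=0$.

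Next I would count morphisms. Tensoring the (locally split, hence universally exact) Euler sequence $0\to\O\to\O(1)^{\oplus4}\to T\p3\to0$ by $E(-1)$ and using $h^0(E(-1))=0$ gives
\[
0\to H^0(E)^{\oplus4}\to \Hom(E,T\p3)\to \ker\!\big(H^1(E(-1))\to H^1(E)^{\oplus4}\big)\to0 ,
\]
together with $\chi(E(-1)\otimes T\p3)=4\chi(E)-\chi(E_\eta)=4$; using the $h^1$-data from \cite{Chang} I would pin down $\hom(E,T\p3)$, expecting it to be at least $5$. This matters because any morphism of generic rank $\le1$ factors as $E\to\O(1)\to T\p3$: a saturated rank-one subsheaf of $T\p3$ is a line bundle $\O(a)$ with $a\le1$, and $\Hom(E,\O(a))=H^0(E(a-1))=0$ for $a\le0$, so the rank-$\le1$ locus is the image of the bilinear composition $H^0(E)\otimes H^0(T\p3(-1))\to\Hom(E,T\p3)$, whose projectivization has dimension at most $3$. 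Once $\dim\P\Hom(E,T\p3)>3$, a generic $\phi$ has rank-two image, hence is injective (its kernel is a rank-$0$ subsheaf of the torsion-free $E$, so vanishes).

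For the cokernel, observe that $\wedge^2\phi\colon\det E=\O(1)\to\wedge^2T\p3=\Om(4)$ is exactly the $1$-form $\omega\in H^0(\Om(3))$ defining $\sF$, and $\coker\phi$ is torsion free precisely when $\omega$ has no divisorial zeros, i.e.\ $(\omega=0)$ has codimension $\ge2$. I would secure this either by the Bertini-type theorem of the Appendix, or by invoking openness of the condition in $\P\Hom(E,T\p3)$ together with the existence of at least one degree-$1$ distribution singular along a conic and a point (Table \ref{table deg 1}). Granting such a $\phi$, with $d=1$ and $c_2(T_\sF)=c_2(E)=1$ Theorem \ref{P:SLocus} forces $\deg(C)=d^2+2-c_2=2$ and, with $c_3(T_\sF)=1=\mathrm{length}(\mathscr{U})$, $p_a(C)=0$; thus $\sing(\sF)$ is a genus-$0$ degree-$2$ curve (a conic) together with one point. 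For non-coplanarity I twist \eqref{eq:Dist p3} by $\O(-2)$ and use $H^0(T\p3(-2))=H^1(T\p3(-2))=0$ to obtain $H^0(\IZ(1))\cong H^1(E(-2))=0$; hence $Z$ lies in no plane, so the point is not coplanar with the conic. Notably this step depends only on $E$, not on $\phi$.

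The main obstacle is the middle step. Because $E$ is reflexive but not locally free at its single non-locally-free point — which, accounting for $c_3=1$, is exactly the isolated point of $Z$ — the standard Bertini and global-generation arguments do not apply verbatim: $E(-1)\otimes T\p3$ is not globally generated, and one must control the degeneracy of $\phi$ both away from and at this point. This is precisely where a hands-on construction through Chang's locally free resolution of $E$, or the Appendix's generalization of Ottaviani's theorem, is required; everything else is Chern-class bookkeeping and cohomology vanishing drawn from \cite{Chang}.
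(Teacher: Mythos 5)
Your outline reproduces the paper's bookkeeping but stalls exactly at the theorem's crux, and your diagnosis of that crux is backwards. The paper's key move is the one you gesture at but do not carry out: by \cite[proof of Lemma 9.3, p.~166]{H2}, \emph{every} stable rank 2 reflexive sheaf with $(c_1,c_2,c_3)=(1,1,1)$ admits a resolution $0\to\O(-1)\to\O^{\oplus3}\to E\to0$. Hence $E$ is globally generated (the surjection $\O^{\oplus3}\twoheadrightarrow E$ is not obstructed by the non-locally-free point), and therefore $E^\vee\otimes T\p3\simeq E\otimes T\p3(-1)$ \emph{is} globally generated, being a tensor product of globally generated sheaves --- your assertion that it is not is false, and with it falls your reason for dismissing the Appendix route. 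Indeed, the Appendix is engineered precisely for reflexive, non-locally-free sheaves: Lemma \ref{LemmaGlobal1} bounds the degeneracy locus of a generic $\phi$ using $\min\{t-g+1,\,\codim_X\sing(G^\vee)\}=\min\{2,3\}=2$ (the non-locally-free locus of a reflexive sheaf on $\p3$ has codimension 3), and Lemma \ref{LemmaGlobal2} then yields a torsion-free cokernel; packaged as Proposition \ref{global} (Corollary \ref{glob gen p3}), this is literally the paper's one-line existence step. Your fallback --- openness of $\mathcal{D}(E)$ plus one realized example from Table \ref{table deg 1} --- cannot prove the statement as quantified: openness lives inside $\mathbb{P}\Hom(E,T\p3)$ for a \emph{fixed} $E$, and a single example such as Example \ref{ex_grau1:3} realizes only its own tangent sheaf, not an arbitrary point of the three-dimensional moduli space $\mathcal{M}(-1,1,1)\simeq\p3$.

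Your numerics also need correction, though the errors are not all your fault. From the resolution one computes $h^0(E)=3$ (not $1$, and this comes from Hartshorne's resolution, not Chang's tables), $\chi(E)=3$, $h^1(E(-2))=0$, and $\hom(E,T\p3)=12$, exactly as the paper does in Section \ref{sec:dist deg 1}; your value $\chi(E)=1$ traces to formula \eqref{eq:HilbP}, which as printed omits the $c_1$-terms of Riemann--Roch and is only reliable when $c_1(T_\sF)=0$ (test it on $\O\oplus\O(1)$: it returns $3$ instead of $5$). With the corrected numbers your injectivity count still works --- rank-$\le1$ morphisms factor through $\O(1)$, so they sweep out a locus of dimension at most $\dim\bigl(\mathbb{P}\Hom(E,\O(1))\times\mathbb{P}\Hom(\O(1),T\p3)\bigr)=2+3=5$ inside the $11$-dimensional $\mathbb{P}\Hom(E,T\p3)$ --- but it becomes superfluous once Proposition \ref{global} is invoked. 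Your closing steps do match the paper: $\deg(C)=2$, $p_a(C)=0$, ${\rm length}(\mathscr{U})=1$ from Theorem \ref{P:SLocus}, and non-coplanarity via $h^0(\IZ(1))=h^1(E(-2))=0$, where the paper additionally cites \cite[Corollary 1.6]{N} to note that the degree-2 genus-0 curve is planar, so that ``the same plane'' makes sense.
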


\begin{proof}
Let $E$ be a stable rank 2 reflexive sheaf on $\P^3$ with Chern classes
$c_1(E)=c_2(E)=c_3(E)=1$. Its normalization is $E_{\eta}=E(-1)=E^{\vee}$, so that
$c_1(E_{\eta})=-1$ and $c_2(E_{\eta})=c_3(E_{\eta})=1$; it follows from \cite[proof of Lemma 9.3, page 166]{H2} that $E_{\eta}$ is given as the cokernel of a morphism
\begin{equation}\label{exsqc}
0 \to \O(-1) \to \op3^{\oplus 3} \to E_{\eta}(1)\simeq E \to 0.
\end{equation}
It follows from Proposition \ref{global} that $E^{\vee}(1)=[E(-1)](1)=E$ is the tangent sheaf of a codimension one distribution on $\P^3$.

% The spectrum of such $E_{\eta}$ is $(-1)$, see  \cite[Proposition 7.3]{H2}. By \cite[Theorem 7.1]{H2} we have $h^2(E_{\eta}(-1))=h^1(\P^1, \mathcal{O}_{\P^1}(-1))=0$. By stability and Serre duality $h^3(E_{\eta}(-2))=h^0(E_{\eta}^{\vee }(-2))=h^0(E_{\eta}^{\vee }(-1))=0.$ Since also $h^0(E_{\eta})=0$, we conclude that $h^1(E_{\eta})=-\chi(E_{\eta}) =0$, see \cite[Theorem 2.3]{H2}. This shows that $E_{\eta}$ is $1$-regular, i.e. $E_{\eta}(1)=E$ is globally generated; in fact, since $h^0(E)=\chi(E)=3$, we have that every stable rank 2 reflexive sheaf on $\P^3$ with Chern classes $c_1(E)=c_2(E)=c_3(E)=1$ is given as the cokernel of a morphism:
% \begin{equation}\label{exsqc}
% 0 \to \O(-1) \to \op3^{\oplus 3} \to E \to 0.
% \end{equation} 

Theorem \ref{P:SLocus} implies that $\deg(C)=2$ and $p_a(C)=0$; according to \cite[Corollary 1.6]{N} every such curve is planar. On the other hand, the exact sequence
$$ 0 \to E \to T\p3 \to I_{Z/\p3}(3) \to 0 $$
yields $h^0(I_Z(1))=h^1(E(-2))=0$, where the second equality is an immediate consequence of the exact sequence (\ref{exsqc}). It follows that $Z$ is not contained in a plane, thus $C$ and the additional point cannot be contained in the same plane.

% We now check that $C$ is reduced. If it is not, since $p_a(C)=0$, it must be given by an exact sequence of the form \cite[Corollary 1.6]{N}
% $$ 0 \to I_{C/\p3} \to I_{\ell/\p3} \to \mathcal{O}_{\ell/\p3}(-1) \to 0, $$
% where $\ell:=C_{\rm red}$. Passing to cohomology, we obtain
% $$ 0 \to H^0(I_{C/\p3}(2)) \to H^0(I_{\ell/\p3}(2)) \to H^0(\mathcal{O}_{\ell/\p3}(1)),$$
% and it follows that $h^0(I_{C/\p3}(2))\ge h^0(I_{\ell/\p3}(2)) - h^0(\mathcal{O}_{\ell/\p3}(1))=5$.
% On the other hand, the exact sequence
% $$ 0 \to E \to T\p3 \to I_{Z/\p3}(3) \to 0 $$
% yields
% $$ H^0(E(-1)) \to H^0(T\p3(-1)) \to H^0(I_{Z/\p3}(2)) \to H^1(E(-1)) .$$
% Since $h^0(E(-1))=h^1(E(-1))=0$, it follows that $h^0(I_{Z/\p3}(2))=h^0(T\p3(-1))=4$, giving a contradiction.
\end{proof}

\begin{Ex}\label{ex_grau1:3}\rm
A generic foliation  $\sF$ with $c_1(T_\sF)=c_2(T_\sF)=c_3(T_\sF)=1$ is induced by 
$$ \omega= (z_0^2+z_1^2+z_2^2)dz_3-z_3(z_0dz_0+z_1dz_1+z_2dz_2).$$
The singular scheme of $\sF$ is $\{z_3=z_0^2+z_1^2+z_2^2=0\}  \cup \{[0:0:0:1]\}$, see \cite[Proposition 3.5.1]{J}.
\end{Ex}

\begin{Ex}\label{ex_grau1:4}\rm
Consider the non-integrable distribution $\sF$ with $c_1(T_\sF)=c_2(T_\sF)=c_3(T_\sF)=1$ induced by 
$$ \omega= (z_0^2+z_1^2+z_2^2)dz_3-z_3(z_0dz_0+z_1dz_1+z_2dz_2)
+z_3(z_0dz_1-z_1dz_0). $$
The singular scheme of $\sF$ is $\{z_3=z_0^2+z_1^2+z_2^2=0\}  \cup \{[0:0:0:1]\}$.
\end{Ex}

The Example \ref{ex_grau1:3} and Example \ref{ex_grau1:4} show two different distributions with the same singular scheme. In both cases, the tangent sheaves are not locally free; this is to be contrasted with \cite{AC}, where the authors provide conditions for locally free distributions to be determined by their singular schemes.  

\begin{Ex}\label{ex_grau1:2}\rm
Consider the non-integrable distribution $\sF$ induced by 
$$ \omega= z_0z_2dz_1+z_2(z_1-2z_3)dz_0+z_1z_2dz_3+z_1(z_3-2z_0)dz_2. $$
The singular set of $\sF$ is $\{z_1=z_2=0\}\cup \{z_2=z_3-2z_0=0\} \cup \{[0:0:1:0]\}$.
\end{Ex}

%----------------------------------------------

\subsection{Distributions with locally free tangent sheaf}
Summarizing what was already argued in the introduction of the present section (precisely, item (i) in page \pageref{page20}), we obtain the following claim.

\begin{Cor}\label{deg1free}
If $\sF$ is a codimension one distribution of degree 1 with locally free tangent sheaf, then $T_\sF=\O(1)\oplus \O$. In addition, $\sing(\sF)$ is a (possibly degenerate) twisted cubic curve not contained in a plane. 
\end{Cor}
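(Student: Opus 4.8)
The plan is to exploit the dichotomy already established at the opening of Section \ref{class1}: for a degree $1$ distribution the normalization of $T_\sF$ is $T_\sF(-1)$, so either (i) $h^0(T_\sF(-1))\ne0$, in which case Lemma \ref{coho_non_split} forces $T_\sF=\O(1)\oplus\O$, or (ii) $h^0(T_\sF(-1))=0$ and $T_\sF$ is a stable reflexive sheaf. I would prove the claim by showing that the hypothesis of local freeness is incompatible with case (ii), so that (i) must hold.

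To rule out (ii), I would invoke Remark \ref{R:CC}, according to which a rank $2$ reflexive sheaf on $\p3$ is locally free if and only if $c_3=0$; thus it suffices to check that $c_3(T_\sF)>0$ in each stable subcase. In case (ii) the Bogomolov inequality gives $c_2(T_\sF)\ge1$, hence $\deg(C)\in\{0,1,2\}$ by the first formula in display \eqref{eq:length}; feeding each value into the second formula of Theorem \ref{P:SLocus} yields $c_3(T_\sF)=\mathrm{length}(\mathscr U)$ equal to $5$, $2$, or $1$ respectively (these are exactly the entries of Table \ref{table deg 1}). Since $c_3(T_\sF)>0$ in all three subcases, $T_\sF$ cannot be locally free, and therefore local freeness forces case (i), giving $T_\sF=\O(1)\oplus\O$.

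It remains to describe $\sing(\sF)$ when $T_\sF=\O(1)\oplus\O$. Here $c_2(T_\sF)=c_3(T_\sF)=0$, so Lemma \ref{loc free tg} gives $Z=C$ of pure dimension $1$, and display \eqref{eq:length} then yields $\deg(Z)=3$ and $p_a(Z)=0$. Connectedness of $Z$ follows from Theorem \ref{con}, since $h^2(T_\sF(-3))=0$ for the split sheaf $T_\sF=\O(1)\oplus\O$. To see that $Z$ is nondegenerate I would twist the defining sequence \eqref{eq:Dist p3} by $\O(-2)$ and pass to cohomology; using Bott vanishing for $T\p3(-2)$ one identifies $h^0(I_{Z/\p3}(1))\cong h^1(T_\sF(-2))$, and the latter vanishes because the line bundles $\O(-1)$ and $\O(-2)$ appearing in the splitting of $T_\sF(-2)$ have no intermediate cohomology. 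Hence $Z$ lies on no plane, and a connected, nondegenerate, arithmetic-genus-$0$ curve of degree $3$ in $\p3$ is a (possibly degenerate) twisted cubic, as claimed.

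This Corollary is essentially a repackaging of the analysis already carried out in the section, so I do not anticipate a genuine obstacle; the only point requiring care is the logical step that the strict positivity of $c_3$ in the stable cases, read off from Table \ref{table deg 1} via Theorem \ref{P:SLocus}, is precisely what excludes them under the local-freeness hypothesis through the reflexive criterion of Remark \ref{R:CC}.
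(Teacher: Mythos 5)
Your proposal is correct and follows essentially the same route as the paper, whose proof of this corollary is literally a summary of the dichotomy opened in Section \ref{class1}: local freeness forces $c_3(T_\sF)=0$, which excludes all the stable cases of Table \ref{table deg 1} (where $c_3(T_\sF)>0$), leaving the split case $T_\sF=\O(1)\oplus\O$ with its degree-$3$, genus-$0$, nonplanar singular curve. Two small points in your favor: your identification $h^0(I_{Z/\p3}(1))\cong h^1(T_\sF(-2))$ is the correct reading of the twisted sequence (the paper's displayed $h^0(T_\sF(-2))$ is a slip, though both groups vanish for the split sheaf), and your explicit connectedness step via Theorem \ref{con} makes precise what the paper leaves implicit in its corollary on split tangent sheaves.
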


% In particular, these distributions  are given on homogeneous coordinates by a 1-form of the type
% \[
% \omega=A_0(z_0,z_1,z_2,z_3)dz_0+A_1(z_0,z_1,z_2,z_3)dz_1+A_2(z_0,z_1,z_2,z_3)dz_2,
% \]
% where $A_j$ are homogeneous of degree $2$ and $\sum_{i=0}^2 z_iA_i=0$.

\begin{Ex} \label{ex_grau1:1}
As explained in the end of Section \ref{ss:cohomology}, every distribution with $T_\sF=\O(1)\oplus \O$ is given in homogeneous coordinates by a 1-form of the type
$$ \omega = A_0(z_0,z_1,z_2,z_3)dz_0 + A_1(z_0,z_1,z_2,z_3)dz_1 + A_2(z_0,z_1,z_2,z_3)dz_2 , $$
where $A_j$ are homogeneous of degree $2$ and $\sum_{i=0}^2 z_iA_i=0$. So consider, in particular, the non-integrable distribution $\sF$ induced by the 1-form:
$$ \omega= z_0z_2dz_1+z_2(z_3-z_1)dz_0-z_0z_3dz_2.$$
Its singular set is a chain of three lines
$$ \sing(\sF) = \{z_0=z_2=0\}\cup \{z_0=z_3-z_1=0\} \cup \{z_2=z_3=0\}. $$
Note that $A_0=z_0z_2$, $A_1=z_2(z_3-z_1)$ and $A_2=z_0z_3$ are the $2\times2$ minors of the $2\times3$ matrix
$$ \left( \begin{array}{cc}
z_0 & 0 \\ 0 & z_2 \\ z_3 & z_3-z_1
\end{array} \right), $$
showing that $\sing(\sF)$ is indeed a (degenerate) twisted cubic curve.
\end{Ex}

%%%%%%%%%%%%%%%%%%%%%%%%%%%%%%%%%%%%%%%%%%%%%%%%%%%%%%%%%%%%%%%%%%%%%%%%%%%%%%%%%%%%%%%%%%%%%
%%%%%%%%%%%%%%%%%%%%%%%%%%%%%%%%%%%%%%%%%%%%%%%%%%%%%%%%%%%%%%%%%%%%%%%%%%%%%%%%%%%%%%%%%%%%%

\section{Classification of degree 2 distributions}\label{class2}

Let $\sF$ be a codimension one distribution of degree 2 on $\p3$ with singular scheme $Z$; let $\mathscr{U}$ be the maximal 0-dimensional subsheaf of $\OZ$, and let $C\subset Z$ be the corresponding subscheme of pure dimension 1. The inequality (\ref{eq:BdegZ}) becomes $\deg(C)\leq 7$; using Theorem \ref{P:SLocus}, we obtain
\begin{equation}\label{c2 c3}
c_2(T_\sF) = 6 - \deg(C) ~~{\rm and}~~ c_3(T_\sF) = 18 - 4\deg(C) + 2 p_a(C).
\end{equation}

Since $c_1(T_\sF)=0$, $T_\sF$ is already normalized. It follows that:
\begin{itemize}
\item[(i)] either $h^0(T_\sF(-1))\neq 0$, hence, by Lemma \ref{coho_non_split}, $T_\sF$ splits as a sum of line bundles;
\item[(ii)] or $h^0(T_\sF(-1))= 0$, and $T_\sF$ is a $\mu$-semistable reflexive sheaf.
\end{itemize}

In the first case, $T_\sF=\O(-1)\oplus\O(1)$, hence $c_2(T_\sF)=-1$ and $c_3(T_\sF)=0$; it follows that $Z=C$ is a curve of degree 7 and arithmetic genus 5.

When $T_\sF$ is $\mu$-semistable, we must have $c_2(T_\sF)\ge0$. We now study each of six possibilities: $0\le c_2(T_\sF)\le 6$, or, equivalently, $0\le\deg(C)\le6$.

If $\deg(C)=0$, so that $C=\emptyset$, it follows that $c_2(T_\sF)=6$, $c_3(T_\sF)=20$, and $\sing(\sF)$ consists of 20 points. In addition, Theorem \ref{StGDist} implies that $T_\sF$ is stable. 

If $\deg(C)=1$, so that $C$ is a line and hence $p_a(C)=0$, it follows that $c_2(T_\sF)=5$, $c_3(T_\sF)=14$, hence $\sing(\sF)$ consists of a line plus 14 points. In addition, Proposition \ref{non_stable_prop} implies that $T_\sF$ is stable.

If $\deg(C)=2$, then $p_a(C)\le0$ \cite[Corollary 1.6]{N}. Since $T_\sF$ is reflexive and comparing with the formulas in Theorem \ref{P:SLocus}, it follows that $c_2(T_\sF)=4$, and $0\le c_3(T_\sF)=10+2p_a(C)\le 10$. In addition, Proposition \ref{non_stable_prop} implies that $T_\sF$ is stable.

If $\deg(C)=3$, then $p_a(C)\le1$ by a classical result, see \cite{H3}. Since $T_\sF$ is reflexive and comparing with the formulas in Theorem \ref{P:SLocus}, it follows that $c_2(T_\sF)=3$, and $0\le c_3(T_\sF)=6+2p_a(C)\le 8$. 

When $\deg(C)=4$, we first exclude the possibility of $C$ being a plane quartic: in this case, $p_a(C)=3$ then $c_3(T_\sF)=8$ which is impossible for a $\mu$-semistable rank 2 reflexive sheaf with $c_1(T_\sF)=0$ and $c_2(T_\sF)=2$, cf. \cite[Theorem 8.2(a)]{H2}. Hence $C$ is not planar, and by \cite[Theorem 3.3]{H3}, $p_a(C)\le1$, hence $0\le c_3(T_\sF)=2+2p_a(C)\le 4$. 

If $\deg(C)=5$, then $T_\sF$ is a $\mu$-semistable rank 2 reflexive sheaf with $c_2(T_\sF)=1$; it follows from \cite[Theorem 8.2(a)]{H2} that $c_2(T_\sF)=0,2$.

Finally, if $\deg(C)=6$, it follows that $c_2(T_\sF)=0$ hence actually $T_\sF=\O\oplus\O$, cf. \cite[Remark 3.1.1]{H2}, since $T_\sF$ is $\mu$-semistable; it follows that $Z=C$ is a curve of degree 6 and arithmetic genus 3.

We summarize in the results obtained above in Table \ref{deg 2 table}. 

\begin{table}[h] %\large{\scshape{ }}
\begin{tabular}{|c | c | c | c | }  \hline
$\deg(C) $ & $c_2(T_\sF)$ & $c_3(T_\sF)$         & $T_\sF$    \\ \hline
0          &  6           &  {\bf 20}            & stable     \\ \hline
1          &  5           &  14                  & stable     \\ \hline
2          &  4           &  0, 2, 4, 6, 8, 10   & stable     \\ \hline
3          &  3           &  0, 2, 4, 6, {\bf 8} & $\mu$-semistable \\ \hline
4          &  2           &  {\bf 0}, 2, {\bf 4} & $\mu$-semistable \\ \hline
5          &  1           &  {\bf 0}, {\bf 2}    & $\mu$-semistable \\ \hline
6          &  0           &  {\bf 0}             & split      \\ \hline
7          &  -1          &  {\bf 0}             & split      \\ \hline
\end{tabular}
\medskip
\caption{Codimension one distribution of degree 2; values in boldface are realized.}
\label{deg 2 table}
\end{table}

It is not clear to us whether there actually exists a codimension one distribution of degree 2 for each of the possible values of the second and third Chern classes. The pairs $(c_2(T_\sF),c_3(T_\sF))=(3,8)$, $(2,4)$, and $(1,2)$ are realized by well-known families of foliations, namely \emph{rational} and \emph{logarithmic} foliations. We will study them in detail in Section \ref{rat-log} below. 

% The tangent sheaf of a codimension 1 distribution $\sF$ of degree 2 is strictly $\mu$-semistable if and only if $\sF$ admits a sub-foliation $\sG$ of degree 1, induced by a non-trivial section $\sigma\in H^0(T_\sF)$. This means that we can set $k=0$ in the sequences \eqref{v1} and \eqref{v2}, providing
% \begin{equation}\label{v2.2}
% 0 \to \O(-4) \to  N_\sG^\vee \to \IC \to 0 ~~{\rm and}~~
% 0 \to \OC \to \omega_{C} \to \mathscr{V} \to 0.
% \end{equation}
% In particular, it follows that $h^0(\omega_{C})\ne0$.

% If $d=2$, then cohomology sequence associated with short exact sequence in display (\ref{eq:Dist p3}) is given by
% \begin{equation} \label{sqc deg 2 b}
% 0 \to H^0(T_\sF) \to H^0 (T\p3) \to H^0(\IZ(4)) \to H^1(T_\sF) \to 0.
% \end{equation}
% Therefore, we conclude that the tangent sheaf of a degree 2 distribution is stable if and only if the map $H^0(T\p3) \to H^0(\IZ(4))$ is injective. 

For the moment, we observe that the tangent sheaves of codimension one foliations of degree 2 with invariants $(c_2(T_\sF),c_3(T_\sF))=(3,8)$, $(2,4)$ are stable.

\begin{Lemma}\label{(3,8)}
If $\sF$ is a codimension one foliation  of degree 2 such that $c_2(T_\sF)=3$ and $c_3(T_\sF)=8$, then $T_\sF$ is stable, and $C$ is a plane cubic curve.
\end{Lemma}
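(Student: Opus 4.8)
The plan is to split the statement into its geometric half ($C$ is a plane cubic) and its sheaf-theoretic half ($T_\sF$ is stable), and to extract both from the Chern-class bookkeeping already in place.

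First I would read off the numerics. Since $\deg(\sF)=2$ we have $c_1(T_\sF)=0$, and the formulas of Theorem \ref{P:SLocus} (in the form of display \eqref{c2 c3}) give $c_2(T_\sF)=6-\deg(C)$ and $c_3(T_\sF)=18-4\deg(C)+2p_a(C)$. Imposing $c_2(T_\sF)=3$ and $c_3(T_\sF)=8$ forces $\deg(C)=3$ and $p_a(C)=1$. A degree $3$ curve in $\P^3$ satisfies $p_a\le 1$, with equality precisely for plane cubics, since a non-degenerate degree $3$ curve has $p_a\le 0$; invoking the classical genus bound \cite{H3} I conclude that $C$ is a plane cubic. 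This half uses only the Chern data.

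For stability, recall from the dichotomy opening Section \ref{class2} that, since $\deg(C)=3$ (so $T_\sF$ does not split), one already has $h^0(T_\sF(-1))=0$; as $c_1(T_\sF)=0$ this says exactly that $T_\sF$ is $\mu$-semistable, and it remains to upgrade this to stability, i.e. to show $h^0(T_\sF)=0$. Suppose not, and pick $0\neq\sigma\in H^0(T_\sF)$. Because $h^0(T_\sF(-1))=0$, Remark \ref{min twist} shows the cokernel of $\sigma:\O\to T_\sF$ is torsion free, so its zero scheme $Y$ is a curve of pure dimension $1$ and degree $c_2(T_\sF)=3$. The same vanishing also prevents the composite $X:=\phi\circ\sigma\in H^0(T\P^3)$ from vanishing on a divisor: if $X$ were divisible by a linear form $\ell$, then at a general point of the plane $\{\ell=0\}$ (which avoids the lower-dimensional locus $\sing(\sF)$, where $\phi$ is fibrewise injective) we would get $\sigma=0$, hence $\sigma\in\ell\cdot H^0(T_\sF(-1))=0$, a contradiction. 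Thus $X$ defines a genuine one-dimensional sub-foliation $\G$ of degree $1$, and Lemma \ref{sing sub-dist} identifies $Y$ with the one-dimensional component of $\sing(\G)$.

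To finish I would invoke a sharp bound on degree $1$ foliations. Such a foliation is given by $X\in H^0(T\P^3)$, i.e. by a $4\times 4$ matrix $A$ modulo scalars, and its singular set is the union $\bigcup_\lambda \P(\ker(A-\lambda\,\mathrm{id}))$ of projectivised eigenspaces; hence the one-dimensional part of $\sing(\G)$ is supported on at most two lines (from two-dimensional eigenspaces, necessarily disjoint), and a short local computation shows the scheme structure can raise the degree to at most $2$ (at worst a double line). This gives degree $\le 2 < 3=\deg(Y)$, the desired contradiction, so $h^0(T_\sF)=0$ and $T_\sF$ is stable. The main obstacle is exactly this last step: Sancho's bound \cite{Sancho} used in Proposition \ref{non_stable_prop} only yields $\le 3$ in degree $1$ and therefore stalls precisely at the boundary $2\deg(C)=d^2+d$ relevant here; the gain comes from the eigenspace description together with the verification that a double line is the worst scheme-theoretic possibility.
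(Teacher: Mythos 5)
Your proposal is correct in substance, but it takes a genuinely different route to stability than the paper does. The paper's argument is essentially integrability-theoretic: from a putative section $\sigma\in H^0(T_\sF)$ it builds the same degree-one sub-foliation $\sG$, but then invokes Mol's theorem on flags \cite{Mol} --- which requires $\sF$ to be integrable --- to conclude that $C$ is invariant under $\sG$; since $C$ is a plane cubic one has $\omega_C=\OC$, so any induced section $\nu\in H^0(\omega_C^\vee\otimes T_\sG|_C)=H^0(\OC)$ is nowhere vanishing, contradicting the Jouanolou--Esteves--Kleiman result of Remark \ref{JEK}. (For the geometric half, the paper cites \cite{N} rather than \cite{H3}, which is safer since $C$ need not be reduced, but this is the same fact.) You instead compare degrees: $Y=(\sigma=0)$ is pure of dimension $1$ of degree $c_2(T_\sF)=3$ and is contained in $\sing(\sG)$ (display \eqref{incl}, or Lemma \ref{sing sub-dist}), and this containment is even scheme-theoretic, since $I_{\sing(\sG)}=\img\bigl((\phi\circ\sigma)^\vee\bigr)\subseteq\img(\sigma^\vee)=I_{Y/\P^3}$; meanwhile the Jordan-form analysis of the linear vector field defining $\sG$ bounds the pure $1$-dimensional part of $\sing(\sG)$ by degree $2$. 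I checked your ``short local computation'' across the Jordan types admitting a $2$-dimensional eigenspace (type $(2,2)$ gives the double line; all others give at most two reduced skew lines or a single reduced line), so the bound $\le 2$ does hold --- it is the same list as in \cite[Lemma 2.4]{Chang}, which the paper quotes in a nearby context. Your diagnosis that Sancho's bound \cite{Sancho} in Proposition \ref{non_stable_prop} stalls exactly at $2\deg(C)=d^2+d$ is also accurate. Notably, your argument uses neither integrability nor $c_3$, so it proves the stronger statement that \emph{every} degree-$2$ distribution with $c_2(T_\sF)=3$ has stable tangent sheaf, which would upgrade the ``$\mu$-semistable'' entry for $\deg(C)=3$ in Table \ref{deg 2 table}; this buys more than the paper's proof, at the cost of a case analysis the paper avoids by exploiting $\omega_C=\OC$.

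One step is fragile as written: the claim that if $X=\phi\circ\sigma$ were divisible by a linear form $\ell$, then $\sigma\in\ell\cdot H^0(T_\sF(-1))$. Since $c_3(T_\sF)=8$, the sheaf $T_\sF$ is \emph{not} locally free, and the restriction $T_\sF|_D$ to the plane $D=\{\ell=0\}$ may have torsion at the finitely many points of $\sing(T_\sF)\cap D$; vanishing of $\sigma$ at general points of $D$ therefore only shows that $\sigma|_D$ is a torsion section, and the factorization through $T_\sF(-1)$ requires an extra argument (for instance via reflexivity: $\Hom(I_{W/\P^3},T_\sF(-1))=H^0(T_\sF(-1))$ for $W$ zero-dimensional). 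In fact the step is unnecessary: since $h^0(T_\sF(-1))=0$, the twist $r=0$ is minimal, so Remark \ref{min twist} makes $\coker\sigma$ torsion free, and the sequence \eqref{sqc relat normal} then makes $N_\sG$ torsion free --- which already forbids $X$ from vanishing along a divisor, because a factorization $X=\ell X'$ would exhibit the nonzero torsion subsheaf $\mathcal{O}_D(1)\subseteq N_\sG$. With that replacement, your proof is complete.
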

\begin{proof}
Note that $C$ is a curve of degree 3 and genus 1, hence it must be a plane cubic, cf. \cite[proof of Proposition 3.1]{N}; in particular, we have that $\omega_C=\OC$.

Clearly, $T_\sF$ does not split as a sum of line bundles, so $h^0(T_\sF(-1))=0$ by Lemma \ref{coho_non_split}. If $T_\sF$ is not stable, then a nonzero section $\sigma\in H^0(T_\sF)$ induces  a sub-foliation $\sG$ of degree 1 whose 1-dimensional component of its singular scheme is, by Lemma \ref{sing sub-dist}, a curve of degree 3 and genus $-1$, hence $\sing(\sG)\ne C$. Using the integrability of $\sF$, it follows from \cite[Theorem 1]{Mol} that $C$ must be invariant by $\sG$; however, every nontrivial section
$$ \nu\in H^0(\omega_C^\vee\otimes T_{\sG}|_C)=H^0(\OC), $$ 
is nowhere vanishing, thus contradicting the Jouanolou--Esteves--Kleiman result mentioned in Remark \ref{JEK}.

% it follows that the sheaf $\mathscr{V}$ in the rightmost sequence in display \eqref{v2.2} vanishes, meaning that the sequences in the displays \eqref{v3} and \eqref{v4} simplify to 
% $$ 0 \to \inext^1(N_\sG,\O) \to \omega_Y \to \mathscr{U} \to 0 , $$
% thus $\sing(\sG)=Y$, which is a curve of degree 3 and genus $-1$, hence $\sing(\sG)\ne C$. Using the integrability of $\sF$, it follows from \cite[Theorem 1]{Mol} that $C$ must be invariant by $\sG$; however, every nontrivial section
% $$ \nu\in H^0(\omega_C^\vee\otimes\sG|_C)=H^0(\OC), $$ 
% is non-vanishing, thus contradicting the Jouanolou--Esteves--Kleiman result mentioned in Remark \ref{JEK}.
\end{proof}

\begin{Lemma}\label{(2,4)}
If $\sF$ is a codimension one foliation  of degree 2 such that $c_2(T_\sF)=2$ and $c_3(T_\sF)=4$, then $T_\sF$ is stable, and $C$ is an elliptic quartic curve.
\end{Lemma}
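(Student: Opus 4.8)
The plan is to transcribe the argument used for Lemma \ref{(3,8)}, with the numerology adjusted to the present invariants. First I would extract the geometry of $C$ from display \eqref{c2 c3}: the values $c_2(T_\sF)=2$ and $c_3(T_\sF)=4$ give $\deg(C)=4$ and $p_a(C)=1$. The discussion preceding Table \ref{deg 2 table} already rules out $C$ being a plane quartic (that case forces $p_a(C)=3$ and $c_3(T_\sF)=8$, impossible for a $\mu$-semistable reflexive sheaf with $c_1=0$, $c_2=2$), so $C$ is non-degenerate. A non-degenerate space curve of degree $4$ and arithmetic genus $1$ lies on a pencil of quadrics and is a complete intersection of two of them, i.e. an elliptic quartic curve (see the classification of space curves, e.g. \cite{H3}); in particular $C$ is connected and, by adjunction, $\omega_C\simeq\OC(2+2-4)=\OC$. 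Establishing $\omega_C\simeq\OC$ is the only feature of $C$ I will actually need.

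For the stability, I would first observe that $T_\sF$ cannot split as a sum of line bundles: a split rank-$2$ reflexive sheaf with $c_1=0$ has $c_2\le0$, while here $c_2(T_\sF)=2$. Lemma \ref{coho_non_split} then yields $h^0(T_\sF(-1))=0$, so $T_\sF$ is $\mu$-semistable; since $c_1(T_\sF)=0$ is even, $T_\sF$ is normalized and stability is equivalent to $h^0(T_\sF)=0$. Arguing by contradiction, I would assume $T_\sF$ is not stable and pick a nonzero section $\sigma\in H^0(T_\sF)$. By Remark \ref{min twist} its cokernel is torsion free, so it induces a degree-$1$ sub-foliation
\[ \sG~:~ 0\to\O\to T\p3\to N_\sG\to 0 . \]
Reading \eqref{y} with $k=0$ and $d=2$ gives $0\to\O\to T_\sF\to I_{Y/\p3}\to 0$, so by Lemma \ref{sing sub-dist} the $1$-dimensional component of $\sing(\sG)$ is the zero locus $Y=(\sigma=0)$, with $\deg(Y)=c_2(T_\sF)=2$ and, from $c_3(T_\sF)=c_3(I_{Y/\p3})=2p_a(Y)-2+4\deg(Y)$, arithmetic genus $p_a(Y)=-1$; thus $Y$ is a pair of skew lines (or a degeneration thereof).

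The contradiction is then obtained exactly as in Lemma \ref{(3,8)}. Since $\deg(Y)=2<4=\deg(C)$, we have $C\not\subseteq\sing(\sG)$, so the integrability of $\sF$ together with \cite[Theorem 1]{Mol} forces $C$ to be invariant by $\sG$. By the definition of invariance recalled in Remark \ref{JEK}, this gives a nontrivial section $\nu\in H^0(\omega_C^\vee\otimes T_\sG|_C)$; but $T_\sG=\O$ and $\omega_C\simeq\OC$, so $\nu\in H^0(\OC)=\C$ is a nowhere-vanishing constant. This contradicts the Jouanolou--Esteves--Kleiman statement that $(\nu=0)$ is nonempty. Hence $T_\sF$ is stable, completing the proof.

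The step I expect to be the main obstacle is the rigorous identification of $C$, specifically the proof that $\omega_C\simeq\OC$ and that $C$ is connected, because $T_\sF$ is only reflexive (indeed $c_3(T_\sF)=4$ shows it is not locally free), so $C$ is a priori merely a Cohen--Macaulay curve of pure dimension $1$ that could be reducible or non-reduced. The pencil-of-quadrics argument is clean for the generically transverse case, but controlling the degenerate configurations, and confirming the triviality of the dualizing sheaf and connectedness there, is the delicate point; I would handle it using the structure theory of degree-$4$, genus-$1$ space curves and the Cohen--Macaulayness forced by reflexivity of $T_\sF$. By contrast, the stability mechanism is a verbatim adaptation of Lemma \ref{(3,8)} and should raise no new difficulty.
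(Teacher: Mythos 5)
Your stability half is essentially the paper's own argument: once $\omega_C\simeq\OC$ is known, the paper proves stability for these invariants by declaring the proof ``exactly the same as for the stability part of Lemma \ref{(3,8)}'', which is the Mol-invariance plus Jouanolou--Esteves--Kleiman contradiction you transcribe, and your computation of the sub-foliation's singular curve $Y$ (degree $2$, arithmetic genus $-1$, a pair of skew lines or a degeneration) is correct. The genuine gap is exactly at the step you flagged as the ``main obstacle'', and flagging it does not fill it: the classification fact you lean on --- that every non-planar curve of degree $4$ and arithmetic genus $1$ is a complete intersection of two quadrics --- is false in the generality required here. The union of a plane cubic with a line meeting it in a single point is non-planar, locally Cohen--Macaulay, of degree $4$ and $p_a=1$; it even lies on a pencil of quadrics (the cubic's plane union any plane through the line), yet every quadric containing it contains the cubic's plane, so it is not a complete intersection of quadrics, and its dualizing sheaf restricts to the line as $\mathcal{O}_L(-1)$, so $\omega_C\not\simeq\OC$. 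On such a curve your endgame collapses: $\omega_C^\vee\otimes T_\sG|_C$ restricts to the line as $\mathcal{O}_L(1)$, whose sections do vanish, so Remark \ref{JEK} produces no contradiction. Since $T_\sF$ is merely reflexive ($c_3=4\neq 0$), nothing in your proposal excludes this configuration, and ``Cohen--Macaulayness plus structure theory'' alone will not.

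The paper closes this hole with a cohomological input that your proposal lacks. By \cite[Table 2.12.2 and Table 2.16.1]{Chang}, every $\mu$-semistable rank $2$ reflexive sheaf $E$ with $(c_1,c_2,c_3)=(0,2,4)$ satisfies $h^1(E(-2))=0$ and $h^2(E(-2))=2$; feeding this into the sequence \eqref{eq:Dist p3} twisted by $\O(-2)$ gives $h^0(\IZ(2))=0$ and $h^1(\IZ(2))=2$, and then the sequence \eqref{IZ->IC} twisted by $\O(2)$, together with $h^0(\mathscr{U})=c_3(T_\sF)=4$ from Theorem \ref{P:SLocus}, yields $h^0(\IC(2))\ge 2$. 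The identification of $C$ as an elliptic quartic is then obtained from \cite[Proposition 3.5]{H3} applied to a degree $4$, genus $1$ curve on quadrics --- that reference, not a bare degree-genus count, is what disposes of the degenerate configurations. Note that your own observation that $T_\sF$ is $\mu$-semistable (non-split since $c_2=2>0$, then Lemma \ref{coho_non_split}) is precisely what licenses the use of Chang's tables; so the repair is to replace your classification citation by this computation and the appeal to \cite[Proposition 3.5]{H3}, after which your (correct) stability argument goes through verbatim.
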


\begin{proof}
According to \cite[Table 2.12.2 and Table 2.16.1]{Chang}, every $\mu$-semistable rank 2 reflexive sheaf $E$ with $c_1(E)=0$, $c_2(E)=2$, and $c_3(E)=4$ must have $h^1(E(-2))=0$ and $h^2(E(-2))=2$. Therefore, using the sheaf sequence in display \eqref{eq:Dist p3} twisted by $\O(-2)$, we have $h^0(\IZ(2))=h^1(T\sF(-2))=0$, and $h^1(\IZ(2))=h^2(T\sF(-2))=2$. Next, we use the sequence in display \eqref{OZ->OC} twisted by $\O(2)$ to obtain the following exact sequence
$$ 0 \to H^0(\IC(2)) \to H^0(\mathscr{U}) \to H^1(\IZ(2)) \to H^1(\IC(2)) \to 0. $$
Since, by Theorem \ref{P:SLocus}, $h^0(\mathscr{U})=c_3(T_\sF)=4$, we conclude that $h^0(\IC(2))\ge2$, so $C$ must be contained in a quadric hypersurface. The same Theorem \ref{P:SLocus} tells us that $C$ is a degree 4 curve of genus 1, so \cite[Proposition 3.5]{H3} implies that $C$ is an elliptic quartic curve.

In particular, we have that $\omega_C=\OC$. With this in mind, the proof of stability of $T_\sF$ is exactly the same as for the stability part of Lemma \ref{(3,8)}.
\end{proof}

\begin{Ex}\label{ex_grau2:2}\rm
Consider the non-integrable distribution $\sF$ induced by the 1-form
$$ \omega = (z_0^3+z_1^3)dz_3 + (z_1^2z_2-z_0^2z_3)dz_0 - z_3z_1^2dz_1 - z_0z_1^2dz_2. $$
As a variety, the singular set is just the line $\{z_0=z_1=0\}$; however, note that the ideal generated by the coefficients of $\omega$ is contained in the ideal generated by the monomials $z_0^2$, $z_1^2$. This means that, in this case, the curve $C$ is a line of multiplicity 4 with genus 1, so that $c_2(T_\sF)=2$ and $c_3(T_\sF)=4$. 
\end{Ex}

\begin{Obs}\label{rmk 9.4}\rm
If $\sF$ is a codimension one distribution of degree 2 such that $c_2(T_\sF)=1$ and $c_3(T_\sF)=2$, then $T_\sF$ must be strictly $\mu$-semistable, because there are no stable rank 2 reflexive sheaves with these invariants \cite[Lemma 2.1]{Chang}. In addition, $C$ is a degree 5 curve of genus 2.
\end{Obs}

Let us now focus on codimension one distributions $\sF$ of degree 2 with locally free tangent sheaf. We say that a curve of degree $m$ and genus $p$ is \emph{reduced up to deformation} if it lies in a component of the Hilbert scheme ${\rm Hilb}_{m,g}$ whose general point corresponds to a reduced curve. 

\begin{Theorem}\label{classification_degre_tow}
Let $\sF$ be a codimension one distribution of degree $2$ with locally free tangent sheaf $T_\sF$, and such that $\sing(\sF)$ is reduced, up to deformation. Then:
\begin{enumerate}
\item  $T_\sF$ splits as a sum of line bundles and 
\begin{enumerate}

\item either $T_\sF=\O(1)\oplus \O(-1)$, and $\sing(\sF)$ is a connected curve of degree $7$ and arithmetic genus $5$.

\item or $T_\sF=\O\oplus \O$, and $\sing(\sF)$ is a connected curve of degree $6$ and arithmetic genus $3$.

\end{enumerate}

\item  $T_\sF$ is stable, and:

\begin{enumerate}
\item either $T_\sF$ is a null-correlation bundle, and $\sing(\sF)$ is a connected curve of degree $5$ and arithmetic genus $1$; in addition, $\sF$ possesses  sub-foliations of degree $2$ which are singular along two skew lines;
 
\item or $T_\sF$ is an instanton bundle of charge $2$, and $\sing(\sF)$ is the disjoint union of a line and a twisted cubic, up to deformation; in addition, $\sF$ possesses sub-foliations of degree $2$ which are singular along three skew lines.  
\end{enumerate}
\end{enumerate}

In addition, this result is effective, in the sense that there exist injective morphisms $\phi : T_\sF \to T\P^3$ with torsion free cokernel for each of the possibilities listed above.  
\end{Theorem}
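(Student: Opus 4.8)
The plan is to reduce everything to the numerical data of Theorem \ref{P:SLocus} together with the split/$\mu$-semistable dichotomy recorded at the start of Section \ref{class2}, and then to identify the reflexive sheaf in each admissible numerical regime. First I would use that, since $T_\sF$ is locally free, Lemma \ref{loc free tg} gives $Z=C$ and $c_3(T_\sF)={\rm length}(\mathscr{U})=0$; feeding $c_3(T_\sF)=0$ into the formulas \eqref{c2 c3} yields the rigid relation $p_a(C)=2\deg(C)-9$, while $c_2(T_\sF)=6-\deg(C)$. The dichotomy via Lemma \ref{coho_non_split} says $T_\sF$ either splits or is $\mu$-semistable; in the latter case $c_2(T_\sF)\ge0$, so in all cases $\deg(C)\le7$.

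The key reduction is to use the hypothesis that $\sing(\sF)$ is reduced up to deformation in order to bound $\deg(C)$ from below. I would invoke the elementary bound $p_a(D)\ge 1-\deg(D)$, valid for every reduced curve $D\subset\p3$ (from $\chi(\O_D)\le\sum_i\chi(\O_{D_i})\le\#\{\text{components}\}\le\deg(D)$, with equality only for disjoint unions of lines). Since degree and arithmetic genus are constant in flat families, the bound applies to the general reduced member of the Hilbert component containing $C$, hence to the pair $(\deg C,p_a(C))$ itself. Combined with $p_a(C)=2\deg(C)-9$ this forces $3\deg(C)\ge10$, i.e.\ $\deg(C)\ge4$, eliminating the would-be cases of degrees $2$ and $3$ (genus $-5$ and $-3$, below the reduced lower bound). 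Thus $\deg(C)\in\{4,5,6,7\}$.

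It then remains to identify $T_\sF$ in each surviving regime. For $\deg(C)=7$ ($c_2=-1$) the sheaf cannot be $\mu$-semistable, so it splits as $\O(a)\oplus\O(-a)$ with $-a^2=-1$, giving $\O(1)\oplus\O(-1)$; for $\deg(C)=6$ ($c_2=0$) a $\mu$-semistable locally free sheaf with $c_1=c_2=0$ is trivial, giving $\O\oplus\O$. In both split cases connectedness of $\sing(\sF)$ is the corollary to Theorem \ref{con} for split tangent sheaves. For $\deg(C)=5$ ($c_2=1$) and $\deg(C)=4$ ($c_2=2$) the sheaf is $\mu$-semistable and nonsplit, and the main point is to upgrade $\mu$-semistability to stability. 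I would do this by excluding strictly $\mu$-semistable locally free sheaves: a section in $H^0(T_\sF)$ would vanish on a curve $W$ of degree $c_2\in\{1,2\}$ (a line, or a conic/two lines) and realize $T_\sF$ as an extension $0\to\O\to T_\sF\to I_{W/\p3}\to0$, but $\Ext^1(I_{W/\p3},\O)\cong H^0(\omega_W(4))$ carries no nowhere-vanishing section for such low-degree $W$, so the extension cannot be locally free. Hence $T_\sF$ is stable, and the classification of stable rank $2$ bundles with $c_1=0$ and $c_2=1$ (resp.\ $c_2=2$) identifies it as a null-correlation bundle (resp.\ a charge $2$ instanton), cf.\ \cite[\S8]{H2}. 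Connectedness for $c_2=1$ follows from Theorem \ref{con} since $h^2(T_\sF(-4))=h^1(N)=0$ for a null-correlation bundle $N$, whereas the instanton case produces a disconnected scheme, consistent with the failure of the converse of Theorem \ref{con} at $d=2$.

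Finally, the sub-foliation and effective claims. The degree $2$ sub-foliations correspond to nonzero sections of $H^0(T_\sF(1))$, whose zero loci are curves of degree $c_2(T_\sF(1))=c_2(T_\sF)+1$, namely $2$ (null-correlation) and $3$ (instanton); Lemma \ref{sing sub-dist} identifies these with the $1$-dimensional singular components of the sub-foliations, and the classical description of the vanishing of sections of $N(1)$ and of a charge $2$ instanton twist gives the two, resp.\ three, skew lines, the same analysis pinning down $C$ as a disjoint line and twisted cubic (or its degenerations) in the instanton case. For effectivity I would exhibit an injection $\phi:T_\sF\to T\p3$ with torsion-free cokernel in each case: for the split sheaves a generic element of $H^0(T\p3(-1))\oplus H^0(T\p3(1))$, resp.\ $H^0(T\p3)^{\oplus2}$, works; for the null-correlation bundle, Proposition \ref{global} applied to the globally generated sheaf $N(1)$ (with $c_1=2$) returns $N$ as the tangent sheaf of a degree $2$ distribution; the instanton case is realized by the explicit construction of Section \ref{sec:2-inst}. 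The hard part is precisely this last bundle-identification-plus-realization: ruling out strictly semistable locally free sheaves through the Serre obstruction above, and producing an actual embedding of the charge $2$ instanton into $T\p3$ whose degeneracy scheme is a line together with a twisted cubic.
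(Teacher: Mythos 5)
Your outline reproduces the paper's skeleton correctly in several places (the elimination of $\deg(C)\in\{2,3\}$ via the reduced-genus bound $p_a\ge 1-\deg$ is a clean justification of the paper's assertion that no reduced curves of degree $3$, genus $-3$ or degree $2$, genus $-5$ exist; the split and null-correlation cases match the paper), but it has a genuine gap at $c_2(T_\sF)=2$, i.e.\ $\deg(C)=4$. Your Serre-construction exclusion of strictly $\mu$-semistable bundles is valid for $c_2=1$ (there $W$ must be a line and $\omega_W(4)\simeq\mathcal{O}_{\mathbb{P}^1}(2)$ has no nowhere-vanishing section), but for $c_2=2$ you enumerate only \emph{reduced} zero schemes ("a line, or a conic/two lines"). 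By Koszul adjunction, the zero scheme $W$ of a section of a locally free $T_\sF$ with $c_1=0$ satisfies $\omega_W\simeq\mathcal{O}_W(-4)$, hence $\omega_W(4)\simeq\mathcal{O}_W$, which \emph{does} carry a nowhere-vanishing section; for $\deg W=2$ this forces $p_a(W)=-3$, so $W$ is a double line (ribbon) of genus $-3$, and such curves exist. Consequently strictly $\mu$-semistable locally free sheaves with $(c_1,c_2,c_3)=(0,2,0)$ do exist --- they are exactly the sheaves of Chang's Table 2.16.1 that the paper must confront --- so no purely sheaf-theoretic argument can rule them out, and your proposal never invokes the ``reduced up to deformation'' hypothesis in this case, which is precisely what is needed. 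The paper's actual argument is cohomological: for such $T_\sF$ one has $h^0(\OZ(-1))=h^1(\IZ(-1))=h^2(T_\sF(-5))=h^1(T_\sF(1))=2$ (distribution sequence, Serre duality, Chang's table), so $Z$ is non-reduced, contradicting the hypothesis.

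The second gap is the effectivity for the instanton case, which you defer to ``the explicit construction of Section \ref{sec:2-inst}'' --- but that section derives surjectivity of the forgetful morphism from the proof of this very theorem, so your reference is circular, and you concede the point by calling it ``the hard part'' without supplying it. This construction is the bulk of the paper's proof: from $\Ext^1(F,\O)\simeq H^1(F)\simeq\C^2$ one builds an extension $0\to\O\to\widehat F\to F\to 0$, lifts the coordinates $z_i$ to $s_i\in\Hom(\widehat F,\O(1))$ (possible since $\Ext^1(F,\O(1))\simeq H^1(F(1))=0$), proves $\widehat\rho=(s_0,\dots,s_3)$ is injective via $\{s_i=0\}\subset\{z_i=0\}$, and gets $\phi:F\to T\p3$ by the snake lemma; torsion-freeness of the cokernel for generic $\phi$ then follows from a saturation argument comparing $\dim\Hom(F,T\p3)=10$ against $\dim\Hom(G,T\p3)$ for any possible saturation $G$ (degree $1$ is impossible since $h^0(F)=0$; degree $0$ gives $1$ for $N(1)$ and $8$ for $\O(1)^{\oplus2}$). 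Relatedly, your identification of $\sing(\sF)$ as a line plus a twisted cubic up to deformation is asserted rather than proved: sections of $F(1)$ control the sub-foliation, not the singular scheme of $\sF$ itself, which is the degeneracy locus of $\phi$; the paper pins it down via the Nollet--Schlesinger decomposition of ${\rm Hilb}_{4,-1}$ into three components, excluding extremal curves and pairs of disjoint conics through the vanishing $h^0(\IC(2))=0$ obtained from $h^1(F(-2))=0$. Both items must be filled in for your proposal to constitute a proof.
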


\begin{proof}
If $T_\sF$ splits as a sum of line bundles, then only the two possibilities listed in the statement of the theorem occur. The degree and genus of the singular set are easily computed via formula (\ref{c2 c3}). 

If $T_\sF$ does not split as a sum of line bundles, then, according to Table \ref{deg 2 table}, we must have $1\le c_2(T_\sF)\le4$. 

If either $c_2(T_\sF)=3$ or $c_2(T_\sF)=4$, then $\sing(\sF)$ cannot be reduced up to deformation: in the first case, $\sing(\sF)$ would be a degree 3 curve of genus $-3$, while in the second case $\sing(\sF)$ would be a degree 2 curve of genus $-5$, and there are no reduced curves with these degrees and genera.

Next, suppose that $T_\sF$ is strictly $\mu$-semistable with $c_2(T_\sF)=2$; it follows that
$$ h^0(\OZ(-1))=h^1(\IZ(-1))=h^2(T_\sF(-5))=h^1(T_\sF(1)), $$
where the last equality is given by Serre duality. According to \cite[Table 2.16.1]{Chang}, $h^1(T_\sF(1))=2$, hence $Z$ cannot be reduced. Therefore, we conclude if $\sing(\sF)$ is reduced up to deformation, then $T_\sF$ must be stable.

Finally, we remark that every $\mu$-semistable locally free sheaf with $c_1(T_\sF)=0$ and $c_2(T_\sF)=1$ is in fact stable, cf. \cite[Lemma 2.1]{Chang}.

Since we have eliminated all other possibilities, it is now enough to establish the effectiveness part of the Theorem. 

The existence of distributions of type (1a) or (1b) is clear, and the characterization of their singular loci is an easy calculation together with the application of Theorem \ref{con}.

Now let $N$ be a null-correlation bundle; just setting $k=1$ in Example \ref{ex nc bdl}, we obtain a distribution of the form
\begin{equation}\label{eq:Null}
\D: 0\rightarrow N\rightarrow T\mathbb{P}^3\rightarrow I_{Z/\P^3}(4)\rightarrow0
\end{equation}
where $Z$ is a curve of degree $5$ and arithmetic genus 1. Since 
$h^2(\P^3,T_\sF(-2-d))=h^2(\P^3,N(-4))=0$, Theorem \ref{con} allows us to conclude that $Z$ is connected. Every section $\sigma\in H^0(N(1))$ vanishes along two skew lines, and hence it yields a sub-foliation
of degree 2 with the desired properties.

Finally, let $F$ be an instanton bundle of charge $2$; recall that these are given as cohomologies of monads of the form 
$$ \O(-1)^{\oplus 2} \to \O^{\oplus 6} \to \O(1)^{\oplus 2}; $$
see also \cite[section 9]{H1}. We will now show that there exist injective morphisms $\phi: F \to T\P^3$ whose cokernel is torsion free. First, note that twisting the Euler sequence by $F^\vee\simeq F$ and taking cohomology we get
$$ H^0(F)=0\to H^0(F(1))^{\oplus 4} \to H^0(F\otimes T\p3)\to H^1(F) \to 0=H^1(F(1))^{\oplus 4}, $$
so that
$$ \dim\Hom(F,T\p3) = h^0(F\otimes T\p3) = 4\cdot h^0(F(1)) + h^1(F) = 10. $$

Since $\dim\Ext^1(F,\O)=h^1(F)=2$, there exists an indecomposable locally free sheaf $\widehat{F}$ fitting into the sequence 
$$ 0\to \O \to  \widehat{F} \to F\to 0. $$
Now, applying the functor $\Hom(\cdot,\O(1))$ to the previous sequence, we get
$$
0\to \Hom(F,\O(1)) \to \Hom(\widehat{F},\O(1)) \stackrel{\psi}{\to} \Hom(\O,\O(1)) \to 0, $$ 
since $\Ext^1(F,\O(1)) \simeq H^1(F(1))=0$. Consider $s_0,s_1,s_2,s_3 \in \Hom(\widehat{F},\O(1))$ such that $\psi(s_i)=z_i \in \Hom(\O,\O(1))$, for $i=0,\dots,3$. Define the morphism  $\widehat{\rho }: \widehat{F} \to \O(1)^{\oplus 4} $ by $\widehat{\rho}=(s_0,s_1,s_2,s_3)$ and $\rho =(z_0,z_1,z_2,z_3)$. We have the commutative diagram
$$ \xymatrix{
\O  \ar@{=}[d] \ar[r] &\widehat{F} \ar[d]^-{\widehat{\rho }} \\
\O \ar[r]^-{\rho }   & \O(1)^{\oplus 4}
} $$
Our first task is to show that $\widehat{\rho}$ is injective; indeed, consider, for each $i=0,\dots,3$, the diagram 
$$ \xymatrix{
&   &   &  0 \ar[d] &    0 \ar[d]    & \\
&   &   &   \O \ar@{=}[r]  \ar[d]^-{s_i}   &  \O  \ar[d]      & \\
&0\ar[r]& T_\sF(1) \ar@{=}[d] \ar[r]   & \widehat{F}^{\vee }(1) \ar[r] \ar[d]   &\O(1)\ar[r] \ar[d]&0 \\
&0\ar[r]& T_\sF(1)  \ar[r]   & \G_i \ar[r] \ar[d]   &\mathcal{O}_{P_i}(1)\ar[r] \ar[d]&0 \\
& &     & 0      &  0 &
} $$
where $P_i=\{z_i=0\}$. We get
$$  0\to \G_i^{\vee } \to F(-1)\to \inext^1(\mathcal{O}_{P_i}(1),\O) \to 
\inext^1(\G_i,\O) \to 0 ; $$
we also conclude that $\inext^p(\G_i,\O)=0$, for $p=2,3$, so that
$$ 
\sing(\G_i) = \supp\left(\inext^1(\G_i,\O)\right)= \{s_i=0\}. 
$$
In addition, since 
$$ \supp\left(\inext^1(\G_i,\O)\right) \subset  \supp\left(\inext^1(\mathcal{O}_{P_i}(1),\O)\right) ,$$
it follows that $\{s_i=0\} \subset \{z_i=0\}$, hence 
$\cap_{i=0}^3\{s_i=0\}\subset \cap_{i=0}^3\{z_i=0\}= \emptyset$, thus $\widehat{\rho}$ is injective. 

Letting $K:=\coker\widehat{\rho}$, we have the following diagram after applying the Snake Lemma:
\begin{equation}
\xymatrix{
        &                      & 0 \ar[d]                                     & 0 \ar[d]                   & \\
0\ar[r] & \O \ar@{=}[d] \ar[r] & \widehat{F} \ar[d]^-{\widehat{\rho }} \ar[r] & F \ar[r] \ar[d]^-{\phi }   & 0 \\
0\ar[r] & \O \ar[r]^-{\rho }   & \O(1)^{\oplus 4} \ar[r] \ar[d]               & T\mathbb{P}^3\ar[r] \ar[d] & 0 \\
        &                      & K \ar@{=}[r] \ar[d]                & K \ar[d]         & \\
        &                      & 0                                            & 0                          &
}
\end{equation}
  with an induced injective morphism $\phi: F\to T\P^3$.
 Therefore, a    generic element of  $\Hom(F,T_{\P^3}) $ is injective. 
 
 %It is now enough to argue that $K$ is torsion free.

Assume that $K$ does have torsion, and let $T(K)$ be its maximal torsion subsheaf, given by the kernel of the natural morphism $K\to K^{\vee\vee}$. The morphism $\phi:F\to T\P^3$ factors through a reflexive sheaf $G$, giving a monomorphism $\phi':G\to T\P^3$ such that $\coker(\phi')=K/T(K)$ is torsion free. This means that $G$ is the tangent sheaf of a codimension one distribution
$$ \G ~:~ 0 \to G \stackrel{\phi'}{\to} T\p3 \to K/T(K) \to 0, $$
which is called the \emph{saturation} of $\phi:F\to T\p3$. Dualizing and taking the second exterior powers of the morphisms $\phi$ and $\phi'$ we obtain the diagram
$$ \xymatrix{ \Omega^2 \ar[r]^\omega \ar[dr]^{\omega'} & \det(F^\vee)=\O \\ & \det(G^\vee)=\O(d'-2) \ar[u]^{f} } $$
where $d'$ is the degree of $\G$. We therefore obtain two 1-forms $\omega\in H^0(\Omega^1(4))$ and $\omega\in H^0(\Omega^1(d'+2))$, and a polynomial $f\in H^0(\O(2-d'))$ such that $\omega=f\omega'$. In particular, $d'=0,1$ and
$$ \sing(\G)= \{\omega'=0\} \subset \{\omega=0\}. $$

We argue that $G=T_\G$ must be a locally free sheaf. Indeed, recall that, since $\phi: F\to T\P^3$ is an injective morphism of locally free sheaves, then its degeneracy scheme, given by zero locus of the 1-form $\omega$, has codimension at most 2, see \cite[Chapter, section 4]{ACGH}. Thus if $G$ is not locally free, then $\sing(\G)$ contains a component of codimension 3, contradicting the fact that $\codim\{\omega=0\}\le2 $.

Applying the functor $\Hom(-,T_{\P^3})$ to the exact sequence
\begin{equation}\label{saturado2}
0\to  F  \stackrel{\sigma}{\longrightarrow} G \to T(K) \to 0.
\end{equation}
we get
$$ 0 \rightarrow \Hom(G,T_{\P^3}) \stackrel{\circ\sigma}{\to} \Hom(F,T_{\P^3}) \rightarrow \cdots $$
It follows that an injective morphism $\phi\in\Hom(F,T_{\P^3})$ has a torsion free quotient if $\phi\notin\img(\circ\sigma)$. Therefore, it is enough to show that $\dim\Hom(F,T_{\P^3})$ is strictly greater than $\dim\Hom(G,T_{\P^3})$.

% We   have an exact sequence 
% \begin{equation}\label{saturado2}
% 0\to  F  \stackrel{\sigma}{\to} G \to T(K) \to 0.
% \end{equation}
% Now, we apply  the functor $\Hom(-,T_{\P^3})$  to the exact sequence (\ref{saturado2}) and we get
% %According to the Cerveau--Lins Neto classification of codimension 1 %foliations of degree 2 on $\p3$ \cite{CL}, the (non-saturated) %distribution induced by the 1-form $\omega$ cannot be integrable; %since its saturation $\omega'$ coincides with $\omega$ outside of the %surface $S$, we conclude that $\omega'$ is not integrable either. 
% $$
% 0 \rightarrow  \Hom(G,T_{\P^3})  \stackrel{J}{\to} \Hom(F,T_{\P^3}) \rightarrow \cdots
% $$

% Therefore, an injective morphism $\phi \in \Hom(F,T_{\P^3}) $ has   a torsion free quotient if  $\phi \notin  Im(J)$. Thus, it is   enough
% to  show that  $ \dim \Hom(F,T_{\P^3}) $ is strictly greater than   $\dim \Hom(G,T_{\P^3})$.
% First of all, we observe that 
% $$
% \dim \Hom(F,T_{\P^3})=h^0(F\otimes T_{\P^3})=10,
% $$
% see  Proposition  \ref{space-inst}. 

As mentioned above, the degree of the saturated distribution $\G$ is either 1 or 0; let us analyze both cases. 

If $\G$ has degree 1, then $G=\O\oplus \O(1)$ since it must be locally free. However, $h^0(F)=0$, thus there can be no injective morphism $\sigma: F\to \O\oplus \O(1)$. In fact, every morphism $\sigma=(\sigma_1,\sigma_2):F \to \O\oplus \O(1)$ satisfies $\sigma_1=0$, since $\sigma_1\in H^0(F^{\vee})=H^0(F)=0$.

If $\G$ has degree 0, we have from Proposition \ref{deg0} that either $G=N(1)$, where $N$ is a null-correlation bundle, or $G =\O(1)\oplus\O(1)$. Note that
$$
\dim \Hom(N(1), T_{\P^3})=h^0(N(-1)\otimes T_{\P^3})=1, 
$$
see the proof of Proposition \ref{space_null} below, and   
$$
\dim \Hom(\O(1)\oplus\O(1),T_{\P^3})=h^0(  T_{\P^3}(-1)^{\oplus 2})=8.
$$
This shows us that  $\dim \Hom(G,T_{\P^3}) < \dim \Hom(F,T_{\P^3})=10$ in both cases (the last equality is established in the proof of Proposition \ref{space-inst} below), as desired.

We therefore conclude that there exists an injective morphism  $\phi\in\Hom(F,T_{\P^3})$  with torsion free quotient  
$$  0 \to F  \stackrel{\phi}{\to}  T\p3 \to \IC(4) \to 0 $$
inducing a  holomorphic distribution whose tangent sheaf is precisely the instanton bundle of charge 2; let $C$ denote its singular locus.

Theorem \ref{P:SLocus} implies that its singular set is a curve $C$ of degree 4 and arithmetic genus -1; the Hilbert scheme of such curves has 3 irreducible components, described as follows, see \cite[Table II]{NS}: the general point of each component corresponds to: (1) the disjoint union of a twisted cubic and a line; (2) the disjoint union of two conics; (3) a so-called \emph{extremal curve}, that is, a curve $D$ whose \emph{Rao function}
$$ n \mapsto h^1(I_{D/\p3}(n)) $$
is the largest possible with respect to its degree and genus, see \cite[Definition 9.2]{HSc} for further details. In particular, an extremal curve $D$ satisfies $h^0(I_{D/\p3}(1))=0$ and $h^0(I_{D/\p3}(2))\ge2$, cf. \cite[proof of Proposition 9.5, p. 5672]{HSc}.

However, the exact sequence in cohomology
$$ \cdots \to H^0(T\p3(-2)) \to H^0(\IC(2)) \to H^1(F(-2)) \to \cdots $$
implies that $h^0(\IC(2))=0$, thus $C$ cannot be extremal. It cannot be the union of two disjoint conics either, because in this case $h^0(I_{C/\P^3}(2))=1$ since $C$ is contained in the union of two planes. 

It follows that $C$ must lie in the closure of the first component, so it is either the disjoint union of a line and a twisted cubic, or some degeneration of such curve.

This completes the proof of Theorem \ref{classification_degre_tow}.\end{proof}

%%%%%%%%%%%%%%%%%%%%%%%%%%%%%%%%%%%%%%%%%%%%%%%%%%%%%%%%%%%%%%%%%%%%%%%%%%%%%%%%%%%%
%%%%%%%%%%%%%%%%%%%%%%%%%%%%%%%%%%%%%%%%%%%%%%%%%%%%%%%%%%%%%%%%%%%%%%%%%%%%%%%%%%%%

\section{The tangent sheaves of foliations} \label{rat-log}

In this section we study properties of the tangent sheaves of two well known families of codimension one foliations on projective spaces, namely rational and logarithmic foliations.

%----------------------------------------------

\subsection{Tangent sheaves of rational foliations} 

We denote by $\mathcal{R}(a,b)$, the set of codimension one foliations represented by 1-forms of the type
\begin{equation}\label{eq:rat}
\o= p\psi d\varphi- q \varphi d\psi
\end{equation}
where $\psi\in H^0(\O(a))$ and $\varphi\in H^0(\O(b))$ ($1\leq a\leq b$) have no common factor, and $(p,q)$ are the unique relatively prime integers such that $a\, q =b\, p$. Clearly, $\o\in H^0(\Omega^1_\p3(a+b-2))$. The leaves are the fibres of the rational map $\varphi^p/\psi^q:\P^3\dashrightarrow \P^1$.

We also remark that if $a\nmid b$, then $\mathcal{R}(a,b)$ can be regarded as the open subset of $\mathbb{P}H^0(\O(a))\times\mathbb{P}H^0(\O(b))$ consisting of those pairs $(\psi,\varphi)$ with no common factors; in particular:
$$ \dim \mathcal{R}(a,b) = {{a+3}\choose{3}} + {{b+3}\choose{3}} - 2. $$ 
If $a|b$ and $a\ne b$, then $\mathcal{R}(a,b)$ is a projective bundle over 
$\mathbb{P}H^0(\O(a))$, with fiber over $\psi\in\mathbb{P}H^0(\O(a))$ equal to $\mathbb{P}H^0(\O(b))/\langle \psi^{b/a}\rangle$.

Finally, if $a=b$, then $\mathcal{R}(a,a)$ is isomorphic to the grassmannian  of pencils \linebreak $\langle\psi,\phi\rangle\in{\rm Grass}(2,H^0(\O(a))$. 

Further details on rational foliations can be found in \cite{LV}.

\begin{Theorem}\label{racionais} 
Let $\mathcal{F}\in \mathcal{R}(a,b)$ be a rational foliation of degree $d=a+b-2$ on $\P^3$. Then:
\[\begin{array}{lll}
c_1(T_\sF) &=& 2-d=4-(a+b) \\
c_2(T_\sF) &=& d^2+2-ab=a^2+b^2+ab-4(a+b)+6 \\
c_3(T_\sF) &=& (a+b-2)^3+2(a+b-2)^2+2(1-ab)(a+b-2)
\end{array}
\]
In addition:
\begin{enumerate}
\item[(i)] if $d=0$, then $T_\sF$ is isomorphic to $\O(1)\oplus\O(1)$;
\item[(ii)] if $d\geq 1$, then $T_\sF$ is stable.
\end{enumerate}
\end{Theorem}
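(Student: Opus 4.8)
The plan is to read off the Chern classes from Theorem \ref{P:SLocus} and then establish stability by combining Proposition \ref{non_stable_prop} with the low-degree classifications already proved. For the Chern classes, writing $\omega=p\psi\,d\varphi-q\varphi\,d\psi$, I would first observe that $\omega$ factors as
$$ T\p3 \xrightarrow{(d\psi,\,d\varphi)} \O(a)\oplus\O(b) \xrightarrow{(-q\varphi,\,p\psi)} \O(a+b), $$
and, since $\psi,\varphi$ have no common factor, the image of the second arrow is (a twist of) the ideal of the complete intersection $C=\{\psi=\varphi=0\}$. This identifies the pure $1$-dimensional component of $\sing(\sF)$ with $C$, so $\deg(C)=ab$ and, by adjunction, $p_a(C)=1+\tfrac12 ab(a+b-4)$. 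Substituting these together with $c_1(T_\sF)=2-d=4-(a+b)$ into Theorem \ref{P:SLocus} gives the three displayed expressions; a short manipulation rewrites the last as $c_3(T_\sF)=d\big[(a-1)^2+(b-1)^2\big]$, which I will reuse. When $d=0$ this forces $a=b=1$, so $\sF$ is the pencil of planes through the line $\{\psi=\varphi=0\}$; its singular scheme is that line, in particular nonempty, and Proposition \ref{deg0} then gives $T_\sF=\O(1)\oplus\O(1)$, proving (i).

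For the stability in (ii), the first point is that $d\ge1$ gives $a+b\ge3$, whence $c_3(T_\sF)=d[(a-1)^2+(b-1)^2]>0$; by Remark \ref{R:CC} this means $T_\sF$ is not locally free, and in particular it does not split. I can therefore invoke Proposition \ref{non_stable_prop} with $\deg(C)=ab$: for $d$ even it yields stability once $2ab<d^2+d$, and for $d$ odd once $2ab<(d+1)^2$. Expanding with $d=a+b-2$, these become $(a-\tfrac32)^2+(b-\tfrac32)^2>\tfrac52$ and $(a-1)^2+(b-1)^2>1$ respectively, both of which hold for all admissible $(a,b)$ with $d\ge1$ \emph{except} the three pairs $(1,2)$, $(1,3)$, and $(2,2)$.

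These exceptions are exactly the low-degree foliations handled earlier, and each has already been shown to have stable tangent sheaf. For $(a,b)=(1,2)$ one has $d=1$ and $(c_2,c_3)=(1,1)$: by Lemma \ref{coho_non_split} the non-split sheaf $T_\sF$ satisfies $h^0(T_\sF(-1))=0$, which in the degree $1$ analysis of Section \ref{class1} (equivalently Theorem \ref{conic+point}) forces $T_\sF$ to be stable. For $(a,b)=(1,3)$ one has $d=2$ and $(c_2,c_3)=(3,8)$, so Lemma \ref{(3,8)} applies, and for $(a,b)=(2,2)$ one has $d=2$ and $(c_2,c_3)=(2,4)$, so Lemma \ref{(2,4)} applies. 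As rational foliations are integrable, the hypotheses of these results are met, completing (ii).

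The principal difficulty is twofold. First, one must be sure that the pure $1$-dimensional part of $\sing(\sF)$ is \emph{exactly} the complete intersection $\{\psi=\varphi=0\}$, and not a larger curve, since this is what fixes $\deg(C)=ab$ and hence both the Chern classes and the numerology feeding Proposition \ref{non_stable_prop}; the factorization of $\omega$ above is the right tool, with the residual singularities forming the $0$-dimensional excess measured by $c_3$. Second, it is easy to overlook that Proposition \ref{non_stable_prop} fails to cover the three smallest cases, so the proof is genuinely complete only after these are handed off to the degree $1$ classification and to Lemmas \ref{(3,8)} and \ref{(2,4)}.
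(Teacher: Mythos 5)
Your proof is correct and follows essentially the same route as the paper: Chern classes from Theorem \ref{P:SLocus} using the complete intersection $C=\{\psi=\varphi=0\}$ with $\deg(C)=ab$ and $2p_a(C)-2=ab(a+b-4)$, case (i) via Proposition \ref{deg0}, and stability via Proposition \ref{non_stable_prop}, with the exceptional pairs $(1,2)$, $(1,3)$, $(2,2)$ handed off to the degree-one classification and to Lemmas \ref{(3,8)} and \ref{(2,4)} exactly as in the paper. The only minor differences are that the paper reaches the numerical criterion for $d\ge3$ through the bound $2\deg(C)\le(d+2)^2/2$ of Remark \ref{cota_Log} rather than your sharper exact inequalities in $(a,b)$, and it rules out the null-correlation case in (i) by non-integrability rather than by nonemptiness of the singular scheme; your explicit verification that $c_3(T_\sF)=d\left[(a-1)^2+(b-1)^2\right]>0$, hence $T_\sF$ is nonsplit, is a welcome addition, since Proposition \ref{non_stable_prop} requires this hypothesis and the paper leaves it implicit (noting only after the statement that $c_3\ne0$ for $d\ge1$).
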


Note that the tangent sheaf of a rational foliation is locally free only when $d=0$, since $c_3(T_\sF)\ne0$ otherwise.

We will need the following elementary fact in the proof of the Theorem \ref{racionais}.

\begin{Obs}\label{cota_Log}\rm
Let $d_1,\dots,d_r$ be positive  integers such that $\sum_{i=1}^rd_i=d+2$. Then
\[
\sum_{1 \leq i<j\leq r} d_id_j\leq \frac{r(r-1)}{2r^2}(d+2)^2.
\]
In fact, 
apply the  method of Lagrange multipliers for the function $f(x_1,\dots,x_r)=\sum_{1 \leq i<j\leq r} x_ix_j$ under the conditions $\sum_{i=1}^rx_i=d+2$ and $x_i>0$ for all $i=1,\dots,r.$
\end{Obs}

\begin{proof}
Let $Z$ be the singular scheme of $\sF$; let $\mathscr{U}$ be the maximal 0-dimensional subsheaf of $\OZ$, and let $C\subset Z$ be the corresponding subscheme of pure dimension 1. For rational foliations, $C$ is a complete intersection curve $\{\psi=\varphi=0\}$, so that $\deg(C)=ab$ and $2p_a(C)-2= ab(a+b-4)$. The formulas for the Chern classes of the tangent sheaf now follow immediately from Theorem \ref{P:SLocus}.

According to Proposition \ref{deg0} above, the tangent sheaf of a codimension one distribution of degree 0 on $\p3$ either splits as a sum of line bundles, or is isomorphic to a null-correlation bundle. Since the latter is never integrable, the first item follows immediately.

When $d=1$, Table \ref{table deg 1} also guarantees that $T_\sF$ is stable. The Chern classes of rational foliations of degree 2 are $(c_2(T_\sF),c_3(T_\sF))=(3,8)$, in the case $(a,b)=(1,3)$, and $(c_2(T_\sF),c_3(T_\sF))=(2,4)$, in the case $a=b=2$. The stability of the tangent sheaves was  shown in Lemma \ref{(3,8)} and Lemma \ref{(2,4)}, respectively.

Next, note that Remark \ref{cota_Log} implies that 
$$ 2\deg(C)\leq \frac{(d+2)^2}{2}. $$
Since $(d+2)^2/2<d^2+d$ when $d\ge 4$ and $(d+2)^2/2<(d+1)^2$ when $d\ge 3$, Proposition \ref{non_stable_prop} implies that $T_\sF$ is stable for $d\ge3$.
\end{proof}

%-------------------------------------------------------------------

\subsection{Tangent sheaf of logarithmic foliations}\label{seclog}

We denote by $\mathcal{L}(d_1,\dots,d_r)$ the set of codimension one foliations represented by 1-forms of the type
$$ \omega=f_1\cdots f_r \sum_{i=1}^r \lambda_i\frac{df_i}{f_i} $$
for some sections $f_i\in H^0(\O(d_i))$ ($1\leq d_1\leq \cdots \leq d_r$) and $\lambda_i\in\C^{\ast}$ such that
$\sum d_i\lambda_i=0$; we also assume that the sections $f_i$ have no  multiple factors. It is clear that
$\omega\in H^0(\Omega^1_{\p3}(d_1+\cdots+d_r))$. Observe that the case $r=2$ reduces to the case of rational foliations, considered above; therefore from now on we assume that $r\geq3$. We remark also, cf. \cite{C}:
$$
 \dim \mathcal{L}(d_1,\dots,d_r) =  
\sum_{i=1}^{r}{3+d_i \choose d_i} -2. 
$$

%The singular locus consist of the curve $C=\cup_{i<j}\{f_i=f_j=0\}$, it has $r(r-1)/2$ irreducible components and has degree $\deg(C)=\sum_{i<j} d_id_j$.

Let $T_\sF$ be the tangent sheaf of a logarithmic foliation $\sF\in\mathcal{L}(d_1,\dots,d_r)$; its degree is $d=d_1+\dots+d_r-2$. By \cite[Theorem 3]{CSV}, its Chern classes are
\[
c_1(T_\sF)=2-d,\quad c_2(T_\sF)=d^2+2-\sum_{i<j}d_id_j,\quad c_3(T_\sF)=N(3,d)
\]
where
\[
N(3,d)=\mbox{coefficient of }h^3 \mbox{ in }\frac{(1-h)^4}{(1-d_1h)\cdots(1-d_rh)}
\]

It follows from the equation (\ref{eq:length}) that
\[
N(3,d)=d^3+2d^2+2d-(3d-2)\sum_{i<j}d_id_j+2p_a(C)-2.
\]
One can then compute the arithmetic genus $p_a(C)$ by comparing these last two expressions for $N(3,d)$.

\begin{Theorem} \label{logStable}
Let $\mathscr{F}\in  \mathcal{L}(d_1,\dots,d_r)$ be a logarithmic foliation of degree $d$ on $\P^3$,
\begin{enumerate}
\item[(i)] If $d=1$, then $T_\sF\simeq \O\oplus\O(1)$.
\item[(ii)] If $d=2$, then 
\begin{itemize}
\item either $\sF\in\mathcal{L}(1,1,1,1)$, in which case $T_\sF\simeq \O\oplus\O$;
\item or $\sF\in\mathcal{L}(1,1,2)$, in which case $T_\sF$ is a strictly $\mu$-semistable reflexive sheaf with Chern classes $c_1(T_\sF)=0$,  $c_2(T_\sF)=1$, and $c_3(T_\sF)=2$.
\end{itemize}
\item[(iii)] For $d>2$, $T_\sF$ is stable provided
\begin{itemize}
\item $d>3r-2$ when $d$ is even;
\item $d>2r-2$ when $d$ is odd.
\end{itemize}
\item[(iv)] For $d>2$ and even, $T_\sF$ is $\mu$-semistable provided $d>r-2$.
\end{enumerate}
\end{Theorem}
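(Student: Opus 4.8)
The plan is to dispatch the four items separately. Items (i) and (ii) are handled by reducing to the degree-$1$ and degree-$2$ classifications already in place, while items (iii) and (iv) follow from the stability criterion of Proposition~\ref{non_stable_prop} fed by the combinatorial bound of Remark~\ref{cota_Log}. Throughout I use that, for a logarithmic foliation, $\deg(C)=\sum_{i<j}d_id_j$ (comparing the formula $c_2(T_\sF)=d^2+2-\sum_{i<j}d_id_j$ with Theorem~\ref{P:SLocus}) and that $\sum_i d_i=d+2$.

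For item (i), $d=1$ together with $r\ge3$ forces $\sF\in\mathcal{L}(1,1,1)$, the only partition of $3$ into at least three positive parts. Reading off the coefficient of $h^3$ in $(1-h)^4/(1-h)^3=1-h$ gives $c_3(T_\sF)=0$, so $T_\sF$ is locally free by Remark~\ref{R:CC}, and Corollary~\ref{deg1free} then yields $T_\sF\simeq\O\oplus\O(1)$. For item (ii), $d=2$ and $r\ge3$ leaves only $\mathcal{L}(1,1,1,1)$ and $\mathcal{L}(1,1,2)$. In the first, $\deg(C)=6$ gives $c_2(T_\sF)=0$, which forces $T_\sF\simeq\O\oplus\O$ exactly as in the degree-$2$ discussion of Section~\ref{class2}. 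In the second, $\deg(C)=5$ gives $c_2(T_\sF)=1$, already incompatible with splitting (a split reflexive sheaf with $c_1=0$ has $c_2\le0$); the coefficient of $h^3$ in $(1-h)^2/(1-2h)$ equals $2$, so $c_3(T_\sF)=2$, and Remark~\ref{rmk 9.4} delivers the claimed strict $\mu$-semistability.

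For items (iii) and (iv) I first note that the bound $2\deg(C)\le\frac{r-1}{r}(d+2)^2$ of Remark~\ref{cota_Log} implies $c_2(T_\sF)=d^2+2-\deg(C)>d^2+2-\tfrac12(d+2)^2=\tfrac12 d(d-4)\ge0$ for $d\ge4$; since each stated hypothesis forces $d\ge4$, the sheaf $T_\sF$ has $c_2>0$ and hence does not split, so Proposition~\ref{non_stable_prop} applies. It then suffices to show that $\frac{r-1}{r}(d+2)^2$ lies below the relevant threshold: $d^2+d$ for stability in even degree, $(d+1)^2$ for stability in odd degree, and $d^2+3d+2$ for $\mu$-semistability in even degree. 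A direct computation rewrites these three sufficient inequalities as $d^2-(3r-4)d-4(r-1)>0$, as $d^2-(2r-4)d-(3r-4)>0$, and (after cancelling the factor $d+2$) as $d>r-2$, respectively.

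The one point requiring care — the main, if routine, obstacle — is the location of the positive roots of the two quadratics. One checks that the positive root of $d^2-(3r-4)d-4(r-1)$ lies in $[3r-3,\,3r-2)$ and that of $d^2-(2r-4)d-(3r-4)$ lies in $[2r-3,\,2r-2)$, using $3r-2\le\sqrt{9r^2-8r}<3r$ and $r-1\le\sqrt{r^2-r}<r$. Consequently the hypotheses $d>3r-2$ and $d>2r-2$ place $d$ strictly beyond these roots, making both quadratics positive, while $d>r-2$ is exactly the $\mu$-semistability condition. This confirms that the clean numerical bounds in the statement are genuinely sufficient for the chain Remark~\ref{cota_Log} $\Rightarrow$ Proposition~\ref{non_stable_prop}, completing all four items.
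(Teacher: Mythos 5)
Your proposal follows the paper's own route almost step for step: items (i) and (ii) are settled by computing the Chern classes and quoting the degree~1 and degree~2 classifications together with Remark~\ref{rmk 9.4}, and items (iii)--(iv) by feeding the bound of Remark~\ref{cota_Log} into Proposition~\ref{non_stable_prop}. Your explicit root analysis of the two quadratics ($d^2-(3r-4)d-4(r-1)$ with positive root in $[3r-3,3r-2)$, and $d^2-(2r-4)d-(3r-4)$ with positive root $(r-2)+\sqrt{r^2-r}\in[2r-3,2r-2)$) is correct and merely makes precise the inequalities the paper asserts without computation, as is the cancellation of $d+2$ giving $d>r-2$ in item (iv). There is, however, one genuinely flawed step: to secure the hypothesis of Proposition~\ref{non_stable_prop} you argue that $c_2(T_\sF)>0$ ``and hence $T_\sF$ does not split''. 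That implication is false once $c_1(T_\sF)=2-d\neq 0$: a split sheaf $T_\sF=\O(1-a)\oplus\O(1-b)$ with $a+b=d$ and $a,b\ge 2$ has $c_2=(a-1)(b-1)>0$; for instance $\O(-1)\oplus\O(-1)$ has $c_2=1>0$. (Your use of this fact in item (ii) is fine, since there $c_1=0$ and split forces $c_2=-a^2\le 0$.) Moreover, at the boundary of item (iv), namely $d=4$, your derived inequality $c_2>\tfrac12 d(d-4)=0$ genuinely fails to exclude the split bundle $\O(-1)\oplus\O(-1)$ with $c_2=1$, so the step cannot be waved through as a harmless imprecision.

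The gap is easily repaired with the same numerics you already have: if $T_\sF$ splits, then $c_2=1-d+ab$ with $ab\le d^2/4$, so $\deg(C)=d^2+d+1-ab\ge \tfrac34 d^2+d+1$, whence $2\deg(C)\ge \tfrac32 d^2+2d+2>d^2+3d+2$ for $d\ge 2$; this contradicts the bound $2\deg(C)\le \left(1-\tfrac1r\right)(d+2)^2$, which under each of your hypotheses is strictly below $d^2+3d+2$ (the weakest of the three thresholds). So non-splitness does follow from Remark~\ref{cota_Log} under the stated degree bounds, just not for the reason you give. It is worth noting that the paper's own proof is silent on this point --- it invokes Proposition~\ref{non_stable_prop} without verifying its nonsplit hypothesis --- so your instinct that something must be said here was sound; only the justification needs to be replaced as above.
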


In particular, we conclude from item (iv) above that the tangent sheaf of a logarithmic foliation $\sF\in \mathcal{L}(d_1,\dots,d_r)$ of even degree is $\mu$-semistable whenever $d_r>1$.

\begin{proof}
When $d=1$, the only possibility is to have $\sF\in\mathcal{L}(1,1,1)$; using the formulas for the Chern classes of $T_\sF$ given above, we obtain $c_1(T_\sF)=1$,  $c_2(T_\sF)=0$, and $c_3(T_\sF)=0$. Comparing with Table \ref{table deg 1}, we conclude that $T_\sF$ splits as a sum of line bundles, and the first item follows immediately.

Suppose that $d=2$. If $\sF\in\mathcal{L}(1,1,1,1)$, then the  Chern classes of $T_\sF$ are
$c_1(T_\sF)=c_2(T_\sF)=c_3(T_\sF)=0$, hence clearly $T_\sF\simeq \O\oplus\O$.

The second possibility is $\sF\in\mathcal{L}(1,1,2)$, in which case we obtain $c_1(T_\sF)=0$, $c_2(T_\sF)=1$, and $c_3(T_\sF)=2$; the fact that $T_\sF$ must be strictly $\mu$-semistable was observed in Remark \ref{rmk 9.4}.

Finally, suppose that $d\geq 2$. It follows from Remark \ref{cota_Log} the degree of the singular curve is bounded by
$$ 2\deg(C) = 2 \sum_{i<j} d_id_j \leq \frac{r(r-1)}{r^2}(d+2)^2=\left(1-\frac{1}{r}\right)(d+2)^2. $$
If $d$ is even and larger than $3r-2$, then the last term in the previous inequality is bounded above by $d^2+d$, hence it follows from Proposition \ref{non_stable_prop} that $T_\sF$ is stable. If $d$ is odd and larger than $2r-2$, then the last term in the previous inequality is bounded above by $(d+1)^2$, and again Proposition \ref{non_stable_prop} allows us to conclude that $T_\sF$ is stable. If $d$ is even and larger than $r-2$, then the last term in the previous inequality is bounded above by $d^2+3d+2$, so $T_\sF$ must be $\mu$-semistable by Proposition \ref{non_stable_prop}.
\end{proof}

We summarize our description of the tangent sheaves of rational and logarithmic foliations of degree $\le2$ in Table \ref{rat log table} below.

\begin{table}[h] 
\begin{tabular}{ ||l || c | c | c | c| c| }\hline
Component               & $c_1(T_\sF)$ & $c_2(T_\sF)$ & $c_3(T_\sF)$ & Tangent sheaf is    \\ \hline
$\mathcal{R}(1,1)$      &  2           &  1           &   0          & $\O(1)\oplus\O(1)$  \\ \hline
$\mathcal{L}(1,1,1)$    &  1           &  1           &   0          & $\O\oplus\O(1)$     \\ \hline
$\mathcal{R}(1,2)$      &  1           &  1           &   1          & stable              \\ \hline
$\mathcal{L}(1,1,1,1)$  &  0           &  0           &   0          & $\O\oplus\O$        \\ \hline
$\mathcal{L}(1,1,2)$    &  0           &  1           &   2          & strictly $\mu$-semistable \\ \hline  
$\mathcal{R}(2,2)$      &  0           &  2           &   4          & stable              \\ \hline  
$\mathcal{R}(1,3)$      &  0           &  3           &   8          & stable              \\ \hline  
%$\mathscr{L}(1,1,1,1,1)$& -1  &  1  &   1 & stable             \\ \hline
%$\mathscr{L}(1,1,1,2)$  & -1  &  2  &   4  & stable             \\ \hline
%$\mathscr{L}(1,2,2)$    & -1  &  3  &   3  & stable         \\ \hline  %Ch
%$\mathscr{L}(1,1,3)$    & -1  &  4  &  12  & stable             \\ \hline
%$\mathscr{R}(2,3)$      & -1  &  5  &  15  & stable             \\ \hline
%$\mathscr{R}(1,4)$      & -1  &  7  &  27   & stable             \\ \hline
\end{tabular}
\caption{Logarithmic and rational components of degree $\le2$.}
\label{rat log table}
\end{table}

%%%%%%%%%%%%%%%%%%%%%%%%%%%%%%%%%%%%%%%%%%%%%%%%%%%%%%%%%%%%%%%%%%%%%%%%%%%%%%%%%%%%%%%%%%%%%
%%%%%%%%%%%%%%%%%%%%%%%%%%%%%%%%%%%%%%%%%%%%%%%%%%%%%%%%%%%%%%%%%%%%%%%%%%%%%%%%%%%%%%%%%%%%%

\section{Descriptions of moduli spaces of distributions}\label{sec:modspc}

In this section, we provide explicit descriptions of certain moduli spaces of codimension one distributions on $\p3$ of degree at most two. We denote the moduli space ${\mathcal D}^{P}$ with  
$$ P := \frac{1}{3}(t+3)(t+2)(t+1)+\frac{1}{2}(t+2)(t+1)(2-d)-(t+2)c+\frac{1}{2}(l +(d-2)c) $$
by ${\mathcal D}(d,c,l)$; note that $d$ is the degree of $\sF$, $c=c_2(T_\sF)$ and $l=c_3(T_\sF)$. Similarly, ${\mathcal D}^{st}(d,c,l):={\mathcal D}^{P,st}$

Our main tool is the forgetful morphism described in Lemma \ref{forget phi}
\begin{equation} \label{forget map} 
\varpi :  {\mathcal D}^{st}(d,c,l) \to {\mathcal M}(2-d,c,l) ~~,~~ \varpi([\sF]) := [T_\sF]
\end{equation}
where ${\mathcal M}(2-d,c,l)$ denotes the moduli space of stable rank 2 reflexive sheaves on $\p3$ with Chern classes
$(c_1,c_2,c_3)=(2-d,c,l)$. We will check that, for certain particular values of the invariants $(d,c,l)$, the forgetful morphism $\varpi$ is surjective, and that ${\mathcal M}(2-d,c,l)$ is an irreducible, %rational, 
nonsingular quasi-projective variety; if, in addition, the dimension of $\Hom(E,T\P^3)$ is constant for every $[E]\in{\mathcal M}(2-d,c,l)$, then one can conclude that ${\mathcal D}^{st}(d,c,l)$ is also irreducible and nonsingular, as well as compute $\dim {\mathcal D}^{st}(d,c,l)$.

%---------------------------------------------------------------------

\subsection{Null-correlation distributions} \label{sec:nc-dist}

A \emph{null-correlation distribution} $\sF$ on $\p3$ is a codimension one distribution whose tangent sheaf is isomorphic to a null-correlation bundle up to twist. To be more precise, assume that $T_\sF\simeq N(-a)$ for some null-correlation bundle $N$; one can check that $\Hom(N(-a),T\p3)=H^0(N\otimes T\p3(a))=0$ for $a\le-2$; on the other hand, according to Example \ref{ex nc bdl}, such distributions do exist for each $a\ge-1$. 

In fact, the moduli space of null-correlation distributions, that is, those of the form 
$$ \sF ~:~ 0\to N(-a) \to T\p3 \to \IZ(4+2a) \to 0, $$
is precisely ${\mathcal D}^{st}(2+2a,1+a^2,0)$. Indeed, note that $c_1(N(-a))=-2a$ and $c_2(N(-a))=1+a^2$, meaning that $[\sF]\in{\mathcal D}^{st}(2+2a,1+a^2,0)$. Conversely, if $[\sF]\in {\mathcal D}^{st}(2+2a,1+a^2,0)$, then $T_\sF(a)$ is a stable rank 2 locally free sheaf with $c_1(T_\sF(a))=0$ and $c_2(T_\sF(a))=1$, hence $T_\sF(a)$ is a null-correlation bundle.

\begin{Prop} \label{space_null}
${\mathcal D}^{st}(2+2a,1+a^2,0)$ is an irreducible, nonsingular quasi-projective variety of dimension 
$$ 8{a+4 \choose 3}-2{a+3\choose 3}-3a-6. $$
\end{Prop}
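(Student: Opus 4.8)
The plan is to apply Lemma \ref{forget phi 2} to the forgetful morphism $\varpi : {\mathcal D}^{st}(2+2a,1+a^2,0) \to {\mathcal M}(-2a,1+a^2,0)$. As established just above the statement, the target moduli space consists of sheaves $E$ with $E(a)$ a null-correlation bundle; these are automatically stable, locally free, and reflexive, so ${\mathcal M}(-2a,1+a^2,0)={\mathcal M}^{r,st}(-2a,1+a^2,0)$. First I would record that the moduli space of null-correlation bundles is well known to be an irreducible, nonsingular quasi-projective variety isomorphic to $\p5\setminus G$, where $G\simeq {\rm Grass}(2,4)$ is the Pl\"ucker quadric; hence ${\mathcal M}(-2a,1+a^2,0)$ is irreducible, nonsingular, of dimension $5$. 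This verifies the irreducibility and smoothness hypotheses of Lemma \ref{forget phi 2}.

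Next I would establish surjectivity of $\varpi$ together with the two vanishing conditions and the constancy of $\dim\Hom(E,T\p3)$. Surjectivity follows from Example \ref{ex nc bdl}: setting $N=E(a)$, the distribution in display \eqref{nc-bdl dual} twisted appropriately realizes every $[E]$ as a tangent sheaf, so every point of the target is hit. For the constancy of $h^0(N(-a)\otimes T\p3)$, I would compute it directly. Twisting the Euler sequence $0\to\O\to\O(1)^{\oplus4}\to T\p3\to0$ by $N(-a)\simeq N^\vee(-a)$ and passing to cohomology, one gets $\dim\Hom(N(-a),T\p3)=4h^0(N(1-a))-h^0(N(-a))+\delta$, where the correction terms come from $h^1(N(-a))$; all of these are computable from sequence \eqref{nc-bdl} via Bott's formula and are independent of the particular $N$, since the cohomology of a null-correlation bundle depends only on its Chern classes. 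The upshot is a single integer $h:=\dim\Hom(N(-a),T\p3)$, the same for all $[E]$ in the image.

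To apply the dimension formula of Lemma \ref{forget phi 2}, I would verify $\Ext^2(T_\sF,T_\sF)=0$ for every $[\sF]$ in the moduli space. Since $T_\sF=N(-a)$ is locally free, $\Ext^2(N(-a),N(-a))\simeq H^2(N\otimes N^\vee)=H^2(N\otimes N)$, and $N\otimes N\simeq \mathcal End(N)\oplus\O$ (using $\det N=\O$); the vanishing $H^2(\mathcal End(N))=0$ is standard for null-correlation bundles and can be read off from the defining sequence. This gives both the nonsingularity of ${\mathcal D}^{st}$ and the dimension count $\dim{\mathcal D}^{st}=\dim{\mathcal M}+\dim\Hom(T_\sF,T\p3)-1 = 5 + h - 1$. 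The final step is the bookkeeping: I would compute $h=8\binom{a+4}{3}-2\binom{a+3}{3}-3a-2$ by evaluating the Bott-formula expressions from the previous paragraph (this matches the twisted Euler-sequence count, where $8\binom{a+4}{3}$ is $4h^0(\O(1-a)^{\oplus4}$-type terms transported through $N$, and $2\binom{a+3}{3}$ accounts for the $\O(-a)$ sub-line-bundle contribution), so that $5+h-1$ collapses to the stated $8\binom{a+4}{3}-2\binom{a+3}{3}-3a-6$.

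The main obstacle I anticipate is the explicit cohomology computation of $h^0(N(-a)\otimes T\p3)$ as a closed-form polynomial in $a$: one must carefully track the boundary maps in the long exact sequences arising from both \eqref{nc-bdl} and the Euler sequence to confirm that the intermediate $H^1$ terms vanish or cancel in the claimed range $a\ge-1$, and to pin down the exact binomial combination. The stability-theoretic inputs (irreducibility and smoothness of the null-correlation moduli, and $\Ext^2$-vanishing) are comparatively routine appeals to the structure theory in \cite{OSS} and \cite{H2}, so the real work is the dimension arithmetic.
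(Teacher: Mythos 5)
Your overall strategy coincides with the paper's: both pass through the forgetful morphism to ${\mathcal M}(-2a,1+a^2,0)\simeq{\mathcal M}(0,1,0)$, the complement of a quadric in $\p5$, verify the relevant $\Ext$-vanishings, and reduce the dimension count to a cohomology computation via the Euler sequence. However, your execution of the key count contains a concrete error. Since $T_\sF=N(-a)$, one has
$$\Hom(N(-a),T\p3)=H^0\bigl(N(-a)^\vee\otimes T\p3\bigr)=H^0\bigl(N(a)\otimes T\p3\bigr)$$
using $N^\vee\simeq N$, so the Euler sequence must be twisted by $N(a)$, not by $N(-a)$ as you do. Your formula $4h^0(N(1-a))-h^0(N(-a))+\delta$ computes the wrong space: for $a\ge1$ the terms $h^0(N(1-a))$ and $h^0(N(-a))$ vanish (a null-correlation bundle has no sections in nonpositive twists), which would make the fibers of $\varpi$ essentially empty, contradicting the existence of these distributions established in Example \ref{ex nc bdl}. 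The two twists agree only at $a=0$, which is why the slip is easy to miss.

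The error propagates into your bookkeeping, which is internally inconsistent: with your claimed $h=8\binom{a+4}{3}-2\binom{a+3}{3}-3a-2$, the quantity $5+h-1$ equals $8\binom{a+4}{3}-2\binom{a+3}{3}-3a+2$, not the stated $8\binom{a+4}{3}-2\binom{a+3}{3}-3a-6$. The correct computation (as in the paper) twists the Euler sequence by $N(a)$ to obtain, for $a\ge-1$,
$$0\to H^0(N(a))\to H^0(N(a+1))^{\oplus4}\to H^0(N\otimes T\p3(a))\to H^1(N(a))\to0,$$
whence for $a\ge0$ one gets $h=4h^0(N(a+1))-h^0(N(a))=8\binom{a+4}{3}-2\binom{a+3}{3}-3a-10$, using $h^0(N(k))=2\binom{k+3}{3}-(k+2)$; then $5+h-1$ is exactly the stated dimension. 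Note also that the boundary case $a=-1$ must be treated separately, since there $h^0(N(a+1))=h^0(N(a))=0$ and $h=h^1(N(-1))=1$, giving ${\mathcal D}^{st}(0,2,0)\simeq{\mathcal M}(0,1,0)$; your proposal does not flag this case. The remaining inputs you invoke (surjectivity via Example \ref{ex nc bdl}, the vanishing $\Ext^2(T_\sF,T_\sF)\simeq H^2(N\otimes N)=0$ from smoothness of the null-correlation moduli, and the constancy of $h$ across the moduli) are correct and match the paper's argument.
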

\begin{proof}
We check that every null-correlation bundle $N$ satisfies 
$$ \Ext^1(N(-a),T\p3) \simeq H^1(N\otimes T\p3(a))=0 ~~{\rm and}~~
\Ext^2(N(-a),N(-a))\simeq H^2(N\otimes N) = 0; $$
Indeed, the second vanishing is a consequence of the smoothness of ${\mathcal M}(-2a,1+a^2,0) \simeq {\mathcal M}(0,1,0)$ (isomorphism is given by twisting by $\O(a)$), which is precisely the complement of a quadric in $\p5$ \cite[Theorem 4.3.4]{OSS}. Twisting the Euler sequence by $N(a)$ and taking cohomology, we obtain 
$$ H^1(N(a+1))^{\oplus 4} \to H^1(N\otimes T\p3(a)) \to H^2(N(a)); $$
since $H^1(N(a+1))=H^2(N(a))=0$ for $a\ge-1$, we conclude that $H^1(N\otimes T\p3(a))=0$ in this range as well. By Lemma \ref{forget phi}, we have therefore proved that ${\mathcal D}^{st}(2+2a,1+a^2,0)$ is nonsingular.

The fibers of the map 
$$ \varpi : {\mathcal D}^{st}(2+2a,1+a^2,0) \to
{\mathcal M}(-2a,1+a^2,0) \simeq {\mathcal M}(0,1,0) ~~,~~ 
\varpi([\sF]) := [T_\sF(a)] $$
are an open subset of $\mathbb{P}H^0(N\otimes T\P^3(a))$. To compute its dimension, we twist the Euler sequence by $N(a)$ and take cohomology, obtaining
$$
0\to H^0(N(a)) \to H^0(N(a+1))^{\oplus 4} \to H^0(N\otimes T\P^3(a)) \to H^1(N(a)) \to 0
$$
for all $a\ge-1$.

If $a=-1$, then $h^0(N\otimes T\P^3(-1))=h^1(N(-1))=1$, and we conclude that
${\mathcal D}^{st}(0,2,0)\simeq{\mathcal M}(0,1,0)$.

If $a\ge0$, then $h^0(N\otimes T\P^3(a))=4h^0(N(a+1))-h^0(N(a))$, and the dimension formula follows from
$h^0(N(k))=2{k+3\choose 3}-(k+2).$
\end{proof}

We observe that ${\mathcal D}^{st}(2+2a,1+a^2,0)={\mathcal D}(2+2a,1+a^2,0)$ for $a=-1,0$. However, this is certainly false when $a$ is sufficiently large: as it was pointed out in Example \ref{exeNonsemi}, there exists a distribution in ${\mathcal D}(14,37,0)$, ie. for $a=6$, whose tangent sheaf is not $\mu$-semistable.

%---------------------------------------------------------------------

\subsection{Distributions whose singular set is a conic and a point} \label{sec:dist deg 1}

Let $\sF$ be a codimension one distribution on $\P^3$ whose singular set consists of a (possibly reducible or non-reduced) conic plus a point. As pointed out in Table \ref{table deg 1}, the tangent sheaf $T_\sF$ is a stable reflexive sheaf with $c_1(T_\sF)=c_2(T_\sF)=c_3(T_\sF)=1$. Thus ${\mathcal D}(1,1,1)={\mathcal D}^{st}(1,1,1)$ and we obtain the forgetful morphism
\begin{equation} \label{111}
\varpi :  {\mathcal D}(1,1,1) \to {\mathcal M}(1,1,1) \simeq {\mathcal M}(-1,1,1),
\end{equation}
where the last isomorphism is given by twisting by $\O(-1)$. 

\begin{Prop}
${\mathcal D}(1,1,1)$ is an irreducible, nonsingular quasi-projective variety of dimension 14.
%$\mathbb{P}^{11}$-bundle over $\mathbb{P}^{3}$.  
\end{Prop}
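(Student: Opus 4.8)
The plan is to realize $\mathcal{D}(1,1,1)$ as the total space of the forgetful fibration $\varpi$ of display \eqref{111} over the moduli space $\mathcal{M}(-1,1,1)$ of stable rank $2$ reflexive sheaves with Chern classes $(c_1,c_2,c_3)=(-1,1,1)$, and then to invoke Lemma \ref{forget phi 2}. Since, as already noted, $\mathcal{D}(1,1,1)=\mathcal{D}^{st}(1,1,1)$, and since Theorem \ref{conic+point} shows that every stable reflexive sheaf with $c_1=c_2=c_3=1$ is realized as a tangent sheaf, the morphism $\varpi$ is surjective onto $\mathcal{M}(-1,1,1)$ (after the twist by $\O(-1)$ used in \eqref{111}). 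The structural inputs required by Lemma \ref{forget phi 2} are then: irreducibility and smoothness of the base, together with the vanishing $\Ext^2(T_\sF,T_\sF)=0$ needed for nonsingularity of $\mathcal{D}$; constancy of $\dim\Hom(E,T\P^3)$ over the base; and the two dimension counts.

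First I would record the structure of the base. Every $E\in\mathcal{M}(-1,1,1)$ admits, after the twist $E\mapsto E(1)$, the locally free resolution
$$ 0 \to \O(-1) \to \O^{\oplus 3} \to E(1) \to 0 $$
produced in the proof of Theorem \ref{conic+point} (following \cite[Lemma 9.3]{H2}). Parametrizing these resolutions by the columns of linear forms $\O(-1)\to\O^{\oplus 3}$ modulo the action of $\mathbb{C}^*\times GL(3)$ exhibits $\mathcal{M}(-1,1,1)$ as an irreducible quasi-projective variety of dimension $12-9=3$; its smoothness, and hence $\Ext^2(E,E)=0$ at every point, may be taken from Chang's classification \cite{Chang}. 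As a consistency check, passing through the resolution gives $\chi(E,E)=3\chi(E)-\chi(E(1))=9-11=-2$, so that, using stability ($\hom(E,E)=1$, $\ext^3(E,E)=0$), one finds $\dim\Ext^1(E,E)-\dim\Ext^2(E,E)=3$; combined with $\dim\Ext^1(E,E)=3$ this again forces $\Ext^2(E,E)=0$.

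Next I would compute the fibre dimension. Using $E^\vee\simeq E(-1)$ for a rank $2$ reflexive sheaf with $c_1(E)=1$, one has $\Hom(E,T\P^3)\simeq H^0(E(-1)\otimes T\P^3)$; tensoring the twisted resolution
$$ 0 \to \O(-2) \to \O(-1)^{\oplus 3} \to E(-1) \to 0 $$
with $T\P^3$ and passing to cohomology, the vanishings $h^0(T\P^3(-2))=h^1(T\P^3(-2))=0$ together with $h^0(T\P^3(-1))=4$ (from the Euler sequence and Bott's formula) yield $\dim\Hom(E,T\P^3)=3\cdot 4=12$, uniformly in $[E]$ because the resolution is uniform. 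Feeding $\dim\mathcal{M}(-1,1,1)=3$ and $\dim\Hom(E,T\P^3)=12$ into Lemma \ref{forget phi 2} then gives that $\mathcal{D}(1,1,1)$ is irreducible and nonsingular of dimension $3+12-1=14$.

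The main obstacle is establishing the structural facts about $\mathcal{M}(-1,1,1)$ — its irreducibility, smoothness, and the vanishing $\Ext^2(E,E)=0$ at every point — rather than the routine cohomological dimension counts. Two subtleties are worth flagging. First, Lemma \ref{forget phi} cannot be applied directly here, since $\Ext^1(E,T\P^3)\ne 0$ (indeed $\dim\Hom(E,T\P^3)=9+\dim\Ext^1(E,T\P^3)=12$ forces $\dim\Ext^1(E,T\P^3)=3$); it is essential to use the fibration argument of Lemma \ref{forget phi 2}. Second, the naive Hirzebruch--Riemann--Roch evaluation of $\chi(E,E)$ as $\chi(\mathcal{E}nd E)$ is \emph{not} valid, because $E$ is not locally free (it is singular at the point of $\sing(\sF)$), so the derived-dual corrections $\inext^{>0}(E,\O)$ would have to be accounted for; carrying the Euler characteristic through the locally free resolution, as above, is the safe route.
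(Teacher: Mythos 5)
Your proposal is correct and follows essentially the same route as the paper: Lemma \ref{forget phi 2} applied to the fibration $\varpi$ of \eqref{111}, surjectivity from Theorem \ref{conic+point}, and the resolution \eqref{exsqc} twisted by $T\P^3(-1)$ to get $\dim\Hom(E,T\P^3)=3\cdot h^0(T\P^3(-1))=12$, hence dimension $3+12-1=14$. The only deviation is cosmetic: the paper obtains the base directly from ${\mathcal M}^{st}(-1,1,1)\simeq\p3$ via \cite[proof of Lemma 9.3]{H2} (which also yields $\Ext^2(E,E)=0$), whereas you reconstruct its dimension by a parameter count and defer smoothness to \cite{Chang} --- your consistency check $\chi(E,E)=-2$ and your observation that $\Ext^1(E,T\P^3)\neq0$ rules out Lemma \ref{forget phi} are both accurate, modulo a harmless notational slip in which twist of $E$ appears in the formula $3\chi(E)-\chi(E(1))$.
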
 

\begin{proof}
We check that the conditions of Lemma \ref{forget phi 2} are satisfied. By Theorem \ref{conic+point}, the forgetful morphism $\varpi$ in (\ref{111}) is surjective; in addition, ${\mathcal M}^{\rm st}(-1,1,1)\simeq \p3$, see \cite[proof of Lemma 9.3]{H2}, which also implies that $\Ext^2(E,E)=0$ for every $[E]\in{\mathcal M}(1,1,1)$. To compute $\dim\Hom(E,T\P^3)=h^0(E\otimes T\P^3(-1))$, twist the sequence (\ref{exsqc}) by $T\P^3(-1)$ and pass to cohomology to conclude that $h^0(E\otimes T\P^3(-1))=3\cdot h^0(T\P^3(-1))=12$. It follows that ${\mathcal D}(1,1,1)$ is irreducible and of dimension 14.  
\end{proof}

According to Theorem \ref{racionais}, the tangent sheaf $T_\sF$ of a rational foliation $\sF\in\mathcal{R}(1,2)$ is a stable reflexive sheaf with $c_1(T_\sF)=c_2(T_\sF)=c_3(T_\sF)=1$. It follows that ${\mathcal D}(1,1,1)$ contains $\mathcal{R}(1,2)$ as a closed subset of codimension 3.

%---------------------------------------------------------------------

\subsection{Degree one distributions with isolated singularities}
 
Let $\sF$ be a codimension one distribution on $\P^3$ with only isolated singularities. As it was pointed out in Theorem \ref{generic_deg1}, the tangent sheaf $T_\sF$ is a stable reflexive sheaf with Chern classes $c_1(T_\sF)=1$, $c_2(T_\sF)=3$, and $c_3(T_\sF)=5$. In addition, $T_\sF(1)$ is globally generated, and $h^0(T\P^3\otimes T_\sF^{\vee})=1$, so that the forgetful morphism
$$ \varpi :  {\mathcal D}(1,3,5) \to {\mathcal M}(1,3,5) \simeq {\mathcal M}(-1,3,5) $$
is bijective to an open subset of ${\mathcal M}(-1,3,5)$, cf. \cite[Theorem 3.14]{Chang}; the last isomorphism is given by twisting by $\O(-1)$. 

Note that ${\mathcal D}(1,3,5)$ is a Zariski open subset of $\P H^0(\Omega_{\P^3}^1(3))$; it is therefore clear that $\dim{\mathcal D}(1,3,5)=19$. 

% and $\dim \P H^0(\Omega_{\P^3}^1(3))=19$, this provides a new proof for the unirationality  of ${\mathcal M}(-1,3,5)$, cf. \cite[Theorem 3.14]{Chang}.

%---------------------------------------------------------------------

\subsection{Distributions induced by charge 2 instantons} \label{sec:2-inst}

Let $[\sF]\in{\mathcal D}^{st}(2,2,0)$, so that $T_\sF$ is a stable rank 2 bundle with $c_2=2$; since every rank 2 bundle $E$ on $\p3$ with $(c_1(E),c_2(E))=(0,2)$ is an instanton bundle of charge 2 \cite[Section 9]{H1}, it follows that $T_\sF$ isomorphic to an instanton bundle of charge 2.

Conversely, as it was established in the proof of Theorem \ref{classification_degre_tow}, every rank 2 instanton bundle is the tangent sheaf of a codimension one distribution $[\sF]\in{\mathcal D}^{st}(2,2,0)$. In other words, the forgetful morphism
$$ \varpi :  {\mathcal D}^{st}(2,2,0) \to {\mathcal M}(0,2,0) $$
is surjective.

\begin{Prop}\label{space-inst}
${\mathcal D}^{st}(2,2,0)$ is an irreducible, nonsingular quasi-projective variety of dimension of dimension 22.
\end{Prop}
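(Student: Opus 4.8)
The plan is to verify the hypotheses of Lemma \ref{forget phi 2} for the forgetful morphism $\varpi : {\mathcal D}^{st}(2,2,0) \to {\mathcal M}(0,2,0)$ and then read off irreducibility, smoothness, and the dimension. Three facts must be established: that ${\mathcal M}(0,2,0)$ is irreducible and nonsingular, that $\dim\Hom(E,T\p3)$ is constant on ${\mathcal M}(0,2,0)$, and that $\Ext^2(E,E)=0$ for every instanton bundle $E$ of charge $2$; surjectivity of $\varpi$ is already in hand from the proof of Theorem \ref{classification_degre_tow}.

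\medskip

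First I would recall that the moduli space of charge $2$ instanton bundles ${\mathcal M}(0,2,0)$ is known to be an irreducible, nonsingular quasi-projective variety of dimension $13$, and that $\Ext^2(E,E)=0$ for every such $E$ (this is standard instanton deformation theory; one may cite \cite{H1} or the original instanton references). This smoothness of the moduli space immediately gives $\dim M^{P,r,st}=\dim\Ext^1(E,E)=13$ and supplies the $\Ext^2(T_\sF,T_\sF)=0$ hypothesis needed for the second half of Lemma \ref{forget phi 2}, yielding nonsingularity of ${\mathcal D}^{st}(2,2,0)$.

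\medskip

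The main computational step is to show that $\dim\Hom(E,T\p3)=h^0(E\otimes T\p3)$ is constant, equal to $10$, for every charge $2$ instanton $E$; this is precisely the number asserted in the proof of Theorem \ref{classification_degre_tow}. I would compute it by twisting the Euler sequence
$$ 0 \to \O \to \O(1)^{\oplus 4} \to T\p3 \to 0 $$
by $E\simeq E^\vee$ and passing to cohomology, obtaining
$$ 0 \to H^0(E) \to H^0(E(1))^{\oplus 4} \to H^0(E\otimes T\p3) \to H^1(E) \to H^1(E(1))^{\oplus 4}. $$
For a charge $2$ instanton one has $h^0(E)=0$ (stability), $h^0(E(1))=2$, $h^1(E)=2$, and $h^1(E(1))=0$; these are the standard cohomological invariants of charge $2$ instantons. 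Hence $h^0(E\otimes T\p3)=4\cdot 2+2=10$, constant across the moduli space. This is the same computation already sketched in the proof of Theorem \ref{classification_degre_tow}, so it can be presented briefly.

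\medskip

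With all hypotheses of Lemma \ref{forget phi 2} verified, I conclude that ${\mathcal D}^{st}(2,2,0)$ is irreducible and nonsingular, with
$$ \dim {\mathcal D}^{st}(2,2,0) = \dim\Ext^1(T_\sF,T_\sF) + \dim\Hom(T_\sF,T\p3) - 1 = 13 + 10 - 1 = 22, $$
as claimed. The only genuine obstacle is marshalling the precise cohomological data for charge $2$ instantons — the vanishings $h^0(E)=h^1(E(1))=0$ and the values $h^0(E(1))=2$, $h^1(E)=2$, together with $\Ext^2(E,E)=0$; once these standard instanton facts are invoked, the rest is a direct application of the general machinery already developed.
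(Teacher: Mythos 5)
Your proposal is correct and takes essentially the same route as the paper: both push forward along the forgetful morphism to the $13$-dimensional moduli space of charge $2$ instantons, compute $\dim\Hom(E,T\p3)=10$ from the Euler sequence using $h^0(E)=h^1(E(1))=0$ and $h^0(E(1))=h^1(E)=2$, and conclude $\dim=13+10-1=22$. The only immaterial difference is that you derive nonsingularity from Lemma \ref{forget phi 2} via $\Ext^2(E,E)=0$, whereas the paper additionally verifies $\Ext^1(E,T\p3)=H^1(E\otimes T\p3)=0$ (also from the Euler sequence) in order to invoke Lemma \ref{forget phi}; both verifications go through.
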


\begin{proof}
The moduli space of charge 2 instantons coincides with ${\mathcal M}^{\rm st}(0,2,0)$ which is an irreducible, rational, nonsingular quasi-projective variety of dimension 13 \cite[Section 9]{H1}; in particular, we have that 
$\dim\Ext^1(F,F)=13$ and $\Ext^2(F,F)=0$ for every charge 2 instanton bundle $F$. In addition, twisting the Euler sequence by $F$ one concludes that $\Ext^1(F,T\p3)=H^1(F\otimes T\p3)=0$, so one can apply Lemma \ref{forget phi}. To compute $\dim\Hom(F,T\p3)=h^0(F\otimes T\p3)$ we use the exact sequence 
$$ 0 \to H^0(F(1)^{\oplus 4}) \to H^0(F\otimes T\p3) \to H^1(F) \to 0 , $$
(recall that $h^0(F)=h^1(F(1))=0$); it follows that
$$ h^0(F\otimes T\p3) = 4h^0(F) + h^1(F) = 10. $$
\end{proof}

We observe that it is not clear to us whether ${\mathcal D}(2,2,0)={\mathcal D}^{st}(2,2,0)$, that is whether there exists a codimension one distribution $\sF$ of degree 2 whose tangent sheaf is a strictly $\mu$-semistable locally free sheaf with $c_2(T_\sF)=2$. 

%---------------------------------------------------------------------

\subsection{Space of distributions containing $\mathcal{R}(2,2)$} \label{sec:r(2,2)}

According to Theorem \ref{racionais}, the tangent sheaf $T_\sF$ of a rational foliation $\sF\in\mathcal{R}(2,2)$ is a stable reflexive sheaf with $c_1(T_\sF)=c_2(T_\sF)=2$ and $c_3(T_\sF)=4$. We will see in the next result that  ${\mathcal D}^{st}(2,2,4)$ contains $\mathcal{R}(2,2)$ as a closed subset of codimension 12. 

\begin{Prop}
${\mathcal D}^{st}(2,2,4)$ is an irreducible, nonsingular quasi-projective variety of dimension 28. In particular, ${\mathcal D}^{st}(2,2,4)$ contains $\mathcal{R}(2,2)$ as a closed subset of codimension 12.
\end{Prop}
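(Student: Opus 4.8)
The plan is to apply Lemma~\ref{forget phi 2} to the forgetful morphism $\varpi : {\mathcal D}^{st}(2,2,4) \to {\mathcal M}(0,2,4)$ of display~\eqref{forget map}. Recall from Section~\ref{class2} that the tangent sheaf of any $[\sF]\in{\mathcal D}^{st}(2,2,4)$ is a stable rank $2$ reflexive sheaf with $(c_1,c_2,c_3)=(0,2,4)$; since $c_3=4\neq0$, Remark~\ref{R:CC} shows that no such sheaf is locally free, so the image of $\varpi$ lands in ${\mathcal M}(0,2,4)$, the moduli space of stable (hence reflexive) sheaves with these invariants, and this is exactly the space $M^{P,r,st}$ of Lemma~\ref{forget phi 2}. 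It therefore suffices to check four things: that $\varpi$ is surjective, that ${\mathcal M}(0,2,4)$ is irreducible, that $\dim\Hom(E,T\p3)$ is constant on it, and that $\Ext^2(E,E)=0$ for every such $E$.

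For the moduli space, a Riemann--Roch computation on $\p3$ gives $\chi(E,E)=-12$, independently of $c_3$. Combined with $\Hom(E,E)=\C$ (stability) and $\Ext^3(E,E)\simeq\Hom(E,E(-4))^\vee=0$ (a slope argument, as $\mu(E)=0>-4$), the vanishing $\Ext^2(E,E)=0$ forces $\dim\Ext^1(E,E)=1-\chi(E,E)=13$. Both the vanishing of $\Ext^2(E,E)$ (equivalently $\Ext^2_0(E,E)=0$, since $H^2(\O)=0$) and the irreducibility of ${\mathcal M}(0,2,4)$ should be extracted from the classification of stable rank $2$ reflexive sheaves with $c_1=0$, $c_2=2$ in~\cite{Chang}. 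To compute $\dim\Hom(E,T\p3)=h^0(E\otimes T\p3)$ (using $E^\vee\simeq E$), I would twist the Euler sequence by $E$ and pass to cohomology: with $h^0(E)=0$, $h^0(E(1))=4$ and $h^1(E)=0$ --- values constant over the irreducible ${\mathcal M}(0,2,4)$ by the cohomology tables of~\cite{Chang} --- this yields $\dim\Hom(E,T\p3)=4\,h^0(E(1))=16$.

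The heart of the argument is the surjectivity of $\varpi$, i.e.\ that every stable reflexive $E$ with $(0,2,4)$ arises as a tangent sheaf. The cleanest route is to show that $E(1)$ is globally generated and then invoke Proposition~\ref{global}: since $c_1(E(1))=2$, that result exhibits $E=[E(1)](1-c_1(E(1)))$ as the tangent sheaf of a codimension one distribution of degree $2$. To see $E(1)$ globally generated it suffices that it be $0$-regular, i.e.\ $h^1(E)=h^2(E(-1))=h^3(E(-2))=0$. Here $h^1(E)=0$ was noted above; $h^2(E(-2))=2$ from Lemma~\ref{(2,4)} together with $\chi(E(-2))=2$ gives $h^3(E(-2))=0$; and $h^2(E(-1))=0$ follows from surjectivity of the restriction $H^0(\O(3))\to H^0(\OZ(3))$ on the singular scheme. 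I expect the main difficulty to lie precisely in establishing these vanishings \emph{uniformly} over all of ${\mathcal M}(0,2,4)$, since the cohomology of $E$ can a priori jump on the locus where the four singular points of $E$ fail to impose independent conditions; should global generation fail at special sheaves, surjectivity can instead be obtained by directly constructing an injective morphism $E\to T\p3$ with torsion free cokernel, following the instanton construction in the proof of Theorem~\ref{classification_degre_tow}. Granting surjectivity, Lemma~\ref{forget phi 2} shows that ${\mathcal D}^{st}(2,2,4)$ is an irreducible, nonsingular quasi-projective variety with
\[
\dim{\mathcal D}^{st}(2,2,4)=\dim\Ext^1(E,E)+\dim\Hom(E,T\p3)-1=13+16-1=28.
\]

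For the last assertion, recall from the discussion preceding Theorem~\ref{racionais} that $\mathcal{R}(2,2)$ is isomorphic to the variety of pencils $\langle\psi,\varphi\rangle\subset H^0(\O(2))$ without common factor, an open subset of ${\rm Grass}(2,H^0(\O(2)))={\rm Grass}(2,10)$; hence $\mathcal{R}(2,2)$ is irreducible of dimension $2\cdot 8=16$. By Theorem~\ref{racionais} the tangent sheaf of a foliation in $\mathcal{R}(2,2)$ is stable with invariants $(0,2,4)$, so $\mathcal{R}(2,2)\subset{\mathcal D}^{st}(2,2,4)$; being an irreducible $16$-dimensional subvariety of the irreducible $28$-dimensional ${\mathcal D}^{st}(2,2,4)$, it has codimension $28-16=12$. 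It is moreover closed, since the integrability locus is closed and any degeneration of a pencil acquiring a common factor falls outside the locus of invariants $(0,2,4)$. This completes the plan.
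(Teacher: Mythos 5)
Your proposal is correct and follows essentially the same route as the paper: the forgetful morphism together with Lemma \ref{forget phi 2}, surjectivity via global generation of $E(1)$ and Proposition \ref{global}, Chang's description of ${\mathcal M}(0,2,4)$ as an irreducible, nonsingular variety of dimension 13 with $\Ext^2(E,E)=0$, and the count $13+16-1=28$. The one step you leave open --- establishing global generation of $E(1)$ \emph{uniformly} over the moduli space --- is exactly where the paper cites \cite[Lemma 2.9]{Chang}: every stable rank 2 reflexive sheaf with $(c_1,c_2,c_3)=(0,2,4)$ admits a resolution $0\to\O(-1)^{\oplus 2}\to\O^{\oplus 4}\to E(1)\to 0$, which gives global generation at once (so neither your $0$-regularity computation, whose $h^2(E(-1))=0$ step is only sketched, nor your fallback construction is needed) and which, twisted by $T\p3(-1)$, also yields $\dim\Hom(E,T\p3)=16$, in agreement with your Euler-sequence computation.
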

\begin{proof}
Again, we apply Lemma \ref{forget phi 2}. Every stable rank 2 reflexive sheaf $E$ on $\P^3$ with $c_1(E)=0$, $c_2(E)=2$ and $c_3(E)=4$ has a resolution of the form, cf. \cite[Lemma 2.9]{Chang}:
$$ 0\to \O(-1)^{\oplus 2 }  \to \O^{\oplus 4 }  \to E(1)\to 0. $$
In particular, $E(1)$ is globally generated, hence it follows from Proposition \ref{global} that $E$ is the tangent sheaf of a codimension one distribution. In other words, the forgetful morphism
$$ \varpi :  {\mathcal D}^{st}(2,2,4) \to {\mathcal M}(0,2,4) $$
is surjective. Recall also that ${\mathcal M}(0,2,4)$ is an irreducible, rational, nonsingular quasi-projective variety of dimension 13, see \cite[Theorem 2.12]{Chang}, thus $\dim\Ext^1(E,E)=13$ and $\Ext^2(E,E)=0$. 

Twist the resolution of $E(1)$ above by $T\mathbb{P}^3(-1)$ and pass to cohomology to obtain 
$$ H^0(T\mathbb{P}^3(-2))^{\oplus 2}=0  \to H^0(T\mathbb{P}^3(-1))^{\oplus 4}  \to
H^0(E\otimes T\mathbb{P}^3) \to 0=H^1(T\mathbb{P}^3(-2))^{\oplus 2 }. $$
It follows that $\dim\Hom(E,T\p3)=h^0(E\otimes T\mathbb{P}^3)=4\cdot h^0(T\mathbb{P}^3(-1))=16$.
\end{proof}

%---------------------------------------------------------------------

\subsection{Space of distributions containing $\mathcal{R}(1,3)$} \label{sec:r(1,3)}

According to \cite[proof of Theorem 3.9]{Chang}, every stable rank 2 reflexive sheaf $E$ with $c_1(E)=0$, $c_2(E)=3$ and $c_3(E)=8$ has a resolution of the form 
\begin{equation}\label{(3,8) sqc}
0 \to \O(-2) \to \O^{\oplus 3} \to E(1)\to 0.
\end{equation}
In particular, $E(1)$ is globally generated, hence it follows from Proposition \ref{global} that every such sheaf is the tangent sheaf of a codimension one distribution. This means that the morphism 
$$ \varpi :  {\mathcal D}^{\rm st}(2,3,8) \to {\mathcal M}(0,3,8) $$
is surjective. Recall that  ${\mathcal M}(0,3,8)$ is an irreducible, rational, nonsingular quasi-projective variety of dimension 21 \cite[Theorem 3.9]{Chang}, thus in particular $\dim\Ext^1(E,E)=21$ and $\Ext^2(E,E)=0$ for every $[E]\in{\mathcal M}(0,3,8)$.

\begin{Prop}
${\mathcal D}^{\rm st}(2,3,8)$ is an irreducible, nonsingular quasi-projective variety of dimension 32. In particular, ${\mathcal D}^{\rm st}(2,3,8)$ contains the variety of rational foliations $\mathcal{R}(1,3)$ as a closed subset of codimension 11.
\end{Prop}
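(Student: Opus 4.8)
The plan is to verify the hypotheses of Lemma \ref{forget phi 2} for the forgetful morphism $\varpi : {\mathcal D}^{\rm st}(2,3,8) \to {\mathcal M}(0,3,8)$. Two of the three inputs are already in hand from the discussion preceding the statement: the surjectivity of $\varpi$ (which follows from Proposition \ref{global} applied to the resolution \eqref{(3,8) sqc}), and the fact that ${\mathcal M}(0,3,8)$ is irreducible and nonsingular of dimension $21$ with $\Ext^2(E,E)=0$ for every $[E]$, cited from \cite[Theorem 3.9]{Chang}. The only remaining point to establish is that $\dim\Hom(E,T\p3)$ is constant as $[E]$ ranges over ${\mathcal M}(0,3,8)$; once this is done, both conclusions of Lemma \ref{forget phi 2} apply.

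First I would compute $\dim\Hom(E,T\p3)$. Since $E$ is a rank $2$ reflexive sheaf with $c_1(E)=0$, one has $E^\vee\simeq E$, so $\Hom(E,T\p3)=H^0(E\otimes T\p3)$. Tensoring the resolution \eqref{(3,8) sqc} by the locally free sheaf $T\p3(-1)$ (which preserves exactness) yields
\[
0 \to T\p3(-3) \to T\p3(-1)^{\oplus 3} \to E\otimes T\p3 \to 0 .
\]
Passing to cohomology and invoking Bott's formula, which gives $h^0(T\p3(-3))=h^1(T\p3(-3))=0$ and $h^0(T\p3(-1))=4$, I obtain
\[
h^0(E\otimes T\p3) = 3\cdot h^0(T\p3(-1)) = 12 .
\]
This value is independent of $[E]$, because every stable rank $2$ reflexive sheaf with these invariants admits the uniform resolution \eqref{(3,8) sqc}; this settles the constancy requirement. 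Since this computation is entirely mechanical, I do not expect a genuine obstacle here — the substantive structural facts (existence of the resolution and smoothness of the moduli space) have already been imported from \cite{Chang}.

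With these facts assembled, Lemma \ref{forget phi 2} shows that ${\mathcal D}^{\rm st}(2,3,8)$ is irreducible with
\[
\dim {\mathcal D}^{\rm st}(2,3,8) = \dim {\mathcal M}(0,3,8) + \dim\Hom(E,T\p3) - 1 = 21 + 12 - 1 = 32,
\]
and, using $\Ext^2(E,E)=0$, that it is nonsingular. This proves the first assertion.

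For the codimension statement, I would first note that by Theorem \ref{racionais} the tangent sheaf of a foliation in $\mathcal{R}(1,3)$ is a stable reflexive sheaf with $(c_1,c_2,c_3)=(0,3,8)$, so $\mathcal{R}(1,3)$ indeed sits inside ${\mathcal D}^{\rm st}(2,3,8)$, and it is closed there since integrability is a closed condition. It then remains to compute $\dim\mathcal{R}(1,3)$ using the projective bundle description from the subsection on rational foliations: the base $\mathbb{P}H^0(\O(1))$ has dimension $3$, while the fiber $\mathbb{P}\bigl(H^0(\O(3))/\langle\psi^3\rangle\bigr)$ has dimension $20-2=18$, so $\dim\mathcal{R}(1,3)=3+18=21$. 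Therefore $\mathcal{R}(1,3)$ has codimension $32-21=11$ in ${\mathcal D}^{\rm st}(2,3,8)$, as claimed.
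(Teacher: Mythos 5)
Your proof is correct and follows the paper's argument essentially verbatim: surjectivity of $\varpi$ and the properties of ${\mathcal M}(0,3,8)$ imported from Chang, the twist of the resolution \eqref{(3,8) sqc} by $T\p3(-1)$ together with Bott's formula giving $\dim\Hom(E,T\p3)=3\cdot h^0(T\p3(-1))=12$, and Lemma \ref{forget phi 2} yielding irreducibility, nonsingularity, and dimension $21+12-1=32$. Your explicit count $\dim\mathcal{R}(1,3)=3+18=21$ (via the projective-bundle description, since $1\mid 3$) for the codimension-$11$ claim is a detail the paper's proof leaves implicit, and it checks out.
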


\begin{proof}
To complete the proof, invoking Lemma \ref{forget phi 2}, twist sequence (\ref{(3,8) sqc}) by $T\mathbb{P}^3(-1)$ and pass to cohomology to obtain
$$ H^0(T\mathbb{P}^3(-3))=0 \to H^0(T\mathbb{P}^3(-1))^{\oplus 3} \to H^0 (E\otimes T\mathbb{P}^3) \to 0=H^1(T\mathbb{P}^3(-3)). $$
It follows that
$$ \dim \Hom(E,T\mathbb{P}^3) = 3\cdot h^0(T\mathbb{P}^3(-1)) = 12. $$
\end{proof}

%%%%%%%%%%%%%%%%%%%%%%%%%%%%%%%%%%%%%%%%%%%%%%%%%%%%%%%%%%%%%%%%%%%%%%%%%%%%%%%%%%%%%%%%%%
%%%%%%%%%%%%%%%%%%%%%%%%%%%%%%%%%%%%%%%%%%%%%%%%%%%%%%%%%%%%%%%%%%%%%%%%%%%%%%%%%%%%%%%%%%

\appendix
\section{A Bertini type Theorem for reflexive sheaves}

Let $X$ be an irreducible projective manifold of dimension $n$; let $G$ be a coherent sheaf on $X$, and let $T$ be a locally free sheaf on $X$. Set $g:=\rk(G)$ and $t:=\rk(T)$; assume that  $g<t$. The \emph{degeneracy locus} of a morphism of sheaves $\phi: G \to T $ is defined by
$$ D(\phi) := \supp(\coker(\phi^{\vee})) . $$

Our next result is a generalization of a \emph{Bertini-type theorem} due to Ottaviani, cf. \cite[Teorema 2.8]{O}.

\begin{appxlem}\label{LemmaGlobal1}
If $G^{\vee}\otimes T$ is globally generated, then, for a generic morphism $\phi\in\Hom(G, T)$, the following holds:
$$ \codim_X(D(\phi)) \leq  min\{ t-g+1, \codim_X \sing(G^{\vee} )  \}.$$
%either $D(\phi) \subset \sing(G^{\vee} )$ or $\codim_X(D(\phi))=\min\{ (g-1)(t-1),\codim_X \sing((G^{\vee} ))  \}$.
\end{appxlem}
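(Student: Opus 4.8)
The plan is to run the standard incidence-variety argument behind Bertini-type theorems for degeneracy loci, adapted so that the failure of $G^{\vee}$ to be locally free is isolated on $\sing(G^{\vee})$. Write $V:=\Hom(G,T)=H^0(X,G^{\vee}\otimes T)$ and form the incidence variety
$$\Sigma:=\{(x,[\phi])\in X\times\P(V)\ :\ x\in D(\phi)\},$$
with its two projections $p_1:\Sigma\to X$ and $p_2:\Sigma\to\P(V)$. Since $D(\phi)=\supp(\coker(\phi^{\vee}))$ and $\coker(\phi^{\vee})\otimes k(x)=\coker(\phi^{\vee}\otimes k(x))$, a point $x$ lies in $D(\phi)$ exactly when the linear map $\phi^{\vee}\otimes k(x):T^{\vee}\otimes k(x)\to G^{\vee}\otimes k(x)$ fails to be surjective. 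The strategy is to compute $\dim\Sigma$ fibrewise over $X$ using this pointwise description, and then transfer the estimate to a generic fibre of $p_2$, which is $D(\phi)$.

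First I would stratify $X$ by the fibre dimension of $G^{\vee}$. On the open set $U:=X\setminus\sing(G^{\vee})$ one has $\dim_{k(x)}(G^{\vee}\otimes k(x))=g$, so $D(\phi)\cap U$ is the classical degeneracy locus of a morphism of vector bundles of ranks $t$ and $g$. The hypothesis that $G^{\vee}\otimes T\simeq\mathcal{H}om(T^{\vee},G^{\vee})$ is globally generated restricts to $U$, so the evaluation $V\to\Hom(T^{\vee}\otimes k(x),G^{\vee}\otimes k(x))$ is surjective for every $x\in U$; hence, as $[\phi]$ varies, $\phi^{\vee}\otimes k(x)$ sweeps out all $g\times t$ matrices, and the fibre $p_1^{-1}(x)$ is the projectivization of the preimage of the non-surjective matrices, which has codimension $(t-(g-1))\cdot 1=t-g+1$. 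This is precisely the content of Ottaviani's theorem \cite[Teorema 2.8]{O}, and it shows that the part of $\Sigma$ over $U$ has codimension $t-g+1$ in $U\times\P(V)$. Applying the theorem on fibre dimensions to $p_2$, a generic $D(\phi)$ meets $U$ in a subset of codimension $t-g+1$ whenever $t-g+1\le n$ (and is empty over $U$ otherwise); in particular $\codim_X D(\phi)\le t-g+1$.

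For the remaining estimate I would analyze the part of $\Sigma$ lying over $\sing(G^{\vee})$. Stratifying $\sing(G^{\vee})$ by the defect $\delta(x):=\dim_{k(x)}(G^{\vee}\otimes k(x))-g\ge 1$, global generation again makes $\phi^{\vee}\otimes k(x)$ a generic map $k(x)^{\oplus t}\to k(x)^{\oplus(g+\delta)}$, so the fibre $p_1^{-1}(x)$ now has codimension $\max\{0,\,t-g-\delta(x)+1\}$ in $\P(V)$. On a stratum where $\delta\ge t-g+1$ this codimension is $0$, so the entire stratum is contained in $D(\phi)$ for generic $\phi$; running the same fibre-dimension count on $p_2$ over such a stratum yields $\codim_X D(\phi)\le\codim_X\sing(G^{\vee})$. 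Since $D(\phi)$ is the union of its intersections with $U$ and with $\sing(G^{\vee})$, its codimension is the minimum of the two, and combining the estimates gives $\codim_X D(\phi)\le\min\{t-g+1,\ \codim_X\sing(G^{\vee})\}$.

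The main obstacle I anticipate is precisely the behaviour over $\sing(G^{\vee})$, where $G^{\vee}$ ceases to be locally free and the clean bundle-theoretic statement of \cite[Teorema 2.8]{O} no longer applies directly: there the fibre dimension of $G^{\vee}$ jumps, so one must control $\Sigma$ through the defect stratification rather than a single rank condition, and one must check that $p_2$ is dominant on the relevant components (generic smoothness of $\Sigma$) in order to pass legitimately from $\dim\Sigma$ to the dimension of a generic fibre. A secondary point to be handled is non-emptiness: the estimate $\codim_X D(\phi)\le t-g+1$ is only meaningful when $t-g+1\le n$, and the reflexivity of $G^{\vee}$ — which forces $\codim_X\sing(G^{\vee})\ge 3$ — is what ensures, in the applications of the lemma, that the effective bound is governed by the expected codimension $t-g+1$.
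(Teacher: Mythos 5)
Your incidence--variety setup over $U=X\setminus\sing(G^{\vee})$ is essentially the paper's own construction (the paper works with $\Sigma'\subset U\times H^0(G^{\vee}\otimes T)$ and the two projections, exactly your $\Sigma$ restricted over $U$), but you are aiming at the wrong inequality, and the step you explicitly defer is precisely the one that fails. The printed ``$\leq$'' is a typo: as the paper's proof and its application in Theorem \ref{TeoGlobal1} make clear (there one needs $\codim_X(D(\phi))\geq 2$ in order to invoke Lemma \ref{LemmaGlobal2} on torsion-freeness of the cokernel), the intended conclusion is the \emph{lower} bound $\codim_X(D(\phi))\geq\min\{t-g+1,\,\codim_X\sing(G^{\vee})\}$. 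The paper proves this by a dichotomy on the second projection $q$: either $q$ is not dominant, in which case the generic fibre $D(\phi)\cap U$ is empty and hence $D(\phi)\subset\sing(G^{\vee})$; or $q$ is dominant, in which case generic transversality gives $\codim_U(D(\phi)\cap U)=t-g+1$. In either case, writing $D(\phi)=(D(\phi)\cap U)\cup(D(\phi)\cap\sing(G^{\vee}))$ and using the trivial fact that the second piece has codimension at least $\codim_X\sing(G^{\vee})$, the lower bound follows; no stratification of $\sing(G^{\vee})$ is needed at all.

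Taking the printed ``$\leq$'' at face value, you instead try to prove an upper bound on the codimension, i.e.\ nonemptiness of $D(\phi)$ with at most the expected codimension. This cannot be derived from the hypotheses: global generation of $G^{\vee}\otimes T$ never forces $D(\phi)\neq\emptyset$ (take $G=\mathcal{O}_X$ and $T=\mathcal{O}_X^{\oplus 2}$; a generic $\phi$ is nowhere degenerate and $D(\phi)=\emptyset$, so no upper bound on $\codim_X D(\phi)$ can hold), and nonemptiness theorems for degeneracy loci require positivity hypotheses of Fulton--Lazarsfeld type, not global generation. This is exactly the dominance of $p_2$ that you flag as ``to be checked'': it is not checkable, because it is the point at which the upper-bound reading is false; in particular your assertion that $D(\phi)\cap U$ is empty only when $t-g+1>n$ is incorrect. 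Your defect stratification of $\sing(G^{\vee})$ suffers from the same defect: the observation that strata with $\delta\geq t-g+1$ are contained in $D(\phi)$ for every $\phi$ is correct (no surjection onto a fibre of dimension $>t$ exists), but such strata need not exist, so $\codim_X D(\phi)\leq\codim_X\sing(G^{\vee})$ does not follow; and for the correct ($\geq$) statement this whole analysis is superfluous, since over $\sing(G^{\vee})$ only the trivial containment $D(\phi)\cap\sing(G^{\vee})\subset\sing(G^{\vee})$ is used.
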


\begin{proof}
Let $U:= X \setminus \sing((G^{\vee}))$. Note that $(G^{\vee} \otimes  T)_{|U}$ is locally free and globally generated, since $G^{\vee} \otimes  T$ is globally generated by hypothesis. 
 
Let $V((G^{\vee}\otimes T)_{|U} )$ be the total space of $(G^{\vee}\otimes T)_{|U} $ as a vector bundle over $U$; we have a projection $\pi : V((G^{\vee}\otimes T)_{|U} ) \to U$, whose fibers can be identified with the space of $g\times t$ matrices.  There exists a subvariety $\Sigma \subset V((G^{\vee}\otimes T)_{|U} ) $ equipped with a projection $\pi':\Sigma \to U$ and whose fibres are given by the subspaces of  $g\times t$ matrices of rank is at most $g-1$. Note that $\codim_U (\Sigma )=t-g+1$.

Since $(G^{\vee}\otimes T)_{|U}$ is globally generated, we have the exact sequence:
$$ H^0((G^{\vee}\otimes T)_{|U})\otimes \mathcal{O}_U \to 
(G^{\vee} \otimes T)_{|U} \to 0 $$
Since $G^{\vee} $ is reflexive, $G^{\vee} \otimes  T$ is also reflexive, hence $H^0((G^{\vee}\otimes T)_{|U})\simeq H^0(G^{\vee} \otimes T)$. Therefore, we obtain a submersion 
$$ p: U\times H^0(G^{\vee} \otimes T) \to  V((G^{\vee} \otimes T)_{|U} ) .$$
Let $\Sigma'=p^{-1}(\Sigma) $ and observe that 
$$ \Sigma'=\{(x,\phi); \ \rk(\phi(x))\leq g-1\} . $$
Thus, we have a projection $q: \Sigma' \to H^0(G^{\vee} \otimes T) $, where 
$$ q^{-1}(\phi)=\{ x\in U; \ \rk(\phi(x)) \leq g-1 \} \simeq D(\phi) \cap U. $$
Note that $\codim_{U\times H^0(G^{\vee} \otimes T) } (\Sigma' )= (t-g+1)+h$, where
$h=\dim H^0(G^{\vee} \otimes T) $. There are now two possibilities: 
 
\begin{enumerate}
\item[(i)] if the image of $q$ is not  dense in $H^0(G^{\vee} \otimes T)$, then $D(\phi) \cap U=\emptyset$, or equivalently,  $D(\phi)\subset\sing(G^{\vee})$, for generic $\phi$.
\item[(ii)] if the image of $q$ is dense in $H^0(G^{\vee} \otimes T)$, then a for generic $\phi $ we have that $\Sigma'$ is transversal to $q^{-1}(\phi)$, so that $\codim_U(D(\phi)\cap U)=t-g+1$.
\end{enumerate}
Since $D(\phi)=(D(\phi)\cap U) \cup (D(\phi)\cap \sing(G^{\vee}))$, the result follows. 
\end{proof}

\begin{appxlem} \label{LemmaGlobal2}
Assume that $G$ is reflexive and that $\phi$ is injective. If $\codim_X(D(\phi)) \geq 2$, then $\coker(\phi)$ is torsion free. 
\end{appxlem}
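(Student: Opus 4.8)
The plan is to show that the torsion subsheaf of $\coker(\phi)$ must be supported in codimension at least $2$, and then to use the normality of reflexive sheaves to eliminate it. Write $Q := \coker(\phi)$, so that one has the short exact sequence
$$ 0 \to G \xrightarrow{\phi} T \to Q \to 0. $$
Since $\rk(Q) = t-g > 0$, the sheaf $Q$ has full support, so $Q$ is torsion free if and only if its torsion subsheaf $\tau \subseteq Q$ vanishes. Dualizing the sequence and using that $T$ is locally free gives $\coker(\phi^\vee) \simeq \inext^1(Q,\OX)$, whence $D(\phi) = \supp \inext^1(Q,\OX)$; this records the fact that $D(\phi)$ measures the failure of $Q$ to be locally free, and will let me pass between the degeneracy locus and the geometry of $Q$.

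First I would locate $\supp(\tau)$. On the open set $U := X \setminus (\sing(G) \cup D(\phi))$ the sheaf $G$ is locally free, and the fibrewise map $\phi \otimes k(x)$ is injective at every $x \in U$; by Nakayama's lemma $\phi$ is then a subbundle inclusion near each such point, so $Q|_U$ is locally free, in particular torsion free. Hence $\supp(\tau) \subseteq \sing(G) \cup D(\phi)$. Because $G$ is reflexive, its non-locally-free locus $\sing(G)$ has codimension $\geq 3$, and by hypothesis $\codim_X D(\phi) \geq 2$; therefore $\codim_X \supp(\tau) \geq 2$.

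Next I would build a reflexive oversheaf of $G$ out of $\tau$. Let $K \subseteq T$ be the preimage of $\tau \subseteq Q = T/\phi(G)$, so that $K/\phi(G) \simeq \tau$ and $T/K \simeq Q/\tau$. As $Q/\tau$ is torsion free and $T$ is locally free, $K = \ker(T \to Q/\tau)$ is reflexive (by the lemmas of Okonek--Schneider--Spindler invoked in Remark \ref{min twist}). Thus $\phi$ exhibits $G$ as a subsheaf of the reflexive sheaf $K$ of the same rank $g$, with quotient isomorphic to $\tau$, supported in codimension $\geq 2$. Since reflexive sheaves on the smooth variety $X$ are normal, i.e. their sections extend uniquely across closed subsets of codimension $\geq 2$, and since $G$ and $K$ agree on the complement of $\supp(\tau)$, the inclusion $\phi : G \hookrightarrow K$ must be an isomorphism. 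Consequently $\tau = 0$, and $Q = \coker(\phi)$ is torsion free.

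The main obstacle is the step bounding $\supp(\tau)$: a priori $Q$ could acquire torsion along the non-locally-free locus of $G$, where the naive rank-drop description of $D(\phi)$ is no longer valid. This is precisely where reflexivity of $G$ is indispensable, as it forces $\sing(G)$ to have high codimension and, together with $\codim_X D(\phi) \geq 2$, confines any torsion to codimension $\geq 2$; reflexivity is then used a second time, through normality, to absorb that torsion and conclude $G = K$.
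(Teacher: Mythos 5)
Your proof is correct, and it takes a genuinely different route from the paper's. The paper's argument is purely homological: dualizing $0\to G\to T\to K\to 0$ with $K:=\coker(\phi)$ gives $\inext^1(K,\OX)\simeq\coker(\phi^{\vee})$, supported on $D(\phi)$ and hence in codimension $\geq 2$ by hypothesis, while $\inext^p(K,\OX)\simeq\inext^{p-1}(G,\OX)$ for $p\geq 2$ has support of codimension $\geq p+1$ because $G$ is reflexive; torsion freeness of $K$ then follows in one stroke from the purity criterion \cite[Proposition 1.1.10]{HL}. You instead locate the torsion directly: you confine $\tau\subset K$ to $\sing(G)\cup D(\phi)$, of codimension $\geq 2$ (using that a reflexive sheaf on a smooth variety is locally free outside a set of codimension $\geq 3$), and then saturate, observing that the preimage of $\tau$ in $T$ is reflexive as the kernel of a map from a locally free sheaf to a torsion-free one, so that normality of the reflexive sheaf $G$ forces the inclusion $G\hookrightarrow\ker(T\to K/\tau)$ to be an isomorphism, i.e. $\tau=0$. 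The two proofs consume reflexivity of $G$ in equivalent disguises --- yours via $\codim\sing(G)\geq 3$ together with normality, the paper's via the bound $\codim\supp\inext^{q}(G,\OX)\geq q+2$ --- and both need $X$ smooth. What the paper's version buys is brevity and uniformity: a three-line local Ext computation plus one citation, with no need to identify where torsion lives or to discuss fiberwise ranks. What yours buys is transparency and elementarity: it avoids the purity criterion altogether, exhibits exactly where torsion could arise, and runs on the same toolkit (Nakayama, the Okonek--Schneider--Spindler lemmas on reflexive kernels and normality) that the paper already uses in Remark \ref{min twist}; indeed your saturation step is essentially the same mechanism the paper deploys inside the proof of Theorem \ref{classification_degre_tow}. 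One point that needs the care you correctly gave it: translating $x\notin D(\phi)$ into injectivity of $\phi\otimes k(x)$ via surjectivity of $\phi^{\vee}\otimes k(x)$ is legitimate only where $G$ is locally free, which is precisely why $\sing(G)$ must be excised before invoking Nakayama, and why the reflexivity hypothesis cannot be dropped there.
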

\begin{proof}
Let $K:= \coker(\phi)$. Dualizing the sequence 
$0\to G \to  T \to K \to 0$, we obtain 
\begin{enumerate}
\item[(i)] $\inext^1(K,\mathcal{O}_X)\simeq \coker(\phi^{\vee})$, so that
$\codim_X(\supp(\inext^1(K,\mathcal{O}_X)))\geq 2$ by hypothesis;
\item[(ii)] $\inext^p(K,\mathcal{O}_X)\simeq \inext^{p-1}(G,\mathcal{O}_X)$ for $2\le p\le n-1$, thus $\codim_X(\supp(\inext^p(K,\mathcal{O}_X)))\geq p+1$ since $G$ is reflexive;
\item[(iii)] $\inext^n(K,\O_X)=0$.
\end{enumerate}

The desired conclusion follows from \cite[Proposition 1.1.10]{HL}.
\end{proof}

We apply the previous lemmas to construct distributions on $X$.

\begin{appxthm} \label{TeoGlobal1}
Let $G$ be a globally generated reflexive sheaf on a projective variety $X$ such that
$ \rk(G)\leq \dim X-1\ge2$. If $TX\otimes L$ is globally generated, for some line bundle $L$, then 
$G^{\vee}\otimes L^{\vee}$ is the tangent sheaf of a   distribution on $X$ of codimension $n-\rk(G)$ . 
\end{appxthm}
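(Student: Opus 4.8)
The plan is to take $T_\sF := G^{\vee}\otimes L^{\vee}$ as the candidate tangent sheaf and to produce an injective morphism $\phi : G^{\vee}\otimes L^{\vee}\to TX$ with torsion-free cokernel. Since $G$ is reflexive, so is $G^{\vee}\otimes L^{\vee}$, and its rank equals $\rk(G)=\dim X-(n-\rk(G))$; hence a short exact sequence
$$ 0 \to G^{\vee}\otimes L^{\vee} \xrightarrow{\phi} TX \to \coker(\phi) \to 0 $$
with $\coker(\phi)$ torsion free is exactly a distribution of codimension $n-\rk(G)$ in the sense of \eqref{eq:Dist}. The whole argument is an application of Lemma \ref{LemmaGlobal1} and Lemma \ref{LemmaGlobal2} with the substitutions $G \rightsquigarrow G^{\vee}\otimes L^{\vee}$ and $T \rightsquigarrow TX$.

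First I would check the global generation hypothesis of Lemma \ref{LemmaGlobal1}. As $G$ is reflexive, $(G^{\vee}\otimes L^{\vee})^{\vee}\simeq G\otimes L$, so the relevant sheaf is $(G^{\vee}\otimes L^{\vee})^{\vee}\otimes TX \simeq G\otimes(TX\otimes L)$. Both $G$ and $TX\otimes L$ are globally generated by hypothesis, and a tensor product of globally generated sheaves is again globally generated (the simple tensors of generating sections generate the product stalkwise); hence $G\otimes(TX\otimes L)$ is globally generated. In particular $\Hom(G^{\vee}\otimes L^{\vee},TX)=H^0(G\otimes TX\otimes L)\ne 0$, so the phrase ``generic $\phi$'' is meaningful.

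Next I would use Lemma \ref{LemmaGlobal1} to control the degeneracy locus. With $t=\rk(TX)=\dim X=n$ and $g=\rk(G)$, the hypothesis $\rk(G)\le \dim X-1$ gives $t-g+1=n-\rk(G)+1\ge 2$. Moreover $\sing\!\big((G^{\vee}\otimes L^{\vee})^{\vee}\big)=\sing(G\otimes L)=\sing(G)$, and since $G$ is a reflexive sheaf on a smooth variety its singular locus has codimension at least $3$. The transversality computation inside the proof of Lemma \ref{LemmaGlobal1} shows that, for generic $\phi$, the locus $D(\phi)$ meets the locally free locus $U=X\setminus\sing(G)$ in codimension $t-g+1$ (or not at all), while $D(\phi)\setminus U\subseteq \sing(G)$; combining these two contributions yields $\codim_X D(\phi)\ge \min\{\,t-g+1,\ \codim_X\sing(G)\,\}\ge 2$. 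Since then $D(\phi)\ne X$, the morphism $\phi$ has full rank $g$ at the generic point, and because $G^{\vee}\otimes L^{\vee}$ is torsion free this generic injectivity upgrades to injectivity: $\ker\phi$ would be a rank-zero, hence torsion, subsheaf of a torsion-free sheaf, so $\ker\phi=0$.

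Finally, with $\phi$ injective, $G^{\vee}\otimes L^{\vee}$ reflexive, and $\codim_X D(\phi)\ge 2$, Lemma \ref{LemmaGlobal2} gives that $\coker(\phi)$ is torsion free, which completes the construction of the codimension $n-\rk(G)$ distribution with tangent sheaf $G^{\vee}\otimes L^{\vee}$. The step I expect to require the most care is precisely the passage from the Bertini-type statement to the bound $\codim_X D(\phi)\ge 2$: one must separate the analysis on the locally free locus $U$ (where transversality forces codimension $t-g+1\ge 2$) from the estimate $\codim_X\sing(G)\ge 3$ governing the complement, and then lean on the torsion-freeness of $G^{\vee}\otimes L^{\vee}$ to promote generic injectivity to honest injectivity before invoking Lemma \ref{LemmaGlobal2}.
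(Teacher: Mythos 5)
Your proposal is correct and follows essentially the same route as the paper's own proof: verify that $(G^{\vee}\otimes L^{\vee})^{\vee}\otimes TX \simeq G\otimes TX\otimes L$ is globally generated, invoke Lemma \ref{LemmaGlobal1} (with $t=n$, $g=\rk(G)$, and $\codim_X\sing(G)\ge 3$ by reflexivity) to get $\codim_X D(\phi)\ge 2$ for generic $\phi$, and then apply Lemma \ref{LemmaGlobal2}. Your explicit upgrade from generic full rank to injectivity of $\phi$ (the kernel being a torsion subsheaf of a torsion-free sheaf) is a detail the paper leaves implicit, and you correctly read the inequality in Lemma \ref{LemmaGlobal1} as a lower bound on the codimension, as its proof and the paper's application both require.
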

\begin{proof}
Since $G$ and $TX\otimes L$ are globally generated, then so is 
$$ G \otimes TX\otimes L \simeq  (G^{\vee}\otimes L^{\vee})^{\vee} \otimes TX. $$  
By Lemma \ref{LemmaGlobal1} a generic morphism $\phi: G^{\vee}\otimes L^{\vee}\to TX$ satisfies
$\codim_X(D(\phi))=\min\{n-g+1,3\}\geq 2$, since $G$ is reflexive. Lemma \ref{LemmaGlobal2} implies that $\coker(\phi)$ is torsion free.
\end{proof}

Further specializing to the case $X=\p3$ and $\rk(G)=2$, we obtain the following statement, used throughout the main text.

\begin{appxcor} \label{glob gen p3}
Let $G$ be a globally generated rank 2 reflexive sheaf on $\P^3$.
Then $G^{\vee}(1)$ is the tangent sheaf of a codimension one distribution $\sF$ of degree
$c_1(G)$ with $c_2(T_\sF)=c_2(G)-c_1(G)+1$, and $c_3(T_\sF)=c_3(G)$. 
\end{appxcor}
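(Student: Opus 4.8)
The plan is to obtain this corollary as a direct specialization of Theorem~\ref{TeoGlobal1}, choosing the auxiliary line bundle $L$ so that $L^\vee=\O(1)$, and then to read off the invariants by an elementary Chern class computation. Concretely, I would set $X=\p3$, $n=3$, and $L=\O(-1)$. The hypotheses of Theorem~\ref{TeoGlobal1} to be verified are that $G$ is a globally generated reflexive sheaf with $\rk(G)=2\le n-1=2$ (and $n-1=2\ge2$), which is exactly the standing assumption, together with the global generation of $T\p3\otimes L=T\p3(-1)$. For the latter I would twist the Euler sequence $0\to\O\to\O(1)^{\oplus4}\to T\p3\to0$ by $\O(-1)$, producing $0\to\O(-1)\to\O^{\oplus4}\to T\p3(-1)\to0$, which exhibits $T\p3(-1)$ as a quotient of the globally generated sheaf $\O^{\oplus4}$; hence it is globally generated. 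Theorem~\ref{TeoGlobal1} then yields that $G^\vee\otimes L^\vee=G^\vee(1)$ is the tangent sheaf of a distribution $\sF$ on $\p3$ of codimension $n-\rk(G)=1$, which is the first assertion.

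Next I would compute the invariants. Since $G$ is reflexive of rank $2$, one has $\det(G)=\O(c_1(G))$ and $G^\vee\simeq G\otimes\det(G)^\vee=G(-c_1(G))$, so that $T_\sF=G^\vee(1)=G(1-c_1(G))$ is simply a twist of $G$. Writing $m:=1-c_1(G)$ and using the rank $2$ twist formulas $c_1(G(m))=c_1(G)+2m$ and $c_2(G(m))=c_2(G)+m\,c_1(G)+m^2$, I get $c_1(T_\sF)=2-c_1(G)$ and, after expanding, $c_2(T_\sF)=c_2(G)-c_1(G)+1$. In particular the degree of $\sF$ is $d=2-c_1(T_\sF)=c_1(G)$, as claimed.

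The one point deserving care — and which I regard as the main, if modest, obstacle — is the behaviour of the third Chern class, since $T_\sF=G(1-c_1(G))$ is a nontrivial twist of $G$ and the naive signed Chern class is not twist-invariant in general. Here I would invoke the convention used throughout the paper (cf. Remark~\ref{R:CC} and \cite[Proposition~2.6]{H2}), under which $c_3$ of a rank $2$ reflexive sheaf on $\p3$ equals $\mathrm{length}\bigl(\inext^1(-,\op3)\bigr)$. Because this sheaf is supported on the $0$-dimensional non-locally-free locus, tensoring by any line bundle leaves its length unchanged; thus $c_3$ is invariant under twisting, and $c_3(T_\sF)=c_3\bigl(G(1-c_1(G))\bigr)=c_3(G)$. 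This settles all three invariants and completes the proof. I note that the resulting statement is precisely Proposition~\ref{global} of the main text, upon writing the tangent sheaf as $E(1-c_1(E))=E^\vee(1)$.
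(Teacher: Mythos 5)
Your proposal is correct and follows essentially the same route as the paper: both apply Theorem~\ref{TeoGlobal1} with $L=\O(-1)$, using that $T\p3(-1)$ is globally generated, and then read off the degree and Chern classes of $T_\sF=G^\vee(1)$. The extra details you supply --- the Euler-sequence check of global generation and the twist-invariance of $c_3$ via $\mathrm{length}\bigl(\inext^1(-,\op3)\bigr)$ --- merely make explicit what the paper leaves implicit (in fact, for a rank two sheaf, $c_3$ is already twist-invariant by the standard Chern class formula $c_3(E(m))=c_3(E)$).
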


\begin{proof}
If $G$ is a globally generated rank 2 reflexive sheaf on $\P^3$, then $G\otimes T\mathbb{P}^3(-1)$ is also globally generated, since $T\mathbb{P}^3(-1)$ is globally generated. Now, apply the Theorem \ref{TeoGlobal1} with $L=\O(-1)$ to obtain the desired codimension one distribution:
$$ \sF ~:~ 0 \to G^{\vee}(1) \to T\p3 \to \IZ(d+2) \to 0 , $$
where $d=2-c_1(G^{\vee}(1))=c_1(G)$. In addition, note that $c_2(G^{\vee}(1))=c_2(G)-c_1(G)+1$, and $c_3(G^{\vee}(1))=c_3(G)$
\end{proof}

\bibliographystyle{amsalpha}

\end{document}